\numberwithin{equation}{section}
\newcommand{\Rm}{{\rm Rm}}
\newcommand{\Ric}{{\rm Ric}}
\newcommand{\Vol}{{\rm Vol}}
\newcommand{\diam}{{\rm diam}}
\newcommand{\rv}{{\rm v}}
\newcommand{\dN}{\mathds{N}}
\newcommand{\dP}{\mathds{P}}
\newcommand{\dR}{\mathds{R}}
\newcommand{\dZ}{\mathds{Z}}
\newcommand{\R}{\text{R}}
\newcommand{\cB}{\mathcal{B}}
\newcommand{\cC}{\mathcal{C}}
\newcommand{\cN}{\mathcal{N}}
\newcommand{\cS}{\mathcal{S}}
\newcommand{\cV}{\mathcal{V}}
\newtheorem{theorem}[equation]{Theorem}
\newtheorem{proposition}[equation]{Proposition}
\newtheorem{lemma}[equation]{Lemma}
\newtheorem{corollary}[equation]{Corollary}
\theoremstyle{definition}
\newtheorem{definition}[equation]{Definition}
\theoremstyle{remark}
\newtheorem{remark}[equation]{Remark}
\theoremstyle{remark}
\newtheorem{example}[equation]{Example}
\theoremstyle{remark}
\theoremstyle{remark}\newtheorem{conjecture}[equation]{Conjecture}
\theoremstyle{remark}\newtheorem{question}[equation]{Question}
\begin{document}

\author{Jeff Cheeger and Aaron Naber}
\thanks{The first author was supported by NSF Grant DMS-1406407 and a Simons Foundation Fellowship.}
\thanks{The second author was supported by NSF grant DMS-1406259}

\title[Regularity of Einstein Manifolds and
the Codimension $4$ Conjecture]{Regularity of Einstein Manifolds \\ and the Codimension $4$ Conjecture}

\address{J. Cheeger, Courant Institute, 251 Mercer St., New York, NY 10011}
\email{cheeger@cims.nyu.edu}
\address{A. Naber, Department of Mathematics, 2033 Sheridan Rd., Evanston, IL 60208-2370}
\email{anaber@math.northwestern.edu}
\date{\today}
\maketitle

\begin{abstract}
In this paper, we are concerned with the regularity of noncollapsed Riemannian manifolds $(M^n,g)$ 
with bounded Ricci curvature, as well as their Gromov-Hausdorff
 limit spaces 
$(M^n_j,d_j)\stackrel{d_{GH}}{\longrightarrow} (X,d)$, where $d_j$ denotes the Riemannian distance.
Our main result is a solution to the codimension $4$ conjecture, namely that $X$ is smooth away from a closed 
subset of codimension $4$. 
 We combine this result with the ideas of quantitative stratification to prove a priori 
$L^q$ estimates on the full curvature $|\Rm|$ for all $q<2$. In the case of Einstein manifolds, we improve this to estimates on the regularity scale.  
We apply this to prove a conjecture of Anderson that the collection of 
$4$-manifolds $(M^4,g)$ with $|\Ric_{M^4}|\leq 3$, 
$\Vol(M)>\rv>0$, and $\diam(M)\leq D$  contains at most a 
finite number of diffeomorphism classes.  A local version 
is used to show that noncollapsed $4$-manifolds with 
bounded Ricci curvature have a priori $L^2$ Riemannian 
curvature estimates.
\end{abstract}

\tableofcontents

\section{Introduction}\label{s:intro}

In this paper, we consider pointed Riemannian manifolds $(M^n,g,p)$ with bounded Ricci curvature 
\begin{align}\label{e:bddricci}
|\Ric_{M^n}|\leq n-1\, ,
\end{align}
 which satisfy the noncollapsing assumption 
\begin{align}
\label{e:noncollapsed}
\Vol(B_1(p))>\rv>0\, .
\end{align}
We will be particularly concerned with pointed Gromov-Hausdorff limits

\begin{align}
\label{e:limits}
(M^n_j,d_j,p_j)\stackrel{d_{GH}}{\longrightarrow} (X,d,p)
\end{align}
of sequences of such manifolds, where $d_j$ always denotes the Riemannian distance.  Our main result is that $X$ is smooth away from a closed subset of 
codimension $4$.\footnote{In the K\"ahler case, this  was shown in \cite{Cheeger}, 
and independently by Tian, by means of an $\epsilon$-regularity theorem
which exploits the first Chern form and its relation to Ricci curvature.} 
We will combine this with the previous work 
of the authors on quantitative stratification to show that $X$ satisfies a priori
$L^q$-estimates on the curvature $|\Rm|$ for all $q<2$; see Theorems \ref{t:main_codim4} and \ref{t:main_estimate}. 
 Finally, we will apply the results in the dimension $4$ setting in which there are various improvements, including a
  finiteness theorem up to diffeomorphism and an a priori
$L^2$ curvature bound, for noncollapsed manifolds with bounded Ricci  curvature; see Theorems \ref{t:main_finite_diffeo} 
and \ref{t:main_dimfour}.

The first major results on limit spaces satisfying (\ref{e:bddricci})--(\ref{e:limits}) were proved in 
 Einstein case; see \cite{A89}, \cite{BKN89},  \cite{T90}. 
 A basic assumption 
 is that the $L^{n/2}$ norm of the of the curvature tensor is bounded. 
 From  this, together with an
appropriate $\epsilon$-regularity
 theorem,
  it was shown that 
any limit space as above is smooth away from at most
a definite number of points at which the singularities are of orbifold type. 

  In dimension $4$, given (\ref{e:bddricci}),
 it follows directly from the Chern-Gauss-Bonnet formula that 
the $L^2$-norm of the curvature is bounded in terms of the Euler characteristic.
In \cite{Anderson_Einstein}, it is shown
 that the collection of noncollapsed $4$-manifolds with definite bounds on Ricci curvature, diameter and  Euler charactersistic, contains only finitely many diffeomorphism types.  
Assuming an $L^{n/2}$ bound on
curvature (in place of the Euler characteristic bound) the finiteness theorem was extended to arbitrary dimensions in
\cite{Anderson-Cheeger2}, a precursor of which was \cite{B90}.

It was conjectured in \cite{Anderson_ICM94} that for the finiteness theorem
in dimension $4$, the 
Euler characteristic 
bound 
is an unnecessary assumption. In Theorem \ref{t:main_finite_diffeo},
we prove this conjecture.

 
 The first step toward the study of such 
Gromov-Hausdorff limits as in (\ref{e:bddricci})--(\ref{e:limits}), without the need for  assumptions implying
integral curvature bounds, was taken in \cite{ChC1}. There, a stratification 
theory for noncollapsed limits with only lower Ricci curvature bounds was developed.  By combining 
this with the $\epsilon$-regularity results of \cite{Anderson_Einstein}, it was proved that a
noncollapsed Gromov-Hausdorff limit of manifolds  with bounded Ricci curvature is smooth outside a
closed  subset of codimension $2$.  More recently, it was shown in \cite{CheegerNaber_Ricci} that one 
can then prove a priori $L^q$-bounds on the curvature
for all $q<1$. 

Based on knowledge of the $4$-dimensional case, early workers conjectured that the 
singular set of a noncollapsed 
limit space satisfying (\ref{e:bddricci})--\eqref{e:limits} should have codimension $4$.
This was shown in \cite{CCTi_eps_reg} under the additional assumption of an
$L^q$ curvature bound  for all $q<2$. 
The following is the main result of this paper:

\begin{theorem}\label{t:main_codim4}
Let $(M^n_j,d_j,p_j)\stackrel{d_{GH}}{\longrightarrow} (X,d,p)$ be a Gromov-Hausdorff limit of manifolds 
with $|\Ric_{M^n_j}|\leq n-1$ and $\Vol(B_1(p_j))>\rv>0$.  Then the singular set $\cS$ satisfies
\begin{align}
\dim \cS\leq n-4.
\end{align}
The dimension can be taken to be the Hausdorff or Minkowski dimension.
\end{theorem}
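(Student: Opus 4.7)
The plan is to upgrade the codimension-$2$ smoothness result of \cite{ChC1} to codimension $4$ by establishing a new $\varepsilon$-regularity principle: \emph{any point $x \in X$ at which some tangent cone isometrically splits off an $\RR^{n-3}$ factor is a regular point}. Granting this, the Hausdorff estimate follows immediately from the Cheeger--Colding stratification bound $\dim \cS^k \leq k$, where $\cS^k \subset X$ denotes the set of points at which no tangent cone is $(k+1)$-symmetric, since the $\varepsilon$-regularity statement forces $\cS \subset \cS^{n-4}$. Upgrading to the stronger Minkowski dimension estimate requires a quantitative, effective version of the $\varepsilon$-regularity theorem, fed into the quantitative stratification machinery developed in the authors' earlier work on $L^q$ curvature bounds.

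The motivation for such an $\varepsilon$-regularity theorem is dimensional. If a tangent cone at $x$ splits as $\RR^{n-3} \times C(Y)$, then $Y$ is a compact noncollapsed Ricci limit of Hausdorff dimension at most $2$. A $2$-dimensional noncollapsed limit with a two-sided Ricci bound ought to be smooth, a low-dimensional rigidity phenomenon; once one knows $Y$ is a smooth surface, the metric cone structure of $C(Y)$ together with noncollapsing and the Ricci hypothesis forces $Y$ to be the round $2$-sphere, and hence the tangent cone to be Euclidean $\RR^n$.

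To make this rigorous I would argue by contradiction and compactness, extracting from a hypothetical sequence of counterexamples a limiting model that splits off $\RR^{n-3}$ yet is singular. The technical ingredients I would assemble are: (i) the Cheeger--Colding almost-splitting theorem, producing harmonic $\varepsilon$-splitting maps $u = (u^1,\dots,u^{n-3}) : B_2(x) \to \RR^{n-3}$ near the base point; (ii) a \emph{Transformation Theorem} asserting that, under the two-sided Ricci bound, the splitting estimate for $u$ propagates to every dyadic subscale with no deterioration, so that $u$ remains a controlled fibration across all scales; (iii) a \emph{Slicing Theorem} showing that for a definite fraction of fiber values $y$ the preimage $u^{-1}(y)$ is a $3$-dimensional, noncollapsed, bounded-Ricci region Gromov--Hausdorff close to $C(Y)$; and (iv) the classical orbifold $\varepsilon$-regularity theorem of \cite{Anderson_Einstein}, applied inside the slice, to conclude that $Y$ is smooth and therefore $C(Y) = \RR^3$.

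The main new obstacle is the Transformation Theorem. Under only a lower Ricci bound the individual components of a splitting map can lose their mutual orthogonality at small scales and the splitting structure can degenerate; this is precisely the difficulty that prevents any codimension-$4$ conclusion from lower Ricci bounds alone. Under a two-sided Ricci hypothesis the Bochner formula for a harmonic function acquires two-sided control of the Ricci term, which should allow one to integrate the Hessian error over all dyadic scales with total mass controlled by the initial splitting defect. I would implement this via a Reifenberg-type iteration through the scales; once that is in hand, all remaining ingredients should follow by adapting the arguments of \cite{ChC1} and \cite{CCTi_eps_reg} to the sharper splitting input now available.
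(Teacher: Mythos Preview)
Your outline has the right architecture (stratification + an $\epsilon$-regularity statement feeding into $\cS\subset\cS^{n-4}$, with quantitative stratification for the Minkowski upgrade), but several of the key technical steps are mis-specified in ways that would make the argument fail.

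First, the hard case is not $\RR^{n-3}\times C(Y)$ but $\RR^{n-2}\times C(S^1_\beta)$, and the splitting map one works with goes to $\RR^{n-2}$, not $\RR^{n-3}$. The paper proceeds in two stages: it first rules out codimension-$2$ singularities via the Slicing/Transformation machinery, and only then, \emph{using} the already-established codimension-$3$ smoothness, handles $\RR^{n-3}\times C(Y)$. The codimension-$2$ exponent is not incidental: the measure estimates in the Slicing theorem (comparing $|u(B_r(x))|\sim r^{n-2}$ against the weighted volume $\mu(B_r(x))\sim r^n$, with the singular-scale inequality contributing a factor $r^{-2}$) balance precisely when $k=n-2$ and would not close for $k=n-3$.

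Second, your description of the Transformation theorem is the opposite of what is true. The map $u$ does \emph{not} remain an $\epsilon$-splitting at all subscales; Example~2.13 (harmonic functions on $C(S^1_\beta)$) shows $|\nabla u|$ can vanish on a set of positive $(n-2)$-measure. The content of the Transformation theorem is that for each $x$ and each $r\geq s^\delta_x$ there is a scale-dependent lower-triangular matrix $A=A(x,r)$ such that $A\circ u$ is an $\epsilon$-splitting on $B_r(x)$. This correction by $A$ is the entire point, and the theorem requires only the \emph{lower} Ricci bound, not the two-sided one; the two-sided bound enters later, in the blow-up contradiction.

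Third, you cannot apply $\epsilon$-regularity ``inside the slice'': the intrinsic Ricci curvature of $u^{-1}(y)$ is not controlled by the ambient bound. The paper's blow-up is ambient: one rescales $(M^n_j,r_j^{-1}d_j,x_j)$ at the minimum of $r_h$ along a good slice, and the Slicing theorem guarantees the limit splits as $\RR^{n-2}\times S$ with $S$ a smooth surface; $S$ Ricci-flat $\Rightarrow$ flat $\Rightarrow$ $X=\RR^n$, contradicting $r_h(x)=1$.

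Finally, for the codimension-$3$ step, ``$Y$ smooth forces $Y=S^2$'' is incomplete: $Y$ could a priori be $\RR\dP^2$, and the paper rules this out by a separate bordism argument (a good slice of $u:B_2\to\RR^{n-3}$ would bound $\RR\dP^2$, contradicting its nonzero Stiefel--Whitney number).
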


We will outline the proof of Theorem \ref{t:main_codim4} in subsection \ref{ss:outline}. First we will discuss various applications.  Our first applications are to the regularity theory of Einstein manifolds.  To make this precise, let us begin with the following definition, see also \cite{CheegerNaber_Ricci}:

\begin{definition}\label{d:regularity_scale}
 For $x\in X$ we define the regularity scale $r_x$ by
\begin{align}
r_x\equiv \max_{0<r\leq 1}\big\{\sup_{B_r(x)}|\Rm|\leq r^{-2}\big\}\, .
\end{align}
If $x\in \cS$ is in the singular set of $X$, then $r_x\equiv 0$.
\end{definition}

Let $T_r(S)= \{x\in M: d(x,S)<r\}$ denote the $r$-tube around the set $S$. 
By combining Theorem \ref{t:main_codim4} with the quantitative stratification ideas of \cite{CheegerNaber_Ricci}, we 
show the following:

\begin{theorem}\label{t:main_estimate}
There exists $C=C(n,\rv,q)$ such that if $M^n$ satisfies $|\Ric_{M^n}|\leq n-1$ and $\Vol(B_1(p))>\rv>0$.  
then for each $q<2$,
\begin{align}\label{e:main_Rm_est}
\fint_{B_1(p)} |\Rm|^q\leq C\, .
\end{align}
If in addition, $M^n$ is  assumed to be Einstein, then 
for every $q<2$ we have that
\begin{align}\label{e:main_reg_scale}
\Vol(T_r(\{x\in B_1(p):r_{x}\leq r\}))\leq C\, r^{2q}
\end{align}
\end{theorem}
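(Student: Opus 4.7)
The plan is to combine Theorem \ref{t:main_codim4} with the quantitative stratification machinery of \cite{CheegerNaber_Ricci} to obtain Minkowski-type estimates on the sublevel sets of the regularity scale, and then derive the $L^q$ bound on $|\Rm|$ by a standard layer-cake argument.

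The key preliminary step is to promote Theorem \ref{t:main_codim4} to a definite $\epsilon$-regularity theorem: there exist $\epsilon = \epsilon(n,\rv) > 0$ and $C = C(n,\rv)$ such that if $(M^n,g)$ satisfies (\ref{e:bddricci})--(\ref{e:noncollapsed}) and $B_{2r}(x)$ is $\epsilon r$-Gromov-Hausdorff close to $B_{2r}((0^{n-3},y^*)) \subset \RR^{n-3} \times Y$ for some pointed metric space $(Y,y^*)$, then $\sup_{B_r(x)} |\Rm| \le C r^{-2}$, and moreover $r_x \ge c(n,\rv)\, r$ in the Einstein case. The proof is by contradiction and compactness: a sequence of counterexamples, rescaled so that $r=1$, subconverges (noncollapsing is preserved under such rescalings by Bishop--Gromov) to a noncollapsed limit space of the form $\RR^{n-3} \times Y_\infty$ containing a curvature blow-up point, hence a singular point. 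Projecting onto the $Y_\infty$ factor produces a singular point in $Y_\infty$, which in turn yields an $(n-3)$-dimensional singular set in the product, contradicting Theorem \ref{t:main_codim4}. In the Einstein case, once $|\Rm|$ is pointwise controlled on $B_r(x)$, one bootstraps via the Einstein equation in harmonic coordinates to obtain scale-invariant higher-derivative bounds and thus control on the regularity scale.

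The second step applies the quantitative stratification of \cite{CheegerNaber_Ricci}. Recall the quantitative singular stratum
\[ S^k_{\epsilon,r} = \{x \in B_1(p) : B_s(x)\text{ is not }\epsilon s\text{-close to a ball in }\RR^{k+1}\times Z\text{ for any }s \in [r,1]\}, \]
for which the main technical input of \cite{CheegerNaber_Ricci} is the Minkowski bound
\[ \Vol\bigl(T_r(S^k_{\epsilon,r}) \cap B_1(p)\bigr) \le C(n,\rv,\epsilon,\eta)\, r^{n-k-\eta} \qquad \text{for every }\eta > 0, \]
proved by a cone-splitting plus covering/induction argument. Applied at level $k = n-4$ and combined with the $\epsilon$-regularity above, this gives
\[ \{x \in B_1(p) : r_x \le r\} \subset S^{n-4}_{\epsilon,\, cr}, \]
since a point outside the right-hand side would admit a scale $s \in [cr,1]$ at which $B_s(x)$ is $\epsilon s$-close to splitting off $\RR^{n-3}$, forcing $r_x \ge c's \ge r$ by the $\epsilon$-regularity. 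Choosing $\eta = 4-2q$ yields (\ref{e:main_reg_scale}). For (\ref{e:main_Rm_est}), note that $|\Rm|(x) \ge t$ implies $r_x \le t^{-1/2}$, so the same Minkowski bound gives $\Vol(\{|\Rm|\ge t\} \cap B_1(p)) \le C t^{-2+\eta/2}$, and the layer-cake identity $\int_{B_1(p)} |\Rm|^q = q\int_0^\infty t^{q-1}\,\Vol(\{|\Rm|\ge t\} \cap B_1(p))\,dt$ is finite for every $q<2$ upon choosing $\eta$ small. In the general (non-Einstein) bounded Ricci setting one invokes only the $C^0$ part of the $\epsilon$-regularity, which suffices for (\ref{e:main_Rm_est}) but is insufficient to bound the regularity scale, since derivative control requires the Einstein equation.

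The main obstacle is the effective $\epsilon$-regularity step: converting the purely qualitative codimension $4$ conclusion of Theorem \ref{t:main_codim4} into a quantitative compactness statement valid for Gromov-Hausdorff closeness to an \emph{arbitrary} product $\RR^{n-3} \times Y$ (not merely to $\RR^n$), and, in the Einstein case, producing scale-invariant derivative estimates despite the possibly singular transverse factor $Y$. Once this is in hand, the remaining arguments are formal consequences of the quantitative stratification machinery already developed in \cite{CheegerNaber_Ricci}.
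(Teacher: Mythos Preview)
Your overall strategy matches the paper's: prove an $\epsilon$-regularity theorem by contradiction/compactness from Theorem~\ref{t:main_codim4} (this is Theorem~\ref{t:eps_reg}), feed it into the quantitative stratification of \cite{CheegerNaber_Ricci}, and read off the Minkowski estimate on sublevel sets of the regularity scale. For the Einstein case and for \eqref{e:main_reg_scale} your argument is essentially the paper's.

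There is, however, a real gap in your derivation of \eqref{e:main_Rm_est} in the non-Einstein case. Your $\epsilon$-regularity statement asserts $\sup_{B_r(x)}|\Rm|\le Cr^{-2}$ under only a two-sided Ricci bound, but this is false: with $|\Ric|\le n-1$ the conclusion is only $r_h(x)\ge cr$, which in harmonic coordinates gives $C^{1,\alpha}\cap W^{2,q}$ control on the metric, not $C^2$ control, and hence no pointwise curvature bound. Consequently the implication ``$|\Rm|(x)\ge t\Rightarrow r_x\le t^{-1/2}$'' is unavailable (you control $r_h$, not $r_x$), and your layer-cake argument does not run. The paper circumvents this as follows. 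From the Minkowski estimate $\Vol(\{r_h\le r\}\cap B_1(p))\le Cr^{2q+\eta}$ one takes an efficient covering of $B_1(p)$ by balls $B_{r_j}(x_j)$ with $r_j=\tfrac12 r_h(x_j)$. On each such ball, bounded Ricci gives $|\Delta_x g_{ij}|\le C(n)r_j^{-2}$ in harmonic coordinates, so interior elliptic estimates yield the scale-invariant local bound $r_j^{2q}\fint_{B_{r_j/2}(x_j)}|\Rm|^q\le C(n,q)$. Grouping the balls by dyadic scale and using the Minkowski estimate to control $\sum_j r_j^{-2q}\Vol(B_{r_j}(x_j))$ then gives $\fint_{B_1(p)}|\Rm|^q\le C$. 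The point is that one must use local $L^q$ curvature control on harmonic-radius balls rather than pointwise control.
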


\begin{remark}
If we  replace the assumption that $M^n$ is Einstein with just a bound on $|\nabla\Ric_{M^n}|$ 
we obtain the same conclusion.  In fact, if we only assume a bound on the Ricci curvature $|\Ric_{M^n}|$, then  (\ref{e:main_reg_scale}) holds
with the regularity scale $r_x$ replaced by the harmonic radius $r_h$; see Definition \ref{d:harmonic_radius}.
  Note that estimates on the regularity scale are {\it much} stronger than corresponding 
$L^q$ estimates for the curvature given in (\ref{e:main_Rm_est}).
\end{remark}

The final theorems of the paper concern the $4$-dimensional case in which we can make some 
marked improvements on the results in the general case.  Let us begin with the following, which is a conjecture of 
Anderson \cite{Anderson_ICM94}.  

\begin{theorem}\label{t:main_finite_diffeo}
There exists $C=C(\rv,D)$ such that if  $M^4$ satisfies $|\Ric_{M^4}|\leq 3$, $\Vol(B_1(p))>\rv>0$ 
and $\diam (M^n)\leq D$, then  $M^4$ can have one of at
 most $C$ diffeomorphism types.
\end{theorem}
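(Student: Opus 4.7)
The strategy is to reduce Theorem \ref{t:main_finite_diffeo} to the classical finiteness theorem of Anderson \cite{Anderson_Einstein}, which proves the same conclusion under the additional hypothesis of a uniform bound on the Euler characteristic $\chi(M^4)$. Since $|\Ric|\le 3$ forces the total volume to be bounded by Bishop-Gromov, an a priori estimate
\[
\int_{M^4}|\Rm|^2\,dV \;\le\; C(\rv,D)
\]
immediately produces such an Euler bound via Chern-Gauss-Bonnet in dimension four (the Ricci and scalar curvature terms in the integrand are pointwise controlled). The entire task is therefore to establish this global $L^2$ estimate, using the previously stated Theorems \ref{t:main_codim4} and \ref{t:main_estimate} as the main tools.

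I would argue by contradiction. Suppose a sequence $(M_i^4,g_i,p_i)$ satisfies the hypotheses but $\int_{M_i}|\Rm|^2\to\infty$, and pass to a subsequential Gromov-Hausdorff limit $(X,d,p)$. By Theorem \ref{t:main_codim4}, $\dim \cS\le 0$; combined with Anderson's $\epsilon$-regularity (which gives a definite separation between singular points of noncollapsed Einstein limits) and the bounded diameter, $\cS=\{x_1,\ldots,x_N\}$ is finite with $N\le N(\rv,D)$. At each $x_k$ every tangent cone is a noncollapsed Ricci-flat cone, and applying Theorem \ref{t:main_codim4} a second time to the cone forces its singular set to be at most the apex; hence the cone is of orbifold type $\RR^4/\Gamma_k$ with $|\Gamma_k|\le K(\rv)$, and the analysis of \cite{A89,BKN89,T90} upgrades this to say $X$ is a smooth Einstein orbifold in a neighborhood of each $x_k$. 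Standard bubble-tree analysis at each $x_k$ then produces a finite tree of complete Ricci-flat ALE $4$-manifolds $\{Y_{k,\alpha}\}$ as blow-up limits at successive scales, with $M_i$ diffeomorphic to the resolution of the orbifold $X$ obtained by gluing in this tree.

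The crux is to bound the total complexity of the bubble tree in terms of $\rv$ and $D$ alone. For this I would invoke Theorem \ref{t:main_estimate}: fixing $q\in(1,2)$, the volume estimate \eqref{e:main_reg_scale} on the sublevel sets of the regularity scale $r_x$ controls the measure of the region of $M_i$ in which the curvature is large at a given scale. Combined with $\epsilon$-regularity — each nontrivial ALE bubble carries at least $\epsilon_0$ units of $L^2$-curvature on a definite annulus around its neck — and iterated scale by scale down the bubble tree, this should produce (i) a uniform bound on the depth of each tree, (ii) a uniform bound on the number of bubbles at each level, and (iii) a uniform bound on the Euler number of each individual ALE piece, hence on its $L^2$ curvature via the Chern-Gauss-Bonnet formula for ALE manifolds. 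Summing these contributions together with the contribution from the $C^{1,\alpha}$-regular part of $M_i$, where smooth convergence to $X^{\rm reg}$ on compact sets away from $\cS$ gives a uniform bound, produces the desired $L^2$ estimate and contradicts the assumption $\int_{M_i}|\Rm|^2\to\infty$.

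The main obstacle is the bubble-counting step, because the $L^q$ bound of Theorem \ref{t:main_estimate} with $q<2$ is scale-subcritical: under parabolic rescaling to any bubble scale, the $L^q$ norm of the curvature vanishes in the limit, so the $L^q$ estimate does \emph{not} directly descend to an $L^2$ bound on the bubble itself. The volume-form statement \eqref{e:main_reg_scale} is used precisely because it scales in the favorable direction, and, combined with the codimension-$4$ isolation of $\cS$ provided by Theorem \ref{t:main_codim4}, it confines each bubble to a definite annular region on which $\epsilon$-regularity can be applied uniformly. Making the induction over dyadic scales rigorous — in particular ruling out the emergence of any positive-dimensional stratum of "almost singular" points along which the energy could concentrate — is the delicate technical point on which the whole argument rests.
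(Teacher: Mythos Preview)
Your overall architecture---pass to an orbifold limit with finitely many singular points via Theorem \ref{t:main_codim4}, then perform a bubble-tree analysis---is exactly the skeleton the paper uses. However, there are two substantive differences, and the second is a genuine gap.

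First, the logical order is reversed. You propose to deduce finite diffeomorphism type from an a priori $L^2$ curvature bound via Anderson's theorem. The paper does the opposite: it proves the finite diffeomorphism statement \emph{first}, via a direct bubble-tree decomposition (Theorem \ref{t:bubble_tree}), and only afterwards derives the $L^2$ bound (Theorem \ref{t:main_dimfour}) by applying Chern--Gauss--Bonnet on the finitely many pieces of a \emph{local} version of that decomposition (Theorem \ref{t:local_finite_diffeo}). So the $L^2$ bound is a corollary, not a lemma.

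Second, and more importantly, the step you flag as ``the delicate technical point'' is in fact a real gap in your outline. The estimates of Theorem \ref{t:main_estimate} are all subcritical ($q<2$, or $\Vol(\{r_h\le r\})\le C r^{2q}$ with $2q<4$), and as you correctly observe, they do not survive rescaling to a bubble. You do not give a mechanism that actually terminates the bubble tree. The paper's mechanism is different and does not use Theorem \ref{t:main_estimate} at all: it is the \emph{gap lemma} (Lemma \ref{l:order_drop_diffeo}). At each good scale one sees an annulus modeled on $\dR^4/\Gamma_\alpha$, and volume monotonicity forces $|\Gamma_\alpha|$ to \emph{strictly decrease} every time one passes through a bad scale into a deeper bubble. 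Since the noncollapsing bound gives $|\Gamma_\alpha|\le N(\rv)$ at the top, the tree has depth at most $N(\rv)$. This is a pure Bishop--Gromov argument; no curvature integrals enter. Combined with the neck lemma (Lemma \ref{l:neck}) and the body lemma (Lemma \ref{l:body_diffeo}), it decomposes $M^4$ into boundedly many pieces, each with uniformly bounded harmonic radius relative to its diameter, and Theorem \ref{t:harmonic_rad_finite_diffeo} finishes.

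In short: your outline is missing the key observation that the order of the orbifold group is a strictly decreasing, bounded, integer-valued quantity along the bubble tree. Without it, the $L^q$ estimates you invoke are too weak to close the induction, as you yourself suspected.
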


By proving a more local version of the above theorem, we can improve Theorem \ref{t:main_estimate} in
 the $4$-dimensional case and show that the $L^q$ bounds on the curvature for $q<2$ may be pushed 
all the way to an a priori $L^2$ bound in dimension $4$.  
We conjecture in Section \ref{s:conjectures} that this holds in all dimensions.

\begin{theorem}\label{t:main_dimfour}
There exists $C=C(\rv)$ such that if $M^4$ satisfies $|\Ric_{M^4}|\leq 3$ and $\Vol(B_1(p))>\rv>0$, then
\begin{align}\label{e:main_dimfour:1}
\fint_{B_1(p)} |\Rm|^2\leq C\, .
\end{align}
Furthermore, we have the sharp weak type $L^2$ estimate on the harmonic radius, 
\begin{align}\label{e:main_dimfour:2}
\Vol(T_r(\{x\in B_1(p):r_h\leq r\}))\leq Cr^4\, .
\end{align}
If we assume in addition that $M^4$ is Einstein,  then the same result holds with the harmonic radius 
$r_h$ replaced by the regularity scale $r_x$. \end{theorem}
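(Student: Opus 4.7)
The plan is to bootstrap the $q<2$ estimate of Theorem \ref{t:main_estimate} to $q=2$ by coupling Theorem \ref{t:main_codim4} with the Chern--Gauss--Bonnet formula in dimension four. Three ingredients are needed: (i) a \emph{local} analogue of the finiteness result of Theorem \ref{t:main_finite_diffeo}, giving a definite bound on the Euler characteristic of a carefully chosen domain containing $B_{1/2}(p)$; (ii) Chern--Gauss--Bonnet with boundary; and (iii) an $L^2$ $\varepsilon$-regularity theorem of the type underlying the codimension-four proof.

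First I would localize Theorem \ref{t:main_finite_diffeo}: the aim is to produce an open set $\Omega$ with $B_{1/2}(p)\subset \Omega\subset B_1(p)$ whose boundary $\partial\Omega$ is smooth with $\Vol_3(\partial\Omega)$ and $\|\mathrm{II}_{\partial\Omega}\|_\infty$ controlled by $\rv$, and whose homeomorphism type lies in a finite list depending only on $\rv$, so that in particular $|\chi(\Omega)|\le N(\rv)$. The construction would follow the finiteness scheme of Theorem \ref{t:main_finite_diffeo}: use Theorem \ref{t:main_codim4} together with the quantitative stratification of \cite{CheegerNaber_Ricci} to restrict to the open subset where the harmonic radius exceeds a fixed scale $r_0=r_0(\rv)$, show this subset has controlled topology via an orbifold-type resolution near the codimension-four singular set, and then slice along a regular value of a smoothed distance function to $p$ to extract $\Omega$.

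Given such $\Omega$, the dimension-four Chern--Gauss--Bonnet formula reads
\begin{align*}
8\pi^2\chi(\Omega) = \int_\Omega \Big(\tfrac{R^2}{24}-\tfrac12|E|^2+|W|^2\Big) + \int_{\partial\Omega}\mathcal{T}(\mathrm{II},\Rm|_{\partial\Omega}),
\end{align*}
where $E$ denotes the traceless Ricci, $W$ the Weyl tensor, and $\mathcal{T}$ the transgressed Pfaffian. Since $|E|$ and $R$ are pointwise bounded by $|\Ric|\le 3$, and $|\Rm|^2$ is equivalent, modulo lower-order $|\Ric|$ terms, to $|W|^2+|E|^2+R^2$, the interior integrand rearranges to
\begin{align*}
\int_\Omega |\Rm|^2\le C\Big(|\chi(\Omega)|+\Vol(\Omega)+\Big|\int_{\partial\Omega}\mathcal{T}\Big|\Big).
\end{align*}
The first two terms are controlled by step one and by the noncollapsing hypothesis; the boundary integral is controlled in terms of $\Vol_3(\partial\Omega)$, $\|\mathrm{II}_{\partial\Omega}\|_\infty$ and $|\Ric|$, again from step one. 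This yields (\ref{e:main_dimfour:1}) on $B_{1/2}(p)$, and then on $B_1(p)$ by a standard scaling and covering.

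With the global $L^2$ bound in hand, the sharp weak-type estimate (\ref{e:main_dimfour:2}) follows from an $L^2$ $\varepsilon$-regularity theorem: there exists $\varepsilon=\varepsilon(\rv)>0$ such that $\int_{B_{2r}(x)}|\Rm|^2<\varepsilon$ implies $r_h(x)\ge r$, which is the critical-dimension scaling statement built into the codimension-four proof. Thus every point of $\{r_h\le r\}\cap B_1(p)$ carries a definite amount of $L^2$ curvature on a surrounding ball of radius $r$, and a Vitali covering forces this set to have volume at most $Cr^4$. In the Einstein case, elliptic regularity in harmonic coordinates upgrades the harmonic radius to the regularity scale, so $r_h$ may be replaced by $r_x$. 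The main obstacle is step one: constructing $\Omega$ with simultaneous control of local topology and of the geometry of $\partial\Omega$ is where the full force of the codimension-four result, beyond the earlier codimension-two bound of \cite{ChC1}, becomes essential, since the codimension-two bound alone is insufficient to keep $\partial\Omega$ away from the singular set in a quantitatively sharp way.
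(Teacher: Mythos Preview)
Your high-level plan for \eqref{e:main_dimfour:1}---local diffeomorphism finiteness plus Chern--Gauss--Bonnet with boundary---is exactly the paper's strategy, but the implementation differs in one important way, and your route to \eqref{e:main_dimfour:2} is genuinely different from the paper's.

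For the $L^2$ bound, the paper does \emph{not} try to construct a single domain $\Omega$ with $B_{1/2}(p)\subset\Omega\subset B_1(p)$. Your proposal to slice a smoothed distance function at a regular value does not by itself give second fundamental form control on $\partial\Omega$: nothing forces that sphere to lie in a region where the metric is $C^1$-close to a model. The paper's device is to first prove an \emph{annulus theorem} (Theorem~\ref{t:annulus}): if the Bishop--Gromov volume ratio is $\delta$-pinched between scales $r/4$ and $4r$, then $A_{r/2,2r}(x)$ is $C^{1}$-close to an annulus in $\dR^4/\Gamma$. Since each point has at most $N(\rv,\delta)$ scales at which the ratio is \emph{not} pinched (Lemma~\ref{l:bad_scale_estimate}), one finds for every $x\in B_1(p)$ a good scale $r_x\in[c(\rv),\delta^2]$, covers $B_1(p)$ by finitely many $B_{r_i}(x_i)$, and applies a local version of the bubble-tree/finite-diffeomorphism argument (Theorem~\ref{t:local_finite_diffeo}) to each ball to get $U_i\supseteq B_{r_i}(x_i)$ with $|\chi(U_i)|\le C(\rv)$ and $\partial U_i\cong S^3/\Gamma_i$ with $|A|\le C(\rv)$---the boundary lies in the good annulus, which is precisely what makes the $\mathrm{II}$ bound automatic. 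Chern--Gauss--Bonnet on each $U_i$ gives $\int_{U_i}|\Rm|^2\le C(\rv)$, and summing over the controlled number of balls yields \eqref{e:main_dimfour:1}.

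For \eqref{e:main_dimfour:2}, your route---once \eqref{e:main_dimfour:1} is in hand, use the classical $L^{n/2}$ $\epsilon$-regularity of \cite{A89,BKN89,T90} (which in dimension $4$ is $L^2$) together with a Vitali argument---is correct and is the standard approach. The paper takes a different path: it proves \eqref{e:main_dimfour:2} \emph{first}, independently of any curvature integral, by a direct counting argument based on the annulus theorem. If $T_\alpha(x_j)=0$ at a ball center $x_j$, then no other center lies in $A_{r_\alpha/2,2r_\alpha}(x_j)$; since each center has at most $N(\rv)$ bad scales, an inductive pigeonhole on nested subcollections bounds the total number of centers by $C(\rv)$. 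Your route is shorter; the paper's has the feature that \eqref{e:main_dimfour:2} does not logically depend on \eqref{e:main_dimfour:1} or on the older $\epsilon$-regularity theorems.
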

\begin{remark}
If the assumption that $M^4$ is Einstein is weakened to assuming a bound on $|\nabla\Ric_{M^n}|$, then 
\eqref{e:main_dimfour:2} still holds with the harmonic radius $r_h$ replaced by the stronger regularity scale $r_x$.
\end{remark}

Next, we will give a brief outline of the paper.  We begin in subsection \ref{ss:outline} by outlining the proof of Theorem \ref{t:main_codim4}.  This  includes statements and explanations of some of the main technical theorems of the paper.  

In Section \ref{s:background} we go over some basic background and preliminary material.  This includes the basics of stratifications for limit spaces, the standard $\epsilon$-regularity theorem for spaces with bounded Ricci curvature, and some motivating examples.  

Sections \ref{s:transformation} and \ref{s:slicing} are the the most crucial sections of the paper. 
 There, we prove Theorems \ref{t:transformation_theorem_intro} and Theorem \ref{t:slicing_intro}, the 
Transformation and Slicing theorems which, 
roughly speaking, allow us to blow up along a collection of points which is large enough to see 
into the singular set; see Section \ref{ss:outline} for more on this.

Section \ref{s:codim4} is dedicated to proving the main result of the paper, Theorem \ref{t:main_codim4}.  
The argument is a blow up argument that exploits the Slicing Theorem of Section \ref{s:slicing}.  
In Section \ref{s:eps_reg},  based on  Theorem \ref{t:main_codim4}, we give a new $\epsilon$-regularity
 theorem.   Theorem \ref{t:eps_reg} states that if a ball in a space with bounded Ricci curvature
 is close  enough in the Gromov-Hausdorff sense to a ball in a metric cone, $\R^{n-3}\times C(Z)$, then the concentric ball 
of half the radius must be smooth. 

In Section \ref{s:qs_estimates}, the $\epsilon$-regularity theorem of Section \ref{s:eps_reg} 
is combined with the ideas of quantitative stratification to give effective improvements on all
 the results of the paper.  We show that the singular set has codimension $4$ in the Minkowski 
sense, and give effective estimates for tubes around the 
regions of curvature concentration.  
This culminates in the proof of Theorem \ref{t:main_estimate}.  In subsection \ref{ss:einstein_harmonic_estimates}, 
we use the effective estimates of Theorem \ref{t:main_estimate} to prove new estimates for harmonic
 functions on spaces with bounded Ricci curvature.  These estimates, which
  can fail on manifolds with 
only lower Ricci curvature bounds, give the first taste of how analysis on
  manifolds with bounded Ricci curvature improves over that on manifolds with only lower Ricci curvature bounds.

Finally, in Section \ref{s:dimension_four}, we discuss the $4$-dimensional case and prove 
the finiteness up diffeomorphism theorem, Theorem \ref{t:main_finite_diffeo}.   We also prove the 
$L^2$ curvature estimates of Theorem \ref{t:main_dimfour}.

\subsection{Outline of the proof Theorem \ref{t:main_codim4},
 the codimension 4 conjecture}\label{ss:outline}

Let $S^1_\beta$ denote the circle of circumference $\beta<2\pi$.
It has been understood since \cite{ChC1} that to prove Theorem \ref{t:main_codim4}, the key step
 is to show that the cone $\dR^{n-2}\times C(S^1_\beta)$ does not occur as the
(pointed)  Gromov-Hausdorff limit
of some sequence $M^n_j$ with $|\Ric_{M^n_j}|\to 0$. This was shown in
  \cite{CCTi_eps_reg}
assuming just a lower bound $\Ric_{M^n_i}\geq -(n-1)$, but with the additional
assumption that the $L^1$ norm of the curvature is sufficiently small. In \cite{Cheeger},
 it was proved for the K\"ahler-Einstein case, which was also done by Tian.  A common feature of 
  the proofs 
is an argument by contradiction, implemented by the use of harmonic almost splitting 
maps $u:B_2(p)\to \dR^{n-2}$,
 see Lemma \ref{l:harmonic_splitting_intro}.  In each case, it is shown that for most points 
$s\in \dR^{n-2}$ in the range, the slice $u^{-1}(s)$ has a certain good property which,
when combined with the assumed curvature bounds,
 enables one to deduce a contradiction.  In particular, in \cite{CCTi_eps_reg} it is shown
 that most slices $u^{-1}(s)$ have integral bounds on the second fundamental form, which 
when combined with the assumed integral curvature bounds, enables one apply the Gauss-Bonnet
 formula for $2$-dimensional manifolds with boundary, to derive a contradiction.



However,  prior to the present paper it was not known how, in the general case, to implement 
a version of the above strategy which would rule out the cones $\dR^{n-2}\times C(S^1_\beta)$, 
without assuming the integral curvature bounds.  In the remainder of this subsection, we will
 state the main results which are used in the present implementation,
 that enables us to prove 
Theorem \ref{t:main_codim4}. 

Thus, we consider a sequence of Riemannian manifolds $(M^n_j,d_j,p_j)$,
with $|\Ric_{M^n_j}|\to 0$ and $\Vol(B_1(p_j)>\rv>0$, such that
\begin{align}
(M^n_j,d_j,p_j)\stackrel{d_{GH}}{\longrightarrow} \dR^{n-2}\times C(S^1_\beta)\, . 
\end{align}
  As above, we have harmonic almost splitting maps 
\begin{align}\label{e:outline:almost_splitting_maps}
u_j:B_2(p_j)\to \dR^{n-2}\, ,
\end{align}
see Lemma \ref{l:harmonic_splitting_intro} below.  The key ingredient will be 
Theorem \ref{t:slicing_intro} (the Slicing Theorem), which states  that there exist
 $s_j\in \dR^{n-2}$ such that for all $x\in u^{-1}_j(s_j)$ and for all $r<1$, the ball
$B_r(x)$ is $\epsilon_jr$-close in the Gromov-Hausdorff sense to a ball in an isometric
product $\dR^{n-2}\times S_{j,x,r}$, where $\epsilon_j\to 0$ as $j\to \infty$.

Granted this, we can apply a blow up argument in the spirit of \cite{Anderson_Einstein}
 to obtain a contradiction.  Namely, it is easy to see that 
 since $\beta<2\pi$ then the minimum of the 
harmonic
radius $r_h$ at points of the slice $u^{-1}_j(s_j)$ is obtained at some $x_j\in u^{-1}_j(s_j)$ 
and is going to
zero as $j\to\infty$.  We rescale the metric by the inverse of the harmonic radius $r_j=r_h(x_j)$ 
and find a subsequence converging in the pointed Gromov-Hausdorff sense a smooth noncompact
 Ricci flat manifold, 
\begin{align}
(M^n_j, r_j^{-1}d_j,x_j)\to (X,d_X,x)\, ,
\end{align}
such that $X= \dR^{n-2}\times S$ splits off $\dR^{n-2}$ isometrically, with $S$ a 
smooth two dimensional surface.  It follows that $S$ is Ricci flat, and hence flat. From 
the noncollapsing assumption, it follows that $X$ has Euclidean volume growth. 
Thus, $X=\dR^n$ is Euclidean space.   However, the $2$-sided Ricci bound 
implies that  the harmonic radius behaves continuously in the limit. Hence, the harmonic 
radius at $x$ is $r_h(x_\infty)=1$; a contradiction.  See Section \ref{ss:codim2} for
 more details on the blow up argument.\\

Clearly then, the key issue is to show the existence of the points $s_j\in \dR^{n-2}$, such
 that at {\it all} points $x\in u_j^{-1}(s_j)$, we  have the above mentioned splitting property 
on $B_r(x)$ for {\it all} $r<1$. To indicate
the proof,
we now recall some known connections between isometric splittings, the Gromov-Haudorff distance
and harmonic maps to Euclidean spaces $\dR^k$.  We begin with a definition.
\begin{definition}
\label{d:intro}
A $\epsilon$-splitting map  $u=(u^1,\ldots,u^{k}):B_r(p)\to\dR^{k}$ is a harmonic map 
such that:

\begin{enumerate}

\item $|\nabla u|\leq 1+\epsilon$.
\vskip2mm

\item $\fint_{B_r(p)} |\langle \nabla u^\alpha,\nabla u^\beta\rangle-\delta^{\alpha\beta}|^2<\epsilon^2$.
\vskip2mm

\item $r^2\fint_{B_r(p)} |\nabla^2 u^\alpha|^2<\epsilon^2$.
\end{enumerate}
\end{definition}
\noindent
Note that the condition that $u$ is harmonic is equivalent to the harmonicity of the individual component functions $u^1,\dots,u^k$.

The following lemma summarizes the basic facts about splitting maps\footnote{
In \cite{ChC1}, only a uniform bound $|\nabla u|<C(n)$ is proved. This would actually suffice for our present
purposes.
The improved bound, $|\nabla u|<1+\epsilon$, in (1) above, is derived in (\ref{e:sharp1})--(\ref{e:sharp3}), in a context 
that passes over almost verbatim to the present one.}

\begin{lemma}[\cite{ChC1}]\label{l:harmonic_splitting_intro}
For every $\epsilon,R>0$ there exists $\delta =\delta(n,\epsilon,R)>0$ such that if 
$\Ric_{M^n}\geq -(n-1)\delta$ then:
\begin{enumerate}
\item
 If $u:B_{2R}(p)\to\dR^k$ is a $\delta$-splitting map, then there exists a 
map $f:B_{R}(p)\to u^{-1}(0)$ such that 
$$
(u,f):B_{R}(p)\to \dR^k\times u^{-1}(0)\, ,
$$
is an $\epsilon$-Gromov Hausdorff map, where $u^{-1}(0)$ is given the induced metric.
\vskip1mm

\item  If
\begin{align}
d_{GH}(B_{\delta^{-1}}(p),B_{\delta^{-1}}(0))<\delta,
\end{align}
where $0\in \dR^k\times Y$, then there exists an $\epsilon$-splitting map $u:B_{R}(p)\to \dR^k$.  
\end{enumerate}
\end{lemma}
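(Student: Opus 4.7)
The two parts of the lemma go in opposite directions and together summarize the Cheeger-Colding equivalence between almost isometric splitting in the Gromov-Hausdorff sense and the existence of harmonic functions with appropriate integral Hessian bounds. Both directions ultimately rest on the Bochner identity, Laplacian comparison, the Abresch-Gromoll excess estimate, and the segment and Poincar\'e inequalities for manifolds with a lower Ricci bound.

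For Part (1), the bound $|\nabla u|\le 1+\epsilon$ immediately yields the easy direction $|u(x)-u(y)|\le (1+\epsilon)d(x,y)$. For the reverse direction modulo a transverse projection $f$, the plan is to apply the Cheeger-Colding segment inequality to the nonnegative function
\begin{equation*}
h := |\nabla^2 u|^2 + \sum_{\alpha,\beta}\bigl|\langle\nabla u^\alpha,\nabla u^\beta\rangle-\delta^{\alpha\beta}\bigr|,
\end{equation*}
which is small in $L^1$ by hypothesis. The segment inequality then forces the line integral of $h$ along the minimizing geodesic $\gamma_{xy}$ to be small for most pairs $(x,y)\in B_R(p)\times B_R(p)$. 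Integrating the identity $u^\alpha(y)-u^\alpha(x)=\int_0^{d(x,y)}\langle\nabla u^\alpha,\gamma'\rangle\,dt$ along such a generic segment and using the smallness of $h$ shows $d(x,y)^2-\sum_\alpha|u^\alpha(y)-u^\alpha(x)|^2$ is approximately the squared norm of the component of $\gamma'$ transverse to $\Span\{\nabla u^\alpha\}$. The projection $f(x)$ is then built either by flowing $x$ backward along the nearly orthonormal frame $\{\nabla u^\alpha\}$ until it meets $u^{-1}(0)$, or as a measurable selection of an approximate nearest point in $u^{-1}(0)$. The pair $(u,f)$ is an $\epsilon$-Gromov-Hausdorff map on a large-measure subset, and it extends to all of $B_R(p)$ by a density and uniform continuity argument.

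For Part (2), the plan is to construct $u^\alpha$ as the harmonic extension of an almost-linear boundary datum coming from the geometry. Choose $L\gg R$ depending on $\epsilon$, and pick points $q_{\pm}^\alpha\in M^n$ that shadow $(\pm L e^\alpha,y_0)$ under the assumed GH-approximation. Set
\begin{equation*}
b^\alpha(x) := \tfrac{1}{2}\bigl(d(x,q_-^\alpha)-d(x,q_+^\alpha)\bigr),
\end{equation*}
which is $1$-Lipschitz and in the limit coincides with the $\alpha$-th coordinate projection of $\dR^k\times Y$. The Abresch-Gromoll excess estimate forces $d(\cdot,q_+^\alpha)+d(\cdot,q_-^\alpha)-d(q_+^\alpha,q_-^\alpha)$ to be uniformly small on $B_{2R}(p)$, and combined with upper Laplacian comparison this gives that $b^\alpha$ is almost harmonic in an integrated sense. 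Let $u^\alpha$ solve the Dirichlet problem on $B_{2R}(p)$ with boundary data $b^\alpha|_{\partial B_{2R}(p)}$: the maximum principle gives $\|u^\alpha-b^\alpha\|_\infty$ small, a Cheng-Yau type gradient estimate yields $|\nabla u^\alpha|\le 1+\epsilon$, and the $L^1$-smallness of $|\nabla u^\alpha|^2-1$ on $B_{2R}(p)$ follows from the closeness $u^\alpha\approx b^\alpha$ combined with the almost-linearity of $b^\alpha$ in the limit. Integrating the Bochner inequality
\begin{equation*}
\tfrac{1}{2}\Delta|\nabla u^\alpha|^2 \ge |\nabla^2 u^\alpha|^2 - (n-1)\delta(1+\epsilon)^2
\end{equation*}
against a cutoff supported in $B_{2R}(p)$ then yields condition (3) of Definition \ref{d:intro}, and running the same argument with $u^\alpha$ replaced by $u^\alpha\pm u^\beta$ produces the off-diagonal condition (2).

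The technically most delicate step is the Hessian $L^2$ estimate in Part (2): the integrated Bochner inequality must be absorbed against the $L^1$-smallness of $|\nabla u^\alpha|^2-1$, whose justification in turn rests on the sharp form of the Abresch-Gromoll excess estimate together with Laplacian comparison and the maximum principle. In Part (1), the main subtlety is the passage from estimates valid on most pairs of points to an honest $\epsilon$-Gromov-Hausdorff map on all of $B_R(p)$, which becomes routine once the segment inequality has been applied to the composite integrand $h$.
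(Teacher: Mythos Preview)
The paper does not give its own proof of this lemma; it is stated with the citation \cite{ChC1} and no argument. Your outline is a faithful sketch of the Cheeger--Colding method (Busemann-type functions, harmonic replacement, Bochner against a cutoff, segment inequality), so there is nothing to compare against in this paper.

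One point worth flagging: you write that ``a Cheng--Yau type gradient estimate yields $|\nabla u^\alpha|\le 1+\epsilon$.'' This is exactly the step the paper singles out in its footnote as \emph{not} following from \cite{ChC1}, which only gives $|\nabla u|\le C(n)$. The sharp bound $1+\epsilon$ requires an additional argument; the paper supplies one later (see the heat-kernel/parabolic maximum principle computation in \eqref{e:sharp1}--\eqref{e:sharp3}), in which one flows $(|\nabla v|-1)\varphi$ under the heat kernel and uses the subharmonicity of $|\nabla v|$ coming from Bochner together with the $L^1$-smallness of $||\nabla v|-1|$. A bare Cheng--Yau interior estimate applied to a harmonic function bounded by $O(R)$ on $B_{2R}$ only produces a dimensional constant, not $1+\epsilon$. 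Everything else in your sketch is consistent with the standard proof.
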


Let us return to the consideration of 
the  maps $u_j$ from (\ref{e:outline:almost_splitting_maps}), which in our situation arise from (2) of Lemma \ref{l:harmonic_splitting_intro}.
We can thus assume that the $u_j$ are $\delta_j$-splitting maps, with $\delta_j\to 0$.  
We wish to find slices $u^{-1}_j(s_j)$ such that $B_r(x)$ continues to almost split for all 
$x\in u_j^{-1}(s_j)$ and all $r\leq 1$. 
One might hope that there always exist $s_j$ such that
 by restricting the map $u_j$ to each such ball $B_r(x)$, one obtains an
$\epsilon_j$-splitting map.
 However, it turns out that there are counterexamples to this statement; see Example \ref{sss:Example3_ConeSpace}.

The essential realization  is that for our purposes,  it actually suffices to show the existence of 
$s_j$ such that for all $x\in u_j^{-1}(s_j)$ and all $0<r\leq 1$, there exists a 
matrix $A=A(x,r)\in  GL(n-2)$,
such that the harmonic map
$A\circ u_j:B_r(x)\to \dR^{n-2}$ is our desired 
$\epsilon_j$-splitting map.
Thus, while $u_j$ might not itself be an $\epsilon_j$-splitting map on $B_r(x)$, it 
might only differ from one by 
a linear transformation of the image.
 This turns out to hold. In fact, we 
will show that $A$ can be chosen to be lower triangular with positive diagonal entries. Since
this condition  plays a role in the proof of Theorem \ref{t:transformation_theorem_intro} below,
we will incorporate it from now on.

\begin{theorem}\label{t:slicing_intro} (Slicing theorem)
For each $\epsilon>0$ there exists $\delta(n,\epsilon)>0$ such that if $M^n$ 
satisfies $\Ric_{M^n}\geq -(n-1)\delta$ and if $u:B_2(p)\to \dR^{n-2}$ is a harmonic 
$\delta$-splitting map, then there exists a subset $G_\epsilon\subseteq B_1(0^{n-2})$ 
which satisfies the following:
\begin{enumerate}
\item $\Vol(G_\epsilon)>\Vol(B_1(0^{n-2}))-\epsilon$.
\vskip1mm

\item If $s\in G_\epsilon$ then $u^{-1}(s)$ is nonempty.
\vskip1mm

\item For each $x\in u^{-1}(G_\epsilon)$ and $r\leq 1$ there exists a lower triangular matrix
$A\in GL(n-2)$ with positive diagonal entries such that $A\circ u:B_r(x)\to \dR^{n-2}$ is an $\epsilon$-splitting map.
\end{enumerate}
\end{theorem}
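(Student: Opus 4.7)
The plan is to define a bad set $\cB \subseteq B_1(p)$ consisting of those $x$ for which the conclusion fails at some scale $r \in (0,1]$, and then show that its image $u(\cB)$ has small $(n-2)$-dimensional Lebesgue measure. Taking $G_\epsilon = B_1(0^{n-2}) \setminus u(\cB)$ then gives (1) and (3). Item (2) follows from the standard fact that a $\delta$-splitting map is Gromov--Hausdorff close to a projection (Lemma \ref{l:harmonic_splitting_intro}(1)), hence nearly surjective onto $B_1(0^{n-2})$, so that most $s \in G_\epsilon$ have nonempty preimage.

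The first main ingredient is Theorem \ref{t:transformation_theorem_intro}, which I expect to produce, at each pair $(x,r)$ for which $u$ is close enough to splitting, a canonical lower-triangular matrix $A(x,r) \in GL(n-2)$ with positive diagonal entries normalizing the Gram matrix of $A(x,r) \circ u$ to the identity in $L^2(B_r(x))$. The obstruction to $A(x,r)\circ u$ being an $\epsilon$-splitting should lie entirely in the Hessian condition (3) of Definition \ref{d:intro}, since the Lipschitz bound (1) can be extracted from the Gram normalization together with subharmonicity of $|\nabla u|^2$. Writing $A(x, r/2) = T(x,r) \cdot A(x,r)$ across dyadic scales, the crucial quantitative content of the Transformation Theorem should be that $\|T(x,r) - I\|^2$ is controlled by the renormalized Hessian $r^2 \fint_{B_r(x)} |\nabla^2 (A(x,r)\circ u)|^2$, so a bad scale at $x$ forces a nontrivial lower bound on this Hessian integral.

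With this setup, I would estimate $u(\cB)$ by summing the Hessian defect across scales. For each $x \in \cB$, let $r(x)$ be the maximal bad scale. A Vitali extraction yields a disjoint collection $\{B_{r_i}(x_i)\}$ whose $5$-enlargement covers $\cB$, and the lower bound above gives $\int_{B_{r_i}(x_i)} |\nabla^2(A_i \circ u)|^2 \geq c \epsilon^2 r_i^{n-2}$. Summing over $i$ and using the renormalization of each $A_i$ to absorb its norm into a bound on $|\nabla^2 u|^2$, together with the global Hessian estimate $\fint_{B_2(p)} |\nabla^2 u|^2 \leq C\delta^2$ from $u$ being a $\delta$-splitting, gives $\sum_i r_i^{n-2} \leq C(n)\delta^2/\epsilon^2$. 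Since $u$ is $(1+\delta)$-Lipschitz, the image $u(\cB)$ is covered by balls of radii $C r_i$ in $\dR^{n-2}$, so $\mathcal{H}^{n-2}(u(\cB)) \leq C(n) \delta^2/\epsilon^2 < \epsilon$ for $\delta = \delta(n,\epsilon)$ sufficiently small.

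The principal difficulty, and where Theorem \ref{t:transformation_theorem_intro} is essential, is ensuring that the Hessian control survives the renormalization by $A(x,r)$ at every scale, so that the bad-scale lower bound $\int |\nabla^2(A_i\circ u)|^2 \geq c\epsilon^2 r_i^{n-2}$ can be summed against a single global quantity $\int |\nabla^2 u|^2$. Without the transformation, Example \ref{sss:Example3_ConeSpace} shows that $u$ itself need not remain an $\epsilon$-splitting on small balls along a slice; only after absorbing the scale-dependent deformation into the lower-triangular factor $A(x,r)$ does one recover the scale-invariant Hessian estimate that makes the covering bound $\sum r_i^{n-2} < \epsilon$ tight, rather than the naive $\sum r_i^n$ that a volume estimate alone would give.
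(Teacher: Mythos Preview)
Your overall architecture --- define a bad set, cover it efficiently, and bound the Lebesgue measure of its image --- matches the paper's. But the step you flag as ``the principal difficulty'' is in fact a genuine gap, and the mechanism you propose for closing it does not work.

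Concretely: you want to sum the local lower bounds
\[
\int_{B_{r_i}(x_i)} |\nabla^2(A_i\circ u)|^2 \;\geq\; c\,\epsilon^2\, r_i^{\,n-2}
\]
against the global quantity $\int_{B_2(p)}|\nabla^2 u|^2$, ``absorbing the norm of each $A_i$.'' This absorption is exactly what fails. The matrices $A_i=A(x_i,r_i)$ have no uniform norm bound: in the cone example (Example~\ref{sss:Example3_ConeSpace}), $|\nabla u|\to 0$ at the vertex, so the normalizing matrix must blow up. Since $|\nabla^2(A_i\circ u)|^2 = |A_i\nabla^2 u|^2$ can be much larger than $|\nabla^2 u|^2$, the disjoint integrals on the left need not be dominated by $\int|\nabla^2 u|^2$. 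The Transformation Theorem gives existence of $A(x,r)$ above the singular scale; it does not give a norm bound on $A$, and the sublinear growth $|A_r|\le r^{-C\epsilon}$ obtained in its proof goes the wrong way for your sum.

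The paper closes this gap by replacing both the defect quantity and the measure with objects that transform correctly under $u\mapsto A\circ u$. For the defect, it uses $\Delta|\omega^\ell|$ with $\omega^\ell=du^1\wedge\cdots\wedge du^\ell$; the singular-scale inequality
\[
r^2\fint_{B_r(x)}|\Delta|\omega^\ell||\;\le\;\delta\fint_{B_r(x)}|\omega^\ell|
\]
is invariant under lower-triangular $A$ (both sides scale by the same factor), which is why this, and not $|\nabla^2 u|^2$, is the right obstruction. For the measure, it uses $d\mu=|\omega|\,dv_g$ (normalized), so that the image-volume estimate
\[
|u(B_r(x))|\;\le\; C(n)\,r^{-2}\,\mu(B_r(x))
\]
holds regardless of the size of $A$, since $|u'(B_r)|=\det(A)\,|u(B_r)|$ and $\mu'=\det(A)\,\mu$. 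The covering sum then telescopes against the higher-order estimate $\fint_{B_{3/2}}|\Delta|\omega||<\epsilon$ of Theorem~\ref{t:hessian_estimate_intro}, together with the $\mu$-doubling property above the singular scale. None of these replacements is cosmetic; each is forced by precisely the non-invariance of your proposed quantities under the transformations $A_i$.
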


The proof of the Slicing Theorem is completed in Section \ref{s:slicing}.  In the next subsection,
we give the proof modulo the key technical results on which it depends. These will be indicated
in the remainder of the present subsection.

Given a harmonic function with values in $\R^k$,
for $1\leq\ell\leq k$, we put
\begin{align}
\label{e:omegadef}
\omega^\ell=:du^1\wedge\cdots\wedge du^\ell\, .
\end{align}
The forms, $\omega^\ell$, and in particular
the Laplacians, $\Delta |\omega^\ell|$, of their norms,
play a key role in the sequel.   We
point out that in general, $\Delta |\omega^\ell|$ is a distribution, not just a function.  Put
\begin{align}\label{e:nodal_sets}
&Z_{|\nabla u^a|}=:\{x\, :\, |\nabla u^a|(x)=0\}\, ,\notag\\
&Z_{|\omega^\ell |}=:\{x\, :\, |\omega^\ell |(x)=0\}\, .
\end{align}
Then the functions $|\nabla u^a|$, $|\omega^\ell|$, are Lipschitz on $B_2(p)$ 
and are smooth away from $Z_{|\nabla u^a|}$, $Z_{|\omega^\ell |}$, respectively.  
An important structural point which is contained in the next theorem, is that $\Delta|\nabla u^a|$ is
 in fact a function and 
$\Delta |\omega^\ell|$ is at least a Borel measure. As usual,
$|\Delta |\omega^\ell||$ denotes the absolute value of the measure
$\Delta |\omega^\ell|$. Thus,
$\int_U |\Delta |\omega^\ell||$ denotes the
mass of the restriction of $\Delta|\omega^\ell|$ to $U$ and
$\fint_U |\Delta |\omega^\ell||$ denotes
this mass divided by $\Vol(U)$.




\begin{theorem}\label{t:hessian_estimate_intro} (Higher order estimates)
For every $\epsilon>0$ there exists $\delta(n,\epsilon)>0$ such that if  
$\, \,\Ric_{M^n}\geq -(n-1)\delta$ and $u:B_2(p)\to \dR^k$ is a
$\delta$-splitting map, 
then the following hold:
\begin{enumerate}
\item There exists $\alpha(n)>0$ such that for each $1\leq a\leq k$,
\begin{align}
\label{kato_1}
\fint_{B_{3/2}(p)} \frac{|\nabla^2 u^a|^2}{|\nabla u^a|^{1+\alpha}} < \epsilon\, .
\end{align}

\item Let $\omega^\ell\equiv du^1\wedge\cdots\wedge du^\ell$, $1\leq\ell\leq k$.  The Laplacians $\Delta |\omega^\ell|$
 taken in the distributional sense, are Borel measures with singular part a nonnegative locally finite Borel measure 
supported on $\partial Z_{|\omega^\ell |}$. For $\ell=1$, the singular part vanishes.  The normalized mass of $\Delta|\omega^\ell|$  
satisfies
\begin{equation}\label{e:kato2}
\fint_{B_{3/2}(p)} |\Delta |\omega^\ell|| < \epsilon\, .
\end{equation}
\end{enumerate}
\end{theorem}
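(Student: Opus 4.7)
The plan is to derive both (1) and (2) from a unified mechanism: Bochner's formula combined with an improved Kato inequality makes a suitable nonlinear reparametrization of $|\nabla u^a|$, respectively of $|\omega^\ell|$, almost subharmonic on the complement of its zero set, with error of order $\delta$. The $\delta$-splitting hypothesis forces the reparametrization to be $C\delta$-close in $L^1$ to its Euclidean value $1$, so that a single integration by parts against $\Delta\phi^2$ for a cutoff $\phi\equiv 1$ on $B_{3/2}(p)$ with $\mathrm{supp}\,\phi \subset B_{7/4}(p)$ converts almost-subharmonicity into the desired $L^1$ control.

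For (1), set $v := |\nabla u^a|$. Bochner's identity $v\Delta v + |\nabla v|^2 = |\nabla^2 u^a|^2 + \Ric(\nabla u^a,\nabla u^a)$, the improved Kato inequality $|\nabla^2 u^a|^2 \geq \tfrac{n}{n-1}|\nabla v|^2$ for harmonic $u^a$, and the lower bound $\Ric \geq -(n-1)\delta$, yield away from $Z_v$ the pointwise estimate $\Delta v \geq |\nabla v|^2/((n-1)v) - (n-1)\delta\,v$. Fix $\alpha\in(0,1/(n-1))$. A chain-rule computation on the smoothing $v_\eta:=(v^2+\eta^2)^{1/2}$ then produces the pointwise inequality
\[
\Delta v_\eta^{1-\alpha} \;\geq\; c(n,\alpha)\,v_\eta^{-1-\alpha}|\nabla^2 u^a|^2 \;-\; C(n)\delta\,v_\eta^{1-\alpha},
\]
where the improved Kato constant beats the negative $-\alpha(1-\alpha)v_\eta^{-1-\alpha}|\nabla v_\eta|^2$ produced by the power rule precisely because $\alpha < 1/(n-1)$. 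Multiplying by $\phi^2$, integrating, and using the distributional identity $\int\phi^2\,\Delta v_\eta^{1-\alpha}= \int(v_\eta^{1-\alpha}-1)\Delta\phi^2$ reduces the problem to estimating $\int_{B_2}|v_\eta^{1-\alpha}-1|$. The splitting bound $\fint|\langle\nabla u^a,\nabla u^a\rangle-1|^2<\delta^2$, together with a Chebyshev argument on $\{v\leq \tfrac12\}$, gives this as $O(\delta)$ uniformly in $\eta$. Sending $\eta\downarrow 0$ and applying Fatou's lemma yield (\ref{kato_1}).

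Part (2) follows the same template for the Lipschitz function $|\omega^\ell|$. The starting point is the identity $|\omega^\ell|^2 = \det(G)$ with $G_{ij}:=\langle\nabla u^i,\nabla u^j\rangle$; since each $u^i$ is harmonic one has $\Delta G_{ij}=2\langle \nabla^2 u^i,\nabla^2 u^j\rangle + 2\Ric(\nabla u^i,\nabla u^j)$, and expanding $\Delta(\det G)$ by the Leibniz rule bounds the curvature contribution in terms of Ricci alone (never the full Riemann tensor), hence by $C(n)\delta|\omega^\ell|^2$ once combined with the splitting hypothesis to control the Hessian cross terms. Coupled with the appropriate improved Kato inequality for $\omega^\ell$, the regularization $|\omega^\ell|_\eta:=(|\omega^\ell|^2+\eta^2)^{1/2}$ obeys a uniform pointwise lower bound $\Delta|\omega^\ell|_\eta\geq -C(n)\delta|\omega^\ell|_\eta$. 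Since $|\omega^\ell|^2$ is $O(\delta)$-close to $1$ in $L^1$ by the splitting hypothesis, the identical integration-by-parts argument yields $\int_{B_{3/2}}|\Delta|\omega^\ell|_\eta| \leq C\delta$ uniformly in $\eta$. Weak-$*$ compactness then identifies $\Delta|\omega^\ell|$ as a signed Borel measure of mass $O(\delta)$; the absolutely continuous part inherits the pointwise lower bound, while the singular part lives on $\partial Z_{|\omega^\ell|}$, where the smoothing deforms the nodal set from below, so it is automatically nonnegative. For $\ell=1$, $|\omega^1|^2 = |\nabla u^1|^2$ is smooth with bounded Laplacian; a removable-singularity argument on the critical set of $u^1$ (of Hausdorff codimension $\geq 2$) shows that the singular part of $\Delta v$ at $\partial Z_v$ in fact vanishes.

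The main obstacle is the careful analysis at the nodal sets $Z_v$ and $Z_{|\omega^\ell|}$. In (1) one must justify that the singular weight $v^{-1-\alpha}$ integrated against $|\nabla^2 u^a|^2$ remains finite, which is bootstrapped via the $v_\eta$ regularization together with Fatou. In (2) the structural assertion that the singular part is a \emph{nonnegative} locally finite Borel measure supported on $\partial Z_{|\omega^\ell|}$, and its vanishing when $\ell=1$, both rest on the smoothing argument preserving the correct sign in the distributional inequality; verifying this carefully is the technical heart of the theorem.
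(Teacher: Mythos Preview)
Your overall strategy---Bochner plus Kato, multiply by a cutoff, integrate---matches the paper's, and the $\eta$-regularization $|\,\cdot\,|_\eta=(|\,\cdot\,|^2+\eta^2)^{1/2}$ is a legitimate substitute for the paper's level-set integration-by-parts (its Lemma on the distributional Laplacian of $|w|$). Part (1) is fine as written.

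For Part (2), however, two of your assertions are wrong as stated. First, there is no ``improved Kato inequality for $\omega^\ell$'' when $\ell\geq 2$; only the ordinary Kato bound $|\nabla|\omega^\ell||\leq|\nabla\omega^\ell|$ is available, and that is all the argument needs. Second, and more importantly, the pointwise lower bound $\Delta|\omega^\ell|_\eta\geq -C(n)\delta\,|\omega^\ell|_\eta$ is false. Computing $\Delta\omega^\ell$ via Leibniz (or equivalently expanding $\Delta\det G$) produces, besides the Ricci term, quadratic Hessian cross terms $\sum_{a\neq b}|\nabla^2u^a|\,|\nabla^2u^b|$ which are \emph{not} pointwise $O(\delta)$; the $\delta$-splitting hypothesis only gives them small $L^1$ norm. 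The correct pointwise inequality is
\[
\Delta|\omega^\ell|_\eta \;\geq\; -C(n)\delta\,|\omega^\ell|_\eta \;-\; C(n)\sum_{a}|\nabla^2 u^a|^2\, ,
\]
and then one observes that the second term has normalized $L^1$ norm $\leq C(n)\delta^2$ on $B_2(p)$. With this correction your integration-by-parts goes through unchanged: $\fint\phi^2(\Delta|\omega^\ell|_\eta)_{-}\leq C\delta$, and the singular part of the weak-$*$ limit is nonnegative because the lower bound on $\Delta|\omega^\ell|_\eta$ is by an $L^1$ (hence absolutely continuous) function uniformly in $\eta$.

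For the $\ell=1$ vanishing, Hausdorff codimension $\geq 2$ of the critical set alone is not enough to kill the boundary contribution; you also need either the improved Kato inequality (which gives the boundary integrand an extra power of the distance to $Z_{|\nabla u|}$) or the finite-vanishing-order property of harmonic functions. The paper uses the latter together with a Minkowski-type volume bound on tubes around the critical set.
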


\begin{remark} 
In actuality, we will need only the case $\alpha=0$ of (\ref{kato_1}).
\end{remark}

As will be clear from Theorem \ref{t:transformation_theorem_intro} below (the Transformation theorem) 
 that the following definition is key.

\begin{definition}\label{d:gauged_singular_scale_into}
Let $u:B_2(p)\to\dR^k$ be a harmonic function.   
For  $x\in B_1(p)$ and $\delta>0$,
define the {\it singular 
scale} $s^\delta_x\geq 0$ to be the infimum of all radii $s$ such that for all $r$ with
$s\leq r<\frac{1}{2}$ and all $1\leq \ell\leq k$ we have 
\begin{align}\label{e:gauged_regularity1}
r^2\fint_{B_r(x)} |\Delta|\omega^\ell|| \leq \delta \fint_{B_r(x)}|\omega^\ell|\, .
\end{align}
\end{definition}
\vskip3mm


Note that there is an invariance property for (\ref{e:gauged_regularity1}).  Namely, if (\ref{e:gauged_regularity1}) holds for $u$ then it holds for 
$A\circ u$ for any lower triangular matrix $A\in GL(k)$. 
 That is, the singular scale of $u$ and the singular scale of $A\circ u$ are equal.  In view of (\ref{e:kato2}), 
this means essentially that (\ref{e:gauged_regularity1}) is
a necessary condition for the existence of $A$ as in the Slicing theorem. 
Our next result, which  is by far the most technically difficult
 of the paper, provides a sort of converse.  We will not attempt to summarize
the proof  except to say that it involves a contradiction argument, as
well as an induction on $\ell$.  It is proved  in Section \ref{s:transformation}.

\begin{theorem}\label{t:transformation_theorem_intro} (Transformation theorem)
For every $\epsilon>0$, there exists 
$\delta=\delta(n,\epsilon)>0$ such that if 
$\Ric_{M^n}\geq -(n-1)\delta$
 and $u:B_2(p)\to \dR^k$  is a $\delta$-splitting map,
 then for each $x\in B_1(p)$ and $1/2\geq r\geq s^\delta_x$ there exists a lower triangular matrix $A=A(x,r)$ with 
positive diagonal entries such
 that $A\circ u:B_r(x)\to \dR^k$ is an  $\epsilon$-splitting map.
\end{theorem}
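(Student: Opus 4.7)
The plan is to prove the theorem by induction on $k$, building the lower triangular matrix $A$ row by row via an $L^2(B_r(x))$-Gram--Schmidt procedure, with the singular scale condition on the forms $\omega^\ell$ providing the essential non-degeneracy inputs at each step. For $k=1$, set $A=(a_1)$ with $a_1=(\fint_{B_r(x)}|\nabla u^1|^2)^{-1/2}$; the gradient bound comes from Cheng--Yau and the Hessian bound from the Bochner identity combined with the $\ell=1$ case of the singular scale estimate applied to $\omega^1=du^1$. For the inductive step, apply the hypothesis to $\tilde u=(u^1,\ldots,u^{k-1})$, whose singular scale at $x$ is bounded by that of $u$ (fewer $\omega^\ell$ enter its definition), obtaining a lower triangular $A'\in GL(k-1)$ with positive diagonal such that $v^\alpha:=(A'\circ\tilde u)^\alpha$ is an $\epsilon'$-splitting on $B_r(x)$, for $\epsilon'=\epsilon/C(n)$. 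Define the $k$-th row by
\[
c_\alpha = \fint_{B_r(x)}\langle \nabla u^k,\nabla v^\alpha\rangle,\quad w^k = u^k - \sum_{\alpha<k} c_\alpha v^\alpha,\quad a_k = \Big(\fint_{B_r(x)}|\nabla w^k|^2\Big)^{-1/2},\quad v^k = a_k w^k,
\]
so that the full matrix is lower triangular with positive diagonal once $a_k>0$ is established.

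Of the three conditions for $(v^1,\ldots,v^k)$ to be an $\epsilon$-splitting, the easier ones go as follows. Near-orthonormality for indices $\alpha,\beta<k$ is the inductive hypothesis; for $\alpha<k$, $\beta=k$ the averaged inner product vanishes by the choice of $c_\alpha$, with $L^2$ fluctuations controlled via Cauchy--Schwarz applied to $|\langle\nabla u^i,\nabla u^j\rangle-\delta^{ij}|$, which are small by the global $\delta$-splitting of $u$ and the inductive $\epsilon'$-splitting of the $v^\alpha$; for $\alpha=\beta=k$ it holds by definition of $a_k$. The $L^\infty$ gradient bound then follows from the Cheng--Yau estimate applied to the harmonic function $v^k$, given an $L^2$ bound.

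The main obstacle --- and the heart of the theorem --- is the Hessian bound, equivalently the scale-invariant relative estimate $r^2\fint_{B_r(x)}|\nabla^2 w^k|^2 < \epsilon^2\fint|\nabla w^k|^2$. This splits into two parts. First, the non-degeneracy $a_k\le C(n)$, i.e.\ $\fint|\nabla w^k|^2\gtrsim 1$. The algebraic identity $dv^1\wedge\cdots\wedge dv^{k-1}\wedge du^k = dv^1\wedge\cdots\wedge dv^{k-1}\wedge dw^k$, together with near-orthonormality of the $v^\alpha$, yields $|\omega^k|\lesssim|\nabla w^k|$ in $L^2(B_r(x))$, so it suffices to show $\fint_{B_r(x)}|\omega^k|\gtrsim 1$. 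Here the singular scale estimate $r^2\fint|\Delta|\omega^k|| \le \delta\fint|\omega^k|$ is essential: it enables an iterative scale-by-scale propagation (essentially an almost-monotonicity) of $\fint_{B_s}|\omega^k|$ from its initial value $\approx 1$ at scale $s=1/2$ (by the global $\delta$-splitting of $u$) down to all $s\ge s^\delta_x$. The invariance of the singular scale under lower triangular transformations, noted after Definition \ref{d:gauged_singular_scale_into}, is crucial here: it lets one apply this Laplace bound equivalently to $|\omega^k|$ computed from $(v^1,\ldots,v^{k-1},u^k)$ rather than from the original $u$. Second, with $a_k$ controlled, the absolute Hessian bound on $v^k$ follows from Bochner $\Delta|\nabla v^k|^2 = 2|\nabla^2 v^k|^2 + 2\Ric(\nabla v^k,\nabla v^k)$ integrated against a cutoff, absorbing $\Ric\ge -(n-1)\delta$ and controlling $\int\Delta|\nabla v^k|^2$ by (\ref{kato_1}) (in its $\alpha=0$ form) together with (\ref{e:kato2}) of Theorem \ref{t:hessian_estimate_intro}. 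I expect the scale-by-scale propagation of the lower bound on $\fint|\omega^k|$ via the singular-scale Laplace estimate to be by far the most delicate step, both because it must be performed in a scale-invariant fashion over the full range $s^\delta_x\le r\le 1/2$ and because it must be coupled compatibly with the induction on $k$.
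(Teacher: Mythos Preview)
Your direct Gram--Schmidt approach differs fundamentally from the paper's contradiction/blow-up argument, and has a genuine gap at the inductive step $k>1$. The central issue is that $L^2$-Gram--Schmidt controls only \emph{averages}: you get $\fint_{B_r}\langle\nabla v^\alpha,\nabla v^k\rangle=0$ and $\fint_{B_r}|\nabla v^k|^2=1$, but the definition of $\epsilon$-splitting requires $\fint_{B_r}|\langle\nabla v^\alpha,\nabla v^\beta\rangle-\delta^{\alpha\beta}|^2<\epsilon^2$, and your Bochner argument for the Hessian likewise reduces to showing $\fint\big||\nabla v^k|^2-1\big|$ is small, not merely that the average is $1$. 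Your claim that the $L^2$ fluctuations are ``small by the global $\delta$-splitting of $u$'' is incorrect: those bounds are on $B_2(p)$, not on $B_r(x)$, and they do not propagate down to small scales with uniform constants --- that failure is precisely why the transformation $A$ is needed in the first place. Similarly, the ``scale-by-scale propagation of $\fint_{B_s}|\omega^k|$'' is not an almost-monotonicity: already for $k=1$ on the cone $C(S^1_\beta)$ one has $\fint_{B_s}|\nabla u|\sim s^{2\pi/\beta-1}\to 0$ while the singular-scale condition holds with $\delta\sim(2\pi/\beta-1)^2$ small. And what you actually need for $a_k\le C(n)$ is the comparison $\fint_{B_r}|\omega^k|\gtrsim\fint_{B_r}|\omega^{k-1}|$ (since by induction $\det(A')\fint|\omega^{k-1}|\approx 1$), which the singular-scale bound on $|\omega^k|$ alone does not provide.

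The paper's contradiction argument sidesteps all of this by blowing up at the smallest failing radius $r_j$, so that matrices $A_s$ exist for \emph{all} $s>r_j$; this yields the sublinear growth estimates \eqref{e:k2:2} (Claim~1), which are the engine for everything that follows. The induction hypothesis then strengthens the first $k-1$ functions to $\epsilon_j\to 0$ splittings at every scale (Claim~2), allowing passage to a limit $\dR^{k-1}\times Y$. On the limit, a heat-flow/maximal-function argument gives near-constancy of $|\omega|$ (Claims~3--4), a Liouville-type argument shows $v^k$ depends only on $Y$ up to a linear correction (Claim~5), and a segment-inequality computation finally yields $\fint|\langle\nabla v^\ell,\nabla v^k\rangle|\to 0$ (Claim~6) --- exactly the fluctuation control your direct approach is missing. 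None of these ingredients (all-scale matrices, limit-space splitting, Liouville rigidity) are available in a fixed-scale direct construction.
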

\vskip3mm

\subsection{Proof the the Slicing theorem modulo technical results}
\label{ss:pst}
Granted
Theorem \ref{t:hessian_estimate_intro} and Theorem \ref{t:transformation_theorem_intro},
 the Transformation theorem, we now give the proof of the Slicing theorem, modulo
additional  two technical results,
(\ref{e:doubling_intro}), (\ref{e:intro4}). These  will be seen in Section \ref{s:slicing} to be easy consequences
of the Transformation theorem.

 Fix $\epsilon>0$ as in the Slicing theorem.
We must show that
 there exists $\delta=\delta(n,\epsilon)$ such that if $u:B_2(p)\to \dR^{n-2}$ denotes a $\delta$-splitting map, then
 the conclusions of the the Slicing theorem
hold.

Let us write $\delta_3=\delta_3(n,\epsilon)$ for what was denoted by 
 $\delta(n,\epsilon)$ in 
the Transformation
theorem. 
Put
\begin{align}
\label{cBdef}
\cB_{\delta_3}=:\, \bigcup_{x\in B_1(p)\, |\, s^{\delta_3}_x>0} B_{s^{\delta_3}_x}(x)\, ,
\end{align}
We can assume that $\delta$ of the Slicing theorem is small
enough that $s_x^{\delta_3}\leq 1/32$ (which will be used in (\ref{e:sum_intro})).

 Let $ |u(V)|$ denote the $(n-2)$-dimensional measure of $V\subset \dR^{n-2}$.
According to Theorem 2.37 of \cite{CCTi_eps_reg}, there exists $\delta_1=\delta_1(n,\epsilon/2)$ such that
if  $\delta(n,\epsilon)\leq \delta_1(n,\epsilon/2)$, then
\begin{equation}
\label{e:afv}
| B_1(0^{n-2}))\setminus u(B_1(p))|<\epsilon/2\, .
\end{equation}
It follows from the Transformation theorem and
(\ref{e:afv}), that if we choose $\delta$ to satisfy in addition $\delta\leq \delta_1$, then
to conclude the proof of the Slicing theorem, it suffices to show
that $\delta$ can be chosen so that we also have
\begin{equation}
\label{e:si}
|u(\cB_{\delta_3})|\leq \epsilon/2\, .
\end{equation}

To this end, we record two perhaps non-obvious, but easily verified
 consequences of Theorem \ref{t:transformation_theorem_intro}.

Denote by $\mu$, the measure such that for all open sets $U$
\begin{equation}
\label{mudef}
\mu(U)=\left(\int_{B_{3/2}(p)}|\omega|\right)^{-1}\cdot \int_{U}|\omega|\,  .
\end{equation}

The first consequence (see Lemma \ref{l:doubling}) is that for each $x\in B_1(p)$ and
$1/4\geq r\geq s^{\delta_3}_x$, we have the doubling condition
\begin{equation}\label{e:doubling_intro}
\mu(B_{2r}(x))\leq C(n)\cdot \mu(B_r(x))\, .
\end{equation}

The second consequence (see Lemma \ref{l:mu_vol_comparison}) is that if 
$x\in B_1(p)$ and $1/2\geq r\geq s^{\delta_3}_x$,
 then we have the volume estimate
\begin{equation}\label{e:intro4}
|u(B_{r}(x))|\leq  C(n)\cdot r^{-2}\mu(B_{r}(x)) \, .
\end{equation}

 The proof of these results exploits the fact that $A\circ u:B_r(x)\to \dR^{n-2}$ is an $\epsilon$-splitting map 
for some lower triangular matrix $A$ with positive diagonal entries.\\

By a standard covering lemma,
 there exists a collection of mutually disjoint balls, $\{B_{s_j}(x_j)\}$ with $s_j=s^{\delta_3}_{x_j}$, such that 
\begin{equation}
\label{e:5_intro}
\cB_{\delta_3}
\subset \bigcup_jB_{6s_j}(x_j)\, .
\end{equation}
 Since the balls $B_{s_j}(x_j)$ are mutually disjoint, we can apply Theorem \ref{t:hessian_estimate_intro} 
together with (\ref{e:intro4}) and the (three times iterated) doubling property \eqref{e:doubling_intro} of $\mu$ to obtain
\begin{align}\label{e:sum_intro}
|u(\cB_{\delta_3})|&\leq \sum_j |u(B_{6s_j}(x_j))|\leq \sum_j( {6s_{j}})^{-2}
\mu(B_{6s_{j}}(x_j))\notag\\
&\leq C(n)\sum_j s_{j}^{-2}\mu(B_{s_{j}}(x_j))
\leq C\delta_3^{-1}\sum_j  \left(\int_{B_{3/2}(p)}|\omega|\right)^{-1}\cdot \int_{B_{s_j}(x_j)}|\omega|\notag\\
&\leq C\delta_3^{-1}\cdot \fint_{B_{3/2}(p)}|\omega|\, .
\end{align}
(For the interated doubling property of $\mu$, we used $s^{\delta_3}_x\leq 1/32$.)

Write $\delta_2(n,\,\cdot\,)$ for what was denoted by $\delta(n,\,\cdot\,)$ in Theorem \ref{t:hessian_estimate_intro}.
If in addition we choose $\delta\leq \delta_2(n,\frac{1}{2}C^{-1}\delta_3\epsilon)$, where $C=C(n)$ is
the the constant on the last line in (\ref{e:sum_intro}), then by Theorem \ref{t:hessian_estimate_intro},
 the right hand side of (\ref{e:sum_intro}) is
$\leq \epsilon/2$; i.e. (\ref{e:si}) holds.
As we have noted, this suffices to complete the proof of the Slicing theorem.


\section{Background and Preliminaries}\label{s:background}

In this section we review from standard constructions and techniques, which will be used throughout the paper.

\subsection{Stratification of Limit Spaces}\label{ss:stratification}
In this subsection we  recall some basic properties of pointed Gromov-Hausdorff limit spaces
\begin{align}
(M^n_j,d_j,p_j)\stackrel{d_{GH}}{\longrightarrow} (X,d,p)\, ,
\end{align}
where the $\Ric_{M^n_j}\geq -(n-1)$  and the noncollapsing assumption $\Vol(B_1(p_j))\geq \rv>0$ holds.  
In particular, we recall the stratification of a noncollapsed limit space, which was first introduced in \cite{ChC1},
and which will play an important role in the proof of Theorem \ref{t:main_codim4}. 
 The effective version, called the {\it quantitative stratification}, which was first introduced
 in \cite{CheegerNaber_Ricci}, will be recalled in Section \ref{s:qs_estimates}. 
It will play an important role in the estimates of Theorem \ref{t:main_estimate}.

Given  $x\in X$,
 we call a metric space $X_x$ a {\it tangent cone} at $x$ if there exists a sequence $r_i\to 0$ such that 
\begin{align}
(X,r_i^{-1}d,x) \stackrel{d_{GH}}{\longrightarrow}X_x\, .
\end{align}
That tangent cones exist at every point is a consequence of  Gromov's compactness theorem;
see for instance the book \cite{Petersen_RiemannianGeometry}.  
A point is called {\it regular} if every tangent cone is isometric to $\dR^n$ and otherwise {\it singular}.
The set of singular points is denoted by $\cS$.
As explained below, for noncollapsed
limit spaces with a uniform lower Ricci bound,
the singular set has codimension $\geq 2$. At singular
points,
tangent cones may be highly nonunique, to the extent that neither the dimension of the singular set nor the homeomorphism 
type is uniquely defined; see for instance \cite{CoNa2}.  Easy examples show that the singular set need not be closed if 
one just assumes a uniform
a lower bound $\Ric_{M^n_j}\geq -(n-1)$.  However, under the assumption of a $2$-sided bound
$|\Ric_{M^n_j}|\leq (n-1)$, the singular set is indeed closed; see \cite{Anderson_Einstein}, \cite{ChC2}. 

 For noncollapsed limit spaces, as shown in \cite{ChC1}, every tangent cone is isometric to a {\it metric cone}, i.e.
\begin{align}
X_x=C(Z)\, ,
\end{align}
for some compact metric space $Z$, with $\diam(Z)\leq\pi$.  With this as our starting point, we introduce
the following notion of symmetry. 

\begin{definition}
A metric space $Y$ is called {\it $k$-symmetric} if $Y$ is isometric to 
$ \dR^k\times C(Z)$ for some compact metric
 space $Z$.  We  define the {\it closed $k$th-stratum} by
\begin{align}
\cS^k(X)=:\{x\in X: \text{ no tangent cone at $x$ is $(k+1)$-symmetric}\}
\end{align}
\end{definition}
Thus, in the noncollapsed case, every tangent cone is $0$-symmetric.

The key result of \cite{ChC1} is the following:
\begin{align}
\dim \cS^k\leq k\, ,
\end{align}
where dimension is taken in the Hausdorff sense.  Thus, away from a set of Hausdorff dimension 
$k$, every point has some tangent cone with $(k+1)$ degrees of symmetry. 
 For an effective refinement of this theorem see \cite{CheegerNaber_Ricci} and Section \ref{s:qs_estimates}.\\

\subsection{$\epsilon$-Regularity Theorems}\label{ss:eps_reg_review}
A central result of this paper is the $\epsilon$-regularity theorem,  Theorem \ref{t:eps_reg}.  The original $\epsilon$-regularity 
theorems for Einstein manifolds were given in \cite{BKN89}, \cite{A89},
 \cite{T90}.  They state that if $M^n$ is an 
Einstein manifold with $\Ric_{M^n}=\lambda g$,  $|\lambda|\leq n-1$,  
$\Vol(B_1(p))\geq \rv$ and 
\begin{align}
\int_{B_2(p)}|\Rm|^{n/2} <\epsilon(n,\rv)\, ,
\end{align}
then $\sup_{B_1(p)}|\Rm|\leq 1$. 

In \cite{CCTi_eps_reg}, \cite{Cheeger}, \cite{Chen_Donaldson},  $\epsilon$-regularity theorems were proved under the assumption
of $L^q$ curvature bounds, $1\leq q <n/2$, provided $B_2(p)$ is assumed sufficiently close to a ball in a cone which splits off
an isometric factor $\dR^{n-2q}$. 

On the other hand, the regularity theory of \cite{CheegerNaber_Ricci} for Einstein manifolds, depends on 
$\epsilon$-regularity theorems which do not assume $L^q$ curvature bounds.  In particular, it follows from the work of \cite{Anderson_Einstein} 
 that there exists $\epsilon(n)>0$ such that if $|\Ric_{M^n}|\leq \epsilon(n)$ and if
\begin{align}
d_{GH}(B_2(p),B_2(0^n))<\epsilon(n)\, ,
\end{align}
where $B_2(0^n)\subseteq \mathbb{R}^n$, then  $|\Rm|\leq 1$ on $B_1(p)$. 
 
 This result can be extended in several directions.  In order to state the extension in full generality, we first  recall the notion of the harmonic radius:

\begin{definition}\label{d:harmonic_radius}
For $x\in X$, we define the harmonic radius $r_h(x)$ so that $r_h(x)=0$ if no neighborhood of $x$ is a Riemannian manifold. 
 Otherwise, we define $r_h(x)$ to be the largest $r>0$ such that there exists a mapping $\Phi:B_r(0^n)\to X$ such that:
\begin{enumerate}
\item $\Phi(0)=x$ with $\Phi$ is a diffeomorphism onto its image.
\vskip1mm

\item $\Delta_g x^\ell = 0$, where $x^\ell$ are the coordinate functions and $\Delta_g$
 is the Laplace Beltrami operator.
\vskip1mm

\item If $g_{ij}=\Phi^*g$ is the pullback metric, then
\begin{align}
||g_{ij}-\delta_{ij}||_{C^0(B_r(0^n))}+r||\partial_k g_{ij}||_{C^0(B_r(0^n))} \leq 10^{-3}\, .
\end{align}
\end{enumerate}
\end{definition}

We call a mapping $\Phi:B_r(0^n)\to X$ as above a harmonic coordinate system.  
Harmonic coordinates have an abundance of good properties when it comes to regularity issues; see the book \cite{Petersen_RiemannianGeometry} for a nice introduction.  In particular, if the Ricci curvature is uniformly bounded then
in harmonic coordinates, the metric, $g_{ij}$  has a priori $C^{1,\alpha}\cap W^{2,q}$ bounds, for all $\alpha<1$ and 
$q<\infty$.  If in addition, there is a bound on $|\nabla \Ric_{M^n}|$, then in harmonic coordinates,
$g_{ij}$ has $C^{2,\alpha}$ bounds,
for all $\alpha <1$.

The primary theorem we wish to review in this subsection is the following:

\begin{theorem}[\cite{Anderson_Einstein}, \cite{ChC1}]\label{t:standard_epsilon_regularity}
There exists $\epsilon(n,\rv)>0$ such that if $M^n$ satisfies $|\Ric_{M^n}|\leq \epsilon$, $\Vol(B_1(p))>\rv>0$, and
\begin{align}
d_{GH}(B_2(p),B_2(0))<\epsilon(n)\, ,
\end{align}
where $0\in \dR^{n-1}\times C(Z)$, then the harmonic radius  $r_h(p)$ satisfies
\begin{align}
r_h(p)\geq 1\, .
\end{align}
If $M^n$ is further assumed to be Einstein, then the regularity scale $r_p$ satisfies $r_p\geq 1$.
\end{theorem}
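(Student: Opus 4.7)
The plan is a contradiction-and-compactness argument that leverages the two-sided Ricci bound to upgrade Gromov-Hausdorff convergence to smooth convergence. Suppose the conclusion fails; then there is a sequence of pointed manifolds $(M^n_j, g_j, p_j)$ with $|\Ric_{M^n_j}| \leq 1/j$, $\Vol(B_1(p_j)) > \rv$, and $d_{GH}(B_2(p_j), B_2(0_j)) < 1/j$ for some basepoints $0_j \in \dR^{n-1} \times C(Z_j)$, yet $r_h(p_j) < 1$ (and $r_{p_j} < 1$ in the Einstein case).

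First I would identify the limit. By Gromov's compactness theorem, after passing to a subsequence, $(M^n_j, d_j, p_j) \to (X, d, p)$ in the pointed Gromov-Hausdorff sense, with $B_2(p)$ isometric to a ball in some $\dR^{n-1} \times C(Z_\infty)$. Because $X$ is a noncollapsed Ricci limit that splits off $\dR^{n-1}$ isometrically near $p$, the remaining factor $C(Z_\infty)$ is a one-dimensional noncollapsed Ricci limit. The only such spaces are $\dR$ and the half-line $[0,\infty)$, and the latter is ruled out because the singular set of a noncollapsed Ricci limit has codimension at least two \cite{ChC1}. Hence $B_2(p)$ embeds isometrically into $\dR^n$.

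Next I would extract good splitting maps and upgrade to smooth convergence. By part (2) of Lemma \ref{l:harmonic_splitting_intro}, for $j$ large there exists a harmonic $\delta_j$-splitting map $u_j : B_{3/2}(p_j) \to \dR^n$ with $\delta_j \to 0$; by part (1), $u_j$ is an $\epsilon_j$-Gromov-Hausdorff approximation on $B_1(p_j)$. Since each component $u_j^\ell$ is harmonic, $u_j$ furnishes harmonic coordinates on any open set where it is a diffeomorphism, and there the metric components satisfy the elliptic PDE
\begin{equation*}
-\tfrac{1}{2}\, g^{kl} \partial_k \partial_l g_{ab} + Q(g, \partial g) = \Ric_{ab},
\end{equation*}
with $Q$ quadratic in $\partial g$. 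The splitting conditions (i)--(iii) of Definition \ref{d:intro} give uniform $C^0$ and $L^2$-Hessian control on the $g_{ab}$, and combined with $|\Ric_{ab}| \to 0$ the standard interior $W^{2,q}$ theory yields uniform $W^{2,q}$ bounds, hence uniform $C^{1,\alpha}$ bounds on a slightly smaller ball, for every $q < \infty$ and $\alpha < 1$. Consequently the pullback metrics $u_j^\ast g_j$ converge in $C^{1,\alpha}_{\mathrm{loc}}$ to the Euclidean metric on $B_1(0^n)$. For $j$ large, $u_j^{-1}$ therefore realizes a harmonic coordinate chart satisfying all three conditions of Definition \ref{d:harmonic_radius} at radius $1$, contradicting $r_h(p_j) < 1$.

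Finally, in the Einstein case, once $C^{1,\alpha}$ control is in place the Einstein equation $\Ric_{ab} = \lambda_j g_{ab}$ (with $|\lambda_j| \leq 1/j$) becomes a quasilinear elliptic system for $g_{ab}$ in harmonic coordinates with $C^{1,\alpha}$ coefficients; Schauder bootstrapping then produces uniform $C^{k,\alpha}$ bounds for every $k$, so $\sup_{B_1(p_j)} |\Rm|$ is eventually $\leq 1$, giving $r_{p_j} \geq 1$ and the desired contradiction. The main obstacle is the third step: converting Gromov-Hausdorff closeness to $\dR^n$ into quantitative harmonic-coordinate bounds is precisely where the two-sided Ricci hypothesis is essential, and it is the technical heart of the $\epsilon$-regularity statement; the remaining ingredients—Gromov compactness, codimension two of the singular set, and Schauder bootstrap—are off-the-shelf once that bridge is built.
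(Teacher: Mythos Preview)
The paper does not give its own proof of this theorem; it is stated in the preliminaries section as a known result, attributed to \cite{Anderson_Einstein} and \cite{ChC1}. So there is no in-paper proof to compare against, only the literature.

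Your overall architecture---contradiction, pass to a limit, identify the limit as $\dR^n$, derive a contradiction from continuity of the harmonic radius---is the standard one, and your reduction step is exactly right: the point of the $\dR^{n-1}\times C(Z)$ hypothesis (as opposed to $\dR^n$) is that the codimension-$2$ estimate on the singular set from \cite{ChC1} forces $C(Z)=\dR$, reducing to Anderson's original $\dR^n$ statement. That is precisely the contribution of \cite{ChC1} here.

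There is, however, a genuine gap in your treatment of the $\dR^n$ case. You propose to use the $\delta_j$-splitting map $u_j$ directly as harmonic coordinates and bootstrap via the elliptic equation for $g_{ab}$. The problem is that the defining conditions of a $\delta$-splitting map are $L^2$ conditions: you know $\fint |\langle\nabla u^a,\nabla u^b\rangle-\delta^{ab}|^2<\delta^2$ and $\fint|\nabla^2 u|^2<\delta^2$, together with the pointwise bound $|\nabla u|\leq 1+\delta$. These do \emph{not} give you (a) that $u_j$ is a diffeomorphism on any definite ball (the Jacobian could degenerate on a set of small measure), nor (b) $C^0$ closeness of $g_{ab}=(\langle\nabla u^a,\nabla u^b\rangle)^{-1}$ to $\delta_{ab}$. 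Without $C^0$ control on the coefficients $g^{kl}$ you cannot invoke interior $W^{2,q}$ estimates for the equation $-\tfrac12 g^{kl}\partial_k\partial_l g_{ab}+Q(g,\partial g)=\Ric_{ab}$, so the bootstrap never starts. You correctly flag this step as ``the technical heart,'' but your specific mechanism for it does not work as written. Anderson's actual argument in \cite{Anderson_Einstein} proceeds differently (via a direct harmonic-coordinate construction and a continuity/compactness argument for $r_h$ under two-sided Ricci bounds), and it is that result, used as a black box, which the paper invokes whenever it needs to pass from ``the limit is smooth near $p$'' to ``$r_h(p_j)$ is eventually large.''
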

 
By the results of the previous subsection, it is possible to find balls 
satisfying the above constraint off a subset of Hausdorff codimension $2$.  Moreover,
when combined with the quantitative stratification of \cite{CheegerNaber_Ricci}, 
see also Section \ref{s:qs_estimates}, this 
$\epsilon$-regularity theorem leads to a priori $L^p$ bounds
on the curvature.  The primary result of the present paper can be viewed as Theorem \ref{t:eps_reg}, 
 which states that the conclusions of Theorem \ref{t:standard_epsilon_regularity} continue to hold if
$\dR^{n-1}$ is replaced by
$0\in \dR^{n-3}\times C(Z)$.

\subsection{Examples}\label{ss:examples}
In this subsection, we  indicate some simple examples which play an important role in guiding the 
results of this paper.  

\begin{example} (The Cone Space $\dR^{n-2}\times C(S^1_\beta)$)
\label{sss:Example3_ConeSpace}
 The main result of this paper, Theorem \ref{t:main_codim4},   states that 
$\dR^{n-2}\times C(S^1_\beta)$, with $\beta<2\pi$, is not the 
noncollapsed Gromov-Hausdorff limit of a sequence of manifolds with bounded Ricci curvature.
However, it is clear that this space 
is the Gromov-Hausdorff limit of a a sequence
of noncollapsed manifolds with a uniform lower Ricci curvature bound.  Indeed,
 by rounding off $C(S^1_\beta)$, we see that $\dR^{n-2}\times C(S^1_\beta)$ 
can appear as a noncollapsed limit of manifolds with nonnegative sectional curvature.

 In this example, let us just consider the two dimensional cone  
$C(S^1_\beta)$ with $\beta<2\pi$. Regard $S^1_\beta$ as $0\leq\theta\leq 2\pi$, with the end 
points identified. Then the Laplacian on $S^1_\beta$ is $(\frac{2\pi}{\beta})^2\cdot\frac{\partial^2}{\partial\theta^2}$.
The eigenfunctions are of the form $e^{ik\theta}$, where $k$ is an integer. Written in polar coordinates,
a basis for the bounded
harmonic functions on $C(S^1_\beta)$ is
$\{r^{\frac{2\pi}{\beta}|k|}\cdot e^{ik\theta}\}$.
In particular, we see from this that if $\beta<2\pi$, then $|\nabla ( r^{\frac{2\pi}{\beta}|k|}\cdot e^{ik\theta})|\to 0$ 
 as $r\to 0$.  As a consequence,
{\it every} bounded harmonic function has vanishing gradient at the vertex,
which is a set of positive $(n-2)$-dimensional Hausdorff measure.  By considering examples with more 
vertices, we can construct limit spaces where bounded harmonic functions
 $h$ must have vanishing gradient on bounded subsets sets of arbitrarily large, or even infinite, $(n-2)$-dimensional
Hausdorff measure.  This set can even be taken to be dense.
\end{example}

\begin{example} (The Eguchi-Hanson manifold)
\label{sss:Example1_EH}
The Eguchi-Hanson metric $g$ is a complete Ricci flat metric on the cotangent bundle of $S^2$,
which at infinity,  becomes rapidly asymptotic to the metric cone on $\dR\dP(3)$ or equivalently to 
$\dR^4/\dZ_2$, where $\dZ_2$ acts on $\dR^4$ by $x\to-x$. When the metric $g$ is scaled down
by $g\to r^2g$, with $r\to 0$, one obtains a family of Ricci flat manifolds whose Gromov-Hausdorff
limit is $C(\dR\dP(3))= \dR^4/\dZ_2$.
This is the simplest example which shows that even under the assumption of 
 Ricci  flatness and noncollapsing, Gromov-Hausdorff limit spaces can contain
 codimension 4 singularities.
\end{example}

\begin{example} (Infinitely many topological types in dimension 4)
\label{sss:Example2_ITT}
Let   $T^3$ denote a flat $3$-torus. According to Anderson \cite{Anderson_Hausdorff}, there is  a 
collapsing sequence of manifolds 
$(M^4_j,d_j)\stackrel{d_{GH}}{\longrightarrow} T^3$ 
satisfying
\begin{align}
&\diam(M^4_j)\leq 1\, ,\notag\\
&|\Ric_{M^n_j}|\leq \epsilon_j\to 0\, ,\notag\\
&\Vol(M^4_j)\to 0\, ,\notag\\
&b_2(M^4_j)\to \infty\, ,
\end{align}
where $b_2(M^4_j)$ denotes the second Betti number of $M^4_j$.
In particular, 
Theorem \ref{t:main_finite_diffeo}, the finiteness theorem in dimension $4$,
does not extend to the case in which the lower volume bound is dropped.
\end{example}

\section{Proof of the Transformation Theorem}\label{s:transformation}


In this section we prove the Transformation theorem (Theorem \ref{t:transformation_theorem_intro}) which is the
main technical tool in the proof of the Slicing theorem (Theorem \ref{t:slicing_intro}). As motivation, let us mention the
following.  Given $\epsilon,\eta>0$ and a $\delta(\epsilon, \eta)$-splitting map $u:B_2(p)\to \dR^k$, one can use a weighted maximal function estimate for $|\nabla^2 u|$ to conclude there exists a set $B$ with small $(n-2+\eta)$-content, such  that for each $x\not\in B$ and every $0<r<1$,
 the restriction $u:B_r(x)\to \dR^k$ is an $\epsilon$-splitting map. However,
as we have observed in Example \ref{sss:Example3_ConeSpace}, we cannot take $\eta=0$, since 
$|\nabla u|$ can vanish on a set of large $(n-2)$-content.  For  purposes
of proving the Slicing theorem, this set is too large. 

Suppose instead, that we consider  the collection of balls $B_r(x)$ such that for no lower triangular matrix $A\in GL(n-2)$ 
with positive diagonal entries is $A\circ u$ an $\epsilon$-splitting map on $B_r(x)$.  Though we cannot show that this set has small $(n-2)$-content, we will prove that its image under $u$ has small $(n-2)$-dimensional measure.  This will be what is required for the Slicing Theorem. 

For the case of a single function, $k=1$, the basic
 idea can be explained as follows. In order to obtain an $\epsilon$-splitting
function on $B_r(x)$, it is not necessary that the Hessian of $u$ is small and the gradient is close to $1$.  Rather, we need only that the Hessian of $u$ is small relative to the gradient.  
That is, for $\epsilon>0$, $0<r\leq 1$, consider the condition
\begin{equation}\label{e:relative}
r\fint_{B_{2r}(x)}|\nabla^2u|\leq \delta(\epsilon)\cdot\fint_{B_{2r}(x)}|\nabla u|\, .
\end{equation}
Now if $\fint_{B_{2r}(x)}|\nabla u|$ is very small,
 then the restricted map $u:B_r(x)\to \dR$ will not define a splitting map. 
 However, if \eqref{e:relative} holds we may simply rescale $u$ so that $\fint_{B_{2r}(x)}|\nabla u|=1$,
 in which case, arguments as in the proof of Lemma \ref{l:harmonic_splitting_intro}
 tell us that after such a rescaling, $u:B_r(x)\to \dR$ becomes an $\epsilon$-splitting
 map.  

To control the collection of balls which do not satisfy
the inequality \eqref{e:relative}, we start with what is essentially (\ref{kato_1}):
$$
\label{e:svf1}
\fint_{B_{3/2}(p)} \frac{|\nabla^2 u|^2}{|\nabla u|} < \delta^2\, .
$$
By arguing as in subsection \ref{ss:pst},
this  enables us to control the set of balls $B_{2r}(x)$ which do not satisfy
\begin{equation}
\label{e:svf2}
r^2\fint_{B_{2r}(x)} \frac{|\nabla^2 u|^2}{|\nabla u|} 
< \delta\fint_{B_{2r}(x)} |\nabla u|\, ,
\end{equation}
and 
in particular, to show  that the image under $u$ of this collection of balls
has small $(n-2)$-dimensional measure.
On the other hand, (\ref{e:svf2}) implies (\ref{e:relative}), since 
\begin{equation}\label{e:svf3}
r\fint_{B_{2r}(x)}|\nabla^2 u| \leq \Big(r^2\fint_{B_{2r}(x)}\frac{|\nabla^2 u|^2}{|\nabla u|}\Big)^{1/2}
\cdot \Big(\fint_{B_{2r}(x)}|\nabla u|\Big)^{1/2}\leq \delta^{1/2}\fint_{B_{2r}(x)}|\nabla u|\, .
\end{equation}

For the case $k>1$ serious new issues arise.
 For one thing, even
if on some ball $B_r(x)$ the individual gradients, $\nabla u^1,\ldots,\nabla u^{n-2}$,
satisfy (\ref{e:relative}) and  we then  normalize them to have $L^2$
norm $1$, it still might be  the case that in the $L^2$ sense, this normalized collection looks close
to being linearly dependent.  Then $u$ would still be far from defining an $\epsilon$-splitting map.  This issue is related to the fact that for $\ell>1$ the distributional Laplacian $\Delta|\omega^\ell|$ may have a singular part.  Additionally, for $k>1$ we are unable to obtain a precise analog of (\ref{kato_1}), which was the tool
for handling the case $k=1$.  Instead, we have to proceed on the basis of (\ref{e:kato2}), the bound on the normalized mass of the distributional Laplacian $\Delta |\omega^\ell|$. These points make the proof of the Transformation theorem in the general case substantially more difficult.

\subsection{Higher Order Estimates}\label{ss:higher_order_estimates}
We begin by recalling the existence of a good cutoff function.
According to \cite{ChC1} if $\Ric_{M^n}\geq -\delta$, then for any $B_r(x)\subset M^n$ with $0<r\leq 1$ there exists a cutoff 
function, with $0\leq \varphi\leq 1$, such that 
\begin{align}\label{cutoff1}
\varphi(x) &\equiv1 \text{ if }x\in B_{9r/5}(x)\, ,\notag\\
{\rm supp}\, \varphi&\subset B_{2r}(x)\, ,
\end{align}
and such that 
\begin{align}\label{cutoff2}
r|\nabla\varphi|&\leq C(n)\, ,\notag\\
r^2|\Delta\varphi|&\leq C(n) \, . 
\end{align}

In preparation for proving part (2) of Theorem \ref{t:hessian_estimate_intro},
we state a general lemma on distributional Laplacians.  

Let $w$ be a smooth section of a Riemannian vector bundle with orthogonal
connection over
$B_2(p)$. Let $\Delta w$ denote the rough Laplacian of $w$. 
Note that $|w|$ is a Lipschitz function 
which is smooth off of the set $Z_{|w|}=:\{x\, |\, |w|(x)=0\}$.
We put
\begin{align}
U_r=\{x\, :\, |w|(x)\leq r\}\, .
\end{align}

\begin{lemma}
\label{l:bochner_general}
The distributional Laplacian $\Delta |w|$ is a locally finite Borel measure $\mu=\mu_{ac}+\mu_{sing}$. The measure $\mu$ is absolutely 
continuous on $B_2(p)\setminus Z_{|w|}$, with density
\begin{align}\label{e:dist_laplacian}
\mu_{ac}
=\frac{\langle \Delta w,w\rangle}{|w|}
+\frac{|\nabla w|^2-|\nabla|w||^2}{|w|}\, .
\end{align}
The singular part $\mu_{sing}$ is a nonnegative locally finite Borel measure supported on $B_2(p)
\cap\partial Z_{|w|}$. There exists $r_j\to 0$, such that for any nonnegative continuous function $\varphi$, with ${\rm supp}\, \varphi\subset B_2(p)$, we have
\begin{align}
\label{e:sing}
\mu_{sing}(\varphi)= \lim_{r_i\to 0}\int_{B_2(p)\cap\partial U_{r_i}} \varphi \cdot |\nabla|w||
 \geq 0\, .
\end{align}

If in addition $\Ric_{M^n}\geq -(n-1)\kappa$,
then on each ball $B_{2-s}(p)$,  the distributional Laplacian $\Delta |w|$ satisfies
the normalized mass bound
\begin{align}
\label{e:mass}
\fint_{B_{2-s}(p)}|\Delta |w|\,|\leq C(n,\kappa,s)\cdot \inf_c \fint_{B_{2-s/2}(p)}||w|-c| -2\fint_{B_{2-s/2}(p)}\frac{\langle \Delta w,w\rangle_{-}}{|w|}\, ,
\end{align}
where  $\frac{\langle \Delta w,w\rangle_{-}}{|w|}
=:\min(0,\frac{\langle \Delta w,w\rangle}{|w|})$
\end{lemma}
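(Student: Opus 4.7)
My plan is to break the proof into three pieces: the density formula on $B_2(p)\setminus Z_{|w|}$, the measure structure at $\partial Z_{|w|}$, and the mass bound.

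For the density, note that on $B_2(p)\setminus Z_{|w|}$ the function $|w|$ is smooth, so I compute $\Delta|w|^2$ two ways. On the one hand, $\Delta|w|^2 = 2\langle\Delta w,w\rangle + 2|\nabla w|^2$ since the connection is orthogonal; on the other hand, applying the product rule to $|w|\cdot|w|$ gives $\Delta|w|^2 = 2|w|\Delta|w| + 2|\nabla|w||^2$. Subtracting and dividing by $|w|$ yields \eqref{e:dist_laplacian}. The Kato inequality $|\nabla|w||\leq |\nabla w|$ shows the residual term $(|\nabla w|^2-|\nabla|w||^2)/|w|$ is nonnegative, which will be crucial in part 3.

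To realize $\Delta|w|$ globally as a measure and isolate the singular part, I would regularize $|w|$ by $f_\epsilon := \sqrt{|w|^2+\epsilon^2}$. A direct smooth computation gives
$$\Delta f_\epsilon \;=\; \frac{\langle\Delta w,w\rangle+|\nabla w|^2}{f_\epsilon} \;-\; \frac{|\langle\nabla w,w\rangle|^2}{f_\epsilon^3}\,.$$
Since $f_\epsilon\to |w|$ uniformly on compact sets, $\Delta f_\epsilon\to\Delta|w|$ in distributions. Away from $Z_{|w|}$ the right side converges pointwise (and in $L^1_{loc}$) to the density in \eqref{e:dist_laplacian}, accounting for $\mu_{ac}$. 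The remaining contribution is $\frac{\epsilon^2|\nabla w|^2}{f_\epsilon^3}\geq 0$, which is visibly nonnegative and concentrates on $Z_{|w|}$; this accounts for $\mu_{sing}\geq 0$. Testing against $\varphi\geq 0$ and applying the coarea formula converts the concentrating piece into $\int_0^\infty\bigl(\int_{\{|w|=t\}}\varphi\,d\mathcal{H}^{n-1}\bigr)K_\epsilon(t)\,dt$ for an approximate identity $K_\epsilon$ at $0^+$. Combined with Sard's theorem (applied to $|w|^2$, which is smooth), almost every $r>0$ is a regular value, and passing to a subsequence $r_j\to 0$ of regular values yields \eqref{e:sing}; the classical divergence theorem on $B_2(p)\setminus U_{r_j}$ confirms the identity, and locally finite mass follows by testing with a nonnegative cutoff. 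Support on $\partial Z_{|w|}$ is immediate: in the interior of $Z_{|w|}$, $|\nabla|w||\equiv 0$, while outside $\overline{Z_{|w|}}$ the boundary integrals in \eqref{e:sing} vanish for small enough $r$.

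For the mass bound \eqref{e:mass}, since $\mu_{sing}\geq 0$ the total variation equals $|\mu|=\mu + 2\mu_{ac}^-$ as measures. Choose the good cutoff $\varphi$ of \eqref{cutoff1}--\eqref{cutoff2} adapted to the annulus between $B_{2-s}(p)$ and $B_{2-s/2}(p)$; the Ricci lower bound $\Ric_{M^n}\geq -(n-1)\kappa$ ensures $|\Delta\varphi|\leq C(n,\kappa,s)$. Because $\varphi$ is compactly supported, $\int\Delta\varphi=0$, so for any constant $c$,
$$\int\varphi\,d\mu \;=\; \int (|w|-c)\Delta\varphi\;\leq\; C(n,\kappa,s)\int_{B_{2-s/2}(p)}\bigl||w|-c\bigr|\,,$$
and minimizing over $c$ controls the first piece. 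For the second piece, the Kato term in \eqref{e:dist_laplacian} is $\geq 0$, hence $\mu_{ac}\geq \langle\Delta w,w\rangle/|w|$ pointwise, giving $\mu_{ac}^-\leq -\langle\Delta w,w\rangle_-/|w|$. Normalizing via Bishop--Gromov to pass from $B_{2-s/2}(p)$ to $B_{2-s}(p)$ absorbs all volume factors into $C(n,\kappa,s)$, producing \eqref{e:mass}.

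The main obstacle I foresee is the rigorous passage from the distribution $\Delta|w|$ to a locally finite Borel measure while simultaneously identifying $\mu_{sing}$ with the boundary-trace formula \eqref{e:sing}; the $L^1_{loc}$-bound on the density is not obvious near $\partial Z_{|w|}$ where $1/|w|$ blows up, so the coarea/Sard argument together with the nonnegativity coming from the explicit form of $\Delta f_\epsilon$ is essential for avoiding any cancellation pathologies and for ensuring that the limit along $r_j\to 0$ exists.
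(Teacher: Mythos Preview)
Your proposal is correct. The density computation and the mass bound are argued exactly as in the paper (the paper phrases the latter via the elementary inequality $|a|\le a+2b_-$, which is your identity $|\mu|=\mu+2\mu_{ac}^-$ in disguise).

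The one place you take a slightly different route is the measure structure. The paper skips the $f_\epsilon=\sqrt{|w|^2+\epsilon^2}$ regularization entirely: it first uses the coarea formula to choose regular values $r_i\to 0$ with $r_i\cdot\mathcal H^{n-1}(B_{2-s}\cap\partial U_{r_i})\to 0$, and then integrates by parts twice directly on $B_2(p)\setminus U_{r_i}$. The two boundary terms are handled by this area bound and by the observation that the surviving term $\int_{\partial U_{r_i}}\varphi\,N(|w|)=\int_{\partial U_{r_i}}\varphi\,|\nabla|w||$ is manifestly nonnegative; since the interior Kato term is also nonnegative and the total is finite, both limits exist separately. This yields $\mu_{\mathrm{sing}}\ge 0$ and the formula \eqref{e:sing} in one stroke. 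Your $f_\epsilon$ computation is a valid alternative for establishing that $\Delta|w|$ is a measure with nonnegative singular part, but note that your coarea description of the concentrating piece is not quite right as written (the level-set integrand is $\varphi\,|\nabla w|^2/|\nabla|w||$, not $\varphi$); this is harmless because you then invoke the divergence theorem on $B_2(p)\setminus U_{r_j}$ anyway, which is precisely the paper's argument. In short, your regularization step is extra scaffolding around the same core integration by parts.
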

\begin{proof}
The computation
of the absolutely continuous part (\ref{e:dist_laplacian}) on
$B_2(p)\setminus Z_{|w|}$ is standard.  

Before continuing, let us mention the following  technical point.  Fix $2>s>0$.  Since
on $B_{2-s}(p)$, $|w|$ is Lipschtiz and
\begin{align}
\label{e:Kato}
\nabla  |w|=\frac{\langle\nabla w,w\rangle}{|w|}
\qquad ({\rm on}\,\, B_{2-s}(p)\setminus Z_{|w|})
\end{align}
has uniformly
bounded norm, $|\nabla |w||\leq |\nabla w|$, we have by the coarea formula,
 that as $r\to 0$
\begin{align}
\label{tozero}
 o(r)& =\int_{B_{2-s}(p)\cap (U_r\setminus U_{r/2})}|\nabla |w|\, |\notag\\
&=\int^r_{r/2} \mathcal H^{n-1}(B_{2-s}\cap\partial U_t)\, dt\, ,
\end{align}
where $\mathcal H^{n-1}$ denotes $(n-1)$-dimensional Hausdorff measure.
By combining this with Sard's theorem, it follows in particular that there exist 
decreasing sequences $r_i\searrow 0$, such that for any $2>s>0$, 
we have that $B_{2-s}(p)\cap\partial U_{r_i}$
is smooth and 
 \begin{align}
\label{e:sing}
\lim_{r_i\to 0}r_i\cdot\mathcal H^{n-1}(B_{2-s}(p)\cap\partial U_{r_i})=0\, .
\end{align}


Let $\varphi\geq 0$ denote a smooth function with 
${\rm supp}\,\varphi\subset B_2(p)$ and let $r_i\searrow 0$ be as in (\ref{e:sing}).  Then for any constant $c$, we have
\begin{align}\label{e:dist_bochner_1}
\int_{B_2(p)}\Delta\varphi \cdot \big(|w|-c\big) &=\lim_{r_i\to 0}\int_{B_2(p)\setminus U_{r_i}}\Delta\varphi \cdot |w|\notag\\
&= \lim_{r_i\to 0}
\int_{B_2(p)\setminus U_{r_i}} \varphi \cdot\Delta |w|+\lim_{r_i\to 0}
\int_{B_2(p)\cap\partial U_{r_i}}\varphi \cdot N(|w|)
-\lim_{r_i\to 0}\int_{B_2(p)\cap\partial U_{r_i}}N(\varphi) \cdot r_i \notag\\
&=\int_{B_2(p)\setminus Z_{|w|}}\varphi \cdot \Delta |w|+\lim_{r_i\to 0}\int_{B_R(p)\cap\partial U_{r_i}}\varphi\cdot N(|w|)
\notag\\
&=\int_{B_2(p)\setminus Z_{|w|}}\varphi\cdot\langle \Delta w,\frac{w}{|w|}\rangle
+\lim_{r_i\to 0}\int_{B_2(p)\setminus U_{r_i}}\varphi \cdot\frac{|\nabla w|^2-|\nabla|w||^2}{|w|}+\lim_{r_i\to 0}\int_{B_2(p)\cap\partial U_{r_i}}\varphi\cdot |\nabla|w||\, .
\end{align}
where the third term on the right-hand side of the second line of (\ref{e:dist_bochner_1})
vanishes because of (\ref{e:sing}). 
Note that since $w$ is smooth and
the second and third integrands on the last line are nonnegative, it follows that
all three limits on the  last line exist. 

For fixed $i$ each term on the last line above defines
a Borel measure.  
To see that the weak limits of these measures define Borel measures which
satisfy the mass bound in (\ref{e:mass}), we assume $\Ric_{M^n}\geq-(n-1)\kappa$ and
 choose $\varphi$ in (\ref{e:dist_bochner_1})  to be a cutoff
function as in \cite{ChC1} with $\varphi\equiv 1$ on $B_{2-s}(p)$, ${\rm supp}\, \varphi\subset B_{2-s/2}(p)$ and
$s|\nabla \varphi|,\, s^2|\Delta\varphi|\leq c(n,\kappa,s)$.
From the elementary fact that $a-b\geq 0$ implies $|a|\leq a+2b_{-}$, where $b_{-}=:\min(0,b)$, we get the mass bound
\begin{align}
\label{e:mb}
\min_c\int_{B_{2-s/2}(p)}|\Delta\varphi| \cdot |(|w|-c)| -
2\int_{B_{2-s/2}(p)}\varphi\cdot\frac{\langle \Delta w,w\rangle_{-}}{|w|}
\geq \int_{B_{2-s}(p)\setminus Z_{|w|} } |\Delta |w||
+\lim_{r_i\to 0}\int_{B_{2-s}(p)\cap\partial U_{r_i}}
|\nabla|w||
\end{align}
 which suffices to complete the proof. 

\end{proof}

\begin{remark} 
Note that for the proof of the mass bound in Lemma \ref{l:bochner_general}, on which the mass bound in Theorem 
\ref{t:hessian_estimate_intro} is based, it is crucial that the 
singular term has the correct sign: $\int_{B_2(p)\cap\partial U_{r_i}}\varphi \cdot |\nabla|\omega||\geq 0$, where
$\varphi\geq 0$.
\end{remark}

Now we can finish the proof of Theorem \ref{t:hessian_estimate_intro}.  
First, we recall the statement:

{\it For every $\epsilon>0$ there exists $\delta(n,\epsilon)>0$ such that if 
$\Ric_{M^n}\geq -\delta$, with $u:B_2(p)\to \dR^k$ a 
$\delta$-splitting map, 
then the following hold:}
\begin{enumerate}
\item {\it There exists $\alpha(n)>0$ such that for each $1\leq a\leq k$,}
\begin{align}
\label{e:higher_1}
\fint_{B_{3/2}(p)} \frac{|\nabla^2 u^a|^2}{|\nabla u^a|^{1+\alpha}} < \epsilon\, .
\end{align}

\item
{\it Let $\omega^\ell\equiv du^1\wedge\cdots\wedge du^\ell$, $1\leq\ell\leq k$.  The Laplacians $\Delta |\omega^\ell|$, taken in the distributional sense, are Borel measures with singular part a locally finite nonnegative Borel measure supported on $\partial Z_{|\omega^\ell |}$. For $\ell=1$, the singular part vanishes.  The normalized mass of $\Delta|\omega^\ell|$  satisfies}
\begin{equation}\label{kato2}
\fint_{B_{3/2}(p)} \big|\Delta |\omega^\ell|\big| < \epsilon\, .
\end{equation}
\end{enumerate}

\begin{proof}[Proof of Theorem \ref{t:hessian_estimate_intro}.]
We begin by proving (1).\footnote{
We remind the reader that in our subsequent applications, we encounter
only the case $\alpha=0$. Moreover, in view of Lemma \ref{l:bochner_general}, our subsequent
arguments would go through even without knowing
that the singular part of $\Delta|\nabla u|$ is absent. However, for the sake of completeness, we start
by considering all $0\leq \alpha<1$ and then specialize to the case $0\leq \alpha<\frac{1}{n-1}$, in which
we can give an effective estimate.}

The main observation is that for all $0\leq \alpha<1$ we have that the distributional Laplacian $\Delta |\nabla u|^{1-\alpha}$ satisfies
\begin{align}\label{e:dist_bochner_k=1}
\Delta |\nabla u|^{1-\alpha} = (1-\alpha)\frac{\Big(|\nabla^2 u|^2-(1+\alpha)|\nabla|\nabla u||^2 
+ \Ric(\nabla u,\nabla u)\Big)}{|\nabla u|^{1+\alpha}}\, .
\end{align}
In particular, unlike for $\omega^\ell$ with $\ell>1$, there is no possibility of a
singular contribution.  The proof
of this is similar to arguments in \cite{Dong_Nodal}, but for the sake of convenience,
we will outline it here.  There are two key facts which play a role in the vanishing of the singular part of $\Delta |\nabla u|$:
\begin{enumerate}
\item[(a)] The critical set $Z_{|\nabla u|}$ has Hausdorff dimension $\leq n-2$. 
\vskip2mm

\item[(b)]  $u$ vanishes to finite order at each point of $x\in Z_{|\nabla u|}$.  That is,  $u$ has a leading order Taylor expansion at $x$ of degree $k_x\geq 1$, with $k_x$ uniformly bounded on compact subsets. 
\end{enumerate}

The previous two properties are standard. They follow by working in a sufficiently smooth coordinate chart and using the monotonicity of the frequency, see \cite{HanLin_Book}, \cite{ChNaVa}, \cite{NaVa_CriticalSets}.  Note that  the frequency is not monotone until one restricts to a sufficiently
 regular coordinate chart.  Since in our situation, there is no a priori estimate on the size of such a coordinate chart,
 although there is finite vanishing order at each point, 
there is no {\it a priori} estimate on the size of the vanishing order.

Now let us finish outlining the proof of \eqref{e:dist_bochner_k=1}.  Let $\varphi$ be a smooth function with support contained in $B_2(p)$.  Put $S_r(\, \cdot\,) =\partial T_r(\, \cdot\,)$.  Now $Z_{|\nabla u|}$ is a closed set which satisfies the Hausdorff dimension estimate of (a).  While this is sufficient, to simplify the argument we use \cite{ChNaVa}, \cite{NaVa_CriticalSets}, to see that the following Minkowski estimate holds:
\begin{align}\label{e:nodal_volume_est}
\Vol(S_{r}(Z_{|\nabla u|}\cap \text{supp}\, \varphi))<C r\, .
\end{align}
Then we compute
 \begin{align}\label{e:bochner0}
\int_{B_2(p)} \Delta\varphi\cdot |\nabla u|^{1-\alpha} &= 
\lim_{r\to 0}
\int_{B_2(p)\setminus T_{r}(Z_{|\nabla u|})} \Delta\varphi\cdot |\nabla u|^{1-\alpha} 
\notag\\
& = 
-\lim_{r\to 0}
\int_{B_2(p)\setminus T_{r}(Z_{|\nabla u|})}
\langle\nabla\varphi,\nabla|\nabla u|^{1-\alpha}\rangle 
+\lim_{r\to 0}\int_{S_{r}(Z_{|\nabla u|})}
N(\varphi)\cdot |\nabla u|^{1-\alpha}\, ,
\notag\\
& = -\lim_{r\to 0}\int_{B_2(p)\setminus T_{r}(Z_{|\nabla u|})}
\langle\nabla\varphi,\nabla|\nabla u|^{1-\alpha}\rangle \, ,
\notag\\
& = \lim_{r\to 0} \int_{B_2(p)\setminus T_{r}(Z_{|\nabla u|})} \varphi\cdot\Delta|\nabla u|^{1-\alpha} -\frac{(1-\alpha)}{2} \lim_{r\to 0} \int_{B_2(p)\cap S_{r}(Z_{|\nabla u|})}\varphi\cdot
\frac{N(|\nabla u|^2)}{|\nabla u|^{1+\alpha}}\, , \notag\\
& = (1-\alpha)\int_{B_2(p)\setminus Z_{|\nabla u|}} \varphi\cdot \left(\frac{\Big(|\nabla^2 u|^2-(1+\alpha)|\nabla|\nabla u||^2 
+ \Ric(\nabla u,\nabla u)\Big)}{|\nabla u|^{1+\alpha}}\right)\, ,
\end{align}
where in dropping the last boundary term, we have used
\eqref{e:nodal_volume_est} and finite vanishing order (b)  to estimate
\begin{align}
\lim_{r\to 0} \int_{B_2(p)\cap S_{r}(Z_{|\nabla u|})}\varphi\cdot
\frac{N(|\nabla u|^2)}{|\nabla u|^{1+\alpha}} &= 2\lim_{r\to 0} \int_{B_2(p)\cap S_{r}(Z_{|\nabla u|})}\varphi\cdot
\frac{|\nabla|\nabla u||}{|\nabla u|^{\alpha}}&\leq C\lim_{r\to 0} r^{1} r_i^{-1+(1-\alpha)}=C\lim_{r\to 0} r^{1-\alpha}\to 0\, .
\end{align}
For a related argument, see \cite{Dong_Nodal}.

To finish the proof, observe that since ${\rm trace}(\nabla^2 u) = \Delta u = 0$, 
it follows if $\lambda_1,\ldots,\lambda_n$ are the eigenvalues of $\nabla^2u$ then $\sum \lambda_i = 0$.  
In particular, if $\lambda_n$ is the largest eigenvalue then by the Schwarz inequality,
\begin{align}
\lambda_1^2+\cdots+\lambda^2_n \geq \frac{1}{n-1}(\lambda_1+\cdots+\lambda_{n-1})^2 + \lambda_n^2 
\geq \frac{n}{n-1}\lambda_n^2\, .
\end{align}
This leads to the improved Kato inequality
\begin{align}
|\nabla^2 u(\rv)|^2\leq \big(1-\frac{1}{n}\big)|\nabla^2 u|^2\, ,
\end{align}
where $\rv$ is any vector with $|\rv|=1$.

Thus, if rewrite 
\begin{align}
|\nabla |\nabla u|| = \Big|\nabla^2 u\left(\frac{\nabla u}{|\nabla u|}\right)\Big|\, ,
\end{align}
and apply the improved Kato inequality 
and $\Ric_{M^n}\geq -\delta$, we get 
\begin{align}
\label{e:katoineq}
\Delta |\nabla u|^{1-\alpha}
\geq \frac{1-(n-1)\alpha}{n}\frac{|\nabla^2 u|^2}{|\nabla u|^{1+\alpha}}-(1-\alpha)\delta |\nabla u|^{1-\alpha}\, ,
\end{align}
which gives nontrivial information  for any  $\alpha<\frac{1}{n-1}$, which we now assume.  
Namely, we get the distributional inequality
\begin{align}\label{e:bochner1}
\frac{|\nabla^2 u|^2}{|\nabla u|^{1+\alpha}}\leq C(n,\alpha)\Big(\Delta|\nabla u|^{1-\alpha}+\delta|\nabla u|^{1-\alpha}\Big)\, .
\end{align}

Finally let $\varphi\geq 0$ be a smooth function as in (\ref{cutoff1}), (\ref{cutoff2}), with ${\rm supp}\,\varphi\subset B_2(p)$,
$|\nabla\varphi|,|\Delta\varphi|\leq C(n)$.  

By multiplying both sides of \eqref{e:bochner1} by $\varphi$ and integrating we obtain
\begin{align}
\int_{B_2(p)} \varphi \frac{|\nabla^2 u|^2}{|\nabla u|^{1+\alpha}}
&\leq C(n,\alpha)\int_{B_2(p)}\Big(\varphi \Delta|\nabla u|^{1-\alpha}+\varphi\delta|\nabla u|^{1-\alpha}\Big)\, , \notag\\
&\leq C(n,\alpha)\int _{B_2(p)}\Delta\varphi\Big(|\nabla u|^{1-\alpha}-\fint_{B_2(p)}|\nabla u|^{1-\alpha}\Big) 
+ C(n,\alpha)\delta\int_{B_2(p)}|\nabla u|^{1-\alpha}\, ,\notag\\
&\leq C(n,\alpha)\int_{B_2(p)} \Big||\nabla u|^{1-\alpha}-\fint_{B_2(p)}|\nabla u|^{1-\alpha}\Big| 
+ C(n,\alpha)\delta\int_{B_2(p)}|\nabla u|^{1-\alpha}\, ,
\end{align}

Now we use that if $u$ is a harmonic $\delta$-splitting map then $|\nabla u|^{1-\alpha}$ is
 bounded and $\fint_{B_2(p)} \Big||\nabla u|^{1-\alpha}-\fint_{B_2(p)}|\nabla u|^{1-\alpha}\Big|$ is small.
  In particular, for $\delta$ sufficiently small, we have 
\begin{align}
\fint_{B_{3/2}(p)}\frac{|\nabla^2 u|^2}{|\nabla u|^{1+\alpha}}
&\leq C(n)\fint_{B_2(p)} \varphi \frac{|\nabla^2 u|^2}{|\nabla u|^{1+\alpha}}\notag\\
&\leq C(n,\alpha)\fint_{B_2(p)} \Big||\nabla u|^{1-\alpha}-\fint_{B_2(p)}|\nabla u|^{1-\alpha}\Big|
 + C(n,\alpha)\delta\fint_{B_2(p)}|\nabla u|^{1-\alpha}\leq \epsilon\, ,
\end{align}
which proves (\ref{e:higher_1}).

\begin{remark}
\label{r:cutoff}
 For $0\leq \alpha < \frac{1}{n-1}$, there is another way of seeing that \eqref{e:dist_bochner_k=1} holds in the distributional sense, 
which uses only the fact that $Z_{|\nabla u|}\cap{\rm supp}\, \varphi$ has  Hausdorff dimension $\leq n-\frac{n}{n-1}$ and  the improved Kato inequality.
From the Hausdorff dimension bound,
 it follows that there is a nondecreasing sequence of cutoff functions $\psi_i$ converging
pointwise to $1$ on $B_2(p)\setminus (Z_{|\nabla u|}\cap {\rm supp}\, \varphi)$ each of which vanishes in a neighborhood of
$Z_{|\nabla u|}$ and such that  $|\nabla \psi_i|_{L^q}\to 0$, for all $q<\frac{n}{n-1}$. For 
the case $\alpha=0$, the claim follows by applying the divergence
theorem to the vector fields $\psi_i\nabla |\nabla u|$, noting that 
$|\nabla |\nabla u|\, |\in L^\infty\subset L^{q'}$
and using H\"older's inequality.  For $0<\alpha<\frac{1}{n-1}$, one uses an iterative version of the above argument. 
 For additional details on this instance of the divergence
theorem, see  e.g. Section 2 of
\cite{Cheeger}.
\end{remark}

Next we prove  (2).  The vanishing of the singular part for $\ell=1$ is contained in
part (1).

By invoking Lemma \ref{l:bochner_general},  all that remains
is to bound from below, the term,
 $\langle\Delta \omega^\ell,\frac{\omega^\ell}{|\omega^\ell|}\rangle_{-}$, in (\ref{kato2}).
On  $B_2(p)\setminus Z_{|\omega^\ell|}$, by Bochner's formula, we have
\begin{align}
\label{e:lower}
\Delta \omega^\ell = \sum_a du^{1}\wedge\cdots \Ric(du^{a})\wedge\cdots\wedge du^\ell 
+ 2\sum_{a\ne b,j} du^{1}\wedge \nabla_j(du^{a})\wedge\cdots\wedge 
\nabla_j(du^{b})\wedge\cdots\wedge du^\ell\, ,
\end{align}
from which it follows that
\begin{align}
\label{e:lower1}
\langle\Delta \omega^\ell,\frac{\omega^\ell}{|\omega^\ell|}\rangle_{-}
\geq -C(n)\left(\delta
+\sum_a|\nabla u^j|^2\right)|\omega^\ell|\, .
\end{align}
Since $u$ is a $\delta$-splitting map, by using (\ref{e:mass})
of Lemma \ref{l:bochner_general}, this suffices to complete the proof.
\end{proof}

\begin{remark} A simple example of a harmonic map $u:\R^n\to \R^k$, for which the distributional Laplacian
 $\Delta |\omega|$ has a singular part with positive mass 
 is furnished by the $2$-form $dx\wedge d(x^2-y^2)=-2y\cdot dx\wedge dy$
 (which can be thought of as depending on $(n-2)$ additional variables).
 We do not know whether an $\epsilon$-splitting map with small 
 $\epsilon$ can furnish such an example, though this seems within reason.
\end{remark}

\subsection{Proof of the Transformation theorem}\label{t:transformation}

In this subsection, we prove the Transformation theorem (Theorem \ref{t:transformation_theorem_intro})
which
constitutes the
technical heart of the Slicing theorem (Theorem \ref{t:slicing_intro}). 
 We will assume for notational simplicity that $M^n$ is complete, but it is an
 easy exercise to show that this may be weakened to the local assumption that $B_4(p)$ has compact closure in $M^n$.
First we recall the definition of the  singular scale:

{\it Let $u:B_2(p)\to\dR^k$ be a harmonic function.  For $\delta>0$ let us define for $x\in B_1(p)$ the  singular 
scale $s^\delta_x\geq 0$ as the infimum of all radii $s$, such that for all $s< r<\frac{1}{2}$ and all $1\leq \ell\leq k$ 
we have the estimate
$$
r^2\fint_{B_r(x)} |\Delta|\omega^\ell|| \leq \delta \fint_{B_r(x)}|\omega^\ell|\, ,
$$
where $\omega^\ell = du^{1}\wedge\cdots\wedge du^{\ell}$.  \\
}


Next recall that Theorem \ref{t:transformation_theorem_intro} states the following.

{\it For every $\epsilon>0$ there exists 
$\delta=\delta(n,\epsilon)>0$ such that if 
$\Ric_{M^n}\geq -\delta$
 and $u:B_2(p)\to \dR^k$  is a harmonic $\delta$-splitting map,
 then for each $x\in B_1(p)$ and $r\geq s^\delta_x$ there exists a lower triangular matrix $A=A(x,r)$ with positive diagonal entries such
 that $A\circ u:B_r(x)\to \dR^k$ is a harmonic $\epsilon$-splitting map.
}

\begin{proof}[Proof of Theorem \ref{t:transformation_theorem_intro}.]
 The strategy will be a proof by induction. Thus, 
 we will begin with the simplest case of $k=1$.  The following is a slightly more general form of the statement 
we wish to prove.
 
\begin{lemma}\label{l:transformation_theorem_k1}
Let $u:B_{2r}(x)\to \dR$ be a harmonic function with $r\leq 1$.  
Then for every $\epsilon>0$ there exists $\delta(n,\epsilon)>0$ such that if $\Ric_{M^n}\geq -(n-1)\delta$ and
\begin{align}\label{e:k1:1}
r^2\fint_{B_{2r}(x)} |\Delta|\nabla u|| \leq \delta \fint_{B_{2r}(x)}|\nabla u|\, ,
\end{align}
then for $A=\Big(\fint_{B_{r}(x)}|\nabla u|\Big)^{-1}>0$ we have that $A\circ u:B_{r}(x)\to \dR$ is an $\epsilon$-splitting map.
\end{lemma}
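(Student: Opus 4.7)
The plan is to rescale $\bar u := Au$ with $A = (\fint_{B_r(x)}|\nabla u|)^{-1}$, so that $\fint_{B_r(x)}|\nabla \bar u| = 1$, and then verify the three conditions of Definition \ref{d:intro} for $\bar u$ on $B_r(x)$. Both the hypothesis and the $\epsilon$-splitting conditions are invariant under constant rescaling, so after rescaling the hypothesis reads $r^2 \fint_{B_{2r}(x)}|\Delta|\nabla \bar u|| \leq \delta \fint_{B_{2r}(x)}|\nabla \bar u|$.

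The first ingredient is an integrated Hessian bound. By the improved Kato inequality together with Bochner, as in the proof of Theorem \ref{t:hessian_estimate_intro}(1) with $\alpha=0$, one has the distributional inequality $\tfrac{|\nabla^2 \bar u|^2}{|\nabla \bar u|} \leq C(n)(\Delta|\nabla \bar u| + \delta|\nabla \bar u|)$ on $B_{2r}(x)$. Pairing with a standard cutoff equal to $1$ on $B_{3r/2}(x)$ and supported in $B_{2r}(x)$, the $\Delta|\nabla \bar u|$ contribution is bounded by $\int_{B_{2r}}|\Delta|\nabla \bar u||$, which by hypothesis is at most $C(n)\delta r^{-2}\int_{B_{2r}}|\nabla \bar u|$. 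This yields the weighted Hessian bound $r^2 \int_{B_{3r/2}}\tfrac{|\nabla^2 \bar u|^2}{|\nabla \bar u|} \leq C(n)\delta\int_{B_{2r}}|\nabla \bar u|$.

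The main obstacle is showing that $|\nabla \bar u|$ is pointwise close to $1$ on $B_r(x)$. Bochner with the ordinary Kato inequality gives $\Delta|\nabla \bar u| \geq -(n-1)\delta|\nabla \bar u|$ weakly, so $|\nabla \bar u|$ is almost subharmonic and Moser's mean-value inequality gives $\sup_{B_{3r/2}}|\nabla \bar u| \leq C(n)\fint_{B_{2r}}|\nabla \bar u|$. To upgrade this from a $C(n)$-multiplier to the sharp $1+\epsilon$, I would decompose $|\nabla \bar u| = h + v$ on $B_{3r/2}(x)$, where $h$ is harmonic with the same boundary values and $v$ solves $\Delta v = \Delta|\nabla \bar u|$ with Dirichlet zero boundary values. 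The harmonic piece $h$ has arbitrarily small oscillation on $B_r(x)$ by Cheng-Yau/Cheeger-Colding gradient estimates, while $\|v\|_{L^\infty(B_r)}$ is small by a Green's function argument exploiting the $L^1$-smallness of $\Delta|\nabla \bar u|$ together with volume doubling under $\Ric\geq -(n-1)\delta$. Combined with the normalization $\fint_{B_r}|\nabla \bar u| = 1$, this forces $|\nabla \bar u|(y) \in [1-\epsilon, 1+\epsilon]$ for all $y \in B_r(x)$, for $\delta \leq \delta(n,\epsilon)$ small enough.

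The three splitting conditions then assemble: (1) is the sharp upper bound from the previous paragraph; (2) follows via the pointwise closeness, since $||\nabla \bar u|^2 - 1| = ||\nabla \bar u|-1|\,(|\nabla \bar u|+1) \leq C\epsilon$; and (3) follows from the weighted Hessian bound combined with $|\nabla \bar u| \geq 1-\epsilon$ and the sup bound on $|\nabla \bar u|$ (controlling $\int_{B_{2r}}|\nabla \bar u|$ by $C(n)\Vol(B_{2r})$), giving $r^2\fint_{B_r}|\nabla^2 \bar u|^2 \leq C(n)\delta \leq \epsilon^2$ for $\delta$ sufficiently small. The sharp pointwise gradient closeness is the technically delicate step: Moser iteration alone gives only a $C(n)$-multiplier on the mean rather than $1+\epsilon$, so one must use both the integral smallness of $\Delta|\nabla \bar u|$ and the normalization $\fint_{B_r}|\nabla \bar u|=1$ in an essential way to recover the sharp form.
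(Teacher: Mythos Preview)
Your overall architecture matches the paper: normalize so that the average of $|\nabla \bar u|$ equals $1$, obtain a weighted Hessian bound from the improved Kato inequality, push for the sharp sup bound $|\nabla \bar u|\le 1+\epsilon$, and then read off the three splitting conditions. The difficulty, as you correctly isolate, is the sharp pointwise control of $|\nabla\bar u|$. However, the mechanism you propose for this step does not work, and a second claim you make is actually false.

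\textbf{The Green's function step is a gap.} Writing $|\nabla\bar u|=h+v$ with $h$ harmonic and $\Delta v=\Delta|\nabla\bar u|$, you assert that $\|v\|_{L^\infty(B_r)}$ is small because $\Delta|\nabla\bar u|$ is small in $L^1$. This is not valid: the Dirichlet Green's function behaves like $d(y,z)^{2-n}$ near the diagonal (for $n\ge 3$), so an $L^1$ source only gives weak-$L^{n/(n-2)}$ control on $v$, not $L^\infty$. Volume doubling does not cure the diagonal singularity. The paper obtains the sharp upper bound by a different route: it uses that $|\nabla v|$ is \emph{almost subharmonic}, i.e.\ $\Delta|\nabla v|\ge -(n-1)\delta|\nabla v|$, so that $t\mapsto \int(|\nabla v|-1)\varphi\,\rho_t(y,\cdot)$ is almost nondecreasing up to boundary errors; evaluating at $t=r^2$ and using the $L^1$ closeness $\fint_{B_{2r}}\bigl||\nabla v|-1\bigr|\le C\delta^{1/2}$ then yields $|\nabla v|(y)\le 1+C\delta^{1/2}$. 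Note that it is the pointwise lower bound on $\Delta|\nabla v|$, not its $L^1$ smallness, that drives this step.

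\textbf{The pointwise lower bound $|\nabla\bar u|\ge 1-\epsilon$ need not hold.} Even under hypothesis \eqref{e:k1:1}, $|\nabla u|$ can vanish at points (or on a codimension-$2$ set); the paper never claims a pointwise lower bound, only the sharp upper bound together with $\fint\bigl||\nabla v|-1\bigr|\le C\delta^{1/2}$. Your derivation of (3), which divides the weighted Hessian inequality by $|\nabla\bar u|\ge 1-\epsilon$, therefore fails. The paper instead obtains (3) by multiplying the Bochner identity $\Delta|\nabla v|^2=2|\nabla^2 v|^2+2\Ric(\nabla v,\nabla v)$ by a cutoff and integrating, which only requires $\fint_{B_{3r/2}}\bigl||\nabla v|^2-1\bigr|\le C\delta^{1/2}$; this last estimate follows from the $L^1$ closeness of $|\nabla v|$ to $1$ combined with the sharp sup bound.
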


As in 
the proof of Theorem \ref{t:hessian_estimate_intro}, 
 the fact
that $u$ is harmonic leads to the improved Kato inequality,
$|\nabla|\nabla u^a|\,|^2\leq \frac{n-1}{n}|\nabla^2u^a|^2$,
from which we can compute
\begin{align}
\Delta |\nabla u| \geq \frac{1}{n}\frac{|\nabla^2 u|^2}{|\nabla u|}-(n-1)
\delta
|\nabla u|\, .
\end{align}
In particular, the estimate \eqref{e:k1:1} gives rise to the estimate
\begin{align}
r^2\fint_{B_{2r}(x)}\frac{|\nabla^2 u|^2}{|\nabla u|}\leq C(n)\delta\fint_{B_{2r}(x)}|\nabla u|\, ,
\end{align}
from which, as previously noted (see (\ref{e:svf2}), (\ref{e:svf3}) ) we get
\begin{align}
\label{e:fundamental}
r\fint_{B_{2r}(x)}|\nabla^2 u| \leq \Big(r^2\fint_{B_{2r}(x)}\frac{|\nabla^2 u|^2}{|\nabla u|}\Big)^{1/2}\cdot
\Big(\fint_{B_{2r}(x)}|\nabla u|\Big)^{1/2}\leq C\delta^{1/2}\fint_{B_{2r}(x)}|\nabla u|\, .
\end{align}

Let us put $v= \Big(\fint_{B_{2r}(x)}|\nabla u|\Big)^{-1}u$, so that $\fint_{B_{2r}(x)}|\nabla v|=1$.
  The lower Ricci bound implies that a Poincar\'e inequality holds. When combined with the last inequality 
this gives
\begin{align}
\fint_{B_{2r}(x)}\big||\nabla v|-1\big|\leq C(n)\delta^{1/2}\, .
\end{align}
By using the doubling property, we have after possible increasing $C(n)$, that for every $y\in B_{3r/2}(x)$,
\begin{align}\label{e:k1:2}
\fint_{B_{r/2}(y)}\big||\nabla v|-1\big|\leq C\delta^{1/2}\, .
\end{align}
In particular,    
\begin{align}
1-C\delta^{1/2}\leq\frac{\fint_{B_{2r}(x)}|\nabla v|}{\fint_{B_r(x)}|\nabla v|}\leq 1+C\delta^{1/2}\, .
\end{align}
Let us observe that we may also use the Ricci lower bound,
the Poincar\'e inequality, and the Harnack inequality to conclude the weak gradient estimate $\sup_{ B_{\frac{7}{4}r}(x)} |\nabla v|\leq C(n)$.  Now, if we can show that, for $\delta$ sufficiently
 small, the map $v:B_r(x)\to \dR$ is an $\epsilon/2$-splitting, then
 for $k=1$, the proof will be complete

Now as in \cite{ChC1}, let $\varphi\geq 0$ be a cutoff function satisfying $\varphi(y)=1$ if $y\in B_{5r/3}(x)$ with
 $\varphi(y)\equiv 0$ if $y\not\in B_{2r}(x)$, and such that $r|\nabla\varphi|, r^2|\Delta\varphi|\leq C(n)$.  
Let $\rho_t(y,dz)$ be the heat kernel on $M^n$.  Consider for $y\in B_{3r/2}(x)$ the one parameter family
\begin{align}
\label{e:sharp1}
\int \big(|\nabla v|-1\big)\varphi\,\rho_t(y,dz)\, .
\end{align}
Note that for all $y\in A(0,r)$, $z\in A(3r/2,2r)$ and $t\in [0,r^2]$,
we have $|\rho_t(y,dz)|<C(n)\Vol(B_{\sqrt{t}}(x))^{-1}$; see \eqref{e:hk1}.
It follows that for $t\in[0,r^2]$, we have
\begin{align}
\frac{d}{dt}\int_{B_{2r}(x)}
 \Big(|\nabla v|-1\Big)\varphi\,\rho_t(y,dz) 
&\geq  \int_{B_{2r}(x)}\left(\left( \frac{|\nabla^2 v|^2-|\nabla|\nabla v||^2}{|\nabla v|}-(n-1)\delta^2|\nabla v|  \right)\varphi
+2\langle \nabla |\nabla v|,\nabla\varphi\rangle + (|\nabla v|-1)\Delta\varphi\right)\rho_t(y,dz)\, ,\notag\\
&\geq -C(n)\delta^{2}
 -C(n)\int_{A(3r/2,2r)} \Big(r^{-1}|\nabla^2 v|+r^{-2}\big||\nabla v|-1\big|\Big)\rho_t(y,dz)\, ,\notag\\
&\geq -C\delta^{1/2}r^{-2}\, .
\end{align}
 Integrating this yields
\begin{align}
(|\nabla v|(y) -1)\leq C\delta^{1/2}+\int \Big(|\nabla v|-1\big)\varphi\,\rho_{r^2}(y,dz)
\leq C\delta^{1/2}+C\fint_{B_{2r}(x)} \big||\nabla v|-1\big|\leq C\delta^{1/2}\, .
\end{align}
In particular we have
\begin{align}
\label{e:sharp2}
\sup_{B_{3r/2}(x)}|\nabla v|\leq 1+C\delta^{1/2}\, .
\end{align}
Combining this with the integral estimate \eqref{e:k1:2} we get
\begin{align}\label{e:sharp3}
\fint_{B_{3r/2}(x)}\big||\nabla v|^2-1\big|\leq C\delta^{1/2}\, .
\end{align}

Now using the Bochner formula
\begin{align}
\Delta|\nabla v|^2 = 2|\nabla^2 v|^2 +2\Ric(\nabla v,\nabla v)\geq 2|\nabla^2 v|^2-C\delta^2|\nabla v|^2\, ,
\end{align}
 we can estimate
\begin{align}
\fint_{B_r(x)}|\nabla^2 v|^2&\leq C(n)\fint_{B_{3r/2}(x)}\varphi|\nabla^2 v|^2\\
&\leq C\fint_{B_{3r/2}(x)}\varphi\Big(\Delta(|\nabla v|^2-1) +
\delta
|\nabla v|^2\Big)\notag\\
&\leq C\fint_{B_{3r/2}(x)}|\Delta\varphi|\big||\nabla v|^2-1\big| + C
\delta
\fint_{B_{3r/2}(x)}|\nabla v|^2\, ,\notag\\
&\leq Cr^{-2}\delta^{1/2}\, .
\end{align}
Hence, for $\delta(n,\epsilon)$ sufficiently small, we have that $v$ is an $\epsilon/2$-splitting, 
which as previously remarked, proves the theorem for the case $k=1$.


We now turn to the proof of Theorem \ref{t:transformation_theorem_intro}, 
which will proceed by induction.


Assume the Theorem has been proved for some $k-1\geq 1$. 
We will prove the result for $k$ by arguing by contradiction.  

Thus, we can suppose that for some $\epsilon>0$ the result is false.  There is no harm in assuming 
$0<\epsilon\leq \epsilon(n)$ is sufficiently small, which we will do from time to time.  Then, for some $\delta_j\to 0$, we can find a sequence of spaces 
$(M^n_j,g_j,p_j)$ with $\Ric_{M^n_j}\geq -\delta_j$, 
and mappings $u_j:B_2(p_j)\to \dR^k$,
which are $\delta_j$-splitting mappings, 
for which there exists $x_j\in B_1(p_j)$ and
 radii $r_j\geq s^{\delta_j}(x_j)$, such that there is no lower triangular matrix 
 $A$ with positive diagonal entries, such that $A\circ u:B_{r_j}(x_j)\to \dR^k$ 
is an $\epsilon$-splitting map.  Without loss of generality, we can assume $r_j$ is the supremum of those radii for which 
there is no such matrix.  In particular, there exists such a matrix $A_j$ corresponding to the radius $2r_j$. 
Observe that $r_j\to 0$. Indeed, we can see this just by using the identity map $A= I$, since 
$\delta_j\to 0$ and $u:B_2(p)\to \dR^2$ is a 
$\delta_j$-splitting map.

Now, set $v_j=A_j \circ\big(u_j - u_j(x_j)\big)$
and 
consider the rescaled spaces $(M^n_j,g'_j,x_j)$ with $g'_j\equiv r_j^{-2}g$.
 Thus, $v_j
 :B_{2r^{-1}_j}(x_j)\to \dR^k$ 
 is a harmonic 
function on this space.  We have normalized so that $v(x_j)=0$. As before,
for all $2\leq r\leq 2r_j^{-1}$, there is a lower triangular matrix $A_r$
with positive entries on the diagonal, such that  $A_r\circ v_j:B_r(x_j)\to R^k$ is an $\epsilon$-splitting map
and with our current normalization,  $A_2=I$, the identity map.



\noindent
{\bf Note:} Throughout the remainder of the argument, when there is no danger of
confusion, for ease of notation, we will sometimes omit the subscript $j$ from various quantities 
including $v$ and $A$, which in
actuality depend on $j$. For example, we omit the subcript $j$ from the matrices $A_r, \, A_{2r}$
in Claim 1 below.

We will now break the proof  into a series of claims.\\

{\bf Claim 1:} For each $2\leq r\leq 2r_j^{-1}$ we have  
\begin{equation}
\label{e:c1}
(1-C(n)\epsilon)A_{2r}\leq A_{r}\leq (1+C(n)\epsilon)A_{2r}\,  .
\end{equation}

Since $A_{2r}\circ v:B_{2r}(x_j)\to \dR^k$ is an $\epsilon$-splitting map, 
we have 
\begin{align}
\fint_{B_{2r}(x_j)}\big|\langle\nabla(A_{2r}\circ v)^a,\nabla(A_{2r}\circ v)^b\rangle - \delta_{ab}\big|<\epsilon\, ,
\end{align}
and thus, by doubling of the volume measure, we have
\begin{align}
\fint_{B_{r}(x_j)}\big|\langle\nabla(A_{2r}\circ v)^a,\nabla(A_{2r}\circ v)^b\rangle - \delta^{ab}\big|<C(n)\epsilon\, .
\end{align}
However in addition we also have 
\begin{align}
\fint_{B_{r}(x_j)}\big|\langle\nabla(A_{r}\circ v)^a,\nabla(A_{r}\circ v)^b\rangle - \delta^{ab}\big|<\epsilon\, .
\end{align}
By using the Gram-Schmidt process, it follows that there exist lower triangular matrices, $T_1,T_2$
with $|T_1-I|<C(n)\epsilon$, $|T_2-I|<C(n)\epsilon$, such that 
\begin{align}
\fint_{B_{r}(x_j)}\langle(T_2A_{2r}\circ \nabla v)^a,(T_2A_{2r}\circ \nabla v)^b\rangle =\delta^{ab}\, ,
\notag
\end{align}
\begin{align}
\fint_{B_{r}(x_j)}\langle(T_1A_{r}\circ \nabla v)^a,(T_1A_{r}\circ \nabla v)^b\rangle =\delta^{ab}\, .
\notag
\end{align}
We can assume that $\epsilon$ has been chosen 
small enough that $T_1$, $T_2$ have positive diagonal entries which implies that the lower triangular
matrices
$T_1A_r$ and $T_2A_{2r}$ do as well. Define $H$ by
$$
(H)_{s,t}=\fint_{B_{r}(x_j)}\langle\nabla v_s,\nabla v_t\rangle \, .
$$
It follows from the above that we have two so-called Cholesky
decompositions of the positive definite symmetric matrix $H$; \cite{Golub_Van_Loan96}. Namely,
$((T_1A_r)^{-1}))^*(T_1A_r)^{-1}=((T_2A_{2r})^{-1}))^*(T_2A_{2r})^{-1}=H$.
Since for lower triangular matrices with positive diagonal entries and $H$ positive definite,
the Cholesky decomposition is unique, it follows that
$(T_1A_r)^{-1}=(T_2A_{2r})^{-1}$. Therefore, $T_1A_r=T_2A_{2r}$ which suffices to prove the claim.
 $\square$\\

Now let us record some very important consequences of Claim 1.  First, since by our normalization, $A_2\equiv I$, 
we have for $r\geq 2$ the sublinear growth estimate
\begin{align}
|A_r|,\, |A^{-1}_r|\leq r^{C(n)\epsilon}\, .
\end{align}
In particular, since $A_r\circ v:B_{r}(x_j)\to \dR^k$ is an $\epsilon$-splitting,
 and hence $\sup_{B_r(x_j)}|\nabla(A_r\circ  v)|\leq 1+\epsilon$, we have for any $2\leq r\leq r_j^{-1}$ the {\it sublinear growth} conditions
\begin{align}\label{e:k2:2}
&\sup_{B_r(x_j)}|\nabla v^a_j|\leq (1+C\epsilon)r^{C\epsilon}\, ,\notag\\
&\sup_{B_r(x_j)}|\omega_j|\leq (1+C\epsilon)r^{C\epsilon}\, ,\notag\\
&r^2\fint_{B_r(x_j)} |\nabla^2 v^a_j|^2\leq C\epsilon r^{C\epsilon}\, ,
\end{align}
where $\omega_j\equiv dv_{j}^1\wedge\cdots\wedge dv_{j}^k$ is the pullback $k$-form.  

\begin{remark}
\label{r:slg}
The sublinearity of the growth estimates in (\ref{e:k2:2}) will play a fundamental role in the proof; see in particular, Claims 3--5.
\end{remark}

Our first application of these estimates is the following, which uses the induction statement to conclude
that $v_{j}^1,\ldots,v_{j}^{k-1}$ are improving in their splitting behavior as $j\to\infty$.  \\

{\bf Claim 2:}  There exists a lower triangular matrix $A$ such that $A\circ v: B_2(x_j)\to \dR^k$ is a 
$C(n)\epsilon$-splitting while for each $R>0$ the restricted map $A\circ v:B_R(x_j)\to \dR^{k-1}$, obtained by 
dropping the last function, is an $\epsilon_j(R)$-splitting map, where $\epsilon_j(R)\to 0$ if $j\to\infty$
and $R$ is fixed.\\

To prove the claim let us first denote by $\tilde v:B_{2r_j^{-1}}(x_j)\to \dR^{k-1}$ the map obtained by 
dropping the last function $v^k$.  By our induction hypothesis, there exists for every $r\geq 2$, a
lower 
triangular matrix $\tilde A_r\in GL(k-1)$ with positive diagonal entries, such that $\tilde A_r\circ \tilde v:B_{r}(x_j)\to \dR^{k-1}$ is an 
$\epsilon_j$-splitting map with $\epsilon_j\to 0$.  Since both $\tilde v$ and $\tilde A_2\circ \tilde v$ are in 
particular $\epsilon$-splittings on $B_2(x_j)$ with $\tilde A_2$ lower triangular, then arguments similar to
those in Claim 1 give $|\tilde A_2-I|<C(n)\epsilon$, and  the growth estimates
\begin{align}\label{e:k:1}
\sup_{B_r(x_j)}|\nabla (\tilde A_2\circ \tilde v)|&\leq (1+C\epsilon_j)r^{C\epsilon_j}\, ,\notag\\
r^2\fint_{B_r(x_j)} |\nabla^2 (\tilde A_2\circ \tilde v)|^2&\leq C\epsilon_j r^{C\epsilon_j}\, .
\end{align}

In particular, we can use the Hessian estimate and a Poincar\'e inequality to conclude
\begin{align}
\Big|\fint_{B_2(x)}\big|\langle \nabla(\tilde A_2\circ \tilde v)^a,&\nabla(\tilde A_2\circ \tilde v)^b\rangle-\delta^{ab}\big|
-\fint_{B_R(x)}\big|\langle \nabla(\tilde A_2\circ \tilde v)^a,\nabla(\tilde A_2\circ \tilde v)^b\rangle-\delta^{ab}\big|\Big|\notag\\
&\leq \fint_{B_2(x)}\Big|\big|\langle \nabla(\tilde A_2\circ \tilde v)^a,\nabla(\tilde A_2\circ\tilde v)^b\rangle-\delta^{ab}\big|
-\fint_{B_R(x)}\big|\langle \nabla(\tilde A_2\circ \tilde v)^a,\nabla(\tilde A_2\circ \tilde v)^b\rangle-\delta^{ab}\big|\Big|\notag\\
&\leq C(n,R)\fint_{B_R(x)}\Big|\big|\langle \nabla(\tilde A_2\circ \tilde v)^a,\nabla(\tilde A_2\circ \tilde v)^b\rangle-\delta^{ab}\big|
-\fint_{B_R(x)}\big|\langle \nabla(\tilde A_2\circ \tilde v)^a,\nabla(\tilde A_2\circ \tilde v)^b\rangle-\delta^{ab}\big|\Big|\notag\\
&\leq C(n,R)\fint_{B_R(x)}\big|\nabla \langle \nabla(\tilde A_2\circ \tilde v)^a,\nabla(\tilde A_2\circ \tilde v)^b\rangle\big|
\leq \epsilon_j(R)\to 0\, .
\end{align}
Thus, for each $R>0$ fixed we have  $\tilde A_2\circ \tilde v:B_R(x_j)\to \dR^{k-1}$ is an $\epsilon_j(R)$-splitting,
where $ \epsilon_j(R)\to 0$
 when $j\to\infty$ with $R$.
Finally, if we let $A=\tilde A_2\oplus 1$ act on $\dR^k$ by fixing the last component, then we have proved the claim. $\square$\\

\noindent
{\bf Note:} As a point of notation, we mention that as above, from now on, the symbol, $\epsilon_j(R)$, 
 will always denote a quantity, regardless of origin, satisfying $\epsilon_j(R)\to 0$,
 when $j\to\infty$ with $R$ 
  fixed.

\noindent
{\bf Note:} In the course of the proof, on more than one occasion, we will replace $v$ by $A\circ v$, 
where $A$ is a lower triangular matrix with positive diagonal entries and with 
$|A-I|<C(n)\epsilon$.  In particular, from 
this point on in the proof, we will assume $v^a$ has been normalized as in Claim 2.  
Thus $v^a:B_2(x_j)\to \dR^k$ will be taken to be an $C\epsilon$-splitting map, while $v^a:B_R(x_j)\to \dR^{k-1}$ is an $\epsilon_j(R)$-splitting map.  \\

A useful consequence is that for each $R>0$ and $1\leq \ell\leq k-1$,
we have 
\begin{align}\label{e:k:3}
\fint_{B_R(x_j)}|\nabla^2 v^a|^2\leq \epsilon_j(R)\to 0\, .
\end{align}

\begin{remark}
\label{r:ltg}
By way of orientation, we mention at this point that our long term goal is to show 
$$\fint_{B_R(x_j)}|\nabla^2 v^k_j|^2\leq \epsilon_j(R)\to 0\, $$
which is the content of Claim 6. Once this has been achieved, the proof will be virtually complete.
\end{remark}

 Our next goal is to study in more detail the properties of $\omega_j = \omega^k_j=dv^1_{j}\wedge\cdots\wedge dv^k_{j}$. 
 First, since
\begin{align}
\nabla\omega_j = \nabla(dv^1_j)\wedge\cdots\wedge dv^k_j+\cdots+dv^1_j\wedge\cdots\wedge\nabla(dv^k_j)\, ,
\end{align}
we can use \eqref{e:k2:2} to obtain for $2\leq r\leq r_j^{-1}$,
\begin{align}\label{e:k2:2.2}
r^2\fint_{B_r(x_j)}|\nabla \omega_j|^2\leq C\epsilon\,r^{C\epsilon}\, .
\end{align}

Recall that our underlying assumptions are that for every $r\geq 1$,
we have
\begin{align}
r^2\fint_{B_r(x_j)}|\Delta|\omega_j|\, |\leq \delta_j \fint_{B_r(x_j)}|\omega_j|\, .
\end{align}
By combining this with \eqref{e:k2:2}, we get that for every $2\leq r\leq r_j^{-1}$,
\begin{align}\label{e:k2:1}
r^2\fint_{B_r(x_j)}|\Delta|\omega_j|\, |\leq C\delta_j r^{C\epsilon}\, . 
\end{align}

Now we are ready to make our third claim:\\

{\bf Claim 3:} For each fixed $R\geq 1$, we have  $\fint_{B_R(x_j)}\big||\omega_j|^2-\fint_{B_R(x_j)}|\omega_j|^2\big|\to 0$.\\

The proof of Claim 3 will rely on
the sublinear growth estimates 
(\ref{e:k2:2}), (\ref{e:k2:2.2}),  (\ref{e:k2:1}), standard heat kernel 
estimates for almost nonnegative
 Ricci curvature, (\ref{e:hk1})--(\ref{e:hk2}) and the Bakry-Emery 
gradient estimate for the heat kernel (\ref{e:be}).  In particular, the sublinear growth
condition in  (\ref{e:k2:2.2}) enters crucially in (\ref{e:k2:4}) and its consequence  (\ref{e:S}).

Fix $R\geq 1$ and consider the maximal function
\begin{align}
M^R(x)\equiv \sup_{r\leq R} \fint_{B_r(x)}|\Delta|\omega_j||\, ,
\end{align}
for $x\in B_R(x_j)$.  
Since by the Bishop-Gromov inequality, the Riemannian measure is doubling, we can combine the usual 
maximal function arguments with \eqref{e:k2:1} and conclude that there exists a subset $U_j\subseteq B_{R}(x_j)$ such that
\begin{align}
\label{e:mf}
&\frac{\Vol(B_R(x_j)\setminus U_j)}{\Vol(B_R(x_j))}\leq \epsilon_j(R)\to 0\, ,\notag\\
&M^R(x)\leq \epsilon_j(R)\to 0\, ,
\end{align}
for all $x\in U_j$.  Relation (\ref{e:mf}) will be used in (\ref{e:fundamental1}).

As with the symbol, $\epsilon_j(R)$, the symbol $\epsilon_j(S)$
 will always denote a quantity, regardless of origin, satisfying $\epsilon_j(S)\to 0$
 when $j\to\infty$ with $S$ fixed.

 Now let $\varphi\geq 0$ be a smooth cutoff function as in \cite{ChC1}, such that
$\varphi\equiv 1$ on $B_{r_j^{-1}/2}(p)$, ${\rm supp}\, \varphi\subset B_{r_j^{-1}}(p)$,
and such that $r^{}_j|\nabla\varphi|,r_j^{2}| $$ $$ \Delta\varphi|\leq C(n)$.  For $x\in B_R(x_j)$ let us consider the function
\begin{align}
\int_{M^n_j} |\omega_j|\varphi \rho_t(x,dy)\, ,
\end{align}
where $\rho_t$ is the heat kernel centered at $x$. 
  Then we have the equality
\begin{align}\label{e:k:2}
\frac{d}{dt}\int_{M^n_j} |\omega_j|\varphi \rho_t(x,dy) = \int \Big(\Delta|\omega_j|\varphi
+2\langle\nabla |\omega_j|,\nabla \varphi\rangle+|\omega_j|\Delta\varphi\Big)\rho_t(x,dy)\, .
\end{align}
As a consequence of our assumption that $\Ric_{M^n_j}\geq -\delta_j r_j^{2}$, 
we have 
the usual heat kernel estimates (\cite{SY_Redbook})
\begin{equation}
\label{e:hk1}
\rho_t(x,y)\leq C(n)\Vol(B_{\sqrt{t}}(x))^{-1/2}\Vol(B_{\sqrt{t}}(y))^{-1/2}
e^{-\frac{d^2(x,y)}{4t}+C(n)\delta_j r_j^{2}t}\, ,
\end{equation}
which implies that for $y\in B_{r_j^{-1}}(x)$ and $t\leq r_j^{-2}$, we have
\begin{equation}
\rho_t(x,y)\leq C(n)\Vol(B_{\sqrt{t}}(x))^{-1/2}\Vol(B_{\sqrt{t}}(y))^{-1/2}e^{-\frac{d^2(x,y)}{4t}}\, .
\end{equation}
 We can use the volume doubling and 
monotonicity properties to observe the following useful inequality. If $y\in B_r(x)$, then 
\begin{align}
\label{e:hk2}
\rho_t(x,y)&\leq C(n)\Big(\frac{\Vol(B_{r}(x))}{\Vol(B_{\sqrt{t}}(x))^{1/2}\Vol(B_{\sqrt{t}}(y))^{1/2}}\Big)\Vol(B_r(x))^{-1}
e^{-\frac{d^2(x,y)}{4t}}\notag\\
&\leq C(n)\Big(\frac{r}{t^{1/2}}\Big)^n\Vol(B_r(x))^{-1}e^{-\frac{d^2(x,y)}{4t}}\, .
\end{align}

Let us fix $S>>R\geq 2$ and consider times $0<t\leq S^2$.  
By combining the heat kernel estimate, (\ref{e:hk2}), with the growth estimates
 (\ref{e:k2:2}), (\ref{e:k2:2.2}),
for all $x\in B_R(x_j)$ and $0<t\leq S^2$,
we can bound the second two terms of \eqref{e:k:2} by
\begin{align}
\label{e:last2}
\int_{M^n_j} \big|\langle\nabla |\omega_j|,\nabla \varphi\rangle+|\omega_j|\,|\Delta\varphi|\big|\,\rho_t(x,dy) 
&= \int_{A_{r_j^{-1}/2,r_j^{-1}}(x_j)} 
\big|\langle\nabla |\omega_j|,\nabla \varphi\rangle+|\omega_j|\,|\Delta\varphi|\big|\rho_t(x,dy)\, \notag\\
&\leq Cr_jr_j^{1-C\epsilon}\Vol(B_{r_j^{-1}}(x_j)) \Vol(B_{\sqrt{t}}(x))^{-1/2}\Vol(B_{\sqrt{t}}(y))^{-1/2}\,e^{-\frac{1}{4t}r^{-2}_j}\notag\\
&\leq Cr_j^{2-C\epsilon}\Big(\frac{1}{r_j t^{1/2}}\Big)^{n}e^{-\frac{1}{4t}r^{-2}_j}\leq\epsilon_j(S)\to 0\, .
\end{align}

To estimate the first term of \eqref{e:k:2} is more involved.  To this end,
we begin with an estimate in which we must
restrict attention to points $x\in U_j\subseteq B_R(x_j)$; see (\ref{e:mf}).  
Below, we write $t=r^2$ and so, we consider $0<r<S$.
We also put $r_\alpha=2^\alpha r$.  Suppose first that $\sqrt t=r\leq R$.
Then we have
\begin{align}
\label{e:fundamental1}
\int_{M^n_j} \big|\Delta|\omega_j|\big|\varphi\rho_{r^2}(x,dy) 
&= \int_{B_{r}(x)}\big|\Delta|\omega_j|\big|\varphi\rho_{r^2}(x,dy)
 + \sum_\alpha \int_{A_{r_\alpha,r_{\alpha+1}}(x)}\big|\Delta|\omega_j|\big|\varphi\rho_{r^2}(x,dy)\, 
 \\
&\leq C(n)\fint_{B_{r}(x)}\big|\Delta|\omega_j|\, \big| 
+ C(n)\sum_\alpha \Big(\frac{r_\alpha}{r}\Big)^ne^{-\big(r^{-1}r_\alpha\big)^2}\fint_{B_{2^\alpha r}(x)}
\big|\Delta|\omega_j|\, \big|\notag\\
&\leq C(n)\fint_{B_{r}(x)}\big|\Delta|\omega_j|\, \big| 
+ C(n)\sum_\alpha 2^{n\alpha}e^{-2^{2\alpha}}\fint_{B_{2^\alpha r}(x)}\big|\Delta|\omega_j|\, \big|\notag 
\\
 &= C(n)\fint_{B_{r}(x)}\big|\Delta|\omega_j|\big| 
+ C(n)\sum_{r^\alpha\leq R} 2^{n\alpha}e^{-2^{2\alpha}}\fint_{B_{2^\alpha r}(x)}\big|\Delta|\omega_j|\,\big |
+ C(n)\sum_{r_\alpha>R} 2^{n\alpha}e^{-2^{2\alpha}}\fint_{B_{2^\alpha r}(x)}\big|\Delta|\omega_j|\, \big|\, \notag
\\
 &\leq C\epsilon_j(R)+ C\sum_{r_\alpha\leq R} 2^{n\alpha}e^{-2^{2\alpha}}\epsilon_j(R)+ CR^{-2}\sum_{r^\alpha>R} 2^{n\alpha}e^{-2^{2\alpha}}\delta_j\to 0\, .\notag
\end{align}
Note that in estimating the first two terms in the last line of 
(\ref{e:fundamental1}) 
we use the maximal function estimate (\ref{e:mf}),  which is the reason
for restricting attention to $x\in U_j$.  For the third term in the last line we use (\ref{e:k2:1}).

Similarly, $\sqrt t=r>R$, the first two terms on the last line of (\ref{e:fundamental1}) are absent 
and we just get
\begin{align}
\label{e:fundamental2}
\int_{M^n_j} \big|\Delta|\omega_j|\big|\varphi\rho_{r^2}(x,dy) \leq 
CR^{-2}\sum_\alpha 2^{n\alpha}e^{-2^{2\alpha}}\delta_j\to 0\, .\
\end{align}

By combining (\ref{e:k:2}), (\ref{e:last2}), (\ref{e:fundamental1}), (\ref{e:fundamental2}), we get
 for $x\in U_j$ and $0<t\leq S^2$,
\begin{align}\label{e:k2:3}
\Big|\frac{d}{dt}\int_{M^n_j} |\omega_j|\varphi \rho_t(x,dy)\Big| \leq \epsilon_j(S)\to 0\, ,
\end{align}
uniformly in $U_j$.  

At this point,  by using \eqref{e:k2:3} and integrating with respect to $t$
from $0$ to $S^2$,  we have for any $x\in U_j\subseteq B_R(x_j)$, 
\begin{align}\label{e:k2:5}
\Big||\omega_j|(x)-\int_{M^n_j} |\omega_j|\varphi \rho_{S^2}(x,dy)\Big|\leq \epsilon_j(S)
\cdot S^2\to 0\, ,
\end{align}
uniformly in $U_j$.

By arguing in a manner similar to the above
(but without the need for a maximal function estimate)
 we can use \eqref{e:k2:2.2}, to see that for all  $x\in B_{2R}(x_j)$ 
\begin{align}\label{e:k2:4}
\int_{M^n_j} \big|\nabla(|\omega_j|\varphi)\big|^2\rho_{S^2}(x,dy)& \leq 2\int_{M^n_j} |\nabla\omega_j|^2 
+ |\omega_j|^2|\nabla\varphi|^2\rho_{S^2}(x,dy)\notag\\
&\leq C\sum 2^{n\alpha}e^{-2^{2\alpha}}\fint_{B_{2^\alpha S}(x)} |\nabla \omega_j|^2
+Cr_{j}^{2-C\epsilon}\Big(\frac{1}{Sr_j}\Big)^{n}e^{-\frac{1}{S^2}r_j^{-2}}\, \notag\\
&\leq C\,S^{-2+C\epsilon}+\epsilon_j(S)\, ,
\end{align}
where without loss of generality, we can assume that our original $\epsilon$ has been
chosen so that 
$-2+C\epsilon<0$.  As previously mentioned, it is at just this point that the sublinearity in (\ref{e:k2:2.2}) has entered crucially,
 giving rise 
to the {\it negative} power of $S$ in (\ref{e:k2:4}), which comes to fruition in
(\ref{e:S}).


We  have that
$H_t\big(|\omega_j|\varphi\big)=\int_{M^n_j} |\omega_j|\varphi \rho_{t}(x,dy)$ 
solves the heat equation. So using the Bakry-Emery gradient estimate, 
 \cite{BakryEmery_diffusions}, we have for any $x\in B_{2R}(x_j)$ 
\begin{align}
\label{e:be}
|\nabla H_t\big(|\omega_j|\varphi\big)|^2(x)\leq 
e^{\delta_{j}r_{j}^2 t}H_t|\nabla(|\omega_j|\varphi)|^2(x)\, .
\end{align}
In particular, using \eqref{e:k2:4} we have 
\begin{align}
{}\sup_{B_{2R}(x_j)}\Big|\nabla_x \int_{M^n_j} |\omega_j|\varphi \rho_{S^2}(x,dy)\Big| \leq \frac{C}{S^{1-C\epsilon/2}}+\epsilon_j(S)
\, .
\end{align}
Combining this with \eqref{e:k2:5} we get for any pair of points, $x,y\in U_j$, 
\begin{align}
\label{e:S}
\big|\, |\omega_j|(x) -|\omega_j|(y)\big|&\leq \big|\omega_j(x)
 -\int_{M^n_j} |\omega_j|\varphi \rho_{S^2}(x,dz)\big|+\big|\omega_j(y) 
-\int_{M^n_j} |\omega_j|\varphi \rho_{S^2}(y,dz)\big|\notag\\
&+\big|\int_{M^n_j} |\omega_j|\varphi \rho_{S^2}(x,dy) -\int_{M^n_j} |\omega_j|\varphi \rho_{S^2}(y,dz)\big|\notag\\
&\leq \epsilon_j(S) + \frac{CR}{S^{1-C\epsilon/2}}\, .
\end{align}

By letting $S$ tend to infinity sufficiently slowly, we get for 
 $x,y\in U_j$, that 
\begin{align}
\big|\, |\omega_j|(x) -|\omega_j|(y)\big|&\leq \epsilon_j(R)\to 0\, .
\end{align}

Finally, to finish the proof, we use the supremum bound \eqref{e:k2:2.2} on $|\omega|$ to note that for $x\in U_j$, we have
\begin{align}
\Big|
\fint_{B_R(x_j)}|\omega_j|^2-|\omega_j|^2(x)\Big|&\leq \fint_{B_R(x_j)}\Big||\omega_j|^2-|\omega_j|^2(x)\Big|\notag\\
&\leq \fint_{B_R(x_j)}\big||\omega_j|-|\omega_j|(x)\big|\cdot \big||\omega_j|+|\omega_j|(x)\big|\notag\\
&\leq C(n,R)\fint_{B_R(x_j)}\big||\omega_j|-|\omega_j|(x)\big|\notag\\
&\leq C(n,R)\fint_{U_j}\big||\omega_j|-|\omega_j|(x)\big|+C(n,R)\fint_{B_R(x_j)\setminus U_j}\big||\omega_j|-|\omega_j|(x)\big|\notag\\
&\leq C(n,R)\epsilon_j (R) +C(n,R)\cdot \frac{\Vol(B_R(x_j)\setminus U_j)}{\Vol(B_R(x_j))}\to 0\, . 
\end{align}

Hence, we have 
\begin{align}
\fint_{B_R(x_j)}\Big|\,  |\omega_j|^2-\fint_{B_R(x_j)}|\omega_j|^2\Big|\leq
 \Big|\fint_{B_R(x_j)}|\omega_j|^2-|\omega_j|^2(x)\Big|+\fint_{B_R(x_j)}\Big||\omega_j|^2-|\omega_j|^2(x)\Big|\to 0\, ,
\end{align}
which proves the claim. $\square$\\

We know from \eqref{e:k2:2}, (\ref{e:k2:2.2}) that $|\nabla\omega_j|$ has $L^2$ bounds.
  It is crucial to improve these to bounds that are small compared to $\epsilon$.  This is the content of the next claim:\\

{\bf Claim 4:}
For fixed $R$ we have 
\begin{equation}\label{e:omegatozero}
\fint_{B_R(x_j)}|\nabla \omega_j|^2
\leq \epsilon_j(R)\to 0\,  .
\end{equation}

To see this fix $R$ and as in \cite{ChC1}, let $\varphi:B_{2R}(x_j)\to \dR^+$ be a cutoff function 
with $\varphi\equiv 1$ on $B_R(x_j)$ and $R|\nabla\varphi|,\, R^2|\Delta\varphi|\leq C(n)$.  
We use the Bochner formula
\begin{align}
\Delta |\omega_j|^2 &= 2|\nabla \omega_j|^2+2\langle\sum_b dv^1_j\wedge\cdots \Ric(dv^{b}_j)\wedge
\cdots\wedge dv^{k}_j,\omega\rangle\notag\\
&\,\,\,\,\,\,\,\,\,\,\,\,\,\,\,\,\,\,\,\,\,\,\,+\langle\sum_{a\neq b} dv^{1}_j\wedge \nabla^c(dv^{a}_j)\wedge
\cdots\wedge \nabla_c(dv^{b}_j)\wedge\cdots\wedge dv^{k}_j,\omega_j\rangle \notag\\
&\geq 2|\nabla\omega_j|^2 - C(n)\delta_j^2r_j^2|\omega_j|^2-C(n)|\nabla(dv)|^2|\omega_j|^2\notag\\
&\,\,\,\,\,\,\,\,\,\,\,\,\,\,\,\,\,\,\,\,\,\,\,+\langle\sum_{a\neq b} dv^{1}_j\wedge \nabla^c(dv^{a}_j)
\wedge\cdots\wedge \nabla_c(dv^{b}_j)\wedge\cdots\wedge dv^{k}_j,\omega_j\rangle\, ,
\end{align}
which together with the growth estimates \eqref{e:k2:2} allows us to compute
\begin{align}\label{e:k2:6}
\fint_{B_R(x_j)}|\nabla \omega_j|^2&\leq C(n)\fint_{B_{2R}(x_j)} \varphi\Delta|\omega_j|^2 
+C(n,R)\sum_{a\neq b}\fint_{B_{2R}(x_j)} |\nabla^2 v^a_j|\,|\nabla^2 v^b|
+ C(n,R)\delta_jr_j^2\notag\\
&\leq  C \fint_{B_{2R}(x_j)} \Delta\varphi\,\big(|\omega_j|^2-\fint_{B_{2R}(x_j)}|\omega_j|^2\big)\notag\\
&\,\,\,\,\,\,
+C(n,R)\sum_{a\neq b}\Big(\fint_{B_{2R}(x_j)
}|\nabla^2 v^a|^2\Big)^{1/2}\Big(\fint_{B_{2R}(x_j)}|\nabla^2 v^b|^2\Big)^{1/2}+ \epsilon_j(R)\notag\\
&\leq C\fint_{B_{2R}(x_j)} \big||\omega_j|^2-\fint_{B_{2R}(x)}|\omega_j|^2\big|+\epsilon_j(R)\leq \epsilon_j(R)\to 0\, ,
\end{align}
where we have used Claim 3 and \eqref{e:k:3}.  
Note that it is important that we have $a\neq b$ in the summation, so that at least one of the 
Hessian terms in each factor is going to zero as $j\to\infty$.  This proves the claim.\qed

As mentioned in Remark \ref{r:ltg}, 
to complete the proof we must show that $\fint_{B_R(x_j)}|\nabla^2 v^k_j|^2\to 0$ 
as $j\to\infty$.  
To prove this we will first pass to limits and obtain information on the limiting space.  
That is, we have been considering a sequence 
$(M^n_j,d_j,x_j)$ 
with $\Ric_{M^n_j}\geq -\delta_jr_j^2\to 0$.
  After passing to a subsequence if necessary, we can take a measured pointed Gromov-Hausdorff limit
\begin{align}
(M^n_j,d'_j,x_j)\stackrel{d_{GH}}{\longrightarrow} (X,d,x)\, ,
\end{align}
to obtain an $RCD(n,0)$ space $X$, see \cite{Ambrosio_Calculus_Ricci}, \cite{Ambrosio_Ricci}.
The fact that $X$ is
an $RCD(n,0)$ space is used below in applying the mean value estimate (\ref{e:mvt}),
which is known to hold for such spaces.

In addition, we can assume that the
functions $v_j^\ell$ converge to harmonic functions.
\begin{align}
v_j^\ell \to v^\ell:X\to \dR\, .
\end{align}
Indeed, for any ball $B_R(x_j)$ we
 can characterize $v^\ell_j$ as minimizers of the Dirichlet energy with fixed Dirichlet boundary values.
  Our assertion then follows from the lower semicontinuity of the Dirichlet energy \cite{Ambrosio_Ricci} 
combined with the Mosco convergence of the Dirichlet form \cite{GMS_Stability}, to see that the limit also
 minimizes the Dirichlet energy on any ball.

Observe first, that by using Claim 2 and Lemma \ref{l:harmonic_splitting_intro},  we have
\begin{align}
X=\dR^{k-1}\times Y\, ,
\end{align}
where $v^1,\ldots,v^{k-1}:X\to \dR$ are linear functions which induce the $\dR^{k-1}$
 factor and we can identify $Y=(v^1,\ldots,v^{k-1})^{-1}(0^{k-1})$.  We are left with understanding the behavior of $v^k$.
  We will see in Claim 6 that it too is linear, and in the process prove our Hessian estimate.  We first show the following:\\

{\bf Claim 5:}  There exists $a_1,\ldots,a_{k-1}\in \dR$ with $|a_\ell|<C(n)\epsilon$ such that 
$v^k-a_1v^1-\cdots-a_{k-1}v^{k-1}:X\to \dR$ is a function of only the $Y$ variable.\\

\vskip1mm

To prove the claim let us fix any vector $V\in \dR^{k-1}$ and consider the map $Dv^k:X\to \dR$ defined by
\begin{align}
D v^k(y)= v^k(y+V)-v^k(y)\, ,
\end{align}
where of course, the translation $x\to x+V$ is  well defined, since $X\equiv \dR^{k-1}\times Y$.  
The function $v^k(y)$ is harmonic, and the translation map $x\to x+V$ is a
measure preserving isometry.
 Thus,  $v^k(x+V)$ is a harmonic function as well.  Since $X$ is an $RCD$ space, 
and hence the Laplacian $\Delta$ on $X$ is linear, it follows that $Dv^k$ is harmonic. Using the estimates \eqref{e:k2:2} 
we have 
the growth condition

\begin{align}
\sup_{B_r(x)}|Dv^k|\leq C|V|^{1+C\epsilon}\cdot r^{C\epsilon}\, .
\end{align}

This is to say that $Dv^k$ is a harmonic function with sublinear growth.  
It follows that $Dv^k$ must be a constant.  Indeed, let $\varphi$ be a cutoff on 
$B_{2S}(x)$ with $\varphi\equiv 1$ on $B_S(x)$ and $|\nabla\varphi|\leq 10S^{-1}$. 
Then on the one hand, we have since $Dv^k$ is harmonic and the Dirichlet form is bilinear that

\begin{align}
 0&=\fint_{B_{2S}(x)} \langle\nabla Dv^k,\nabla (\varphi^2 Dv^k)\rangle\notag\\
  &=\fint_{B_{2S}(x)} \varphi^2 |\nabla Dv^k|^2+2\fint_{B_{2S}(x)} \varphi\,Dv^k\,
 \langle\nabla Dv^k,\nabla\varphi\rangle\, .
\end{align}

By rearranging terms, we obtain
\begin{align}
\fint_{B_{S}(x)} |\nabla Dv^k|^2&\leq \fint_{B_{2S}(x)} \varphi^2 |\nabla Dv^k|^2\\
&\leq \frac{1}{2}
\fint_{B_{2S}(x)} \varphi^2 |\nabla Dv^k|^2+8\fint_{B_{2S}(x)} |Dv^k|^2|\nabla \varphi|^2\\
&\leq CS^{-2+C\epsilon}\, ,
\end{align}
where without loss of generality, we can assume that $\epsilon$ is so small that $-2+C\epsilon<0$.

On the other hand,
 $\Ric_{M^n_j}\geq -(n-1)\delta_j r_j^2\to 0$ 
and so $X$ is an $RCD(n,0)$ space. On such spaces, 
there is a mean value inequalilty for the norm squared of the gradient of a harmonic function;
 see for instance \cite{MondinoNaber_Rectifiability}.  When applied to the harmonic function
$Dv^k$ it gives for  $r>0$ fixed and $S\to \infty$ 

\begin{align}
\label{e:mvt}
\sup_{B_r(x)}|\nabla Dv^k|^2 \leq C\fint_{B_S(x)}|\nabla Dv^k|^2\leq  CS^{-2+C\epsilon} \to 0\, .
\end{align}
 Note that once again, we have exploited the sublinearity of the growth estimates.  In particular, it now follows that $Dv^k$ is a constant.  
Since this holds for any $V\in \dR^{k-1}$, we have that $v^k$ is linear in the $\dR^{k-1}$ variable.  
More precisely, since the $\dR^{k-1}$ factor is spanned by $v^1,\ldots, v^{k-1}$ we have 
\begin{align}
v^k=v^k_Y+a_1v^1+\cdots+a_{k-1}v^{k-1}\, ,
\end{align}
where $v^k_Y:Y\to \dR$. 
 Since $v_j\to v:X\to \dR^{k}$ are $C\epsilon$-splittings on $B_2(x_j)$, we automatically have the bounds $|a_\ell|\leq C(n)\epsilon$. 
This finishes the claim.  $\square$\\

To complete the proof, we want to see that the Hessians of $v^k_j$ are tending to zero as $j\to\infty$. 
This is the content of Claim 6 below. However, prior to stating this claim, we will make some additional
normalizations.

To begin with, we can use Claim 5 to further normalize the mappings $v_j$ by
 composing  with another lower triangular matrix with positive diagonal
 entries.  Indeed, as a corollary 
of Claim 5, we can choose a lower triangular matrix $A$ with $|A-I|<C(n)\epsilon$, and whose restriction to 
the first $(k-1)\times (k-1)$ terms is the identity, such that $Av_j:B_2(x_j)\to \dR^k$ 
is still an $C(n)\epsilon$-splitting, while $A\circ v^k_j\to A\circ v^k:\dR^{k-1}\times Y\to \dR$ 
is independent of the $\dR^{k-1}$ factor.  Further, let us consider the induced form
 $A\circ \omega_j=d(A\circ v_j^1)\wedge\cdots\wedge d(A\circ v^k_j) = dv_j^1\wedge\cdots\wedge d(A\circ v^k_j)$. 
 Then after multiplying the $k^{th}$ row of $A$ by a constant $c$ with $|c-1|\leq C(n)\epsilon$ we may further assume that
\begin{align}
\fint_{B_2(x_j)}|A\circ \omega_j|^2 = 1\, .\\ \notag
\end{align}

From this point forward in the proof, for ease of notation, we will write $v_j$, for what
was denoted above by $A  \circ v_j$  In particular, this $v_j$ differs from 
the original mapping $u_j$ only by composition with a lower triangular matrix with positive diagonal entries. We will eventually see that $v_j:B_1(x_j)\to \dR^k$ is an $\epsilon_j$-splitting, 
which will give the desired contradiction and finish the proof.

{\bf Claim 6.} For each $R>0$, we have $\fint_{B_R(x_j)}|\nabla^2 v^k_j|^2\leq \epsilon_j(R)\to 0$.\\

The fact that  $v_j:B_{R}(x_j)\to \dR^{k-1}$ is an $\epsilon_j(R)$-splitting,
 $$
\fint_{B_2(x_j)}|\omega_j|^2 = 1\, ,
$$
together with
$$
\fint_{B_R(x_j)}|\nabla\omega_j|^2\leq \epsilon_j(R)\to 0\, ,
$$
implies
\begin{align}\label{e:k:2:1}
\fint_{B_R(x_j)}\big| |\omega^\ell_j|-1\big|\leq \epsilon_j(R)\qquad ({\rm for\,\, all}\,\, 1\leq\ell\leq k)\, .
\end{align}
 
Now we will show that
\begin{align}
\fint_{B_R(x_j)}\big||\nabla v^k_j|^2-1\big|\leq \epsilon_j(R)\to 0\, .
\end{align}
Once this is accomplished, as we have done repeatedly, we can argue with Bochner's formula
to obtain the Hessian estimate in the claim.

Define the $1$-form
\begin{align}
V_j\equiv \langle \omega^{k-1}_j,\omega_j\rangle\, .
\end{align}
Note $\omega^{k-1}_j\wedge V_j$ is proportional to 
$\omega_j=\omega^{k-1}_j\wedge d v^k_j= \omega^{k-1}_j\wedge \big(d v^k_j-\pi_{k-1}d v^k_j\big)$.  
More generally, we have that $V_j\in \text{span}\{\nabla v^1_j,\ldots,\nabla v^{k}_j\}$ is perpendicular to 
$\text{span}\{\nabla v^1_j,\ldots,\nabla v^{k-1}_j\}$. From the above, we get
\begin{align}\label{e:k:2:0}
\fint_{B_R(x_j)}|V_j-\big(dv^k_j-\pi_{k-1}dv^k_j\big)|\leq \epsilon_j(R)\, .
\end{align}

On the other hand, by \eqref{e:k2:6} we have 
\begin{align}
\fint_{B_R(x_j)}|\nabla V_j|^2\leq \epsilon_j(R)\to 0\, ,
\end{align}
and thus using \eqref{e:k:2:1} we have 
\begin{align}
\fint_{B_R(x_j)}\big| |V_j|-1\big|\leq \epsilon_j(R)\, .
\end{align}
Therefore, from \eqref{e:k:2:0} we get
\begin{align}\label{e:k:2:-1}
\fint_{B_R(x_j)}\Big||dv^k_j-\pi_{k-1}dv^k_j|-1\Big|\leq \epsilon_j(R)\to 0\, .
\end{align}
It follows that our main concern is to show $|\pi_{k-1}(d v^k_j)|\to 0$ in $L^1$ as $j\to\infty$.  Because we have the estimate 
\begin{align}\label{e:k_new:2}
\int_{B_R(x_j)}|\langle \nabla v^a_j,\nabla v^b_j\rangle-\delta^{ab}|\leq \epsilon_j(R)\to 0\, ,
\end{align}
for $a,b<k$ this is equivalent to showing that
\begin{align}\label{e:k:2:-2}
\int_{B_R(x_j)}|\langle \nabla v^\ell_j,\nabla v^k_j\rangle|\leq \epsilon_j(R)\to 0\, ,
\end{align}
for all $\ell<k$, which will be our primary goal now.\\

To accomplish this let us fix some $\ell<k$ and recall that $M^n_j\to X\equiv \dR^{k-1}\times Y$, where the $v^\ell_j\to v^\ell$ converge to the linear splitting factors and $v^k_j\to v^k$ converges to a function on the $Y$ variable.  In particular, notice in the limit that $|\langle \nabla v^\ell,\nabla v^k\rangle| = 0$.  One could therefore prove the result by showing that the energies of a sequence of harmonic functions actually converge in $L^1_{loc}$ to the energies of the limiting harmonic functions.  We will proceed by essentially proving a more effective version of this statement.\\

Thus, for each $(s,y)\in \dR^{k-1}\times Y\cap B_R(x_j)$ and $0<\epsilon_j<<r_2<<r_1<<1$ let us consider an open set $U(s,y,r_1,r_2)$ such that
\begin{align}
&\big(B_{r_2}(s_1,\ldots,s_{\ell-1})\times (s_\ell-r_1,s_\ell+r_1)\times B_{r_2}(s_{\ell+1},\ldots,s_{k-1})\times B_{r_2}(y)\big)\cap B_{R+r_1}(x_j)\subseteq U(s,y,r_1,r_2)\notag\\
&U(s,y,r_1,r_2)\subseteq \big(B_{2r_2}(s_1,\ldots,s_{\ell-1})\times (s_\ell-2r_1,s_\ell+2r_1)\times B_{2r_2}(s_{\ell+1},\ldots,s_{k-1}\big)\big)\cap B_{R+2r_1}(x_j)\, .
\end{align}
with respect to the Gromov-Hausdorff map from $M^n_j$ to $\dR^{k-1}\times Y$.  Clearly, we have the volume estimate
\begin{align}
C^{-1}(n,\rv,R)\leq r_1^{-1} r^{-(n-1)}_2\Vol(U(s,y,r_1,r_2))\leq C(n,\rv,R)\, ,
\end{align}
where $\rv>0$ is the noncollapsing constant.  Let us notice that as $r_2<<r_1\to 0$ we have that $U(s,y,r_1,r_2)$ is an approximately a product of balls with diameter tending to zero, and such that for each $z_1\in U(s,y,r_1,r_2)$ we have the important estimates
\begin{align}\label{e:k_new:1}
&\fint_{U(s,y,r_1,r_2)}\frac{|v^a_j(z_1)-v^a_j(z_2)|}{d(z_1,z_2)}dv_g(z_2)<O(\frac{r_2}{r_1})+\epsilon_j(R)\, ,\text{ for }a\neq \ell\notag\\
&\fint_{U(s,y,r_1,r_2)}\sqrt{\bigg|\frac{|v^\ell_j(z_1)-v^\ell_j(z_2)|}{d(z_1,z_2)}-1\bigg|}dv_g(z_2)<O(\frac{r_2}{r_1})+\epsilon_j(R)\, .
\end{align}
Note that the integrands above are bounded and converging to zero pointwise away from a set whose measure is going to zero relative to $U$ as $\frac{\epsilon_j}{r_2},\frac{r_2}{r_1}\to 0$.  In words, the $v^a_j$ for $a\neq \ell$ are becoming approximately constant functions and $v^\ell_j$ is becoming a norm one linear function in the domains as $\frac{\epsilon_j}{r_2},\frac{r_2}{r_1}\to 0$.

Now let $z_1,z_2\in U(s,y,r_1,r_2)$ with $\gamma_{z_1,z_2}:[0,d(z_1,z_2)]\to M$ a minimizing geodesic connecting them and let $d\equiv d(z_1,z_2)$.  Without loss of generality, let us assume that $v^\ell_j(z_2)\geq v^\ell_j(z_1)$.  Otherwise, the argument below works with the reverse geodesic $\gamma_{z_2,z_1}$.  We can estimate
\begin{align}
|\langle\nabla v^\ell_j,\nabla v^k_j\rangle|(z_1) &= \big|\fint_0^d \langle\nabla v^\ell_j,\nabla v^k_j\rangle - \fint_0^d\int_0^t \nabla_{\dot\gamma}\langle\nabla v^\ell_j,\nabla v^k_j\rangle\big| \, ,\notag\\
&= \big|\fint_0^d \langle \dot\gamma,\nabla v^k_j\rangle +\fint_0^d \langle\nabla v^\ell_j-\dot\gamma,\nabla v^k_j\rangle - \fint_0^d\int_0^t \nabla_{\dot\gamma}\langle\nabla v^\ell_j,\nabla v^k_j\rangle\big| \, ,\notag\\
&\leq C\Big(\,\Big|\frac{v^k_j(z_2)-v^k_j(z_1)}{d}\Big|+\fint_{\gamma_{z_1,z_2}} |\nabla v^\ell_j-\dot\gamma|+\int_{\gamma_{z_1,z_2}} |\nabla^2 v^\ell_j|+\int_{\gamma_{z_1,z_2}} |\nabla^2 v^k_j|\, \Big)\, .
\end{align}
To deal with the second term on the last line let us observe that since $|\nabla v^\ell_j|\leq 1+\epsilon_j$, we have
\begin{align}
\Big(\fint_{\gamma_{z_1,z_2}} |\nabla v^\ell_j-\dot\gamma|\Big)^2&\leq \fint_{\gamma_{z_1,z_2}} |\nabla v^\ell_j-\dot\gamma|^2\, \notag\\
&\leq \fint_{\gamma_{z_1,z_2}} 2\big(1-\langle\nabla v^\ell_j,\dot\gamma \rangle\big)+\epsilon_j\, \notag\\
&\leq 2\Big(1-\frac{v^\ell_j(z_2)-v^\ell(z_1)}{d}\Big)+\epsilon_j\, \notag\\
&\leq 2\Big|1-\frac{|v^\ell_j(z_2)-v^\ell(z_1)|}{d}\Big|+\epsilon_j\, ,
\end{align}
where we have used our normalizing condition that $v^\ell_j(z_2)\geq v^\ell(z_1)$ in the last line.  Plugging this into our estimate for $|\langle\nabla v^\ell_j,\nabla v^k_j\rangle|(z_1)$ we obtain
\begin{align}
|\langle\nabla v^\ell_j,\nabla v^k_j\rangle|(z_1)\leq C\Big(\,&\Big|\frac{v^k_j(z_2)-v^k_j(z_1)}{d}\Big|+\sqrt{\bigg|1-\frac{|v^\ell_j(z_2)-v^\ell(z_1)|}{d}\bigg|}\;+\notag\\
&+\int_{\gamma_{z_1,z_2}} |\nabla^2 v^\ell_j|+\int_{\gamma_{z_1,z_2}} |\nabla^2 v^k_j|\, \Big)+\epsilon_j\, .
\end{align}
Since this holds for each $z_2\in U(y,r,r_1,r_2)$ we can average both sides and use \eqref{e:k_new:1} to estimate
\begin{align}
|\langle\nabla v^\ell_j,\nabla v^k_j\rangle|(z_1) \leq O(\frac{r_2}{r_1})+C\fint_{U(s,y,r_1,r_2)}\Big(\int_{\gamma_{z_1,z_2}} |\nabla^2 v^\ell_j|+\int_{\gamma_{z_1,z_2}} |\nabla^2 v^k_j|\, \Big)dv_g(z_2)+\epsilon_j(R)\, .
\end{align}
Integrating over $z_1$ then gives us the estimate
\begin{align}
\fint_{U(s,y,r_1,r_2)}|\langle\nabla v^\ell_j,\nabla v^k_j\rangle|&\leq O(\frac{r_2}{r_1})+C\fint_{U\times U}\Big(\int_{\gamma_{z_1,z_2}} |\nabla^2 v^\ell_j|+\int_{\gamma_{z_1,z_2}} |\nabla^2 v^k_j|\, \Big)+\epsilon_j\, ,\notag\\
&\leq O(\frac{r_2}{r_1})+Cr_1\fint_{U(s,y,10r_1,10r_2)}\Big(|\nabla^2 v^\ell_j|+|\nabla^2 v^k_j|\, \Big)+\epsilon_j\, .
\end{align}
In the last line we have used a sharpening of the standard segment inequality, which takes into account that all the minimizing geodesics beginning and ending in $U(s,y,r_1,r_2)$ are contained in $U(s,y,10r_1,10r_2)$.  Given this the conclusion follows from the proof of the standard segment inequality.  Rewriting the above gives us
\begin{align}\label{e:k:2:2}
\int_{U(s,y,r_1,r_2)}|\langle\nabla v^\ell_j,\nabla v^k_j\rangle|&\leq \Big(O(\frac{r_2}{r_1})+\epsilon_j\Big)\Vol(U(s,y,r_1,r_2))+Cr_1\int_{U(s,y,10r_1,10r_2)}\Big(|\nabla^2 v^\ell_j|+|\nabla^2 v^k_j|\, \Big)\, ,
\end{align}

Now to complete the proof, let us choose for each $r_1,r_2$ fixed a covering 
\begin{align}
B_R(x_j)\subseteq \bigcup U(s_i,y_i,r_1,r_2)\, ,
\end{align}
such that the sets $U(s_i,y_i,10r_1,10r_2)$ overlap at most $C(n)$ times.  This is possible using the GH condition with $\epsilon_j<<r_2$.  By applying \eqref{e:k:2:2} to each of these and summing we obtain the estimate
\begin{align}
\int_{B_R(x_j)}|\langle\nabla v^\ell_j,\nabla v^k_j\rangle|&\leq \Big(O(\frac{r_2}{r_1})+\epsilon_j\Big)\Vol(B_{2R}(x_j))+Cr_1\int_{B_{2R}(x_j)}\Big(|\nabla^2 v^\ell_j|+|\nabla^2 v^k_j|\, \Big)\, ,
\end{align}
or that
\begin{align}
\fint_{B_R(x_j)}|\langle\nabla v^\ell_j,\nabla v^k_j\rangle|&\leq O(\frac{r_2}{r_1})+Cr_1\fint_{B_{2R}(x_j)}\Big(|\nabla^2 v^\ell_j|+|\nabla^2 v^k_j|\, \Big)+\epsilon_j(R)\notag\\
&\leq O(\frac{r_2}{r_1})+Cr_1+\epsilon_j(R)\, ,
\end{align}
where in the last line we have used that we have uniform $L^2$ estimates on the Hessians of $v^a_j$.  The estimates above hold for all $0<\epsilon_j<<r_2<<r_1<<1$.  To finish the proof let us now choose $r_{2,j}, r_{1,j}\to 0$ such that $\frac{r_{2,j}}{r_{1,j}},\frac{\epsilon_j}{r_{2,j}}\to 0$.  This proves the estimate \eqref{e:k:2:-2}, and therefore by \eqref{e:k:2:-1} we have that
\begin{align}
\fint_{B_R(x_j)}\big|\,|\nabla v_j^k|-1\big|\leq \epsilon_j(R)\to 0\, .
\end{align}
When we combine this with the $L^\infty$ estimate on $|\nabla v^k_j|$, this gives the $L^2$ estimate
\begin{align}\label{e:k_new:3}
\fint_{B_R(x_j)}\big|\,|\nabla v_j^k|^2-1\big|\leq \epsilon_j(R)\to 0\, .
\end{align}
  
Finally, since $v^k_j$ is harmonic we can now argue with Bochner's formula as in the proof of (\ref{kato_1}),
to obtain the  Hessian estimate, $\fint_{B_R(x_j)}|\nabla^2 v^k_j|^2\leq \epsilon_j(R)\to 0$.
This completes the proof of the claim.$\square$\\

 Now we can finish the proof of the Transformation theorem.
  Indeed, we will see that $v_j=A\circ u:B_1(x_j)\to \dR^k$
 is the desired $\epsilon_j(R)$-splitting.  Claim 6 gives
\begin{align}\label{e:k:8}
\fint_{B_R(x_j)}|\nabla^2 v^\ell_j|^2 \to 0\, ,
\end{align}
for all $1\leq \ell\leq k$, while \eqref{e:k:2:-2}, \eqref{e:k_new:2} and \eqref{e:k_new:3} imply
\begin{align}\label{e:k:9}
\fint_{B_R(x_j)}|\langle\nabla v^a_j,\nabla v^b_j\rangle -\delta^{ab}|\to 0\, .
\end{align}
To see that $v_j:B_1(x_j)\to \dR^k$ is an $\epsilon_j(R)$-splitting on $B_1(x_j)$, the last step is to show that 
$|\nabla v^k_j|\leq 1+\epsilon_j\to 1$.  However this follows immediately from  \eqref{e:k:8} 
and \eqref{e:k:9} by using precisely the same argument as in (\ref{e:sharp1})--(\ref{e:sharp3}).

Thus, for $j$ sufficiently large we see that $v_j:B_1(x_j)\to \dR^k$ is an $\epsilon$-splitting.  This is a contradiction, so the proof is complete.

\end{proof}

\section{Proof of Theorem \ref{t:slicing_intro}, the Slicing Theorem}
\label{s:slicing}

The goal of this section is to prove the Slicing Theorem (Theorem \ref{t:slicing_intro}).  Recall the statement:

{\it
For each $\epsilon>0$ there exists $\delta(n,\epsilon)>0$ such that if $M^n$ 
satisfies $\Ric_{M^n}\geq -(n-1)\delta$ and if $u:B_2(p)\to \dR^{n-2}$ is a harmonic 
$\delta$-splitting map, then there exists a subset $G_\epsilon\subseteq B_1(0^{n-2})$ 
which satisfies the following:
\begin{enumerate}
\item $\Vol(G_\epsilon)>\Vol(B_1(0^{n-2}))-\epsilon$.
\vskip1mm

\item If $s\in G_\epsilon$ then $u^{-1}(s)$ is nonempty.
\vskip1mm

\item For each $x\in u^{-1}(G_\epsilon)$ and $r\leq 1$ there exists a lower
triangular matrix
$A\in GL(n-2)$ with positive diagonal entries, 
such that $A\circ u:B_r(x)\to \dR^{n-2}$ is an $\epsilon$-splitting map.
\end{enumerate}
}

\begin{proof}[Proof of Theorem \ref{t:slicing_intro}.]
Recall from subsection \ref{ss:pst} the measure $\mu$ defined in (\ref{mudef}) and $\delta_3=\delta_3(n,\epsilon)$
 in the Transformation theorem; see the sentence prior to (\ref{cBdef}).
It was shown subsection \ref{ss:pst}
that in view of  Theorem \ref{t:hessian_estimate_intro} and the transformation theorem, 
 Theorem \ref{t:transformation_theorem_intro},  to complete the proof of the
Slicing theorem, it suffices to verify 
that for $1/4\geq r\geq s^{\delta_3}_x$,
$\mu$ satisfies the doubling condition $|u(B_{r}(x)|\leq  C(n)\cdot r^{-2}\mu(B_{r}(x))$
and the volume estimate $|u(B_{r}(x))|\leq  C(n)\cdot r^{-2}\mu(B_{r}(x)) $ on the image of a ball; see
(\ref{e:doubling_intro}), (\ref{e:intro4}).

\begin{lemma}
\label{l:doubling}
For each $x$ and
$1/4\geq r\geq s^{\delta_3}_x$ we have the doubling condition
\begin{equation}
\label{e:doubling}
\mu(B_{2r}(x))\leq C(n)\mu(B_r(x))\, .
\end{equation}
\end{lemma}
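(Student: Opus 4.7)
The plan is to invoke the Transformation theorem (Theorem \ref{t:transformation_theorem_intro}) at both scales $r$ and $2r$, which is permissible since $2r \leq 1/2$ and $s^{\delta_3}_x \leq r < 2r$. This produces lower triangular matrices $A_r, A_{2r} \in GL(n-2)$ with positive diagonal entries such that $v_r := A_r \circ u$ and $v_{2r} := A_{2r} \circ u$ are $\epsilon$-splitting maps on $B_r(x)$ and $B_{2r}(x)$, respectively. The algebraic identity driving everything is
$$
dv^1 \wedge \cdots \wedge dv^{n-2} = (\det A)\,\omega,
$$
so integrals of $|\omega|$ are proportional to integrals of the norm of the pullback $(n-2)$-form, with conversion factor $|\det A|^{-1}$.

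Next I would verify that $|\det A_r|$ and $|\det A_{2r}|$ are comparable, by rerunning the Cholesky uniqueness step from Claim 1 in the proof of the Transformation theorem. Both $A_r \circ u$ and $A_{2r} \circ u$ are $\epsilon$-splitting maps on $B_r(x)$ (for the latter, use Riemannian volume doubling to pass from $B_{2r}(x)$ down to $B_r(x)$), so Gram-Schmidt produces lower triangular $T_1, T_2$ with $|T_i - I| \leq C(n)\epsilon$ such that $T_1 A_r$ and $T_2 A_{2r}$ are both lower triangular Cholesky factors with positive diagonal of the same positive definite Gram matrix $\fint_{B_r(x)} \langle \nabla u^a, \nabla u^b\rangle$. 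Uniqueness of the Cholesky decomposition forces $T_1 A_r = T_2 A_{2r}$, whence $|A_r A_{2r}^{-1} - I| \leq C(n)\epsilon$, and in particular $|\det A_r|/|\det A_{2r}| \in [C(n)^{-1}, C(n)]$.

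Assembling the doubling inequality is then routine. Being an $\epsilon$-splitting gives
$$
1 - C(n)\epsilon \leq \fint_{B_r(x)} |dv_r^1 \wedge \cdots \wedge dv_r^{n-2}| \leq 1 + C(n)\epsilon,
$$
and similarly on $B_{2r}(x)$ for $v_{2r}$. Translating via $|dv^1\wedge\cdots\wedge dv^{n-2}| = |\det A|\cdot |\omega|$ yields the lower bound $\int_{B_r(x)}|\omega| \geq (1 - C(n)\epsilon)|\det A_r|^{-1}\Vol(B_r(x))$ and the analogous upper bound on $B_{2r}(x)$. Taking the ratio, applying the determinant comparison from the previous paragraph, and invoking Bishop-Gromov volume doubling of the Riemannian measure (valid since $\Ric_{M^n}\geq -(n-1)\delta$), gives $\mu(B_{2r}(x)) \leq C(n)\mu(B_r(x))$.

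I do not expect a serious obstacle: the lemma is essentially bookkeeping once one has the Transformation theorem and the Cholesky comparison of Claim 1. The only point requiring care is fixing the target $\epsilon$ (equivalently $\delta_3$) small enough from the start that the multiplicative constants $1 \pm C(n)\epsilon$ remain bounded away from $0$ and $\infty$.
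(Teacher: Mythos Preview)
Your argument is correct, and the idea is the same as the paper's: pass from $|\omega|$ to $|\omega'|$ via the Transformation theorem, use that $|\omega'|$ is $L^1$-close to $1$ on the relevant balls, and reduce to Riemannian volume doubling. But you are working harder than necessary.

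The paper applies the Transformation theorem only \emph{once}, at the larger scale $2r$, to obtain a single matrix $A=A_{2r}$ with $u'=A\circ u$ an $\epsilon$-splitting on $B_{2r}(x)$. Setting $\mu'=\det(A)\,\mu$, the ratio $\mu'(B_{2r}(x))/\mu'(B_r(x))$ equals $\mu(B_{2r}(x))/\mu(B_r(x))$, so the determinant drops out entirely and no Cholesky comparison is needed. The estimate $\fint_{B_{2r}(x)}\big||\omega'|-1\big|\leq C(n)\epsilon$ comes from the $\epsilon$-splitting, and the corresponding bound on $B_r(x)$ follows immediately by Riemannian volume doubling (increasing the constant). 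Hence both $\mu'(B_r(x))$ and $\mu'(B_{2r}(x))$ are comparable to the Riemannian volumes, and Bishop--Gromov finishes it.

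Your two-matrix approach recovers exactly this comparison, but the detour through $A_r$, $A_{2r}$, and the Cholesky argument of Claim~1 is avoidable: a single application of the Transformation theorem at the larger ball suffices.
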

\begin{proof}
By Theorem \ref{t:transformation_theorem_intro}, there exists a 
lower triangular
matrix $A\in GL(n-2)$ with positive diagonal entries, such that 
\begin{align}
\label{e:mumuprime}
u'=A\circ u:B_{2r}(x)\to \dR^{n-2}
\end{align}
is an $\epsilon$-splitting.  Let $dv_g$ denote the Riemannian measure
and set $\omega'\equiv du'^1\wedge\cdots\wedge du'^{n-2}$. Define
the measure $\mu'$ by $\mu'= \Big(\int_{B_{3/2}(p)}|\omega|\Big)^{-1}|\omega'|dv_g$.  Then
\begin{align}
\mu' = \det(A)\mu\, .
\end{align}

In particular this gives us
\begin{align}\label{e:slice:4}
\frac{\mu'(B_{2r}(x))}{\mu'(B_r(x))} = \frac{\mu(B_{2r}(x))}{\mu(B_r(x))}\, ,
\end{align}
and it is equivalent to show the ratio bound for $\mu'$.  Now since $u'$ is an $\epsilon$-splitting 
we have the estimate
\begin{align}
&\fint_{B_{2r}(x)}|\, |\omega'|-1|\leq C(n)\epsilon\, .
\end{align}

Hence, we also have the estimate
\begin{align}
&\fint_{B_{r}(x)}|\, |\omega'|-1|\leq \frac{\Vol(B_{2r}(x))}{\Vol(B_r(x))}\fint_{B_{2r}(x)}|\, |\omega'|-1|\leq C(n)\epsilon\, ,
\end{align}
which of course uses the doubling property for the Riemannian measure.  By combining the previous
 two estimates we get
\begin{align}
\big(1-C\epsilon\big)\Vol(B_r(x))&\leq \mu'(B_r(x))\leq \big(1+C\epsilon\big)\Vol(B_r(x))\,  ,\notag\\
\big(1-C\epsilon\big)\Vol(B_{2r}(x))&\leq \mu'(B_{2r}(x))\leq \big(1+C\epsilon\big)\Vol(B_{2r}(x))\, .
\end{align}

Finally, by using the definition of $\mu'$, we arrive at:
\begin{align}
\mu'(B_{2r}(x)) &= \Big(\int_{B_{3/2}(p)}|\omega|\,dv_g\Big)^{-1}\int_{B_{2r}(x)}|\omega'|  \notag\\
&\leq (1+C(n)\epsilon)\Big(\int_{B_{3/2})}|\omega|\,dv_g\Big)^{-1}\Vol(B_{2r}(x)) \notag\\
&\leq C(n)\Big(\int_{B_{3/2})}|\omega|\,dv_g\Big)^{-1}\Vol(B_r(x))\\
&\leq C(n)\Big(\int_{B_{3/2}p)}|\omega|\,dv_g\Big)^{-1}\int_{B_r(x)}|\omega '| \notag\\
&= C(n)\mu'(B_r(x))\, ,
\end{align}
which by \eqref{e:slice:4} completes the proof.
\end{proof}



Recall the collection, $\cB_{\delta_3}$, of bad balls, defined in   (\ref{cBdef}).    
  The proof of the Slicing theorem (Theorem \ref{t:slicing_intro}) requires that the image under $u$ of  $\cB_{\delta_3}$,
 has measure $<\epsilon/2$; see (\ref{e:intro4}), (\ref{e:sum_intro}).
  If in (\ref{e:sum_intro}) the measure  $\mu$ were instead the 
  usual riemannian measure,
 then 
  since $u$ is Lipschitz, standard estimates could be used to show just that.  
  On the face of it, however, the $\mu$-content estimate is much weaker, since for balls where the determinant 
  $|\omega|$ of $u$ is small, then
$\mu(B_r(x))/\Vol(B_r(x))$ is small as well.

On the other hand, in the spirit of Sard's theorem, we will see in the next lemma 
that at least for balls $B_r(x)$ with $1/4\geq r\geq s^{\delta_3}_x$, we recover
 this loss because the volume of the image $u(B_r(x))$ is correspondingly small.  
\begin{lemma} 
\label{l:mu_vol_comparison}
If $1/4\geq r\geq s^\eta_x$, then
\begin{equation}
\label{e:intro41}
|u(B_{r}(x)|\leq  C(n)\cdot r^{-2}\mu(B_{r}(x)) \, .
\end{equation}
\end{lemma}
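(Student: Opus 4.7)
The plan is to reduce the estimate, via the Transformation theorem, to the analogous estimate for an $\epsilon$-splitting map, where a determinant cancellation makes the inequality scale invariant, after which Bishop--Gromov closes the argument. Since $r\geq s^{\delta_3}_x$, Theorem \ref{t:transformation_theorem_intro} (applied with $\delta_3$ chosen so small that the resulting $\epsilon$-splitting map has a universally small $\epsilon=\epsilon(n)$) produces a lower triangular matrix $A\in GL(n-2)$ with positive diagonal entries such that $u':=A\circ u:B_r(x)\to\dR^{n-2}$ is an $\epsilon$-splitting map.

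The key algebraic observation is that the ratio $|u(B_r(x))|/\int_{B_r(x)}|\omega|$ is invariant under left multiplication of $u$ by lower triangular matrices with positive diagonal entries. Indeed, a direct computation (exactly as in the proof of Lemma \ref{l:doubling}) gives $\omega'=\det(A)\,\omega$, so $\int_{B_r(x)}|\omega'|=\det(A)\int_{B_r(x)}|\omega|$; simultaneously, $u(B_r(x))=A^{-1}(u'(B_r(x)))$ and the ordinary change of variables in $\dR^{n-2}$ gives $|u'(B_r(x))|=\det(A)\,|u(B_r(x))|$. The $\det(A)$ factors therefore cancel in the ratio, reducing the problem to establishing
$$|u'(B_r(x))|\cdot\int_{B_{3/2}(p)}|\omega|\;\leq\; C(n)\,r^{-2}\int_{B_r(x)}|\omega'|.$$

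I would then control each of the four quantities using the standard package of $\epsilon$-splitting and volume-comparison estimates. The gradient bound $|\nabla u'^a|\leq 1+\epsilon$ shows $u'$ is essentially $1$-Lipschitz on $B_r(x)$, yielding $|u'(B_r(x))|\leq C(n)r^{n-2}$. The fact (analogous to that used in Lemma \ref{l:doubling}) that the Gram matrix of $\nabla u'$ is close to the identity in mean gives $\fint_{B_r(x)}\big||\omega'|-1\big|\leq C(n)\epsilon$, so $\int_{B_r(x)}|\omega'|\geq (1-C(n)\epsilon)\Vol(B_r(x))$. The $\delta$-splitting assumption on $u$ itself gives $|\omega|\leq(1+\delta)^{n-2}\leq C(n)$ on $B_{3/2}(p)$, so $\int_{B_{3/2}(p)}|\omega|\leq C(n)\Vol(B_{3/2}(p))$. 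Finally, Bishop--Gromov applied at $x$ (for $r\leq 3/2$) and at $p$ (for $1/2\leq 3/2$), combined with the inclusion $B_{1/2}(p)\subset B_{3/2}(x)$ for $x\in B_1(p)$, produces the noncollapsing bound $\Vol(B_r(x))\geq c(n)\,r^n\,\Vol(B_{3/2}(p))$. Chaining the four estimates yields the displayed inequality, with the factor $\Vol(B_{3/2}(p))$ appearing identically on both sides and dropping out.

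I expect the only nonroutine point to be recognizing the determinant cancellation, which makes the estimate scale invariant and explains the $r^{-2}$ factor: the Sard-type shrinkage of the image $|u(B_r(x))|$ in collapsed directions is in exact balance with the smallness of $|\omega|$, as foreshadowed in the paragraph preceding the lemma. Once this cancellation is in hand, the remainder is a standard splitting-plus-volume-comparison computation and should not present further obstacles.
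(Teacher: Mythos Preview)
Your proposal is correct and follows essentially the same approach as the paper: apply the Transformation theorem to obtain an $\epsilon$-splitting $u'=A\circ u$, observe the determinant cancellation between $|u(B_r(x))|$ and $\mu(B_r(x))$, then use the Lipschitz bound $|u'(B_r(x))|\leq C(n)r^{n-2}$, the lower bound $\fint_{B_r(x)}|\omega'|\geq 1-C(n)\epsilon$, and Bishop--Gromov to finish. The only cosmetic difference is that the paper proves the inequality for $u'$ and $\mu'$ first and then transfers back via the determinant relations, whereas you record the invariance of the ratio at the outset; the ingredients and logic are identical.
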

\begin{proof}
As in Lemma \ref{l:doubling}, choose a lower triangular matrix $A\in GL(n-2)$
with positive diagonal entries, such that 
\begin{align}
u'=A\circ u:B_{2r}(x)\to \dR^{n-2}
\end{align}
is an $\epsilon$-splitting and define the measure $\mu'$ as in Lemma \ref{l:doubling}.
Then as in (\ref{e:mumuprime}), $\mu' = \det(A)\mu$.

Since $u'$ is an $\epsilon$-splitting, we have the estimates
\begin{align}\label{e:slice:3}
&\fint_{B_{2r}(x)}||\omega'|-1|\leq C(n)\epsilon\, ,\notag\\
&u'(B_r(x))\subseteq B_{2r}(u'(x))\, .
\end{align}
By the first estimate above,
\begin{align}
\mu'(B_r(x)) &= \Big(\int_{B_{3/2}(p)} |\omega|\Big)^{-1}\int_{B_r(x)} |\omega'|\, ,\notag\\
&\geq (1-C(n)\epsilon)\frac{\Vol(B_r(x))}{\Vol(B_{3/2}(p))}\fint_{B_r(x)} |\omega'|\notag\\
&\geq (1-C\epsilon)\frac{\Vol(B_r(x))}{\Vol(B_{3/2}(x))}\geq C(n) r^n\, ,
\end{align}
where in the last step we have used volume monotonicity for the Riemannian measure.  On the other hand, by the second 
estimate of \eqref{e:slice:3},
\begin{align}
|u'(B_r(x))|\leq C(n)r^{n-2}\, .
\end{align}
Combining these gives the estimate
\begin{align}
|u'(B_r(x))|\leq C(n) r^{-2}\mu'(B_r(x))\, . 
\end{align}

To relate these back to the original function $u$, we observe that
\begin{align}
|u'(B_r(x))| &= \det(A)|u(B_r(x))|\, ,\notag\\
\mu'(B_r(x)) &= \det(A)|\mu(B_r(x))|\, ,
\end{align}
which immediately gives (\ref{e:intro41}).
This completes the proof.
\end{proof}


As previously noted, Lemmas \ref{l:doubling} and \ref{l:mu_vol_comparison} suffice to complete the proof of the
Slicing theorem.
\end{proof}

\section{Codimension $4$ Regularity of Singular Limits}\label{s:codim4}

In this section we  prove Theorem \ref{t:main_codim4}.  Thus, we consider a
Gromov-Hausdorff limit space, 
\begin{align}
(M^n_j,d_j,p_j)\stackrel{d_{GH}}{\longrightarrow}(X,d,p)\, ,
\end{align}
of a sequence of Riemannian manifolds $(M^n_j,g_j,p_j)$, satisfying $|\Ric_{M^n_j}|\leq n-1$ and $\Vol(B_1(p_j))>\rv>0$.  
We will show that there exists a subset $\cS\subseteq X$ of codimension $4$ such that $X\setminus \cS$ is a 
$C^{1,\alpha}$-Riemannian manifold.  In this section, we will show that $\cS$ has Hausdorff codimension 4.
 We will postpone the improvement to Minkowski codimension $4$ until Section \ref{s:qs_estimates}.

As mentioned in  Section \ref{s:intro}, it has been understood since \cite{ChC2} 
that the main technical challenge lies in showing that spaces of the 
form $\dR^{n-2}\times C(S^1_\beta)$, where $S^1_\beta$ is the circle of circumference $\beta\leq 2\pi$, cannot arise as limit spaces 
unless $\beta= 2\pi$ and hence $\dR^{n-2}\times C(S^1_\beta)= \dR^n$.  The  
 Slicing Theorem (Theorem \ref{t:slicing_intro}) was expressly designed to enable us to handle this 
point via a blow up argument.  We will do this  in subsection \ref{ss:codim2}.

 In subsection \ref{ss:codim3} we 
 prove that more general spaces of the form $\dR^{n-3}\times C(Y)$ cannot arise as limit spaces. 
 The proof of this statement, has a very different feel than the proof ruling out the codimension two limits, 
and essentially comes down to a bordism and curvature pinching argument for $3$-manifolds.  

Finally,
 in subsection \ref{ss:proof_codim4} we combine the tools developed in the 
previous subsections to prove the Hausdorff estimates
of Theorem \ref{t:main_codim4}.

\subsection{Nonexistence of Codimension $2$ Singularities}\label{ss:codim2}

In this subsection, we use the tools of Section \ref{s:slicing} in order to prove that spaces that are
$(n-2)$-symmetric
cannot arise as noncollapsed limits of manifolds with bounded Ricci curvature.

\begin{theorem}[$(n-2)$-Symmetric Limits]\label{t:codim2}
Let $(M^n_j,g_j,p_j)$ be a sequence of Riemannian manifolds satisfying 
$|\Ric_{M^n_j}|\to 0$, $\Vol(B_1(p_j))>\rv>0$ and such that
\begin{align}
(M_j^n,d_j,p_j) \stackrel{d_{GH}}{\longrightarrow} \dR^{n-2}\times C(S^1_\beta)\, .
\end{align}
Then $\beta=2\pi$ 
and $\dR^{n-2}\times C(S^1_\beta)=\dR^n$.
\end{theorem}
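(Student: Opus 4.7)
The plan is to argue by contradiction: assume $\beta<2\pi$, use the Slicing theorem to produce a sequence of base points on a good slice at which the harmonic radius tends to zero, and then blow up to reach a contradiction with the continuity of the harmonic radius under two-sided Ricci bounds. Throughout, we may pass to subsequences without comment.

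First, since $(M^n_j,d_j,p_j)\stackrel{d_{GH}}{\longrightarrow} \dR^{n-2}\times C(S^1_\beta)$ and the target isometrically splits off $\dR^{n-2}$, part (2) of Lemma \ref{l:harmonic_splitting_intro} gives, for $\delta_j\to 0$, harmonic $\delta_j$-splitting maps
\[
u_j:B_2(p_j)\to\dR^{n-2}.
\]
Apply the Slicing theorem (Theorem \ref{t:slicing_intro}) with any fixed $\epsilon>0$ (to be sent to $0$) to obtain, for $j$ large, a subset $G_{\epsilon_j}\subset B_1(0^{n-2})$ of nearly full measure such that for every $x\in u_j^{-1}(G_{\epsilon_j})$ and every $0<r\leq 1$ there is a lower triangular $A=A(x,r)\in GL(n-2)$ with positive diagonal entries for which $A\circ u_j:B_r(x)\to\dR^{n-2}$ is an $\epsilon_j$-splitting map. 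By part (1) of Lemma \ref{l:harmonic_splitting_intro}, this is the statement that $B_r(x)$ is $\epsilon_j r$-Gromov--Hausdorff close to $B_r(0)$ in a product $\dR^{n-2}\times S_{j,x,r}$.

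Next, pick $s_j\in G_{\epsilon_j}$; then $u_j^{-1}(s_j)$ is nonempty and compact. On this slice define $\rho_j=\inf_{x\in u_j^{-1}(s_j)}r_h(x)$, and choose $x_j\in u_j^{-1}(s_j)$ realizing (or almost realizing) this infimum. I claim $\rho_j\to 0$. Indeed, if $\rho_j$ were bounded below by some $\rho_0>0$ along a subsequence, then standard $C^{1,\alpha}\cap W^{2,q}$ compactness in harmonic coordinates (valid under two-sided Ricci bounds) would force the limit to be a smooth Riemannian manifold in a definite neighborhood of the slice, and combined with the splitting, this would make the limit locally isometric to $\dR^{n-2}\times\Sigma$ for a smooth surface $\Sigma$. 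Since the limit is $\dR^{n-2}\times C(S^1_\beta)$ and cone points of $C(S^1_\beta)$ lie on the slice when $\beta<2\pi$, this is a contradiction.

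Now rescale: set $g_j'=r_j^{-2}g_j$ where $r_j=r_h(x_j)\to 0$. On $(M^n_j,g_j',x_j)$ we still have $|\Ric|=r_j^2|\Ric|\to 0$ and $\Vol(B_1(x_j))\geq \rv'>0$, by noncollapsing. Pass to a pointed Gromov--Hausdorff limit
\[
(M^n_j,g_j',x_j)\stackrel{d_{GH}}{\longrightarrow}(X,d_X,x_\infty).
\]
The Slicing theorem, applied on each rescaled ball $B_r(x_j)$ of radius $r\leq r_j^{-1}$, produces for every fixed $r>0$ an $\epsilon_j$-splitting $A_{j,r}\circ u_j:B_r(x_j)\to\dR^{n-2}$; hence the limit isometrically splits as $X=\dR^{n-2}\times S$ at every scale, so $S$ is a two-dimensional metric space that is a noncollapsed limit of manifolds with $|\Ric|\to 0$. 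Since $r_h(x_j)=1$ in the rescaled metric and the harmonic radius is continuous under two-sided Ricci bounds (a consequence of the $C^{1,\alpha}$ harmonic coordinate theory), $X$ is a smooth Ricci-flat Riemannian manifold near $x_\infty$ with $r_h(x_\infty)=1$, and $S$ is a smooth Ricci-flat, hence flat, complete two-manifold with Euclidean volume growth at infinity (by Bishop--Gromov applied to $X$ with $|\Ric|\to 0$ and noncollapsing). The only such $S$ is $\dR^2$, so $X=\dR^n$. But in $\dR^n$ every point has infinite harmonic radius, contradicting $r_h(x_\infty)=1$.

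The main obstacle is the construction of points at which one can blow up without losing the isometric $\dR^{n-2}$ factor at every scale. This is exactly what the Slicing theorem (via the Transformation theorem) provides: on the generic slice $u_j^{-1}(s_j)$, even though $u_j$ itself may degenerate at small scales, a suitable lower-triangular recalibration always produces an $\epsilon_j$-splitting, and this survives under the rescaling by $r_j^{-1}\to\infty$. Once this is in hand, the remainder is a soft dimension-and-regularity argument combined with continuity of the harmonic radius.
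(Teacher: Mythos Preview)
Your proof is correct and follows essentially the same approach as the paper's: contradiction via the Slicing theorem, choosing a point on a good slice minimizing the harmonic radius, blowing up, and using continuity of $r_h$ against the identification $X=\dR^n$. One small point where the paper is slightly more explicit: after rescaling, you should state that \emph{every} point on the slice $u_j^{-1}(s_j)$ has harmonic radius $\geq 1$ (since $x_j$ was chosen as the minimizer); this is what guarantees that the entire limit surface $S$ is smooth, not just a neighborhood of $x_\infty$, which in turn is what justifies the $C^{1,\alpha}$ convergence and hence the continuity of $r_h$. Also, the paper restricts the slice to $B_1(p_j)$ to ensure compactness so that the minimum is actually attained.
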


\begin{proof}[Proof of Theorem \ref{t:codim2}]
We will prove the result by contradiction. So let us assume it is false. Then there exists a sequence $(M^n_j,g_j,p_j)$ of 
Riemannian manifolds satisfying $|\Ric_{M^n_j}|\to 0 $, $\Vol(B_1(p_j))>\rv>0$ and such that
\begin{align}
(M_j^n,d_j,p_j) \to \big(\dR^{n-2}\times C(S^1_\beta),d,p\big)\, ,
\end{align}
with $\beta<2\pi$ and $p$ a vertex.  

Note first that by the noncollapsing assumption we have $\beta\geq \beta_0(n,v)$.  

Now by Lemma \ref{l:harmonic_splitting_intro}, there exists $\delta_j$-splitting maps 
$u_j:B_2(p_j)\to\dR^{n-2}$ with $\delta_j\to 0$.  Fix some sequence $\epsilon_j\to 0$ which is 
tending to zero so slowly compared to $\delta_j$,  that Theorem \ref{t:slicing_intro} holds
for $u_j:B_2(0)\to \dR^{n-2}$ with $\epsilon_j$.  Let $G_{\epsilon_j}\subseteq B_1(0^{n-2})$ be
 the corresponding good values of $u_j$, and let $s_j\in G_{\epsilon_j}\cap B_{10^{-1}}(0^{n-2})$
 be fixed regular values.  \\  

Observe that $\dR^{n-2}\times C(S^1_\beta)$ is smooth outside of the singular set 
$\cS= \dR^{n-2}\times\{0\}\subseteq \dR^{n-2}\times C(S^1_\beta)$.  In particular on
$\dR^{n-2}\times C(S^1_\beta)$ we have $r_h(x)\approx 1/d(x,\cS)$, where $r_h$ is the harmonic 
radius as in Section \ref{s:intro} and $d$ denotes distance.  By the standard $\epsilon$-regularity theorem, it follows
that the convergence of $M^n_j$ is in $C^{1,\alpha}\cap W^{2,q}$ away from $\cS$, for every 
$\alpha<1$ and $q<\infty$.  Let $f_j:B_{\epsilon^{-1}_j}(p)\to B_{\epsilon^{-1}_j}(p_j)$ be the 
$\epsilon_j$-Gromov Hausdorff maps, and let us denote $\cS_j\equiv f_j(\cS)\subseteq M^n_j$.  
Then by the previous statements, for every $\tau>0$, all $j$ sufficiently large, and 
$x\in B_1(p_j)\setminus T_\tau(\cS_j)$, we have $r_h(x)\geq \frac{\tau}{2}$.

Consider again the submanifold $u^{-1}_{j}(s_j)\cap B_1(p_j)$.  Define the scale
\begin{align}
r_j= \min\{r_h(x):x\in u^{-1}_{j}(s_j)\cap B_1(p_j)\}\, .
\end{align}
By the discussion of the previous paragraph, this minimum is actually obtained at some 
$x_j\in u^{-1}_{j}(s_j)\cap B_1(p_j)$, with $x_j\to \cS_j\cap B_{10^{-1}}(p_j)$. Moreover, since $S^1_\beta$,  the cross-section of
the cone factor,  satisfies $0<\beta<2\pi$, it follows  that $r_j\to 0$.  According to Theorem \ref{t:slicing_intro},
there exists a lower triangular matrix $A_j\in GL(n-2)$ with positive diagonal entries,
such that 
$v_j\equiv A_j\circ \big(u_j-s_j\big):B_{r_j}(x_j)\to \dR^{n-2}$ is an $\epsilon_j$-splitting map.
Note that we have renormalized so that each of our regular values is the zero level set.

Now let us consider the sequence $(M^n_j,r_j^{-1}d_j,x_j)$.  After passing to a subsequence if necessary, 
which we will continue to denote by $  (M^n_j,r_j^{-1}d_j,x_j)$,  have 
\begin{align}
(M^n_j,r_j^{-1}d_j,x_j)\stackrel{d_{GH}}{\longrightarrow} (X,d_X,x)\, ,
\end{align}
in the pointed Gromov-Hausdorff sense, where $X$ splits off $\dR^{n-2}$ isometrically.

By our noncollapsing 
assumption we have $\Vol(B_1(x_j))>c(n)\rv>0$, and hence, in the rescaled spaces, we have 
$\Vol(B_r(x_j))>c\rv r^n$ for all $r\leq R_j\to \infty$.  In particular, $X$ has Euclidean volume
growth at $\infty$ i.e. 
$\Vol(B_r(x'))>c\rv\,r^n$ for all $r>0$.  

After possibly passing to another subsequence, we can limit the functions $v_j$ to a function $v:X\to \dR^{n-2}$. 
 Note that by our normalization, we have
$v_j:B_{2}(x_j)\to \dR^{n-2}$ are $\epsilon_j$-splittings, and that by 
Theorem \ref{t:transformation_theorem_intro}, we have for each $R>2$ that the maps $v_j:B_{R}(x_j)\to \dR^{n-2}$ 
are $C(n,R)\epsilon_j$-splittings.  In particular, we can conclude that 
\begin{align}
X= \dR^{n-2}\times S\, ,
\end{align}
where $v:X\to\dR^{n-2}$ is the projection map and  $S= u^{-1}(0)$.  \\

Now by construction, in the rescaled spaces we have for any $y\in u^{-1}_j(0)$ that $r_h(y)\geq 1$.  
Therefore, the limit $X$ is $C^{1,\alpha}\cap W^{2,q}$ in a neighborhood of $u^{-1}(0)$, and hence 
$S= u^{-1}(0)$ is a nonsingular surface.  Thus, since $X=\dR^{n-2}\times S$ it follows  that $X$ is
 at least a $C^{1,\alpha}\cap W^{2,q}$ manifold with $r_h\geq 1$.  Since the Ricci curvature is uniformly 
bounded, in fact tending to zero, we have by the standard $\epsilon$-regularity theorem that the 
convergence $(M^n_j,r_j^{-1}d_j,x_j)\to (X, d_X,x)$ is in 
$C^{1,\alpha}\cap W^{2,q}$.  Because the convergence is in $C^{1,\alpha}\cap W^{2,q}$ we have that 
$r_h$ behaves  continuously in the limit; \cite{Anderson_Einstein}.  In particular, we have  $r_h(x'_j)\to r_h(x')$ and so, $r_h(x')=1$. 

On the other hand, since $|\Ric_{M^n_j}|\to 0$ and $X$ is $C^{1,\alpha}\cap W^{2,q}$ it follows that $X$ is a 
smooth Ricci flat manifold.  
This is easiest to see by writing things out in harmonic coordinates on $X$; see \cite{Anderson_Einstein} 
for the argument.  Now since $X=\dR^{n-2}\times S$, we can conclude that $S$ is smooth and Ricci flat, hence flat. 
 In particular, we have that $X$ is flat. Since we have already shown that
$X$ has Euclidean volume growth, this implies that $X=\dR^n$.
However, we have also already concluded that $r_h(x')=1$, which gives us our desired contradiction.
\end{proof}

We end this subsection with the following corollary, which states that a noncollapsed limit space is smooth away from 
a set of codimension $3$. We will use this in the next subsection to show $(n-3)$-symmetric splittings cannot arise as limits.

\begin{corollary}\label{c:codim3}
Let $(M^n_j,g_j,p_j)$ denote a sequence of Riemannian manifolds satisfying $|\Ric_{M^n_j}|\leq n-1$, $\Vol(B_1(p_j))>\rv>0$ and such that
\begin{align}
(M_j^n,d_j,p_j) \to (X,d,p)\, .
\end{align}
Then there exists a subset, $\cS\subseteq X$, with $\dim\cS\leq n-3$, such that for each $x\in X\setminus \cS$,
 we have $r_h(x)>0$.  In particular, $x\in X\setminus \cS$ is a $C^{1,\alpha}$ Riemannian manifold.
\end{corollary}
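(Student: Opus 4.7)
The plan is to take $\cS := \cS^{n-3}$, the closed $(n-3)$-stratum from subsection \ref{ss:stratification}. The Cheeger-Colding dimension estimate $\dim \cS^{n-3} \leq n-3$ recalled there then gives the Hausdorff bound for free, and the whole task reduces to showing $r_h(x) > 0$ for every $x \notin \cS^{n-3}$.

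For such a point $x$, by definition of the stratification there is some tangent cone $T_x X \cong \RR^{n-2} \times C(Z)$ that is $(n-2)$-symmetric. First I would check that $T_x X$ has Hausdorff dimension $n$ (by noncollapsing and volume continuity), forcing $C(Z)$ to be $2$-dimensional and hence $Z$ to be $1$-dimensional; the classification of $1$-dimensional noncollapsed Ricci limits then gives $Z = S^1_\beta$ for some $0 < \beta \leq 2\pi$. The next step is to realize $T_x X = \RR^{n-2} \times C(S^1_\beta)$ itself as a noncollapsed Gromov-Hausdorff limit of Riemannian manifolds with $|\Ric| \to 0$. This is done by a standard diagonal argument: given a sequence $r_i \to 0$ with $(X, r_i^{-1} d, x) \to T_x X$, I would choose $j(i) \to \infty$ fast enough and basepoints $x_{j(i)} \in M^n_{j(i)}$ shadowing $x$, so that
\[
(M^n_{j(i)}, r_i^{-1} d_{j(i)}, x_{j(i)}) \xrightarrow{d_{GH}} \RR^{n-2} \times C(S^1_\beta),
\]
the rescaled bounds $|\Ric_{M^n_{j(i)}}| \leq (n-1) r_i^2 \to 0$ and $\Vol(B_1(x_{j(i)})) \geq c(n)\, \rv > 0$ being automatic. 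Theorem \ref{t:codim2} then forces $\beta = 2\pi$, so $T_x X \cong \RR^n$.

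Once $T_x X \cong \RR^n$, the rescaled balls $(X, r_i^{-1} d, x)$ are arbitrarily Gromov-Hausdorff close to balls in $\RR^n$ for $i$ large. Passing along the same diagonal subsequence of manifolds and applying the standard $\epsilon$-regularity theorem (Theorem \ref{t:standard_epsilon_regularity}) in the rescaled metric gives $r_h(x_{j(i)}) \geq 1$ in the rescaled scale. Because the two-sided Ricci bound makes the harmonic radius continuous under the resulting $C^{1,\alpha} \cap W^{2,q}$ convergence (see \cite{Anderson_Einstein}), I would transfer this lower bound to the limit to conclude $r_h(x) \geq r_i > 0$ in the original metric on $X$, and hence that $x$ has a $C^{1,\alpha}$ Riemannian neighborhood.

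The main obstacle I anticipate is in the middle step: producing the diagonal sequence of genuine manifolds converging to $T_x X$ (rather than to $X$ alone) with the correct rescaled Ricci and volume control, so that Theorem \ref{t:codim2} applies legitimately to a tangent cone rather than to the original limit. Once that diagonal subsequence has been set up and the classification $Z = S^1_\beta$ is in hand, the remainder of the proof is a mechanical combination of Theorem \ref{t:codim2} with Theorem \ref{t:standard_epsilon_regularity} and the continuity of $r_h$ under two-sided Ricci bounds.
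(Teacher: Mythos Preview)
Your proposal is correct and follows essentially the same route as the paper: set $\cS=\cS^{n-3}$, use the stratification bound $\dim\cS^{n-3}\le n-3$, note that any $x\notin\cS^{n-3}$ has an $(n-2)$-symmetric tangent cone $\dR^{n-2}\times C(S^1_\beta)$, invoke Theorem~\ref{t:codim2} to force $\beta=2\pi$, and then apply Theorem~\ref{t:standard_epsilon_regularity}. The paper's proof is considerably terser---it leaves the diagonal argument (needed to realize the tangent cone as a limit of manifolds so that Theorem~\ref{t:codim2} applies) and the identification $Z=S^1_\beta$ implicit---so the extra care you flag as the ``main obstacle'' is exactly the standard detail the paper is suppressing.
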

\begin{proof}
Recall the standard stratification of $X$.  In particular, if we consider the subset $\cS^{n-3}\subset X$ 
we have that $\dim \cS^{n-3}\leq n-3$, and that for every point $x\not\in \cS^{n-3}$ there exists {\it some} 
tangent cone at $x$ which is isometric to $\dR^{n-2}\times C(S^1_\beta)$.  That is, there exists $r_a\to 0$ such that
\begin{align}
(X,r_a^{-1}d,x)\to \dR^{n-2}\times C(S^1_\beta)\, .
\end{align}
However by Theorem \ref{t:codim2} we then have  $\beta= 2\pi$, which is to say that 
\begin{align}
(X,r_a^{-1}d,x)\to \dR^n\, .
\end{align}
Thus, for $a\in \dN$ sufficiently large, we can apply the standard $\epsilon$-regularity theorem, Theorem \ref{t:standard_epsilon_regularity},
to see that a neighborhood of $x$ is a $C^{1,\alpha}$ Riemannian manifold, which proves the corollary.
\end{proof}

\subsection{Nonexistence of Codimension $3$ singularities}\label{ss:codim3}

In this subsection we use the tools of Section \ref{s:slicing} and Section \ref{ss:codim2} in order to prove that 
$(n-3)$-symmetric metric spaces cannot arise as limits of manifolds with bounded Ricci curvature.  
Specifically, we prove the following:

\begin{theorem}[$(n-3)$-Symmetric Limits]\label{t:codim3}
Let $(M^n_j,g_j,p_j)$ denote a sequence of Riemannian manifolds satisfying $|\Ric_{M^n_j}|\to 0$, $\Vol(B_1(p_j))>\rv>0$ and such that
\begin{align}
(M_j^n,d_j,p_j) \to \dR^{n-3}\times C(Y)\, ,
\end{align}
in the pointed Gromov-Hausdorff sense, where $Y$ is some compact metric space.  
Then $Y$ is isometric to the unit $2$-sphere and hence $\dR^{n-3}\times C(Y)=\dR^n$.
\end{theorem}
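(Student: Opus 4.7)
The plan is to mimic the blow-up argument of Theorem \ref{t:codim2}, with the role of the $(n-2)$-dimensional splitting played by an $(n-3)$-dimensional one, and to exploit at the end the dimensional miracle that Ricci-flat $3$-manifolds are flat.

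First, I would use Corollary \ref{c:codim3} together with the product structure of $\dR^{n-3}\times C(Y)$ to deduce that $Y$ is a smooth closed $2$-dimensional Riemannian manifold. Indeed, the singular set of $\dR^{n-3}\times C(Y)$ contains $\dR^{n-3}\times C(\mathrm{Sing}(Y))$; by Corollary \ref{c:codim3} this set has Hausdorff dimension $\leq n-3$, forcing $\mathrm{Sing}(Y)=\emptyset$. Next, by Theorem \ref{t:standard_epsilon_regularity}, the convergence $M^n_j\to \dR^{n-3}\times C(Y)$ is in $C^{1,\alpha}$ away from the vertex locus, so $C(Y)\setminus\{\mathrm{vertex}\}$ is a smooth Ricci-flat $3$-manifold. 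The Ricci-flatness of the cone off its vertex is equivalent to $Y$ being Einstein with $\Ric_Y=g_Y$, i.e., a closed surface of constant Gaussian curvature $1$. Thus $Y$ is isometric either to the round $S^2$ or to $\dR\dP^2$, and correspondingly $C(Y)$ is either $\dR^3$ or $\dR^3/\dZ_2$.

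To rule out $Y=\dR\dP^2$, I would adapt the blow-up scheme from Theorem \ref{t:codim2} to the $\dR^{n-3}$ factor. By Lemma \ref{l:harmonic_splitting_intro}, there exist $\delta_j$-splitting maps $u_j:B_2(p_j)\to\dR^{n-3}$ with $\delta_j\to 0$. Theorem \ref{t:hessian_estimate_intro} and Theorem \ref{t:transformation_theorem_intro} are already formulated for arbitrary target dimension $k$, and the derivation of Theorem \ref{t:slicing_intro} from them in Section \ref{s:slicing} carries over verbatim for $k=n-3$, producing for each $\epsilon_j\to 0$ a set of good regular values $s_j$ near $0\in\dR^{n-3}$ on whose preimage the almost-splitting property of $u_j$ persists on every subball after a lower-triangular rescaling. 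Pick $s_j\in G_{\epsilon_j}$ and choose $x_j\in u_j^{-1}(s_j)\cap B_1(p_j)$ minimizing the harmonic radius $r_h$; since the limit is singular exactly on $\dR^{n-3}\times\{\mathrm{vertex}\}$, $r_j:=r_h(x_j)\to 0$. Rescaling to $(M^n_j,r_j^{-1}d_j,x_j)$ and passing to a subsequential Gromov-Hausdorff limit $(X',d',x')$, the Transformation Theorem forces $X'$ to split isometrically as $\dR^{n-3}\times N^3$ with $N^3$ the limiting slice. Noncollapsing yields Euclidean volume growth of $N^3$, and the minimality of $x_j$ together with continuity of $r_h$ under $C^{1,\alpha}$-convergence (as in the proof of Theorem \ref{t:codim2}) gives $r_h\geq 1$ everywhere on $N^3$ with equality at $x'$. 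Consequently $N^3$ is a smooth complete Ricci-flat Riemannian $3$-manifold.

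The main obstacle, and the decisive new ingredient beyond Theorem \ref{t:codim2}, is the concluding classification in dimension three. In three dimensions the full Riemann tensor is algebraically determined by the Ricci tensor, so $N^3$ is in fact flat. A complete flat Riemannian $3$-manifold with Euclidean volume growth must be a quotient $\dR^3/\Gamma$ by a finite group of isometries acting freely; but any finite-order isometry of $\dR^3$ with nontrivial rotational part (a finite-order screw motion) must have vanishing translational component and hence a fixed point, so $\Gamma$ can act freely only when $\Gamma$ is trivial. Therefore $N^3=\dR^3$, $X'=\dR^n$, and $r_h(x')=\infty$, contradicting $r_h(x')=1$. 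This forces $Y=S^2$ and hence $\dR^{n-3}\times C(Y)=\dR^n$, as claimed.
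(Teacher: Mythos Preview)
Your reduction to the dichotomy $Y=S^2$ or $Y=\dR\dP^2$ matches the paper exactly. The gap is in your method for ruling out $\dR\dP^2$: the assertion that the derivation of Theorem~\ref{t:slicing_intro} ``carries over verbatim for $k=n-3$'' is not correct, and this is the crux of the matter.

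The proof of the Slicing Theorem for $k=n-2$ hinges on Lemma~\ref{l:mu_vol_comparison}, which gives $|u(B_r(x))|\leq C(n)r^{-2}\mu(B_r(x))$; the exponent $2$ arises as $n-k$. For $k=n-3$ the same reasoning yields only $|u(B_r(x))|\leq C(n)r^{-3}\mu(B_r(x))$. On the other hand, the definition of the singular scale $s^\delta_x$ involves an $r^2$ weight (coming from the natural scaling of $\Delta|\omega^\ell|$), so in the covering argument \eqref{e:sum_intro} one controls $\sum_j s_j^{-2}\mu(B_{s_j}(x_j))$ and nothing better. For $k=n-3$ you would need $\sum_j s_j^{-3}\mu(B_{s_j}(x_j))$, which carries an uncontrolled factor $s_j^{-1}$. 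The summation therefore does not close, and you cannot conclude that $u(\cB_{\delta_3})$ has small $(n-3)$-dimensional measure. Without this you have no good slice on which to run the blow-up, and the argument stalls before you ever reach the (correct and pleasant) classification of complete flat $3$-manifolds with Euclidean volume growth.

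The paper sidesteps this entirely with a short bordism argument. Once $Y=\dR\dP^2$ is the only alternative, one takes any $\epsilon$-splitting map $u_j:B_2(p_j)\to\dR^{n-3}$ together with the Poisson approximation $h_j$ to $d^2(\,\cdot\,,p_j)$. Away from the singular set the convergence is $C^{1,\alpha}$, so for a Sard regular value $s_j$ the set $u_j^{-1}(s_j)\cap\{h_j\leq 1\}$ is a smooth compact $3$-manifold whose boundary $u_j^{-1}(s_j)\cap\{h_j=1\}$ is diffeomorphic to $\dR\dP^2$. But $\dR\dP^2$ has nonzero Stiefel--Whitney number $\langle w_1^2,[\dR\dP^2]\rangle$ and hence cannot bound a compact $3$-manifold. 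This gives the contradiction without any need for a Slicing Theorem in codimension $3$.
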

\begin{proof}
Let us assume that this is not the case and study such a limit space $\dR^{n-3}\times C(Y)$.  
The first observation is that by Corollary \ref{c:codim3}, it follows that $Y$ is a smooth surface. 
 Indeed, if there were a point $y\in Y$ such that $r_h(y)=0$.  Then since $X= \dR^{n-3}\times C(Y)$ 
it would follow that there is a set of codimension at least $2$ such that $r_h\equiv 0$, 
which cannot happen by Corollary \ref{c:codim3}.\\

Since $Y$   a $C^{1,\alpha}\cap W^{2,q}$ manifold and  $|\Ric_{M^n_j}|\to 0$, 
it follows that $Y$ is a smooth Einstein manifold satisfying $\Ric_Y= g$.  
Because $Y$ 
is a surface, this means in particular that $Y$ has constant sectional curvature $\equiv 1$.  
Thus,  either $Y=\dR\dP^2$ or $Y= S^2$, the unit $2$-sphere, and  in the latter case we are done. 

So let us study the case $Y= \dR\dP^2$.  For $\epsilon>0$ small, choose 
$u_j:B_2(p_j)\to \dR^{n-3}$ to be an $\epsilon$-splitting as in Lemma \ref{l:harmonic_splitting_intro}.  
Note that away from the singular set, $\cS\equiv \dR^{n-3}\times \{0\}$, we have that the $M_j^n$ 
converge to $\dR^{n-3}\times C(Y)$ in $C^{1,\alpha}$.  If $f_j:B_2(p)\to B_2(p_j)$ denote the Gromov-Hausdorff 
maps, we put $\cS_j=f_j(\cS)$.  Then for $\tau>0$ small but fixed, we  have for $j$ 
sufficiently large, that on $B_1(p)\setminus T_\tau(\cS_j)$, the estimates $|\nabla u_j|>\frac{1}{2}$ and 
$|\nabla^2 u_j|\leq 1$ hold.  

Consider Poisson approximation $h_j$ 
to the square of distance
 function $d^2(x,p_j)$ on $B_2(p_j)$.  That is, $\Delta h_j = 2n$ and $h_j=1$ on $\partial B_2(p_j)$.  
We have (see for instance \cite{ChC1}) that $|h_j-d(\cdot,p_j)|\to 0$ uniformly in $B_2(p_j)$, and 
again, because the convergence is in $C^{1,\alpha}$, we have for $j$ sufficiently large that 
$|\nabla h|>\delta$ and $|\nabla^2 h|\leq 4n$ on $B_1(p_j)\setminus B_\tau(\cS_j)$.  Once again,
appealing to the $C^{1,\alpha}$ convergence,   for all $j$ sufficiently large and all 
$s\in B_1(0^{n-3})$ we have that $u^{-1}(s)\cap h^{-1}(1)$ is diffeomorphic to $\dR\dP^2$. By
Sard's theorem, there exists a regular value $s_j\in B_1(0^{n-3})$.  Then for $j$ sufficiently large, 
$u^{-1}_j(s_j)\cap\{h\leq 1\}$
is a smooth $3$-manifold, whose boundary is diffeomorphic to $\dR\dP^2$.  However, the second Stiefel-Whitney
number of $\dR\dP^2$ is nonzero, and in particular, $\dR\dP^2$ 
does not bound a smooth $3$-manifold.  This contradicts $Y= \dR\dP^2$.

\end{proof}

\subsection{Proof of Hausdorff Estimates of Theorem \ref{t:main_codim4}}
\label{ss:proof_codim4}

With Theorem \ref{t:codim3} in hand, the proof of Theorem \ref{t:main_codim4} becomes standard,
 and follows the same lines as the proof of Corollary \ref{c:codim3}.  Thus,  consider a sequence
\begin{align}
(M^n_j,d_j,p_j)\to (X,d,p)
\end{align}
of Riemannian manifolds satisfying $|\Ric_{M^n_j}|\leq n-1$ and $\Vol(B_1(p_j))>\rv>0$, which 
Gromov-Hausdorff converge to some $X$.  Recall again the standard stratification of $X$, which is 
reviewed in subsection \ref{ss:stratification}.  More specifically let us consider the closed stratum $\cS^{n-4}(X)\subseteq X$.  
On the one hand, we have from \cite{ChC1}
\begin{align}
\dim \cS^{n-4}\leq n-4\, .
\end{align}
On the other hand, we have that for every point $x\not\in \cS^{n-4}$, there exists {\it some} tangent cone at 
$x$ which is isometric to $\dR^{n-3}\times C(Y)$.  That is, for some sequence $r_a\to 0$ we have 
\begin{align}
(X,r_a^{-1}d,x)\to \dR^{n-3}\times C(Y)\, .
\end{align}
However, by Theorem \ref{t:codim3}, we have that $Y$ is isometric to the unit $2$-sphere, and hence,
\begin{align}
(X,r_a^{-1}d,x)\to \dR^n\, .
\end{align}
Then for $a\in \dN$ sufficiently large, we can apply the standard $\epsilon$-regularity theorem, Theorem \ref{t:standard_epsilon_regularity},
to see that $r_h(x)>0$, and thus, that a neighborhood of $x$ is a $C^{1,\alpha}$ Riemannian manifold.
 This proves the theorem.

\section{The $\epsilon$-regularity Theorem}\label{s:eps_reg}

In Section \ref{s:codim4}, we showed  that limit spaces satisfying our assumptions
 must be smooth away from a closed subset of codimension $4$.  
However, the strongest applications come from a more effective version of this statement. 
 In particular, the curvature estimates of Theorem \ref{t:main_estimate} and the Minkowski estimates of
 Theorem \ref{t:main_codim4} will require a more rigid statement.  Namely, in this section,
 we will prove the following:

\begin{theorem}\label{t:eps_reg}
There exists $\epsilon(n,\rv)>0$ such that if $M^n$ satisfies $|\Ric_{M^n}|\leq \epsilon$,  $\Vol(B_1(p))>\rv>0$.  
and
\begin{align}
d_{GH}\big(B_2(p),B_2(0)\big)<\epsilon\, ,
\end{align}
where $0$ is a vertex of the cone $\dR^{n-3}\times C(Y)$,
for some metric space $Y$, then we have 
\begin{align}
r_h(p)\geq 1\, .
\end{align}
Consequently, if $M^n$ is Einstein,  we have the bound
\begin{align}
\sup_{B_1(p)}|\Rm|\leq 1\, .
\end{align}
\end{theorem}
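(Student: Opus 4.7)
The plan is to argue by contradiction via a compactness/blow-down argument that combines the codimension $3$ rigidity (Theorem \ref{t:codim3}) with the standard $\epsilon$-regularity theorem (Theorem \ref{t:standard_epsilon_regularity}).

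Suppose the conclusion fails. Then we can extract a sequence $\epsilon_j \to 0$ and pointed Riemannian manifolds $(M^n_j, g_j, p_j)$ with $|\Ric_{M^n_j}| \leq \epsilon_j$ and $\Vol(B_1(p_j)) > \rv$, together with metric cones $X_j = \dR^{n-3} \times C(Y_j)$ and vertices $0_j \in X_j$ such that $d_{GH}(B_2(p_j), B_2(0_j)) < \epsilon_j$, yet $r_h(p_j) < 1$.  By Gromov's compactness theorem, after passing to a subsequence we have $(M^n_j, d_j, p_j) \stackrel{d_{GH}}{\longrightarrow} (X, d, p)$.  Since the cross sections $Y_j$ have diameter $\leq \pi$, a further subsequence yields $(X_j, 0_j) \to (X_\infty, 0_\infty)$ with $X_\infty = \dR^{n-3} \times C(Y_\infty)$ for some compact $Y_\infty$, and the hypothesis $d_{GH}(B_2(p_j), B_2(0_j)) \to 0$ then forces $B_2(p) \subset X$ to be isometric to a ball about the vertex of $X_\infty$.

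Now I would invoke Theorem \ref{t:codim3}: since $|\Ric_{M^n_j}| \to 0$ and the sequence is noncollapsed, the cross section $Y_\infty$ must be the unit $2$-sphere, and hence $X_\infty$ is isometric to $\dR^n$ on $B_2(p)$.  Consequently, for all $j$ sufficiently large we have $d_{GH}(B_2(p_j), B_2(0^n)) < \epsilon(n, \rv)$, where $\epsilon(n, \rv)$ is the constant from Theorem \ref{t:standard_epsilon_regularity} (applied with the trivial splitting $\dR^n = \dR^{n-1} \times \dR$).  That theorem then delivers $r_h(p_j) \geq 1$, contradicting the choice of the sequence; in the Einstein case, it upgrades this to $r_{p_j} \geq 1$, which immediately gives $\sup_{B_1(p_j)} |\Rm| \leq 1$.

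The main obstacle has effectively been absorbed into Theorem \ref{t:codim3}; given that rigidity result, Theorem \ref{t:eps_reg} reduces to a soft compactness consequence of the already-established codimension $3$ result plus the classical $\epsilon$-regularity of \cite{Anderson_Einstein}, \cite{ChC1}.  The only minor points requiring care are the persistence of the vertex and cone structure in the pointed GH limit, which follows from standard properties of pointed GH convergence together with the uniform diameter bound on the cross sections, and the verification that Theorem \ref{t:standard_epsilon_regularity} is indeed applicable when the limit is $\dR^n$.
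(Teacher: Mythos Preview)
Your argument is correct and follows essentially the same contradiction strategy as the paper. The only difference is that the paper cites Theorem~\ref{t:main_codim4} (the codimension~$4$ Hausdorff estimate, applied to the limit ball) rather than Theorem~\ref{t:codim3} to force $Y_\infty = S^2$; this sidesteps the minor mismatch that Theorem~\ref{t:codim3} is stated for global pointed convergence to the cone while here only $B_2(p)$ is known to be conical, though the proof of Theorem~\ref{t:codim3} is local and adapts without difficulty.
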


\begin{proof}
Given $n$ and $\rv>0$, assume no such $\epsilon$ exists.  Then there exists a sequence of spaces 
$(M^n_j,g_j,p_j)$ such that $|\Ric_{M^n_i}|\leq \epsilon_j\to 0$, $\Vol(B_1(p_j))>\rv>0$ and 

\begin{align}
d_{GH}\big(B_2(p_j),B_2(0_j)\big)<\epsilon_j\to 0\, ,
\end{align}
where $0_j\in \dR^{n-3}\times C(Y_j)$ is a vertex 
but $r_h(p_j)<1$.  After possibly passing to a 
subsequence,we  have
\begin{align}
B_2(p_j)\to B_2(0)\, ,
\end{align}
where $0\in \dR^{n-3}\times C(Y)\equiv X$ is a vertex. 
 But if $C(Y)$ has any point with $r_h(x)=0$, 
then there is a set of Hausdorff codimension $3$ in $X$ which is not smooth.  By the Hausdorff estimate 
of Theorem \ref{t:main_codim4} this is not possible, so it must be that $C(Y)$ is smooth.  Thus, $Y$ is a 
smooth manifold, and in fact, $C(Y)$ is itself be smooth if and only if $Y$ is the unit $2$-sphere. Thus,
\begin{align}
B_2(p_j)\to B_2(0^n)\subseteq \dR^n\, .
\end{align}
Now we can apply the standard $\epsilon$-regularity theorem, 
to conclude $r_h(p_j)\geq 1$, which is a contradiction.
\end{proof}

\section{Quantitative Stratification and Effective Estimates}\label{s:qs_estimates}
Having shown in Sections \ref{s:codim4} and \ref{s:eps_reg} that noncollapsed limits of
 Einstein manifolds are smooth away from a closed codimension $4$ subset, we will now give some applications. 
 In particular, we will use the ideas of quantiative stratification first introduced in \cite{CheegerNaber_Ricci} 
in order to improve the codimension estimates on singular sets of limit spaces
to curvature estimates on Einstein manifolds.  More precisely, in this section, 
we will prove Theorem \ref{t:main_estimate}.  We will also improve the Hausdorff dimension 
estimate of Theorem \ref{t:main_codim4} to a Minkowski dimension estimate.  One can view this as an easy corollary of Theorem \ref{t:main_estimate}. 

We begin here by reviewing the quantitative stratification and the main results on it from \cite{CheegerNaber_Ricci}. 
These will play a crucial role in our estimates.  In subsection \ref{ss:curv_einstein} we combine the main results concerning the 
quantitative stratification, stated in Theorem \ref{t:quant_strat}, with the $\epsilon$-regularity of Theorem \ref{t:eps_reg} 
in order to prove the main estimates on Einstein manifolds given in Theorem \ref{t:main_estimate}. 
 In subsection \ref{ss:einstein_harmonic_estimates} we apply the regularity results of Theorem \ref{t:main_estimate} 
in order to conclude stronger results about the behavior of harmonic functions on Einstein manifolds.\\

The idea of \cite{CheegerNaber_Ricci} was to make the notion of stratification more effective.  
The standard stratification, recalled in subsection \ref{ss:stratification}, is used to show that that most points 
have a lot of symmetry infinitesimally.  The quantitative stratification is used to show that most balls of a 
definite size have a lot of approximate symmetry.  In particular, the quantitative stratification
 introduced in \cite{CheegerNaber_Ricci} exists and gives nontrivial information even on a smooth manifold,
 whereas, on a smooth space, the standard stratification is always trivial.  This point is crucial to the 
proof of Theorem \ref{t:main_estimate}.  To make this precise we begin by defining a more local version of
approximate symmetry.

\begin{definition}
Given a metric space $Y$ with $y\in Y$, $r>0$ and $\epsilon>0$, we say that $y$ is $(k,\epsilon,r)$-symmetric
 if there exists a $k$-symmetric space $Y'$ such that $d_{GH}(B_{r}(y),B_{r}(y'))<\epsilon r$, where $y'\in Y'$ is a vertex.
\end{definition}

Recall from Section that $Y'$ is $k$-symmetric if $Y'=\dR^k\times C(Z')$.  To state the definition in words, we say 
that $Y$ is $(k,\epsilon,r)$-symmetric if the ball $B_r(x)$ looks very close to having $k$-symmetries.  
The quantitative stratification is then defined as follows:

\begin{definition}
For each $\epsilon>0$, $0<r<1$ and $k\in\dN$, define the closed quantitative $k$-stratum, $\cS^k_{\epsilon,r}(X)$, by
\begin{align}
\cS^k_{\epsilon,r}(X)\equiv \{x\in X:\text{ for no $r\leq s\leq 1$ is $x$ a $(k+1,\epsilon,s)$-symmetric point}\}\, .
\end{align}
\end{definition}

Thus, the closed  stratum $\cS^k_{\epsilon,r}(X)$ is the collection of points such that no ball of size at least $r$ is 
almost $(k+1)$-symmetric.  The first main result of \cite{CheegerNaber_Ricci} is to show that for manifolds
 which are noncollapsed and have lower Ricci curvature bounds, the set $\cS^k_{\epsilon,r}(X)$ is small in a
 very strong sense.  To say this a little more carefully, if one pretends that the $k$-stratum
is a well behaved $k$-dimensional submanifold, then one would expect the volume of the $r$-tube
 around the set to behave like $Cr^{n-k}$. Although we don't know this to be the case,
the following slightly weaker statement does hold.

\begin{theorem}[Quantitative Stratification, \cite{CheegerNaber_Ricci}]\label{t:quant_strat}
Let $M^n$ satisfy $\Ric\geq -(n-1)$ with $\Vol(B_1(p))>{\rm v}>0$.  Then for every $\epsilon,\eta>0$ 
there exists $C=C(n,\rv,\epsilon,\eta)$ such that
\begin{align}
\Vol\left(T_r\left(\cS^k_{\epsilon,r}(M)\cap B_1(p)\right)\right)\leq C r^{n-k-\eta}\, .
\end{align}
\end{theorem}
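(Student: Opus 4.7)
The plan is to prove this via the standard quantitative stratification machinery: cone splitting plus a dyadic covering argument that tracks how often approximate symmetries appear at various scales. Throughout, fix $\epsilon,\eta>0$ and set $r_j=2^{-j}$.

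First I would establish the key \emph{cone splitting} lemma: there exists $\delta=\delta(n,\rv,\epsilon)>0$ such that if $\Ric_{M^n}\geq -(n-1)\delta$ and the ball $B_2(x)$ is $(k,\delta,2)$-symmetric along a $k$-plane $V$, and if in addition there is a point $y\in B_1(x)$ with $d(y,V)>\epsilon/2$ which is itself $(0,\delta,1)$-symmetric (i.e.\ $B_1(y)$ is almost conical), then $B_1(x)$ is $(k+1,\epsilon,1)$-symmetric. The proof of this rests on the Cheeger--Colding almost volume cone implies almost metric cone theorem, together with the observation that an isometric splitting direction and an additional metric cone point off the splitting axis forces a further splitting (the "half-line plus cone equals line plus cone" mechanism). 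A contradiction/compactness argument, using that Gromov--Hausdorff limits of manifolds with $\Ric\geq -(n-1)\delta_j$ with $\delta_j\to 0$ carry metric cone structure at each noncollapsed volume-pinched ball, makes this quantitative.

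Next I would set up the dyadic decomposition of $\cS^k_{\epsilon,r}\cap B_1(p)$ using volume pinching. For $x\in B_1(p)$ and scale $s$, let $\theta_s(x)=\Vol(B_s(x))/s^n$; by Bishop--Gromov this is monotone nonincreasing in $s$ on $[0,1]$, bounded between $\rv$ and $\omega_n$. Say the scale $s=r_j$ is \emph{good} at $x$ if $\theta_{r_{j-1}}(x)-\theta_{r_{j+1}}(x)<\tau$, where $\tau=\tau(n,\rv,\delta)$ is chosen so that a good scale forces $B_{r_j}(x)$ to be $(0,\delta,r_j)$-symmetric (this is the quantitative converse: volume pinching at a scale forces almost conicality, again by Cheeger--Colding). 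By the total variation of $\theta_s$, the number of bad scales from $r$ to $1$ is at most $N_0=N_0(n,\rv,\tau)$, independent of $r$.

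Now comes the main covering argument, by downward induction on $k$ from $k=n-1$ to $0$ and by induction on the number of scales $J=\log_2(1/r)$. For the inductive step, I would cover $\cS^k_{\epsilon,r}\cap B_1(p)$ by balls $\{B_{r_j}(x_i)\}$ and partition them by the number of good scales between $r$ and $1$: by the cone splitting lemma applied iteratively, if a ball $B_s(x)$ contains more than $k+1$ good scales at well-separated points transverse to any candidate splitting plane, then $x$ would be $(k+1,\epsilon,s)$-symmetric at some intermediate scale, contradicting $x\in\cS^k_{\epsilon,r}$. Hence at each scale, good balls concentrate in a thin tube around a $k$-dimensional "approximate symmetry" plane, giving the recursion $N(r,s/2)\leq C(n,\rv,\epsilon)\cdot 2^k\cdot N(r,s)$ for the covering number, plus a bounded error term from bad scales. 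Iterating from scale $1$ down to scale $r$, absorbing the factor from bad scales into $r^{-\eta}$ via choice of $\tau$ (making $\tau$ small forces many good scales, which tightens the packing constant), yields $N(\cS^k_{\epsilon,r}\cap B_1(p),r)\leq C r^{-k-\eta}$, which after multiplying by the volume $r^n$ of each ball gives the desired tube bound.

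The main obstacle is the interplay between the two inductions: one must choose $\tau$ (hence the number of bad scales $N_0$) after fixing $\eta$, so that the geometric series produced by the "lost" bad scales contributes only $r^{-\eta}$ rather than a constant power of $r^{-1}$. This requires carefully tracking how the constant in cone splitting depends on $\epsilon$ and ensuring the doubling/packing constant at each inductive step is bounded independently of $J$. The quantitative cone splitting lemma itself, while conceptually clean, demands the delicate almost rigidity of Cheeger--Colding, and its proof by contradiction gives no effective control on $\delta(n,\rv,\epsilon)$; this ineffectivity propagates to $C$ but not to the exponent $n-k-\eta$, which is what matters for the application to Theorem \ref{t:main_estimate}.
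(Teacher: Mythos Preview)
The paper does not give its own proof of this theorem; it is quoted from \cite{CheegerNaber_Ricci} and used as a black box in Section~\ref{s:qs_estimates}. Your outline is essentially the argument from that reference: cone splitting via Cheeger--Colding almost rigidity, a uniform bound on the number of ``bad'' (non-volume-pinched) scales coming from the monotonicity of the volume ratio, and a dyadic covering.

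Two points of imprecision in your covering step are worth flagging. First, there is no induction on $k$; each $\cS^k_{\epsilon,r}$ is handled directly, with cone splitting used only to say that at a good scale the points of $\cS^k_{\epsilon,r}$ inside a ball cluster near an approximate $k$-plane. Second, the partition is not merely by the \emph{number} of good scales but by the full binary pattern $T(x)\in\{0,1\}^J$ recording which of the $J\approx\log_2(1/r)$ scales are good or bad at $x$. For a fixed pattern, the covering recursion gives a factor $c(n)^k$ at each good scale and $c(n)^n$ at each bad scale, so the covering number for that pattern is at most $c^{nN_0}2^{kJ}$. The number of admissible patterns is at most $\binom{J}{N_0}\leq J^{N_0}$, and it is this factor $J^{N_0}=(\log_2(1/r))^{N_0}$ that gets absorbed into $r^{-\eta}$. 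Your phrasing ``absorbing the factor from bad scales into $r^{-\eta}$ via choice of $\tau$'' conflates this with the per-scale loss at bad scales; the $\eta$-loss comes from the polynomial-in-$J$ count of patterns, not from adjusting $\tau$.
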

\begin{remark}
In \cite{CheegerNaber_Ricci}, the theorem is stated with $\epsilon\equiv \eta$, however it is easily seen to be equivalent to the statement above.
\end{remark}

\subsection{Proof of Theorem \ref{t:main_estimate}}\label{ss:curv_einstein}

In this subsection we combine Theorem \ref{t:eps_reg} and Theorem \ref{t:quant_strat}
 in order to prove Theorem \ref{t:main_estimate}.  
\begin{proof} (of Theorem \ref{t:main_estimate})
Let $(M^n,g,p)$ satisfy $|\Ric_{M^n}|\leq n-1$
 and $\Vol(B_1(p))>\rv>0$.  We will first show that for every $q<2$ there exists $C=C(n,\rv,q)>0$ such that
\begin{align}
\fint_{B_1(p)} r_h^{-2q} \leq C\, .
\end{align}
Simultaneously, we will show that if $M^n$ is Einstein, then this can be improved to
\begin{align}
\fint_{B_1(p)} r_x^{-2q} \leq C\, ,
\end{align}
where $r_x$ denotes the regularity scale at $x$.

Let $q<2$ be fixed and set $\eta=4-2q$, and let us consider Theorem \ref{t:quant_strat} 
with $\epsilon=\epsilon(n)>0$ chosen from Theorem \ref{t:eps_reg} and $\eta$ as above.  
Thus, there exists $C(n,\rv,q)$ such that
\begin{align}\label{e:curv_ein1}
\Vol(T_r(\{x\in \mathcal S^{n-4}_{\epsilon,2r}\cap B_1(p)\})) < Cr^{4-\eta}\, .
\end{align}

Note that by rescaling, we may regard the $\epsilon$-regularity theorem (Theorem \ref{t:eps_reg})
as stating that if $x$ is $(n-3,\epsilon,2r)$-symmetric, then $r_h> r$, and if $M^n$ is 
Einstein then $r_x>r$.  In fact, we have that if $x$ is $(n-3,\epsilon,s)$-symmetric for any 
$s\geq 2r$, then $r_h>r$.  Thus, if $x\not\in \mathcal S^{n-4}_{\epsilon,2r}$, then $r_h>\frac{s}{2}> r$.  
The contrapositive gives the inclusion
\begin{align}
\{x\in B_1(p):r_h\leq r\} \subseteq \mathcal S^{n-4}_{\epsilon,2r}\cap B_1(p)\, ,
\end{align}
which by (\ref{e:curv_ein1}) implies the desired estimate
\begin{align}
\Vol(T_r(\{x\in B_1(p):r_h\leq r\})) < Cr^{4-\eta}\leq Cr^{2q}\, .
\end{align}
If $M^n$ is Einstein, then Theorem \ref{t:eps_reg} allows us to replace $r_h$ with $r_x$, as claimed.\\

Now, for $q<2$, let us prove the $L^q$ bound on the curvature from Theorem \ref{t:main_estimate}.  
For this note that if $r_h(x)>r$ then by definition there exists a harmonic coordinate system,
$\Phi:B_r(0^n)\to M$ 
with $\phi(0)=x$ and such that 
\begin{align}
||g_{ij}-\delta_{ij}||_{C^0(B_r(0))}+r||\partial_k g_{i,j}||_{C^0(B_r(0))}<10^{-3}\, ,
\end{align}
where $g_{ij}=\Phi^*g$ is the pullback metric.  Since the Ricci curvature satisfies the 
bound $|\Ric_{M^n}|\leq n-1$, this implies that
\begin{align}
|\Delta_x g_{ij}| < C(n)r^{-2}\, ,
\end{align}
where $\Delta_x$ denotes the Laplacian written in coordinates.
Then for every $\alpha<1$ and 
$s<\infty$, we have the scale invariant estimates
\begin{align}
&r^{1+\alpha}||\partial_k g_{ij}||_{C^\alpha(B_{\frac{3r}{4}}(0))}\leq C(n,\alpha)\, ,\notag\\
&r^{2}||g_{i,j}||_{W^{2,s}(B_{\frac{3r}{4}}(0))}\leq C(n,s)\, .
\end{align}

In particular, applying this to $s=q$ we get 
\begin{align}\label{e:main_est:1}
r^{2q}\fint_{B_{r/2}(x)}|\Rm|^q\leq C(n)r^{2q}\fint_{B_{3r/4}(0)}|\Phi^*\Rm|^q < C(n,q)\, .\\\notag
\end{align}

Put $\eta =2-q$.  Then $q+\frac{\eta}{2}<2$.  Then we have already shown that
\begin{align}\label{e:main_est:2}
\Vol(T_r(\{x\in B_1(p):r_h\leq r\})) < Cr^{2q+\eta}\, ,
\end{align}
for $C(n,\rv,q)>0$. Consider the covering $\{B_{r_h(x)}(x)\}$ of $B_1(p)$, and 
a subcovering $\{B_{r_j}(x_j)\}$ by mutually disjoint balls, such that
\begin{enumerate}
\item $B_1(p)\subseteq \bigcup B_{r_j}(x_j)$ with $r_j=\frac{1}{2}r_h(x)$.
\vskip1mm

\item $\{B_{r_j/4}(x_j)\}$ are disjoint.
\end{enumerate}

By using \eqref{e:main_est:2}, we see that for each $\alpha\in\dN$, we have
\begin{align}
\sum_{2^{-\alpha-1}<r_j\leq 2^{-\alpha}} \Vol(B_{r_j}(x_j)) \leq C r_j^{2q+\eta} = C\,r_j^{2q}\, 2^{-\eta\alpha}\, .
\end{align}
By summing over $\alpha$, this gives 
\begin{align}
\sum r_j^{-2q}\Vol(B_{r_j}(x_j))\leq C\sum 2^{-\eta\alpha}\leq C(n,\rv,q)\, .
\end{align}
Finally, combining this with \eqref{e:main_est:1} we get 
\begin{align}
\fint_{B_1(p)}|\Rm|^q &\leq \rv^{-1}\sum \int_{B_{r_j}(x_j)}|\Rm|^q \notag\\
&\leq C(n,\rv,q)\sum r_j^{-2q}\Vol(B_{r_j}(x_j))\leq C(n,\rv,q)\, , 
\end{align}
which finishes the proof of Theorem \ref{t:main_estimate}. 
\end{proof}

\subsection{$L^q$-Estimates for Harmonic Functions on Einstein Manifolds}
\label{ss:einstein_harmonic_estimates}

In this subsection we give some applications of Theorem \ref{t:main_estimate}.  In particular, 
we obtain Sobolev bounds for harmonic functions and solutions of more general equations on 
manifolds with bounded Ricci curvature.  As we have used repeatedly, given a lower bound on 
Ricci curvature, there is a definite $L^2$ bound on  the Hessian of a harmonic function; see  (\ref{kato_1}).
 However, the example of a rounded off $2$-dimensional cone shows that one does not
have definite $L^q$ bounds for any $q>2$; see Example \ref{sss:Example3_ConeSpace}.
 In this subsection, we will see that the situation is better for noncollapsed spaces with bounded Ricci curvature.  
Namely, one can obtain $L^q$ bounds on the Hessians of such harmonic functions for all $q<4$.  
More generally, we show the following: 

\begin{theorem}\label{t:Lp_bounds_hessian}
For every $q<4$ there exists $C=C(n,\rv,q)$ such that if $M^n$ satisfies $|\Ric_{M^n}|\leq n-1$ and $\Vol(B_1(p))>\rv>0$ and
 $u:B_2(p)\to \dR$ satisfies 
$$|u|\leq 1\, ,$$
$$|\Delta u|\leq 1\, ,$$ 
then for every $q<4$ 
\begin{align}
\fint_{B_1(p)} |\nabla^2 u|^q \leq C\, .
\end{align}
\end{theorem}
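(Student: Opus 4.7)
I will combine three ingredients: (i) a Cheng--Yau type gradient estimate for $u$, (ii) the codimension--$4$ tube bound on $\{r_h\leq r\}$ supplied by Theorem \ref{t:main_estimate} (in the bounded--Ricci form stated in the remark following it), and (iii) scale--invariant interior $L^p$ elliptic estimates in harmonic coordinates, used in a form where the $L^p$--norm of $u$ is replaced by the $L^p$--norm of $\nabla u$ via the Poincar\'e inequality.

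First, since $\Ric_{M^n}\geq -(n-1)$, $|u|\leq 1$ and $|\Delta u|\leq 1$ on $B_2(p)$, the standard gradient estimate gives $|\nabla u|\leq C(n)$ on $B_{3/2}(p)$. Second, by the remark after Theorem \ref{t:main_estimate}, for every $\eta>0$,
\begin{equation}\label{e:tube}
\Vol\big(T_r(\{x\in B_1(p):r_h(x)\leq r\})\big)\leq C(n,\rv,\eta)\, r^{\,4-\eta}\, .
\end{equation}

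The crucial local step is a scale--invariant $W^{2,p}$ bound on balls of size comparable to the harmonic radius. Fix $x\in B_1(p)$ with $r_h(x)\geq r$ and work in a harmonic chart $\Phi:B_r(0^n)\to M$, where $|g-\delta|+r|\partial g|\leq 10^{-3}$. In harmonic coordinates the equation $\Delta_g u=\Delta u$ reduces to $g^{ij}\partial_i\partial_j u=\Delta u$, an elliptic equation with coefficients uniformly close to the identity. The Calder\'on--Zygmund interior $L^p$ estimate, applied to $u-\bar u_{B_r(x)}$ rather than $u$ itself, gives
\begin{equation}\label{e:cz}
\|\nabla^2 u\|_{L^p(B_{r/2}(x))}\leq C(n,p)\Big(\|\Delta u\|_{L^p(B_r(x))}+r^{-2}\|u-\bar u_{B_r(x)}\|_{L^p(B_r(x))}\Big)\, .
\end{equation}
Since $\Ric\geq -(n-1)$ implies the scale--invariant Poincar\'e inequality $\|u-\bar u_{B_r(x)}\|_{L^p(B_r(x))}\leq C(n,p)\, r\,\|\nabla u\|_{L^p(B_r(x))}$, \eqref{e:cz} yields
\begin{equation}\label{e:key}
\|\nabla^2 u\|_{L^p(B_{r/2}(x))}\leq C(n,p)\big(\|\Delta u\|_{L^p(B_r(x))}+r^{-1}\|\nabla u\|_{L^p(B_r(x))}\big)\leq C(n,p,\rv)\, r^{\,n/p-1}\, ,
\end{equation}
where the last step uses $|\Delta u|\leq 1$, $|\nabla u|\leq C$, and volume doubling. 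Equivalently, $\int_{B_{r/2}(x)}|\nabla^2 u|^p\leq C\, r^{\,n-p}$. This is the sharpening: the naive pointwise bound $|\nabla^2 u|\leq C r_h^{-2}$ only gives $r^{\,n-2p}$ on a ball, and by itself yields a useful global estimate only for $q<2$; replacing the $r^{-2}\|u\|_{L^p}$ term by $r^{-1}\|\nabla u\|_{L^p}$ is exactly what doubles the range to $q<4$.

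Finally, a dyadic covering argument assembles the pieces. Write $B_1(p)=R\cup\bigcup_{\alpha\geq 0}A_\alpha$, with $R=\{r_h>1/2\}\cap B_1(p)$ and $A_\alpha=\{2^{-\alpha-1}<r_h\leq 2^{-\alpha}\}\cap B_1(p)$. On $R$, \eqref{e:key} directly yields $\int_R|\nabla^2 u|^q\leq C$. For each $\alpha$, set $r_\alpha=2^{-\alpha}$ and choose a maximal collection of centers $\{x^\alpha_j\}\subset A_\alpha$ with the concentric balls $\{B_{r_\alpha/4}(x^\alpha_j)\}$ pairwise disjoint; then $A_\alpha\subset\bigcup_j B_{r_\alpha/2}(x^\alpha_j)$. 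Since each $B_{r_\alpha/4}(x^\alpha_j)$ lies in $T_{r_\alpha}(\{r_h\leq r_\alpha\})$, \eqref{e:tube} and Bishop--Gromov bound the number of these balls by $N_\alpha\leq C\, r_\alpha^{\,4-\eta-n}$. Combining with \eqref{e:key},
\begin{equation*}
\int_{A_\alpha}|\nabla^2 u|^q\leq N_\alpha\cdot C\, r_\alpha^{\,n-q}\leq C\, 2^{-\alpha(4-\eta-q)}\, .
\end{equation*}
Given $q<4$, choose $\eta\in(0,4-q)$ and sum the geometric series over $\alpha\geq 0$ to conclude $\fint_{B_1(p)}|\nabla^2 u|^q\leq C(n,\rv,q)$.

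The main obstacle is conceptual rather than technical: one must recognize that the pointwise sup bound $|\nabla^2 u|\leq Cr_h^{-2}$ together with \eqref{e:tube} is \emph{not} strong enough to reach $q<4$, and that the Poincar\'e--improved version of the interior $L^p$ estimate \eqref{e:key} is precisely the extra input needed to convert the codimension $4$ control of $\{r_h\leq r\}$ into the Hessian exponent $q<4$.
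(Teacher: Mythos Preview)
Your proof is correct and follows essentially the same approach as the paper: a gradient bound (the paper derives it via the Green's function representation rather than invoking Cheng--Yau, but the conclusion is the same), the codimension--$4$ tube estimate on $\{r_h\le r\}$, a dyadic decomposition into shells $\{r_\alpha\le r_h\le r_{\alpha-1}\}$, and the scale--invariant local estimate $\int_{B_{r/2}(x)}|\nabla^2 u|^q\le C\,r^{n-q}$ summed against the ball count $N_\alpha\le C r_\alpha^{4-\eta-n}$. Your explicit justification of the local estimate via Calder\'on--Zygmund applied to $u-\bar u$ together with Poincar\'e (so that $r^{-2}\|u\|_{L^p}$ is replaced by $r^{-1}\|\nabla u\|_{L^p}$) is exactly what the paper summarizes as ``standard elliptic estimates along with the gradient bound,'' and your remark that this replacement is what doubles the exponent range from $q<2$ to $q<4$ is precisely the point.
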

\begin{remark}
It follows from the $L^2$ curvature estimate (\ref{e:main_dimfour:1}) of
 Theorem \ref{t:main_dimfour} and \cite{Cheeger}, that in dimension $4$, we actually have a full $L^4$ bound on $|\nabla^2 u|$.  If Conjecture \ref{conj:L2} is correct, then this holds in all dimensions.
\end{remark}

\begin{proof}
Let us first note that by a Green's function estimate, we have
\begin{align}\label{e:Lp_bound_hessian:1}
\sup_{B_{3/2}(p)} |\nabla u| \leq C(n,v)\, .
\end{align}
Indeed, for $x\in B_{2}(p)$ we can write
\begin{align}
u(x) = h(x)+\int_{B_2(p)} G(x,y)\Delta(u-h)\,dv_g(y) = h(x)+\int_{B_2(p)} G(x,y)\Delta u\,dv_g(y)\, ,
\end{align}
where $h$ is a harmonic function with $h\equiv u$ on $\partial B_2(p)$.  Standard estimates, see \cite{SY_Redbook}, on the Green's function on spaces with lower Ricci bounds gives us $|\nabla_x G(x,y)|\leq C(n,v)d(x,y)^{1-n}$ in our domain, and since $h$ is a bounded harmonic function we have $|\nabla h|\leq C(n)$ on $B_{3/2}(p)$.  Combining these gives us \eqref{e:Lp_bound_hessian:1}.  In fact, with a little more work one can drop the volume dependence in the estimate, though it makes no difference for our purposes since this is not true for the hessian estimate.

Now using Theorem \ref{t:main_estimate} we know for each $\epsilon>0$ that
\begin{align}\label{e:Lp_hessian:1}
\Vol(T_r(\{x\in B_1(p): r_h(x)\leq r\}))\leq C_\epsilon(n,\rv,\epsilon)r^{4-\epsilon}\, .
\end{align}
In particular, let us consider the sets
\begin{align}
\cC_\alpha \equiv \{x\in B_1(p):r_{\alpha}\leq r_h(x)\leq r_{\alpha-1}\}\, ,
\end{align}
where $r_\alpha\equiv 2^{-\alpha}$.  For the set $\cC_\alpha$,
we have the cover $\{B_{r_\alpha}(x)\}_{x\in \cC_\alpha}$. We can choose a finite subcovering $\{B_{r_\alpha/2}(x_i)\}_1^{N_\alpha}$ such that the balls $B_{r_\alpha/8}(x_i)$ are mutually disjoint. 
Using \eqref{e:Lp_hessian:1} we have 
\begin{align}
N_\alpha \leq C_\epsilon r_\alpha^{4-n-\epsilon}\, .
\end{align}

On each ball $B_{r_\alpha/2}(x_j)$ we can use standard elliptic estimates along with the
 gradient bound $|\nabla u|\leq C(n)$ to get the scale-invariant estimate
\begin{align}
r_\alpha^q\fint_{B_{r_\alpha/2}(x_j)}|\nabla^2 u|^q \leq C(n,\rv,q)\, ,
\end{align}
for any $q<\infty$.  In particular, if we choose $q<4$ and pick $\epsilon = \frac{4-q}{2}$, then we have 
\begin{align}
\int_{B_{r_\alpha/2}(x_j)}|\nabla^2 u|^q \leq C(n,\rv)\,r_\alpha^{n-4+2\epsilon}\, .
\end{align}
Combining this with \eqref{e:Lp_hessian:1} gives
\begin{align}
\int_{\cC_\alpha}|\nabla^2 u|^q \leq C(n,\rv)\,r_\alpha^{n-4+2\epsilon}\cdot N_\alpha\leq C(n,\rv,q)\,r_\alpha^\epsilon\, .
\end{align}
Finally, by summing  over $\cC_\alpha$ we get the estimate
\begin{align}
\int_{B_1(p)}|\nabla^2 u|^q \leq C(n,\rv,p)\lambda^q\,\sum_\alpha r_\alpha^\epsilon = C\,\sum 2^{-\epsilon\alpha} = C(n,\rv,q)\, ,
\end{align}
as claimed.
\end{proof}
\vspace{1 cm}

\section{Improved Estimates in Dimension 4}\label{s:dimension_four}

In this section we apply the codimension $4$ estimates of Theorem \ref{t:main_codim4} 
to prove the finite diffeomorphism and $L^2$ curvature bounds of Theorem \ref{t:main_dimfour} and 
Theorem \ref{t:main_finite_diffeo}.

In subsection \ref{ss:prelim}, we  recall some necessary preliminaries.

In subsection  \ref{ss:annulus}, we use the codimension $4$ estimate of Theorem \ref{t:main_codim4} to prove
the existence of {\it good} annuli which have curvature and harmonic radius control.  

In subsection  \ref{ss:regularity_scale_dimfour} we first use this to show that in the noncollapsed situation, 
at any point, away from a definite number of scales, {\it every} annulus is good.  
We combine this with a counting argument, which plays the role of an effective version of the fact any
 infinite collection of points has a limit point, in order to prove the harmonic radius estimates of 
Theorem \ref{t:main_estimate}. 

In subsection \ref{ss:finite_diffeo_dimfour} we prove the finite diffeomorphism statement of
 Theorem \ref{t:main_finite_diffeo}.  
Morally, the argument is quite similar to the one in \cite{Anderson-Cheeger}, 
though it is designed to be more effective in nature.  
In fact, the argument in Section \ref{ss:finite_diffeo_dimfour} is quite general and works for any collection 
of uniformly noncollapsed smooth manifolds with bounded Ricci curvature, such that all
 Gromov-Hausdorff limits and blow ups have only isolated singularities.

In subsection \ref{ss:L2_curvature_dimfour}, we give a local version of the finite diffeomorphism theorem.  
Our main application of this is to prove {\it a priori} $L^2$ estimates on the curvature on a noncollapsed  $4$-manifold 
with bounded Ricci curvature.

\subsection{Diffeomorphisms and Harmonic Radius}
\label{ss:prelim}

To control the diffeomorphism type of a manifold, or of part of a manifold, 
the basic tool one needs is to control the total number of coordinate charts, the
number of domains these charts which can intersect a given chart and the 
change of coordinate maps between these charts in a suitably strong topology.  
This type of result 
has a long history, going back to \cite{Cheeger_finiteness} in the context of bounded sectional curvature.  
In particular, control on the harmonic radius
 enables one to implement such an argument.

In this subsection we recall two theorems that will be used later.
 We refer the reader to the book \cite{Petersen_RiemannianGeometry} for proofs of these statements. 
 The first theorem states that when two manifolds with harmonic radius bounded from below 
are sufficiently Gromov-Hausdorff close, then they must be diffeomorphic.

\begin{theorem}\label{t:GH_implies_diffeo1}
For every $\epsilon>0$, there exists $\delta=\delta(n,\epsilon)$, such that the following holds.
 If $M^n_1,M^n_2$ are Riemannian manifolds 
and  $U_j\subset M_j$ are subsets such that $r_h(x)>r>0$ for each $x\in U_j$,  
 and $$d_{GH}(B_r(U_1),B_r(U_2))<\epsilon r\, ,$$ then there exist open sets 
$B_{r/2}(U_j)\subseteq U'_j\subseteq B_{r}(U_j)$ and a $C^2$ diffeomorphism 
$\Phi:U'_1\to U'_2$, such that 
\begin{align}
||g_1-\Phi^*g_2||_{C^0}<\epsilon\, .
\end{align}
If we further assume $|\Ric_{M^n_j}|\leq n-1$, $j=1,2$, then  $\Phi$ is
 in $C^{2,\alpha}\cap W^{3,q}$ for all $\alpha<1$ and $q<\infty$, and in
 harmonic coordinates on $U'_1$ we have 
\begin{align}
||g_1-\Phi^*g_2||_{C^0}+r^{1+\alpha}||\partial_i\Phi^*g_2||_{C^\alpha}+r^{2}||\partial_i\partial_j\Phi^*g_2||_{L^q}
\leq C(n,\alpha,q)\epsilon\, .
\end{align}
\end{theorem}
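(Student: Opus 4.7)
The plan is to build the diffeomorphism locally using harmonic coordinate charts on $M_1$ whose transplants to $M_2$ are obtained by a Dirichlet extension, and then to patch these local pieces via a Riemannian center of mass construction. First I would choose a maximal $r/20$-separated net $\{x_i\}$ in $U_1$, so that $\{B_{r/10}(x_i)\}$ covers $U_1$ with controlled multiplicity depending only on $n$ (from Bishop–Gromov, once one observes the harmonic radius bound gives a uniform two-sided volume comparison on $B_{r/2}(x_i)$). By hypothesis each $x_i$ admits a harmonic coordinate chart $\Phi_i:B_{r}(0^n)\to M_1$ with $\Phi_i(0)=x_i$ and with the defining $10^{-3}$-bounds of Definition \ref{d:harmonic_radius}. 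Using the $\delta r$-Gromov–Hausdorff approximation $f:B_r(U_1)\to B_r(U_2)$ (I am reading the smallness condition as $d_{GH}<\delta r$; the constant $\epsilon$ in the statement plays the role of $\delta$), I pick points $y_i\in B_r(U_2)$ with $d(y_i,f(x_i))<\delta r$.

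Next I would construct local maps $\Psi_i:B_{3r/4}(y_i)\to \dR^n$ by solving the Dirichlet problem for the Laplacian of $g_2$ on $B_{3r/4}(y_i)$ with boundary values equal to $\Phi_i^{-1}\circ f^{-1}$ (pre-composed with a smoothing of $f^{-1}$, since the GH map is only defined up to $\delta r$). The key analytic input is that under $d_{GH}\to 0$ and harmonic radius bounds, harmonic functions are stable in $C^{1,\alpha}\cap W^{2,q}$; this follows from standard Schauder and $W^{2,q}$ estimates applied in the fixed harmonic charts on $M_1$, combined with the maximum principle bound $\|\Psi_i-\Phi_i^{-1}\circ f^{-1}\|_{C^0}\le C\delta r$ from comparing boundary values. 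When $|\Ric|\le n-1$ on both sides, the metric coefficients $g^{(1)}_{ab}, (\Psi_i)_* g^{(2)}_{ab}$ are harmonic-coordinate representations with $W^{2,q}\cap C^{1,\alpha}$ bounds uniform in $r$, giving the quantitative estimate displayed in the conclusion; and when only a GH bound is assumed, one still obtains a $C^2$ diffeomorphism from pre-Sobolev regularity of the chart maps plus local inversion once $\delta$ is chosen small enough.

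The main obstacle, and the step I would carry out last, is to patch the $\Psi_i$ into a single globally defined diffeomorphism $\Phi:U'_1\to U'_2$ on an open set with $B_{r/2}(U_1)\subset U'_1\subset B_r(U_1)$. For this I would use the Riemannian center of mass of Grove–Karcher/Jost–Karcher: choose a partition of unity $\{\chi_i\}$ subordinate to $\{\Phi_i(B_{r/5}(0))\}$ with $r|\nabla\chi_i|+r^2|\nabla^2\chi_i|\le C(n)$, and define $\Phi(x)$ to be the unique minimizer in $M_2$ of
\begin{equation*}
y\longmapsto \sum_i \chi_i(x)\, d^2\!\bigl(y,\Psi_i^{-1}(\Phi_i^{-1}(x))\bigr).
\end{equation*}
Convexity of squared distance on balls of radius $\ll \mathrm{inj}$ (which holds here since the harmonic radius bound, together with $|\Ric|\le n-1$ when available and more generally the $C^0$-closeness of metrics, yields a uniform convexity radius on each chart) makes $\Phi$ well-defined, smooth in $x$, and close to each $\Psi_i^{-1}\circ\Phi_i^{-1}$ on its chart. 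Standard estimates on the center of mass then show $\Phi$ is a $C^2$ (resp.\ $C^{2,\alpha}\cap W^{3,q}$) diffeomorphism, and the $C^0$ closeness $\|\Psi_i-\Phi_i^{-1}\circ f^{-1}\|_{C^0}\le C\delta r$ propagates to the desired bound $\|g_1-\Phi^*g_2\|_{C^0}<\epsilon$ once $\delta=\delta(n,\epsilon)$ is chosen small enough. The delicate point is verifying that the $C^{1,\alpha}$ and $W^{2,q}$ bounds on individual $\Psi_i$ survive the patching without loss, which reduces to bounding the derivatives of $\chi_i$ and using the implicit function theorem applied to the center-of-mass defining equation.
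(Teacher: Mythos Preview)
Your proposal is correct and follows essentially the same strategy as the paper. Note that the paper does not give a full proof of this theorem but only sketches the idea and refers to Petersen's textbook: cover $U_1$ by harmonic charts of definite size with controlled overlap, observe that the Gromov--Hausdorff image of each chart ball lies inside a harmonic chart of $U_2$ (since $r_h>r$ on $U_2$ as well), and build a smooth approximation of the GH map $f$ in these coordinates.

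The one technical difference worth flagging is that you manufacture the charts on $M_2$ by solving a Dirichlet problem with boundary data transported from $M_1$ via a smoothed GH map, whereas the paper's sketch simply uses the harmonic charts that already exist on $U_2$ by hypothesis and compares them to those on $U_1$ through the GH map in coordinates. Your route works, but it is slightly more roundabout: the paper's version avoids the smoothing-of-boundary-data issue entirely, since both sides already carry $C^{1,\alpha}\cap W^{2,q}$ coordinate systems and one only needs to see that the transition (read through the GH map) is $C^0$-close to an isometry of $\dR^n$, after which interior elliptic estimates upgrade this automatically. Your Grove--Karcher center-of-mass patching is a standard and perfectly valid alternative to gluing via a partition of unity in the common Euclidean model; either yields the stated regularity. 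Your reading of the $\epsilon$/$\delta$ roles in the hypothesis is also the intended one.
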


The idea of the proof of Theorem \ref{t:GH_implies_diffeo1}
 is to cover the set $U_1$ by harmonic charts $B_{r/2}(x_j)$ of definite size, the intersection
of whose domains also have a definite size or are empty and such that each chart domain intersects
at most a definite number of distinct chart domains.
By restricting the Gromov-Hausdorff map $f:U_1\to U_2$ to $U_1$, and using that the image of 
each ball $f(B_r(x_j))$ lies in a harmonic coordinate chart of $U_2$, we can construct a suitable smooth 
approximation of $f$.  Then using the estimates of the local charts one can see this smoothing 
of $f$ is  the 
required diffeomorphism.

In a related direction, instead of trying to use the harmonic radius to directly to construct 
diffeomorphisms between nearby manifolds, we can use it to simply bound the number 
of diffeomorphism types of a space.  Precisely, we have the following:

\begin{theorem}\label{t:harmonic_rad_finite_diffeo}
There exists $C=C(n,D)$  with the following property. Let $(M^n,g)$ denote
 a Riemannian manifold and $U\subseteq M$  a subset such 
that $r_h(x)>r>0$ for all $x\in U$ and such that $\diam (U)\leq D\cdot r$.  Then there exists 
an open set $U'$ with $T_{r/2}(U)\subseteq U'\subseteq T_r(U)$, such that $U'$ has at
 most one of $C$ diffeomorphism types. 
\end{theorem}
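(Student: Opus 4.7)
The plan is to reduce the theorem to Theorem \ref{t:GH_implies_diffeo1} via a Gromov--Hausdorff compactness argument. After rescaling I may assume $r = 1$. The hypothesis $r_h(x) > 1$ provides, at each $x \in U$, a harmonic chart $\Phi_x : B_1(0^n) \to B_1(x)$ in which the Riemannian metric is bi-Lipschitz equivalent to the Euclidean metric with constants depending only on $n$. In particular any $\epsilon$-separated subset of $B_1(x) \cap U$ has cardinality at most $C(n)\epsilon^{-n}$ for $\epsilon \le 1$.

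To propagate this local estimate globally, let $\{y_i\} \subset U$ be a maximal $\epsilon$-separated set and consider the graph with an edge between $y_i$ and $y_j$ whenever $d(y_i, y_j) \le 1$. By the previous paragraph, each vertex has degree at most $C(n)\epsilon^{-n}$. Since $\diam(U) \le D$, any two vertices are connected by a graph path of length at most $2D+1$ (produced by discretizing a minimizing geodesic between them), so a standard breadth-first counting gives
\begin{align*}
N_\epsilon(U) \,\le\, (C(n)\epsilon^{-n})^{2D+2} \,=:\, N(n, D, \epsilon)\, .
\end{align*}
Hence the family $\cF$ of pointed Riemannian spaces $(M^n, g, p)$ satisfying the rescaled hypotheses with marked subset $U \ni p$ has uniformly bounded covering numbers at every scale, and by Gromov's criterion is precompact in the pointed Gromov--Hausdorff topology.

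Now let $\epsilon_0 = \delta(n, 1/10)$ be the constant from Theorem \ref{t:GH_implies_diffeo1} corresponding to tolerance $1/10$. By precompactness $\cF$ admits a finite $\epsilon_0/2$-net $\{(M_k^*, g_k^*, p_k^*)\}_{k=1}^K$ consisting of elements of $\cF$, with associated subsets $U_k^* \subset M_k^*$ and $K = K(n, D)$. Given any $(M, g, p) \in \cF$ with subset $U$, pick a nearest net element. Then $d_{GH}(U, U_k^*) < \epsilon_0$, and since both sides satisfy the harmonic radius hypothesis, Theorem \ref{t:GH_implies_diffeo1} produces an open set $V$ with $T_{1/2}(U) \subseteq V \subseteq T_1(U)$ that is $C^2$-diffeomorphic to an analogous neighborhood of $U_k^*$. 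Setting $U' = V$ completes the proof with $C = K(n, D)$.

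The main technical obstacle is the global covering number bound in the second paragraph: the local bound is immediate from harmonic coordinates, but propagating it through the full diameter requires the graph-theoretic bookkeeping sketched above. Once this is in hand, everything else is a routine combination of Gromov--Hausdorff precompactness with Theorem \ref{t:GH_implies_diffeo1}. Note that the absence of Ricci curvature hypotheses here is inconsequential, since the harmonic radius bound by itself already encodes the bi-Lipschitz equivalence to Euclidean space that drives all of the estimates.
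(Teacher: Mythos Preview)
Your overall strategy---establish Gromov--Hausdorff precompactness and then invoke Theorem~\ref{t:GH_implies_diffeo1}---differs from the paper's sketch, which instead covers $U$ by a controlled number of harmonic charts and counts the finitely many ways to glue them. Both routes, however, rest on the same ingredient: a bound $N_\epsilon(U)\le N(n,D,\epsilon)$ on the number of $\epsilon$-separated points in $U$.

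Your argument for that bound has a real gap. You claim any two vertices $y_i,y_j$ are joined by a graph path of length $\le 2D+1$ produced by discretizing a minimizing geodesic; but that geodesic lives in $M$, not in $U$, and nothing forces its discretization points to lie near any $y_k$. Concretely, let $M$ consist of $N$ Euclidean balls $B_{10}(0)\subset\dR^n$ attached by thin tubes of length $D/2$ to a common small hub, and let $U$ be the set of the $N$ centers. Then $r_h>1$ on $U$ and $\diam(U)$ is of order $D$, yet your graph has $N$ isolated vertices; moreover any $U'$ with $T_{1/2}(U)\subseteq U'\subseteq T_1(U)$ is $N$ disjoint balls, so even the theorem's conclusion fails for $N$ large. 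The statement therefore needs an extra hypothesis, and the paper's sketch glosses over the same point. In every application in Section~\ref{s:dimension_four} one also has $\Ric\ge -(n-1)$; then the disjoint balls $B_{r/8}(y_i)$ all sit inside $B_{(D+1)r}(y_1)$, the harmonic-radius hypothesis gives each a definite volume $\ge c(n)r^n$, and Bishop--Gromov bounds the ambient volume, yielding the covering estimate directly. With that input either approach works (for yours, you should also check that the GH closeness passes from $U$ to the tubes $T_r(U)$, which is what Theorem~\ref{t:GH_implies_diffeo1} actually requires).
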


The idea of the proof of the above is that $U$ may be covered by a controlled number 
of harmonic charts $B_{r/2}(x_j)$ with suitable control as above on the intersections of their domains.
  The geometry estimates on the charts automatically
 imply control over the transition functions between these charts. Hence there are a 
finite number of ways this finite collection of balls can be pasted together.  

\subsection{Annulus Estimates}\label{ss:annulus}

In this section, we use Theorem \ref{t:main_codim4} to prove our basic annulus 
estimates on $4$-manifolds with bounded Ricci curvature.  These estimates are the key first steps towards
proving  the 
finite diffeomorphism statements and the corresponding curvature estimates of Theorem \ref{t:main_dimfour}.  
To state our main result for this subsection let us recall the volume ratio
\begin{align}
\cV^\delta_r(x):=-\ln\left(\frac{\Vol(B_r(x))}{\Vol(B_r(0^4_{-\delta}))}\right)\, ,
\end{align}
where $0^4_{-\delta}$ is a base point in the $4$-dimensional hyperbolic space of constant curvature $-\delta$;
by the Bishop-Gromov theorem, this ratio is monotone {\it increasing} for a manifold with Ricci curvature bounded from below
 $\Ric_{M^n}\geq -3\delta$.   It has been understood since \cite{ChC1} that 
almost constancy of  $\cV^\delta_r(x)$
 over a range of 
scales leads to cone behavior of the underlying metric space.  Our main result of this subsection states that in the context of bounded Ricci curvature and dimension $4$, 
almost constancy 
 of this volume ratio leads to much stronger control up to diffeomorphism and pointwise geometric control.

\begin{theorem}\label{t:annulus}
 For every $\epsilon>0$ there exists $\delta(\rv,\epsilon)>0$ 
such that if $M^4$ satisfies $|\Ric_{M^4}|\leq 3\delta$, $\Vol(B_1(p))>\rv>0$ and $|\cV^\delta_{4}(p)-\cV^\delta_{1/4}(p)|<\delta$, then there exists a 
discrete subgroup $\Gamma\subseteq {\rm O}(4)$, unique up to conjugacy,
 with $|\Gamma|\leq N(\rv)$ such that the following hold:
\begin{enumerate}
\item For each $x\in A_{\epsilon,2}(p)$ we have the harmonic radius lower bound $r_h(x)>r_0(\rv)\epsilon$.
\vskip1mm

\item There exists a subset $A_{\epsilon,2}(p)\subseteq U\subseteq A_{\epsilon/2,2+\epsilon}(p)$ and a 
diffeomorphism $\Phi:A_{\epsilon,2}(0)\to U$, with $0\in \dR^4/\Gamma$, such that
if $g_{ij}=\Phi^*g$ is the pullback metric then
\begin{align}
||g_{ij}-\delta_{ij}||_{C^{0}} + ||\partial_k g_{ij}||_{C^{0}}<\epsilon\, .
\end{align}
\end{enumerate}
\end{theorem}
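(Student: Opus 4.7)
The plan is to argue by contradiction and use compactness. Suppose no such $\delta$ exists for some $\epsilon>0$. Then we obtain a sequence $(M^4_j,g_j,p_j)$ with $|\Ric_{M^4_j}|\leq 3\delta_j\to 0$, $\Vol(B_1(p_j))>\rv$, and $|\cV^{\delta_j}_4(p_j)-\cV^{\delta_j}_{1/4}(p_j)|<\delta_j\to 0$, such that one of the two conclusions fails. Passing to a subsequence, we have
\[
(M^4_j,d_j,p_j)\stackrel{d_{GH}}{\longrightarrow}(X,d,p_\infty).
\]
The almost-constancy of the volume ratio together with the volume-cone-implies-metric-cone theorem of Cheeger-Colding implies that on the annulus $A_{1/2,2}(p_\infty)\subset X$, $X$ is isometric to the corresponding annulus in a metric cone $C(Y)$, with $p_\infty$ the vertex and $\diam(Y)\leq \pi$. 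The noncollapsing hypothesis gives a definite lower bound on $\Vol(Y)$.

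The next step is to identify $Y$. By Theorem~\ref{t:main_codim4}, the singular set $\mathcal{S}\subset X$ has Hausdorff dimension $\leq 0$, hence is discrete. Restricted to the cone $C(Y)$, the scaling symmetry forces $\mathcal{S}\cap C(Y)\subseteq\{p_\infty\}$, so the regular set is all of $C(Y)\setminus\{p_\infty\}$, and in particular $Y$ is a smooth $3$-manifold. On the regular part the convergence is in $C^{1,\alpha}\cap W^{2,q}$ by the $\epsilon$-regularity theorem (Theorem~\ref{t:standard_epsilon_regularity}), and since $|\Ric_{M^4_j}|\to 0$ we conclude that $C(Y)\setminus\{p_\infty\}$ is smooth and Ricci flat. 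The cone being Ricci flat forces $Y$ to be Einstein with $\Ric_Y=2g_Y$, and for a $3$-manifold this means $Y$ has constant sectional curvature $+1$. Hence $Y=S^3/\Gamma$ for some finite $\Gamma\subset\mathrm{O}(4)$ acting freely, with $|\Gamma|=\Vol(S^3)/\Vol(Y)\leq N(\rv)$ by the lower volume bound. The uniqueness of $\Gamma$ up to conjugacy follows because any two such $\Gamma$ would produce isometric cone structures on a fixed $(X,p_\infty)$.

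To finish, fix $\epsilon>0$ and work on the annulus $A_{\epsilon/2,2+\epsilon}(p_\infty)\subset C(S^3/\Gamma)$, which is a smooth compact region avoiding the vertex, hence with a uniform lower bound on the harmonic radius in the model $\dR^4/\Gamma$. By $C^{1,\alpha}\cap W^{2,q}$ convergence and the continuity of the harmonic radius under such convergence, for $j$ large we get $r_h(x)\geq r_0(\rv)\epsilon$ for all $x\in A_{\epsilon,2}(p_j)$, proving (1). Conclusion (2) then follows by applying Theorem~\ref{t:GH_implies_diffeo1} to the Gromov-Hausdorff close pair $(A_{\epsilon,2}(0^4/\Gamma),A_{\epsilon,2}(p_j))$, which produces a diffeomorphism $\Phi$ onto an open set $U\subseteq A_{\epsilon/2,2+\epsilon}(p_j)$ with pullback metric $C^1$-close to the flat one, in direct contradiction with the failure of (1) or (2) at stage $j$.

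The main technical obstacle is the step identifying the link $Y$ as $S^3/\Gamma$: this requires combining three nontrivial ingredients (the cone structure from almost-volume-rigidity, the discreteness of the singular set from Theorem~\ref{t:main_codim4}, and upgrading smoothness plus the Einstein condition on the regular part), and it is only in dimension $4$ that codimension $4$ coincides with codimension equal to the dimension of the link, making $Y$ everywhere smooth. The remainder is then a fairly standard application of $\epsilon$-regularity and continuous dependence of harmonic coordinates under convergence.
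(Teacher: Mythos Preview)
Your proof is correct and follows essentially the same approach as the paper's: argue by contradiction, pass to a limit, use almost-volume-cone-implies-almost-metric-cone to get a cone $C(Y)$, invoke Theorem~\ref{t:main_codim4} together with the radial/scaling symmetry of the cone to force $Y$ to be smooth, then identify $Y=S^3/\Gamma$ via the Einstein condition in dimension~$3$, and finish with $\epsilon$-regularity and Theorem~\ref{t:GH_implies_diffeo1}. The only cosmetic difference is that the paper phrases the key step as ``a ray through a singular point of $Y$ would be a $1$-dimensional singular set'', whereas you phrase it as scaling symmetry forcing $\mathcal S\cap C(Y)\subseteq\{p_\infty\}$; these are the same observation. (Incidentally, your Einstein constant $\Ric_Y=2g_Y$ is the correct one for a Ricci-flat $4$-dimensional cone.)
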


\begin{proof}
 The proof is by contradiction.  So let us assume for some $\epsilon>0$ there is no such $\delta(\rv,\epsilon)>0$.  
Thus, we have a sequence of spaces $(M^4_j,g_j,p_j)$ with $\Vol(B_1(p_j))>\rv>0$, $|\Ric_{M^4_j}|\leq \delta_j\to 0$ and 
$|\cV_{4}(p_j)-\cV_{1/4}(p_j)|<\delta_j\to 0$, but the conclusions of the theorem fail.  After passing to a subsequence 
we can take a limit
\begin{align}
(M^4_j,d_j,p_j)\stackrel{d_{GH}}{\longrightarrow}(X,d,p)\, .
\end{align}
Using the almost volume cone implies almost metric cone theorem of \cite{ChC1}, we then have 
\begin{align}
B_4(p) = B_4\big(y_0)\, ,
\end{align}
where $y_0\in C(Y)$ is the cone vertex and $Y$ some metric space of diameter $\leq\pi$.  \\

Now using Theorem \ref{t:main_codim4}, we know that away from a set of codimension $4$ in $C(Y)$,  the 
harmonic radius $r_h>0$ is bounded uniformly from below.  Assume there is some point $y\in Y$ such that $r_h(y)=0$ 
and consider the ray $\gamma_y$ in $C(Y)$ through the point $y$. In that case, it would follow that for
 every point of $\gamma_y$, the harmonic radius $r_h=0$ vanishes.  The ray $\gamma$ 
has Hausdorff dimension $1$, and therefore its
existence would contradict Theorem \ref{t:main_codim4}.  
Thus, we conclude that $r_h>0$ and that $Y=(Y,g_Y)$ 
is a $C^{1,\alpha}\cap W^{2,q}$ manifold for every $\alpha<1$ and $q<\infty$.  

Now by writing the formula for the Ricci tensor in harmonic coordinates and using
$|\Ric_{M^4_j}|\to 0$, it follows that $C(Y)$ is smooth and Ricci flat away from the vertex.  In particular, since $C(Y)$
 is a metric cone over $Y$, we must  $\Ric_{Y^3} = 3g^Y$.  Since in dimension $3$, constant Ricci curvature
 implies constant sectional curvature, it follows  $Y=S^3/\Gamma$ has constant sectional curvature $\equiv 1$.
  Additionally, we know from the volume bound, $\Vol(B_1(p))>\rv>0$, 
that the order $|\Gamma|<N(\rv)$ is uniformly bounded.  
In particular, we have that $C(Y)=\dR^4/\Gamma$ is an orbifold with an isolated singularity. 

 It now follows that there exists 
$r_0(\rv)>0$ such that for $y\in \dR^4/\Gamma$ with $|y|=1$, we have
\begin{align}
B_{2r_0}(y) = B_{2r_0}(0^4)\, ,
\end{align}
where $0^4\in \dR^4$.  In particular, for all $j$ sufficiently large, we have from the standard $\epsilon$-regularity theorem, 
Theorem \ref{t:standard_epsilon_regularity}, that for all $x\in A_{\epsilon,2}(p_j)$, the harmonic radius,
$r_h(x)>r_0(\rv,\epsilon)=r_0(\rv)\epsilon$ is bounded uniformly from below independent of $j$.  
Thus, if there exists $\epsilon$ as above, for which there is no $\delta(\rv,\epsilon)$, it must be (2) that fails to hold.

However, by  using again the diffeomorphism statement of Theorem \ref{t:GH_implies_diffeo1},
we have that for $j$ sufficiently large, there exists diffeomorphisms
\begin{align}
\Phi_j:A_{\epsilon,2}(0)\to M^4_j\, ,
\end{align}
such that 
\begin{align}
\Phi_j^*g_j\stackrel{C^{1,\alpha}\cap W^{2,q}}{\longrightarrow} dr^2+r^2g_Y\, .
\end{align}
For $j$ sufficiently large, this implies that (2) holds; a contradiction.
\end{proof}

\subsection{Regularity Scale Estimates}\label{ss:regularity_scale_dimfour}

In this subsection we prove the harmonic and regularity scale estimates \eqref{e:main_dimfour:2} of 
Theorem \ref{t:main_dimfour}.  
We know already from Theorem \ref{t:main_codim4} that if $M^4\to X$ is a limit space, then the 
singular set of $X$ has dimension zero.  
The estimate \eqref{e:main_dimfour:2} may be viewed as an effective version of this statement. 
 Indeed, \eqref{e:main_dimfour:2} 
not only gives a bound on the number of singularities which can appear, but it gives a bound on
 the number of balls with large 
curvature concentration.  Motivated by Theorem \ref{t:annulus} and the constructions of
 \cite{CheegerNaber_Ricci}, we begin with 
the following definition which will be useful in subsequent sections as well.

\begin{definition}\label{d:scale_tuple}
Consider the scales $r_\alpha= 2^{-\alpha}$.  For each $x\in M$ we associate the 
infinite tuple $T(x)\in \dZ_2^\dN$ defined by
\begin{align}
T_\alpha(x)\equiv 
\begin{cases}
1 \text{  if }|\cV^\delta_{4r_\alpha}(x)-\cV^\delta_{r_\alpha/4}(x)|\geq \delta\,\notag\\
0 \text{  if }|\cV^\delta_{4r_\alpha}(x)-\cV^\delta_{r_\alpha/4}(x)|< \delta\, .
\end{cases}
\end{align}
We denote by $|T|(x)= \sum T_\alpha(x)$ the number of {\it bad} scales at $x\in M^4$.\\
\end{definition}
\begin{remark}
The definition of $T(x)$ relies on a choice of $\delta>0$.  When we want to stress this, we will write
 $T^\delta(x)$, but otherwise will supress this dependence. 
\end{remark}

We begin with the following; see also \cite{CheegerNaber_Ricci} for the same statement in a more general context:

\begin{lemma}\label{l:bad_scale_estimate}
Let $\Ric_{M^4}\geq -3\delta$ and $\Vol(B_1(p))>\rv>0$ with $\delta\leq 1$.  
Then for each $\delta'>0$ and $x\in B_2(p)$ there exists at most $N(\rv,\delta')$ 
scales $\alpha\in \dN$ such that
\begin{align}
\big|\cV^\delta_{r_{\alpha+1}}(x)-\cV^\delta_{r_{\alpha}}(x)\big|>\delta'\, .
\end{align}
\end{lemma}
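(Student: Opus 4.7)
The plan is to exploit Bishop-Gromov monotonicity and a telescoping argument, since the quantity $\cV^\delta_r(x)$ is essentially the right "entropy-like" quantity to track.

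First I would observe that under the assumption $\Ric_{M^4}\geq -3\delta$, the Bishop-Gromov inequality says that $r\mapsto \Vol(B_r(x))/\Vol(B_r(0^4_{-\delta}))$ is nonincreasing. Taking $-\ln$ of both sides, $\cV^\delta_r(x)$ is nondecreasing in $r$. In particular, since $r_{\alpha+1}=r_\alpha/2<r_\alpha$, we have
\begin{equation*}
\cV^\delta_{r_\alpha}(x)-\cV^\delta_{r_{\alpha+1}}(x) \geq 0,
\end{equation*}
so the absolute value in the statement is just this nonnegative difference.

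Next I would bound $\cV^\delta_{r_0}(x)=\cV^\delta_1(x)$ from above uniformly in $x\in B_2(p)$. Since $x\in B_2(p)$ implies $B_1(p)\subset B_3(x)$, we have $\Vol(B_3(x))\geq\rv$. By Bishop-Gromov (applied at scale $3$) and the fact that $\delta\leq 1$ ensures $\Vol(B_3(0^4_{-\delta}))$ is bounded by an absolute constant, we get
\begin{equation*}
\frac{\Vol(B_1(x))}{\Vol(B_1(0^4_{-\delta}))} \;\geq\; \frac{\Vol(B_3(x))}{\Vol(B_3(0^4_{-\delta}))} \;\geq\; c(\rv)>0,
\end{equation*}
and therefore $\cV^\delta_1(x)\leq C(\rv)$. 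On the other hand, as $r\to 0$, the standard local Euclidean behavior of small balls in a smooth (or even a lower-Ricci limit) manifold gives $\Vol(B_r(x))/\Vol(B_r(0^4_{-\delta}))\to 1$, hence $\cV^\delta_{r_\alpha}(x)\to 0$ as $\alpha\to\infty$.

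Finally, telescoping yields
\begin{equation*}
\sum_{\alpha=0}^\infty \big(\cV^\delta_{r_\alpha}(x)-\cV^\delta_{r_{\alpha+1}}(x)\big)
= \cV^\delta_{r_0}(x)-\lim_{\alpha\to\infty}\cV^\delta_{r_\alpha}(x)
\leq C(\rv).
\end{equation*}
Since each $\alpha$ for which $|\cV^\delta_{r_{\alpha+1}}(x)-\cV^\delta_{r_\alpha}(x)|>\delta'$ contributes at least $\delta'$ to this sum, the number of such $\alpha$ is bounded by $C(\rv)/\delta'=:N(\rv,\delta')$, proving the lemma. There is no serious obstacle here — the argument is purely a volume monotonicity plus telescoping estimate, and the only thing to be careful about is the uniform upper bound on $\cV^\delta_1(x)$ for $x\in B_2(p)$, which follows from the noncollapsing assumption at $p$ combined with Bishop-Gromov applied at a slightly larger scale.
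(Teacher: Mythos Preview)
Your proof is correct and follows essentially the same approach as the paper: Bishop--Gromov monotonicity gives nonnegativity of the increments, the noncollapsing at $p$ transferred via $B_1(p)\subset B_3(x)$ gives a uniform upper bound $\cV^\delta_1(x)\leq C(\rv)$, and telescoping bounds the number of large increments by $C(\rv)/\delta'$. The paper's argument is identical in structure and content.
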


\begin{proof}
For $x\in B_2(p)$ fixed, we have
\begin{align}
\Vol(B_1(x))\geq C(n)^{-1}\Vol(B_3(x))\geq C^{-1}\Vol(B_1(p))\geq C^{-1}\rv>0\, ,
\end{align}
and so,
\begin{align}
\cV^\delta_1(x)\leq -\ln\Big(C^{-1}\rv\Big)=C(n,\rv)\, .
\end{align}
The monotonicity of $\cV^\delta_r(x)$ gives
\begin{align}
C(n,\rv)-1\geq \cV^\delta_1(x)-\cV^\delta_0(x) &= \sum \Big(\cV^\delta_{r_\alpha}(x)
-\cV^\delta_{r_{\alpha+1}}(x)\Big)\notag\\
&=\sum \Big|\cV^\delta_{r_\alpha}(x)-\cV^\delta_{r_{\alpha+1}}(x)\Big|\, .
\end{align}
In particular, there are at most $N= C(n,\rv)(\delta')^{-1}$ elements $\alpha\in \dN$ such that
\begin{align}
\Big|\cV_{r_\alpha}(x)-\cV_{r_{\alpha+1}}(x)\Big|>\delta'\, ,
\end{align}
as claimed.
\end{proof}

Let us point out the following useful corollary:

\begin{corollary}\label{c:bad_scale_estimate}
Let $M^4$ satisfy $\Ric_{M^4}\geq -3\delta$ and $\Vol(B_1(p))>\rv>0$. 
 Then for each $x\in B_2(p)$ we have 
\begin{align}
|T^\delta|(x)\leq 
N(\rv,\delta)\, .
\end{align}
\end{corollary}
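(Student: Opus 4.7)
The plan is to deduce this directly from Lemma \ref{l:bad_scale_estimate} via a telescoping argument. First I would use the monotonicity of $\cV^\delta_r(x)$ under $\Ric \geq -3\delta$ (Bishop–Gromov) to rewrite the difference
\[
\cV^\delta_{4 r_\alpha}(x) - \cV^\delta_{r_\alpha/4}(x) = \cV^\delta_{r_{\alpha-2}}(x) - \cV^\delta_{r_{\alpha+2}}(x) = \sum_{k=-2}^{1}\bigl(\cV^\delta_{r_{\alpha+k}}(x) - \cV^\delta_{r_{\alpha+k+1}}(x)\bigr),
\]
noting that monotonicity forces every summand to be nonnegative, so the left-hand side equals its absolute value.

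Next I would observe that if $T^\delta_\alpha(x) = 1$, that is, if the total change above is $\geq \delta$, then by pigeonhole at least one of the four single-scale increments $\cV^\delta_{r_{\alpha+k}}(x) - \cV^\delta_{r_{\alpha+k+1}}(x)$ (for $k \in \{-2,-1,0,1\}$) must be $\geq \delta/4$. Call such a one-scale index $\beta = \alpha+k$ a \emph{heavy} index. Applying Lemma \ref{l:bad_scale_estimate} with $\delta' = \delta/4$ bounds the number of heavy indices by some $N_0(\rv,\delta)$.

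Finally, I would argue that each heavy index $\beta$ can be witnessed by at most four values of $\alpha$ (namely $\alpha = \beta - k$ for $k \in \{-2,-1,0,1\}$). Therefore
\[
|T^\delta|(x) = \sum_{\alpha} T^\delta_\alpha(x) \leq 4\,N_0(\rv,\delta) =: N(\rv,\delta),
\]
which completes the proof. There is no real obstacle here; the only thing to be careful about is using monotonicity to eliminate absolute values so the telescoping step is clean, and tracking the multiplicity (the factor of $4$) coming from the width of the window $[r_\alpha/4, 4r_\alpha]$ relative to the dyadic scales.
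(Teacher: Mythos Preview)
Your proof is correct and follows essentially the same approach as the paper: telescope the four-scale window into single-scale increments, apply pigeonhole, and invoke Lemma~\ref{l:bad_scale_estimate}. In fact your counting (four increments and $\delta'=\delta/4$) is the accurate one, whereas the paper's published proof uses three increments and $\delta'=\delta/3$, which is off by one scale; either way the conclusion $|T^\delta|(x)\le N(\rv,\delta)$ is unaffected.
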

\begin{remark}
\end{remark}
\begin{proof}
Put $\delta'=\delta/3$.  Then for $x\in B_2(p)$, there are at most $N(\rv,\delta)=C(\rv)\delta^{-1}$ 
scales $\alpha$ for which
\begin{align}
\Big|\cV_{r_\alpha}(x)-\cV_{r_{\alpha+1}}(x)\Big|>\frac{\delta}{3}\, .
\end{align}
Hence, there are at most $3N$ elements $\alpha\in \dN$ such that
\begin{align}
\Big|\cV_{r_\beta}(x)-\cV_{r_{\beta+1}}(x)\Big|>\frac{\delta}{3}\, ,
\end{align}
for some $\beta\in \{\alpha-1,\alpha,\alpha+1\}$.  Therefore, for all other $\alpha$, we must  have 
\begin{align}
\Big|\cV_{4r_\alpha}(x)-\cV_{r_{\alpha}/4}(x)\Big|<\delta\, ,
\end{align}
which proves the corollary.
\end{proof}

\begin{remark} 
In fact, both the lemma and the corollary work in all dimensions.
\end{remark}

We end this subsection with a proof of the regularity scale estimate \eqref{e:main_dimfour:2} from 
Theorem \ref{t:main_dimfour}.  One can view the proof as an effective version of the fact that an infinite 
collection of points must have a limit point.\\

\begin{proof}[Proof of Estimate \eqref{e:main_dimfour:2} of Theorem \ref{t:main_dimfour}]

Let $M^n$ satisfy $|\Ric_{M^4}|\leq 3$ and $\Vol(B_1(p))>\rv>0$.  We will prove the estimate for the
 harmonic radius $r_h$. The same argument works  in the Einstein case to control the regularity scale.

So let $0<\epsilon<<1$ be fixed
with $\delta(n,\epsilon)$ chosen to satisfy Theorem \ref{t:annulus}.  Consider the set
\begin{align}
\{x\in B_1(p): r_h(x)<r\}\, .
\end{align}
In view of the doubling condition implied by the Bishop-Gromov inequality, we have by a standard construction that
there exists a covering $\{B_r(x_j)\}_1^N$ with
\begin{align}
\{x\in B_1(p): r_h(x)<r\}\subseteq \bigcup_j B_{r}(x_j)\, ,
\end{align}
but such that $\{B_{r/4}(x_j)\}$ are disjoint.  Such coverings, which we  will term ``efficient'',
will be constructed several 
times below.
Note that
\begin{align}
T_r\big(\{x\in B_1(p): r_h(x)<r\}\big)\subseteq \bigcup_j B_{2r}(x_j)\, ,
\end{align}
and thus
\begin{align}
\Vol\Big(T_r\big(\{x\in B_1(p): r_h(x)<r\}\big)\Big)\leq \sum_1^N \Vol(B_{2r}(x_j))\leq C(n)N\cdot r^4\, .
\end{align}
Hence, our goal is to control the number of balls $N$ in the covering.  Denote by $\cC\equiv\{x_j\}_1^N$ 
the corresponding  collection of centers.\\

Now note the following:  if $x_j$ is one of our ball centers and $T_\alpha(x_j)=0$, then by Theorem \ref{t:annulus},
we have for every $x\in A_{r_\alpha/2,2r_\alpha}(x_j)$ that $r_h(x)>\bar r(\rv)\cdot r_\alpha$.  In particular, if 
$r_\alpha>\bar r^{-1}r$,  this implies that
\begin{align}\label{e:main_four_regscale:1}
x_k\not\in  A_{r_\alpha/2,2r_\alpha}(x_j) \, ,
\end{align}
for any other ball center $x_k$.

Now let us inductively build a sequence of decreasing subsets 
$\cC^{k+1}\subseteq \cC^k\subseteq \cdots \subseteq \cC$ and associated radii $s_k=r_{\alpha_k}>0$ with 
$\diam (\cC^k)<4 s_k$.  There are three key inductive properties that will be proved about these sets:
\begin{enumerate}
\item There exists $C(n)>0$ such that the cardinality  of $\cC^k$ satisfies
\begin{align}
\big|\#\cC^k\big|\geq C^{-k}\big|\#\cC\big| = C^{-k} N\, .
\end{align}
\item For every $x^k_j\in \cC^k$ we have 
\begin{align}
\sum_{0\leq \alpha\leq \alpha_k} T^\delta_\alpha (x^k_j)\geq k\, .
\end{align}
\item If $\big|\#\cC^k\big|>1$ and $s_k> \bar r^{-1}r$ then  $\cC^{k+1}\neq \emptyset$.\\
\end{enumerate}

Before constructing the sequence of sets, let us see that once the construction is complete,  we will have 
proved our desired estimate on $N$.  Indeed, let $k$ be the largest index such that $\cC^{k}\neq \emptyset$. 
 By the third property we must have either $|\#\cC^k|=1$ or $s_k\leq \bar r^{-1}r$, at which point we get by a 
covering argument that $|\# \cC^k|< C(n)$.  By Lemma \ref{l:bad_scale_estimate} and the second property 
we have that $k\leq k(n,\rv,\delta)=k(n,\rv)$, and thus by the first property we have 
\begin{align}
N\leq C(n)^{k(n,\rv)}\cdot |\# \cC^k|\leq C(n,\rv)\, ,
\end{align}
which proves the result.\\

Now let $\cC^0\equiv \cC$ with $s_0=1$. Clearly, the inductive properties hold for $\cC^0$.  
Assume we have built $\cC^k\subseteq \cC$ with $s_k>0$ satisfying the inductive properties, 
and let us build $\cC^{k+1}$.  First note that if $|\#\cC^k|=1$ or $s_k\leq \bar r^{-1}r$, then we let 
$\cC^{k+1}=\emptyset$.  Our construction will otherwise give us a nonempty $\cC^{k+1}$, so that 
the third inductive property will automatically be satisfied.  So let us denote $s'_k = \diam(\cC^k)\cdot 2^{-10}$.
Choose an efficient covering  $\{B_{s'_k}(x^k_j)\}$, where $x^k_j\in \cC^k$, so that 
the balls in $\{B_{s'_k/4}(x^k_j)\}$
 are disjoint.  Note that because $\diam(\cC^k)<4s_k$, the usual doubling estimates imply that there are 
at most $C(n)$ balls in this covering.  We choose the ball $B_{s'_k}(y)$ such that $\cC^k\cap B_{s'_k}(y)$ 
has the largest cardinality of any ball from the covering.  Then we define $\cC^{k+1}= \cC^k\cap B_{s'_k}(y)$.\\

By that by our choice of ball, $B_{s'_k}(y)$, we have 
\begin{align}
\big|\#\cC^{k+1}\big| &= \big|\#\cC^k\cap B_{s'_k}(y)\big|\geq C(n)^{-1} \big|\# \cC^k\big|\geq C^{-(k+1)} N\, ,
\end{align}
so that $\cC^{k+1}$ satisfies the first inductive property.  To find $s_{k+1}$ and prove the second inductive property,
 let us define the following.  For each $x^{k+1}_j\in \cC^{k+1}$ if 
\begin{align}
T^\delta_{\alpha_k+7}(x^{k+1}_j)=1\, ,
\end{align}
then let us set $\beta_j=\alpha_k+7$, and otherwise let $\beta_j\geq \alpha_k+8$ be the largest integer such that
 $T^\delta_{\beta_j-1}(x^{k+1}_j)=0$ but $T^\delta_{\beta_j}(x^{k+1}_j)=1$.  Note that 
$B_{s'_k}(y)\subseteq B_{r_{\alpha_k+7}}(x^k_j)$.  Let $\alpha_{k+1}\equiv \max\{\beta_j,\lceil-\log_2\big(\bar r r^{-1}\big)\rceil\}$
 with  $x^{k+1}\in \cC^{k+1}$ the associated element which attains the maximum, and note by \eqref{e:main_four_regscale:1} that
\begin{align}
\cC^{k+1} =\cC^k\cap B_{s'_k}(y) = \cC^k\cap B_{2^{-\alpha_{k+1}+1}}(x^{k+1})\, .
\end{align}

In particular, with $s_{k+1}=r_{\alpha_{k+1}}$ then $\diam(\cC^{k+1})<4s_{k+1}$, and the second inductive property holds, 
which completes the induction step of the construction, and hence, the proof.

\end{proof}

\subsection{Finite Diffeomorphism Type}\label{ss:finite_diffeo_dimfour}

In this subsection we will prove Theorem \ref{t:main_finite_diffeo} and give some refinements 
which will be useful for the $L^2$-curvature estimate of Theorem \ref{t:main_dimfour}.  \\

We begin by associating to a good scale the subgroup of ${\rm O}(4)$ occuring in Theorem
\ref{t:annulus}.  

\begin{definition}
Let $\epsilon,\delta>0$ be such that Theorem \ref{t:annulus} holds.  For $x\in B_1(p)$ and $\alpha\in\dN$ such 
that $T^\delta_\alpha(x)=0$, we denote by $[\Gamma_\alpha(x)]\subseteq O(4)$, the conjugacy
 class of the
 discrete 
subgroups arising from Theorem \ref{t:annulus}.
\end{definition}
In the sequel, $\Gamma_\alpha$ will denote some arbitrary
element of $[\Gamma_\alpha]$; only the isometry class of
of $S^3/\Gamma_\alpha$, which is independent of the particular
choice, is significant.

The following is the key {\it Neck} lemma for our finite diffeomorphism of Theorem \ref{t:main_finite_diffeo}. 
 In essence, the proof of Theorem \ref{t:main_finite_diffeo} will come from decomposing $M$ into a finite number of distinct pieces. 
 What we are refering to informally as the {\it neck} regions will be diffeomorphic to 
cylinders $\dR\times S^3/\Gamma$.
They will connect the  pieces which will be referred to as {\it body} regions.

\begin{lemma}\label{l:neck}
For every $0<\epsilon\leq \epsilon(\rv)$, there exists $\delta=\delta(\rv,\epsilon)$ 
with the following properties. Let $M^4$ satisfy $|\Ric_{M^4}|\leq 3\delta$ and $\Vol(B_1(p))>\rv>0$.
Let  $x\in B_1(p)$ and assume $\alpha_1\in \dN$ satisfies 
$T^\delta_{\alpha_1}(x)=0$ with $\Gamma_{\alpha_1}$ the corresponding group.  Then if $\alpha_2\in \dN$ is such that 
$\cV^\delta_{r_{\alpha_2}/4}(x)\geq \ln\big|\Gamma_{\alpha_1}\big|-\delta$,  there exists a subset 
$A_{r_{\alpha_2}/2,2r_{\alpha_1}}(x)\subseteq U\subseteq A_{(1-\epsilon)r_{\alpha_2}/2,2(1+\epsilon)r_{\alpha_1}}(x)$ 
and a diffeomorphism $\Phi:A_{r_{\alpha_2}/2,2r_{\alpha_1}}(0)\to U$, where $0\in \dR^4/\Gamma_{\alpha_1}$, such that
 if $g_{ij}=\Phi^*g$ is the pullback metric, we have
\begin{align}
||g_{ij}-\delta_{ij}||_{C^0(A_{r_{\alpha}/2},r_{2\alpha})}+r_{\alpha}||\partial_k g_{ij}||_{C^0(A_{r_{\alpha}/2},2r_{\alpha})}<\epsilon
\end{align}
\end{lemma}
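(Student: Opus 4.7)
The plan is to show that under the hypotheses, the volume ratio $\cV^\delta$ is $C\delta$-almost constant on the entire range of scales from $r_{\alpha_2}/4$ up to $4r_{\alpha_1}$, and then to apply Theorem \ref{t:annulus} at every intermediate dyadic scale in a consistent fashion. For the first step, observe that $T^\delta_{\alpha_1}(x)=0$ combined with the conclusion of Theorem \ref{t:annulus} at scale $\alpha_1$ implies $\cV^\delta_{r_{\alpha_1}/4}(x)\leq \ln|\Gamma_{\alpha_1}|+C\delta$, while Bishop--Gromov monotonicity together with $\cV^\delta_{r_{\alpha_2}/4}(x)\geq \ln|\Gamma_{\alpha_1}|-\delta$ gives the matching lower bound on the entire range. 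Hence for every integer $\beta$ with $\alpha_1\leq \beta\leq \alpha_2$ we have $|\cV^\delta_{4r_\beta}(x)-\cV^\delta_{r_\beta/4}(x)|<C\delta$, i.e.\ $T^{C\delta}_\beta(x)=0$, provided $\delta$ is small in terms of $\epsilon$.

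Next, apply Theorem \ref{t:annulus} at each scale $r_\beta$ (that is, to the rescaled metric $r_\beta^{-2}g$) to obtain discrete subgroups $\Gamma_\beta\subseteq \mathrm{O}(4)$ and diffeomorphisms $\Phi_\beta:A_{\epsilon r_\beta,2r_\beta}(0_\beta)\to U_\beta\subseteq A_{(\epsilon/2)r_\beta,2(1+\epsilon)r_\beta}(x)$ with $0_\beta\in \RR^4/\Gamma_\beta$ and $\|\Phi_\beta^*g-g_{\RR^4/\Gamma_\beta}\|_{C^1}<\epsilon$ in the rescaled sense. Since $|\Gamma_\beta|=e^{\cV^\delta_{r_\beta}(x)+O(\delta)}$ takes values in a discrete set of integers bounded by $N(\rv)$, and $\cV^\delta$ is $C\delta$-almost constant along the range, $|\Gamma_\beta|=|\Gamma_{\alpha_1}|$ for every $\beta$; the finiteness of conjugacy classes of subgroups of $\mathrm{O}(4)$ of a fixed order, combined with the continuity imposed by overlapping the diffeomorphisms on the annulus $A_{r_\beta/2,2r_\beta}(x)\cap A_{r_{\beta+1}/2,2r_{\beta+1}}(x)$, forces all $\Gamma_\beta$ to be in a single conjugacy class, which we identify with $\Gamma_{\alpha_1}$.

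Finally, glue the $\Phi_\beta$ into a single diffeomorphism on $A_{r_{\alpha_2}/2,2r_{\alpha_1}}(x)$: starting from $\Phi_{\alpha_1}$, inductively replace each $\Phi_{\beta+1}$ by $\Phi_{\beta+1}\circ \sigma_{\beta+1}$, where $\sigma_{\beta+1}\in \mathrm{Norm}_{\mathrm{O}(4)}(\Gamma_{\alpha_1})/\Gamma_{\alpha_1}$ is chosen so that the composition $\Phi_\beta^{-1}\circ(\Phi_{\beta+1}\circ\sigma_{\beta+1})$ on the definite-width overlap annulus $A_{r_{\beta}/2,r_\beta}(x)$ is $C(n)\epsilon$-close to the identity map of $\RR^4/\Gamma_{\alpha_1}$; the existence and uniqueness (up to the finite group $\mathrm{Norm}(\Gamma)/\Gamma$, which is discrete) of such a $\sigma_{\beta+1}$ follows because the overlap has definite size and the two $C^1$-close harmonic coordinate charts must agree up to an isometry of $\RR^4/\Gamma_{\alpha_1}$. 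A standard partition-of-unity construction subordinate to the covering $\{A_{r_\beta/2,2r_\beta}(x)\}$ then produces a single diffeomorphism $\Phi$ whose $C^1$-error remains bounded by $C(n)\epsilon$ because at every point the error is measured locally against the reference flat metric on $\RR^4/\Gamma_{\alpha_1}$ and does not accumulate across scales. The main obstacle is this patching step: one must verify that the $\sigma_\beta$ can be chosen consistently across all scales without drift, which hinges on the rigidity that a $C^1$-small self-diffeomorphism of an annulus in $\RR^4/\Gamma$ must be $C^0$-close to an isometry, together with the discreteness of $\mathrm{Norm}(\Gamma)/\Gamma$.
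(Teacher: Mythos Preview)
Your proposal is correct and follows essentially the same three-step strategy as the paper: (i) use $T^\delta_{\alpha_1}(x)=0$ plus Theorem \ref{t:annulus} to get the upper bound $\cV^\delta_{r_{\alpha_1}}(x)<\ln|\Gamma_{\alpha_1}|+\delta$, combine with the hypothesis and monotonicity to conclude $T^{C\delta}_\beta(x)=0$ for all $\alpha_1\leq\beta\leq\alpha_2$; (ii) apply Theorem \ref{t:annulus} at each such $\beta$ to get $\Phi_\beta$, and observe that overlapping charts force all $\Gamma_\beta$ to coincide; (iii) after composing each $\Phi_\beta$ with a rotation of $\dR^4/\Gamma$ so that neighbors agree, glue via a partition of unity.

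The one technical point worth noting is how the paper handles step (iii), which neatly sidesteps the ``drift'' issue you flag. Rather than gluing the forward maps $\Phi_\beta$, the paper glues the \emph{inverse} maps: it defines $\Phi^{-1}(y)=\sum_\beta \varphi_\beta(y)\,\Phi_\beta^{-1}(y)$ as a convex combination in the target $\dR^4/\Gamma$. This is legitimate because $\epsilon(\rv)$ is chosen small enough that all the $\Phi_\beta^{-1}(y)$ lie in a ball $B_{\epsilon|z|}(z)\subset\dR^4/\Gamma$ isometric to a Euclidean ball, where convex combinations make sense. Since any given point lies in at most four overlapping annuli, the averaging is purely local and no error accumulates across scales; the inductive alignment you describe is thus unnecessary, and with it the drift concern disappears.
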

\begin{proof}
We will fix $\epsilon(\rv)>0$ later.  For the moment let any $\epsilon_1>0$ be arbitrary with $\delta_1(\rv,\epsilon_1)>0$ the 
corresponding number from Theorem \ref{t:annulus}.  If $T^{\delta_1}_{\alpha_1}(x)=0$ then there exists a diffeomorphism 
\begin{align}
\Phi_{\alpha_1}:A_{r_{\alpha_1}/2,2r_{\alpha_1}}(0)\to U_{\alpha_1}\, ,
\end{align}
where $0\in \dR^4/\Gamma_{\alpha_1}$ and $A_{r_{\alpha_1}/2,2r_{\alpha_1}}(x)\subseteq U_\alpha
\subseteq A_{(1-\epsilon)r_{\alpha_1}/2,(1+\epsilon)r_{\alpha_1}}(x)$, such that 
\begin{align}\label{e:fd:annulus:1}
||\Phi_{\alpha_1}^*g_{ij}-\delta_{ij}||_{C^0(A_{r_{\alpha_1}/2},2r_{\alpha_1})}
+r_{\alpha_1}||\partial_k \Phi_{\alpha_1}^*g_{ij}||_{C^0(A_{r_{\alpha_1}/2},2r_{\alpha_1})}<\epsilon_1\, .
\end{align}
In particular, if $\epsilon>0$ is fixed and $2\delta(n,\epsilon)$ is the corresponding number from 
Theorem \ref{t:annulus}, then we can choose $\epsilon_1=\epsilon_1(\epsilon,\rv)$ sufficiently small so that
\begin{align}
\cV^\delta_{r_{\alpha_1}}(x)<\ln|\Gamma_{\alpha_1}|+\delta\, .
\end{align}
Thus, if $\alpha_2$ is such that
\begin{align}
\cV^\delta_{r_{\alpha_2}/2}(x)\geq \ln|\Gamma_{\alpha_1}|-\delta\, ,
\end{align}
then for all $\alpha_1\leq\alpha\leq\alpha_2$ we have  $T^{2\delta}_{\alpha}(x)=0$.  \\

By Theorem \ref{t:annulus}, there exists for each $\alpha_1\leq \alpha\leq \alpha_2$,
 a diffeomorphism 
\begin{align}
\Phi_\alpha:A_{r_{\alpha}/2,2r_{\alpha}}(0)\to U_\alpha\, ,
\end{align}
where $0\in \dR^4/\Gamma_\alpha$ and $A_{r_{\alpha}/2,2r_{\alpha}}(x)
\subseteq U_\alpha\subseteq A_{(1-\epsilon)r_{\alpha}/2,2(1+\epsilon)r_{\alpha}}(x)$, such that 
\begin{align}\label{e:fd:annulus:11}
||\Phi_\alpha^*g_{ij}-\delta_{ij}||_{C^0(A_{r_{\alpha}/2},2r_{\alpha})}
+r_{\alpha}||\partial_k \Phi_\alpha^*g_{ij}||_{C^0(A_{r_{\alpha}/2},2r_{\alpha})}<\epsilon\, .
\end{align}
In particular this implies that $\Gamma_\alpha=\Gamma$
can be chosen independent of 
$\alpha$. 

 Next we focus on the inverse maps
\begin{align}
\Phi_\alpha^{-1}:U_\alpha \to A_{r_{\alpha}/2,2r_{\alpha}}(0)\, .
\end{align}
Observe that by \eqref{e:fd:annulus:11}, after possibly composing $\Phi_\alpha$ with a rotation of $\dR^4/\Gamma$ 
we can assume for $x\in U_\alpha\cap U_\beta$ that
\begin{align}\label{e:fd:annulus:2}
|\Phi_\alpha^{-1}(x)-\Phi_\beta^{-1}(x)|<\epsilon r_\alpha\, .
\end{align}
Now let $\epsilon<\epsilon(\rv)$ be sufficiently small, so that if $x\in \dR^4/\Gamma$, then 
$B_{\epsilon |x|}(x)\subseteq \dR^4/\Gamma$ is isometric to the standard Euclidean ball $B_{\epsilon|x|}(0^4)\subseteq \dR^4$.  
Note in particular that if $\{x_i\}\in B_{\epsilon|x|}(x)$ is a collection of points, then any convex combination is well defined.

For each $\alpha$ let $\varphi'_\alpha:U_\alpha\to \dR$ be a smooth cutoff function such that
\begin{align}
\varphi'_\alpha(x) =
\begin{cases}
&1 \text{ if }x\in A_{3r_\alpha/8,15r_\alpha/8}(x)\, ,\notag\\
&0 \text{ if }x\not\in A_{r_\alpha/2,2r_\alpha}(x)\, ,
\end{cases}
\end{align}
and such that $|\nabla\varphi'_\alpha|\leq 10r_\alpha^{-1}$.  If we set
$\varphi'(x)=\sum_\alpha \varphi'_\alpha(x)$ then $1\leq \varphi'(x)\leq 4$.  In particular, 
\begin{align}
\varphi_\alpha = \frac{\varphi'_\alpha(x)}{\varphi'(x)}:U_\alpha\to \dR\, ,
\end{align}
sarisfies $\sum\varphi_\alpha(x)=1$, and so, is a partition of unity, with $|\nabla \varphi_\alpha|\leq 40r_\alpha^{-1}$.  

Define the map
\begin{align}
\Phi^{-1}: U=\bigcup_\alpha U_\alpha\to A_{r_{\alpha_2}/2,2r_{\alpha_1}}(0)\, ,
\end{align}
given by
\begin{align}
\Phi^{-1}(x) = \sum_\alpha \varphi_\alpha(x)\Phi^{-1}_\alpha(x)\, .
\end{align}
 (As previously noted, the convex combination is well defined since the $\Phi^{-1}_\alpha(x)$ all live in a
 ball which is isometric to a Euclidean ball.)  On each domain, $U_\alpha$, we have by \eqref{e:fd:annulus:1} and 
\eqref{e:fd:annulus:2} that $\Phi^{-1}$ and $\Phi^{-1}_\alpha$ are $C^1$-close. Hence, $\Phi^{-1}$ is a diffeomorphism,
 and a quick computation using \eqref{e:fd:annulus:1} and \eqref{e:fd:annulus:2} verifies the desired estimates:
\begin{align}\label{e:fd:annulus:3}
||\Phi^*g_{ij}-\delta_{ij}||_{C^0(A_{r_{\alpha}/2},2r_{\alpha})}
+r_{\alpha}||\partial_k \Phi_\alpha^*g_{ij}||_{C^0(A_{r_{\alpha}/2},2r_{\alpha})}<C\epsilon\, .
\end{align}
By choosing $\epsilon$ appropriately small,  we complete the proof.
\end{proof}

The following lemma could be termed a ``gap lemma''. It will be used to tell us that if we consider two distinct neck regions,
 then the complexity of the smaller neck region must be strictly less than that of the larger neck region. 

\begin{lemma}\label{l:order_drop_diffeo}
For each $\delta<\delta(\rv)$, there exists
 $\bar \alpha(\delta, \rv)$ with the following property. If $|\Ric_{M^4}|\leq 3\delta$,
$\Vol(B_1(p))>\rv>0$, 
 and $\cV^\delta_{r_{1}}(x)<\ln N-\delta$ for some $N\in \dN$ and $x\in B_1(p)$, we have 
$$\cV^\delta_{r_{\bar\alpha}}(x)<\ln\Big(N-1\Big)+\delta\, .$$
\end{lemma}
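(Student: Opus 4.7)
The plan is a direct argument combining Bishop–Gromov monotonicity of $\cV^\delta_r$, the bad-scale count of Corollary~\ref{c:bad_scale_estimate}, the annulus identification of Theorem~\ref{t:annulus}, and the elementary observation that the order of a finite subgroup of $\mathrm{O}(4)$ is an integer. Fix $(M^4,p,x,N)$ satisfying the hypotheses and, for contradiction, assume $\cV^\delta_{r_{\bar\alpha}}(x)\geq \ln(N-1)+\delta$, where $\bar\alpha=\bar\alpha(\delta,\rv)$ will be pinned down below. Monotonicity of $\cV^\delta_r$ in $r$ together with the hypothesis $\cV^\delta_{r_1}(x)<\ln N-\delta$ then traps
\begin{align*}
\cV^\delta_{r_\alpha}(x)\in \bigl[\ln(N-1)+\delta,\ \ln N-\delta\bigr]\quad\text{for every }\alpha\in[1,\bar\alpha],
\end{align*}
while the volume lower bound $\Vol(B_1(p))>\rv$ caps $N$ by some $N_\ast(\rv)$, so the worst-case integer gap $c(\rv):=\min_{2\leq M\leq N_\ast(\rv)}\ln(M/(M-1))$ is bounded below in terms of $\rv$ alone.

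Next, choose an auxiliary parameter $\epsilon_\ast=\epsilon_\ast(\rv,\delta)$ and demand $\delta(\rv)$ so small that $\delta\leq\bar\delta(\rv,\epsilon_\ast)$, where $\bar\delta(\rv,\epsilon_\ast)$ is the threshold from Theorem~\ref{t:annulus}; then set $\bar\alpha(\delta,\rv):=N_0(\rv,\delta)+2$, where $N_0$ is the bound of Corollary~\ref{c:bad_scale_estimate}. Since $|T^\delta|(x)\leq N_0$, some scale $\alpha_\ast\in[1,\bar\alpha]$ satisfies $T^\delta_{\alpha_\ast}(x)=0$, and Theorem~\ref{t:annulus} applied at this scale (after rescaling by $r_{\alpha_\ast}^{-1}$) produces a discrete subgroup $\Gamma_\ast\subseteq\mathrm{O}(4)$ with $|\Gamma_\ast|\leq N_\ast(\rv)$ and a diffeomorphism identifying $A_{\epsilon_\ast r_{\alpha_\ast},\,2r_{\alpha_\ast}}(x)$ with the corresponding annulus in $\dR^4/\Gamma_\ast$ in the $C^1$ sense, with error $\epsilon_\ast$.

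The $C^1$-identification yields $\Vol(A_{\epsilon_\ast r_{\alpha_\ast},2r_{\alpha_\ast}}(x))=|\Gamma_\ast|^{-1}\Vol(A_{\epsilon_\ast r_{\alpha_\ast},2r_{\alpha_\ast}}(0^4))\bigl(1+O(\epsilon_\ast)\bigr)$, while Bishop–Gromov bounds the missing inner core by $\Vol(B_{\epsilon_\ast r_{\alpha_\ast}}(x))=O(\epsilon_\ast^4)\,r_{\alpha_\ast}^4$; summing and dividing by $\Vol(B_{2r_{\alpha_\ast}}(0^4_{-\delta}))$ gives $\cV^\delta_{2r_{\alpha_\ast}}(x)=\ln|\Gamma_\ast|+O(\epsilon_\ast)+O(\delta)$. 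The hypothesis $T^\delta_{\alpha_\ast}(x)=0$ further constrains $\cV^\delta_r(x)$ to vary by at most $\delta$ across $r\in[r_{\alpha_\ast}/4,4r_{\alpha_\ast}]$, hence $\cV^\delta_{r_{\alpha_\ast}}(x)=\ln|\Gamma_\ast|+O(\epsilon_\ast)+O(\delta)$ as well. Choosing $\epsilon_\ast\ll\delta$ forces this total error strictly below $\delta$, and combined with the trapping interval from the first paragraph this places $\ln|\Gamma_\ast|$ into the open interval $(\ln(N-1),\ln N)$---impossible, since $|\Gamma_\ast|$ is a positive integer. The main technical obstacle is the fussy ordering of the constants: $\delta(\rv)$ must be chosen small enough that one can simultaneously take $\epsilon_\ast\ll\delta$ (so the integer-gap argument closes) and $\delta\leq\bar\delta(\rv,\epsilon_\ast)$ (so Theorem~\ref{t:annulus} applies at $\alpha_\ast$); this compatibility is achieved once one untangles the dependencies against the $\rv$-dependent gap $c(\rv)$, after which Corollary~\ref{c:bad_scale_estimate} fixes $\bar\alpha$.
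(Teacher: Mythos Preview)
Your overall strategy is the right one and matches the paper's: find a good scale, apply Theorem~\ref{t:annulus} to identify $\cV^\delta$ with $\ln|\Gamma_*|$ up to small error, then use that $|\Gamma_*|$ is an integer. But the way you order the constants creates a genuine circularity that you flag but do not actually resolve.

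The issue is this. You locate the good scale $\alpha_*$ using $T^\delta_{\alpha_*}(x)=0$, which gives you only the pinching $|\cV^\delta_{4r_{\alpha_*}}-\cV^\delta_{r_{\alpha_*}/4}|<\delta$. To apply Theorem~\ref{t:annulus} with output error $\epsilon_*$, you need pinching (and Ricci) below the theorem's threshold $\bar\delta(\rv,\epsilon_*)$, so you are forced to require $\delta\leq\bar\delta(\rv,\epsilon_*)$. At the same time, to land $\ln|\Gamma_*|$ strictly inside $(\ln(N-1),\ln N)$ from the trapping interval $[\ln(N-1)+\delta,\ln N-\delta]$ you need the volume error strictly less than $\delta$, i.e.\ $\epsilon_*\ll\delta$. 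Since $\bar\delta(\rv,\cdot)$ comes from a compactness argument with no quantitative rate, there is no reason the inequality $\delta\leq\bar\delta(\rv,\epsilon_*)$ should be compatible with $\epsilon_*\ll\delta$; for instance, if $\bar\delta(\rv,\epsilon)=o(\epsilon)$ the two constraints are mutually exclusive for all small $\delta$. Invoking the threshold $\delta(\rv)$ in the lemma statement does not help, because the constraint is $\delta\leq\bar\delta(\rv,c\,\delta)$, which is a condition on the \emph{shape} of $\bar\delta$, not on the size of $\delta$.

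The paper breaks this loop by decoupling the two roles of the small parameter. Given the target error $\delta$, it first chooses $\delta'=\delta'(\rv,\delta)\leq\delta$ so small that \emph{pinching below $\delta'$} forces $|\cV^\delta_{r_\alpha}-\ln|\Gamma_\alpha||<\delta$ via Theorem~\ref{t:annulus}. It then applies Lemma~\ref{l:bad_scale_estimate} (not the Corollary) with the parameter $\delta'$ to find, among the first $N(\rv,\delta')$ scales, one with pinching $<\delta'$; this sets $\bar\alpha(\delta,\rv)$. The point is that the good scale is located using a \emph{stricter} pinching than the one tied to $\delta$, so the annulus theorem delivers an error that is genuinely $<\delta$ without any implicit constraint on $\delta$ itself. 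Your argument becomes correct as soon as you replace the use of Corollary~\ref{c:bad_scale_estimate} with $\delta$ by the use of Lemma~\ref{l:bad_scale_estimate} with this auxiliary $\delta'$.
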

\begin{proof}
First note by Theorem \ref{t:annulus} that if $\delta$ is fixed, then there exists $\delta'(\rv,\delta)$ such that if 
$|\Ric|\leq 3\delta'$ and if $T^{\delta'}_0=0$, then 
\begin{align}
\big|\cV^{\delta'}_{1}(x)-\ln|\Gamma_0|\big|<\delta\, .
\end{align}

By rescaling this inequality, we see that in the context of this lemma, the following holds. If
 $x\in B_1(p)$, $\alpha>\bar\alpha(v,\delta)$ and 
\begin{align}
\big|\cV^\delta_{4r_\alpha}(x)-\cV^\delta_{r_\alpha/4}(x)\big|<\delta'\, ,
\end{align}
then we have 
\begin{align}
\big|\cV^{\delta}_{r_\alpha}(x)-\ln|\Gamma_\alpha|\big|<\delta\, .
\end{align}
In particular, for $x\in B_1(p)$, we can apply Lemma \ref{l:bad_scale_estimate} to see that there exists a scale 
$\alpha\leq \bar\alpha(v,\delta)$ such that
\begin{align}
\big|V^\delta_{r_{\alpha}}(x)-\cV^\delta_{r_{\alpha+1}}(x)\big|<\delta'\, ,
\end{align}
and hence
\begin{align}
\big|\cV^{\delta}_{r_\alpha}(x)-\ln|\Gamma_\alpha|\big|<\delta\, .
\end{align}
However,  if 
\begin{align}
\cV^{\delta}_{r_\alpha}(x)<\ln N-\delta\, ,
\end{align}
 this implies $|\Gamma_\alpha|<N$, which completes the proof.   
\end{proof}

In Lemma \ref{l:neck} we have built the required structure for constructing the {\it neck} regions of our decomposition.  What is left is to build the {\it body} regions of the decomposition.  The following lemma will be applied in the proof of Theorem \ref{t:main_finite_diffeo} in order to construct the various body regions.

\begin{lemma}\label{l:body_diffeo}
For every $\delta>0$, there exists $r_0(\rv,\delta),N(\rv,\delta)>0$ with 
the following properties.  Let $M^4 $ satisfy  $|\Ric_{M^4_j}|\leq 3\delta$, $\Vol(B_1(p))>\rv>0$.
 Then there exists points $\{x_j\}_1^N$ with $N\leq N(\rv,\delta)$,
and scales $\alpha_j\in \dN$ with $r_j\equiv r_{\alpha_j}>r_0$, such that 
\begin{enumerate}
\item $T^\delta_{\alpha_j}(x_j)=0$,
\vskip1mm

\item If $x\in B_1(p)\setminus \bigcup_j B_{r_{j}}(x_j)$ then $r_h(x)>r_0$, 
\vskip1mm

\item If $\beta_j\in \dN$ denotes the largest integer such that $\cV^\delta_{r_{\beta_j}/4}(x_j)\geq \ln|\Gamma_j|-\delta$,
then for every $x\in B_{2r_{\beta_j}}(x_j)$ we have 
\begin{align}
\cV^\delta_{r_{\beta_j}/8}(x)< \ln|\Gamma_j|-\delta\, .
\end{align}
\end{enumerate}
\end{lemma}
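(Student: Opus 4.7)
The plan is a nested selection procedure organized by group order, terminating via the gap lemma. First, fix $r_0 = r_0(\rv, \delta)$ small enough to apply Theorem \ref{t:annulus} and the $\epsilon$-regularity theorem of Section \ref{s:eps_reg}, and choose $N_0 = N_0(\rv)$ to be an upper bound on $|\Gamma|$ for any group arising from Theorem \ref{t:annulus} at any scale $\geq r_0$ (this follows from the noncollapsing condition together with the fact that $\cV^\delta_{r_\alpha}(y) \approx \ln|\Gamma_\alpha(y)|$ at a good scale). Call $y \in B_1(p)$ \emph{bad} if $r_h(y) \leq r_0$; by Theorem \ref{t:eps_reg} combined with Corollary \ref{c:bad_scale_estimate}, at any bad point there exists at least one good scale $\alpha$ with $r_\alpha \geq r_0$ and $|\Gamma_\alpha(y)| \geq 2$.

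The selection proceeds inductively. I maintain a collection $\{(x_j, \alpha_j, \Gamma_j)\}_{j=1}^{k}$ satisfying conditions (1) and (3), together with an ``uncovered bad set'' $U_k = \{y \in B_1(p) : r_h(y) \leq r_0\} \setminus \bigcup_{j=1}^{k} B_{r_{\alpha_j}}(x_j)$. Starting with $k=0$, if $U_k$ is empty I stop. Otherwise, among all pairs $(y, \alpha)$ with $y \in U_k$, $T^\delta_\alpha(y) = 0$, and $r_\alpha \geq r_0$, I choose one which \emph{maximizes} the corresponding $\beta(y)$, i.e., which minimizes the scale $r_{\beta(y)}$ at which $\cV^\delta$ drops below $\ln|\Gamma_\alpha(y)| - \delta$. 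Such a maximum exists as an integer upper-bounded in terms of $\rv, \delta$: by Lemma \ref{l:order_drop_diffeo}, indefinite deepening at fixed group order is impossible, and volume noncollapsing bounds $|\Gamma|$ and hence caps $\beta$.

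Condition (3) for the newly added $x_{k+1}$ is then \emph{automatic} from the maximality: if some $x' \in B_{2r_{\beta_{k+1}}}(x_{k+1})$ satisfied $\cV^\delta_{r_{\beta_{k+1}}/8}(x') \geq \ln|\Gamma_{k+1}| - \delta$, then Theorem \ref{t:annulus} applied to $x'$ at a nearby scale (after a cosmetic adjustment of $\delta$ by a universal factor) would produce a good scale at $x'$ with $\beta(x') > \beta(x_{k+1})$, contradicting the maximality of the choice. Condition (2) holds at termination because the procedure explicitly exhausts the bad set: any remaining bad point keeps $U_k$ nonempty and forces another iteration.

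To bound the number $N$ of selected centers, I organize the iterations into ``waves'' indexed by the group order $|\Gamma_j|$, processed in \emph{decreasing} order of $|\Gamma_j|$; within a wave, a Vitali/packing argument based on Bishop-Gromov bounds the number of centers of a fixed group order in terms of $\rv$, and Lemma \ref{l:order_drop_diffeo} guarantees that there are at most $N_0(\rv)$ distinct waves. The main obstacle, as I see it, is the bookkeeping that guarantees this wave-by-wave schedule actually produces the ``deepest-first'' maximizer of $\beta$ at each stage, and that later iterations in lower-order waves never retroactively invalidate condition (3) at previously placed centers; processing in order of decreasing $|\Gamma_j|$, combined with the strict drop in Lemma \ref{l:order_drop_diffeo}, is the ingredient that makes these compatible.
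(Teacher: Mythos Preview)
Your approach departs substantially from the paper's, and the departure introduces a genuine gap in the argument for condition~(3).

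In your scheme, you select $(x_{k+1},\alpha_{k+1})$ by maximizing $\beta$ over pairs $(y,\alpha)$ with $y$ in the \emph{uncovered bad set} $U_k$. You then argue that a violating point $x' \in B_{2r_{\beta_{k+1}}}(x_{k+1})$ with $\cV^\delta_{r_{\beta_{k+1}}/8}(x') \geq \ln|\Gamma_{k+1}|-\delta$ would yield a competitor with larger $\beta$, contradicting maximality. But there is no reason $x'$ belongs to $U_k$: $x'$ need not satisfy $r_h(x') \le r_0$, and $x'$ could already lie in a previously selected ball. So the maximality gives no contradiction. A second issue: your $\beta(y)$ depends on the group $\Gamma_\alpha(y)$ attached to the chosen scale $\alpha$, and there is no reason the group at $x'$ matches $\Gamma_{k+1}$, so even the comparison of $\beta$-values is not well posed. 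Finally, the ``wave'' bound on $N$ is not substantiated: you assert a Vitali-type bound within each wave but never explain why the selected balls at a fixed group order are comparable in scale or disjoint in any controlled sense.

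The paper's argument sidesteps all of this with a much simpler two-step construction. First, using Lemma~\ref{l:bad_scale_estimate} (at a parameter $\delta' \ll \delta$) every point of $B_1(p)$ has a $\delta'$-good scale $\alpha_x \le \bar\alpha(\rv,\delta')$; one takes an efficient (Vitali) subcovering $\{B_{r_j}(x'_j)\}$, which immediately bounds $N$ by doubling. This already gives conditions~(1) and~(2) after a small recentering. Second, to obtain condition~(3), one fixes the group $\Gamma_j$ attached to $B_{r_j}(x'_j)$ and maximizes $\beta_j(x)$ over \emph{all} $x \in B_{\epsilon r_j}(x'_j)$, not just over bad or uncovered points. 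The maximizer is the final center $x_j$. Because the domain of maximization contains $B_{2r_{\beta_j}}(x_j)$ (this is where the choice of $\epsilon$ and $\delta'$ enters), any point violating~(3) is automatically a competitor, and the maximality argument goes through cleanly. Note that Lemma~\ref{l:order_drop_diffeo} is not used at all in this lemma; it is only invoked later, in the bubble-tree construction, to bound the number of levels.
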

\begin{proof}
Let $\delta>0$ be chosen with $\delta'(\rv,\delta)$ to be chosen later.  Note that by Lemma \ref{l:bad_scale_estimate},
 for each $x\in B_1(p)$  there exists $\alpha_x\leq \bar\alpha(v,\delta')$ such that $T^{\delta'}_{\alpha_x}(x)=0$. 
 Consider the covering $\{B_{r_{\alpha_x}}(x)\}$ of $B_1(p)$, and choose an efficient 
subcovering $\{B_{r_j}(x'_j)\}_1^N$,
where $r_j=r_{\alpha_{x'_j}}$ and  the balls in $\{B_{r_j/4}(x'_j)\}$ are disjoint.  The usual doubling arguments imply
 that $N\leq N(v,\delta')$.  

By Theorem \ref{t:annulus}, if we are given $\epsilon>0$, then we can choose $\delta'(\rv,\epsilon,\delta)$ such that 
for each $x\in B_{\epsilon r_j}(x'_j)$ we have $T^\delta_{\alpha_j}(x)=0$, while for each $x\in A_{\epsilon r_j,2r_j}(x_j)$ 
we have $r_h(x)>\bar r(v,\epsilon) r_j\geq r_0(v,\epsilon,\delta')$. 
Let $\Gamma_j$ be the group associated to $B_{r_j}(x'_j)$, and for each $x\in B_{\epsilon r_j}(x'_j)$ let $\beta_j(x)$ be 
the largest integer such that $V^\delta_{r_{\beta_j}/4}(x)\geq \ln|\Gamma_j|-\delta$.  Let $\beta_j=\max \beta_j(x)$ with $x_j$ 
the corresponding point.  Note that for $\epsilon(v,\delta)$ sufficiently small, we have 
$B_{2r_{\beta_j}}(x_j)\subseteq B_{\epsilon r_j}(x'_j)$, and in particular,  for every $x\in B_{2r_{\beta_j}}(x_j)$ 
\begin{align}
V^\delta_{r_{\beta_j}/8}(x)< \ln|\Gamma_j|-\delta\, .
\end{align}

Consider the collection of balls $\{B_{r_j}(x_j)\}$.  Clearly, by construction, conditions (1) and (3) are satisfied. 
If $x\in B_1(p)\setminus \{B_{r_j}(x_j)\}$ then since $\{B_{2r_j}(x_j)\}$ cover $B_1(p)$ we have that for some $x_j$ that 
$x\in A_{r_j,2r_j}(x_j)$, which implies $r_h(x)\geq r_0(v,\delta)$, as claimed.
\end{proof}

By the previous lemma, the regions between necks, namely $B_1(p)\setminus \bigcup_j B_{r_{\alpha_j}}(x_j)$, 
can be written as the union of a definite number of balls of definite size, on  which there is definite geometric control. \\

We are nearly in a position to prove Theorem \ref{t:main_finite_diffeo}.  To do so we
 will in fact prove the following stronger result, 
which is the {\it bubble tree} decomposition of $M^4$.

\begin{theorem}\label{t:bubble_tree}
Let $M^4$ satisfy $|\Ric_{M^4}|\leq 3$, 
$\Vol(M)\geq \rv>0$ and $\diam(M)\leq D$. Then there exists a decomposition of $M^4$
\begin{align}
M^4 = \cB^1\cup\bigcup_{j_2=1}^{N_2}
\cN^2_{j_2}\cup\bigcup_{j_2=1}^{N_2}\cB^2_{j_2}\cup\cdots\cup
\bigcup_{j_k=1}^{N_k} \cN^k_{j_k}\cup\bigcup_{j_k=1}^{N_k} \cB^k_{j_k}\, ,
\end{align}
into open sets which satisfy the following:
\begin{enumerate}
\item If $x\in \cB^\ell_j$ then $r_h(x)>r_0(n,\rv,D)\cdot \diam(\cB^\ell_j)$.
\vskip1mm

\item Each neck $\cN^\ell_j$ is diffeomorphic 
to $\dR\times S^3/\Gamma^\ell_j$ for some $\Gamma^\ell_j\subset O(4)$.
\vskip1mm

\item $\cN^\ell_j\cap \cB^\ell_j$ is diffeomorphic to $\dR\times S^3/\Gamma^\ell_j$.

\item $\cB^{\ell-1}_{j'}\cap \cN^\ell_j$ are either empty or 
diffeomorphic to $\dR\times S^3/\Gamma^\ell_j$.
\vskip1mm

\item $N_\ell\leq N(\rv,D)$ and $k\leq k(\rv,D)$.
\end{enumerate}
\end{theorem}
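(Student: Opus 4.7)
My plan is to build the decomposition by iterating the local structure results of Lemmas \ref{l:neck}, \ref{l:body_diffeo}, and \ref{l:order_drop_diffeo} to produce a rooted tree whose nodes alternate between bodies and necks, and whose depth is controlled by the maximal order of the groups that can appear.

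First I would fix $\delta = \delta(\rv,D)$ small enough that all three lemmas apply with their stated conclusions. Covering $M^4$ by at most $N(\rv,D)$ unit balls (possible since $\diam(M)\leq D$ and $\Vol(M)\geq \rv$) and applying Lemma \ref{l:body_diffeo} on each, I obtain a finite collection of points $\{x^1_{j_2}\}_{j_2=1}^{N_2}$ and scales $r^1_{j_2}\geq r_0(\rv,D)$ with $T^\delta_{\alpha^1_{j_2}}(x^1_{j_2})=0$, such that the harmonic radius on the complement of $\bigcup_{j_2} B_{r^1_{j_2}}(x^1_{j_2})$ is bounded below by $r_0$. Setting $\cB^1$ to be a suitable smooth thickening of that complement gives property (1) at the root, since $\diam(\cB^1)\leq D$. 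For each $j_2$ I would then apply Lemma \ref{l:neck} at $x^1_{j_2}$ with $\alpha_1 = \alpha^1_{j_2}$ and $\alpha_2 = \beta^1_{j_2}$, the largest scale at which $\cV^\delta_{r_{\beta^1_{j_2}}/4}(x^1_{j_2}) \geq \ln|\Gamma^2_{j_2}|-\delta$ still holds; this produces a neck $\cN^2_{j_2}$ diffeomorphic to an annulus in $\dR^4/\Gamma^2_{j_2}$, and hence to $\dR\times S^3/\Gamma^2_{j_2}$.

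The inductive step applies the entire construction to each ball $B_{r_{\beta^1_{j_2}}}(x^1_{j_2})$ after rescaling it to have unit radius. Under this rescaling the Ricci bound tightens from $|\Ric|\leq 3$ to $|\Ric|\leq 3\,r_{\beta^1_{j_2}}^2 \ll 3\delta$, and the rescaled noncollapsing constant depends only on $\rv$ and $|\Gamma^2_{j_2}|\leq N(\rv)$, so Lemmas \ref{l:body_diffeo} and \ref{l:neck} reapply uniformly. Recursively this yields bodies $\cB^\ell_{j_\ell}$ and deeper necks $\cN^{\ell+1}_{j_{\ell+1}}$. The overlap conditions (3) and (4) are verified directly from the $C^1$-closeness estimates in Lemmas \ref{l:neck} and \ref{l:body_diffeo}: each overlap lives near one of the two ends of the annular domain of a neck, and the partition-of-unity gluing from the proof of Lemma \ref{l:neck} identifies the overlap with a sub-annulus of $\dR\times S^3/\Gamma^\ell_{j_\ell}$.

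The main obstacle, and the reason the construction terminates, is bounding the depth $k$. This is where Lemma \ref{l:order_drop_diffeo} is essential. At each step, property (3) of Lemma \ref{l:body_diffeo} forces $\cV^\delta_{r_{\beta^\ell_{j_\ell}}/8}(x)<\ln|\Gamma^\ell_{j_\ell}|-\delta$ for every $x$ in the inner ball $B_{r_{\beta^\ell_{j_\ell}}}(x^\ell_{j_\ell})$; after rescaling that ball to unit size, Lemma \ref{l:order_drop_diffeo} implies that any neck group $\Gamma^{\ell+1}_{j_{\ell+1}}$ arising inside must satisfy $|\Gamma^{\ell+1}_{j_{\ell+1}}|<|\Gamma^\ell_{j_\ell}|$. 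Combined with the universal upper bound $|\Gamma^\ell_{j_\ell}|\leq N(\rv)$ coming from the volume noncollapsing and Theorem \ref{t:annulus}, this forces $k\leq k(\rv)$. The branching bound $N_\ell\leq N(\rv,D)$ then follows at every level, since Lemma \ref{l:body_diffeo} caps the number of new bad balls produced inside each previous-level region by $N(\rv,\delta)$ and the number of previous-level regions is itself bounded inductively in terms of $\rv,D$.
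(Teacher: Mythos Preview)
Your approach is essentially the same as the paper's: iterate Lemmas \ref{l:body_diffeo} and \ref{l:neck} to produce bodies and necks level by level, and use the strict drop in $|\Gamma|$ coming from Lemma \ref{l:order_drop_diffeo} together with the a priori bound $|\Gamma|\leq N$ to terminate the tree.

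There is one gap at the base of the induction. Lemmas \ref{l:neck}, \ref{l:order_drop_diffeo}, and \ref{l:body_diffeo} all have the hypothesis $|\Ric|\leq 3\delta$ on a unit ball, whereas your manifold only satisfies $|\Ric|\leq 3$; simply ``fixing $\delta$ small enough'' does not reconcile these, and your first application of Lemma \ref{l:body_diffeo} on unit balls is not yet justified. The paper deals with this by performing a single global rescaling of the metric at the outset so that $|\Ric|\leq 3\delta$ holds everywhere, at the cost of replacing $D$ by some $D'=D'(D,\delta)$ and working with the effective noncollapsing constant $\rv'=\rv'(\rv,D)$ obtained from Bishop--Gromov. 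This is also why your depth bound should read $k\leq k(\rv,D)$ rather than $k\leq k(\rv)$: the order bound $|\Gamma|\leq N(\rv')$ from Theorem \ref{t:annulus} depends on $\rv'$, and hence on $D$. Once that initial rescaling is inserted, your argument and the paper's agree.
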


\begin{proof}
Let us remark first, that if $p\in M^n$, then by volume ratio monotonicity, we have for every $r\leq 1$ that
\begin{align}
\Vol(B_r(p))\geq \frac{\Vol_{-1}(B_r)}{\Vol_{-1}(B_D)}\Vol(B_D(p)) \geq C(n,D)^{-1}\Vol(M^4)r^n
\geq C^{-1}\rv r^n=: \rv' r^n\, .
\end{align}

Let $\epsilon<\epsilon(\rv')$ from Lemma \ref{l:neck} with $\delta(\rv,D,\epsilon)$ sufficiently small to satisfy 
Theorem \ref{t:annulus} and  Lemmas \ref{l:neck}, \ref{l:order_drop_diffeo}, \ref{l:body_diffeo}.  
After rescaling, it is sufficient to consider a Riemannian manifold $(M^4,g)$ with $|\Ric_{M^4}|\leq 3\delta$, 
$\diam(M) \leq D\delta^{-2}=D'$ and $\Vol(B_1(p))> \rv'>0$ for every $p\in M$.\\

Let us begin by efficiently covering $M^4$ by balls $\{B_1(x^0_j)\}$ such that
the balls in $\{B_{1/4}(x^0_j)\}$ are disjoint.  
By the usual doubling argument,  there are at most $N(n,D,\rv)$ such balls.  For each such ball, we 
apply Lemma \ref{l:body_diffeo} in order to produce a collection of balls $\{B_{r^1_j}(x^1_j)\}_1^{N_1}$ 
such that $r^1_j=r_{\alpha^1_j}>\bar r(\rv,D)$, $N_1\leq N(\rv',D')$, $T^\delta_{\alpha^1_j}(x^1_j)=0$, and 
such that if $x\in M^4\setminus \bigcup_j B_{r^1_j}(x^1_j)$ then $r_h(x)>\bar r$.  Furthermore, if we denote
by $\Gamma^2_j$,
the group associated to $B_{r^1_j}(x^1_j)$, then if $\beta^1_j$ is the largest integer such that 
$V^\delta_{r_{\beta^1_j}/2}(x^1_j)\geq \ln |\Gamma_j^2|-\delta$, then for all 
$x\in B_{2r_{\beta^1_j}}(x^1_j)$ we have 
\begin{align}
\cV^\delta_{r_{\beta^1_j}/4}(x^1_j)< \ln |\Gamma_j^2|-\delta\, .
\end{align}

Define
\begin{align}
\cB^1=: M^4\setminus \bigcup B_{r_j}(x_j)\, ,
\end{align}
as the first body region. Then we can write
\begin{align}\label{e:tree_decomp:1}
M^4=\cB^1 \bigcup B_{2r^1_j}(x^1_j)\, ,
\end{align}
where by using Theorem \ref{t:annulus}, we have that $B_{2r^1_j}(x^1_j)\cap \cB^1$ is 
diffeomorphic to $\dR\times S^3/\Gamma^1_j$.\\

Now to prove the theorem, let us inductively build a decomposition of $M^4$ 
\begin{align}
M^4 = \cB^1\cup\bigcup_{j_2=1}^{N_2}\cN^2_{j_2}\bigcup_{j_2=1}^{N_2}\cB^2_{j_2}\cup\cdots
\cup\bigcup_{j_k=1}^{N_k} \cN^k_{j_k}\bigcup_{j_k=1}^{N_k} \cB^k_{j_k}
\cup\bigcup_{a=1}^{N_{k+1}} B_{2r^k_{a}}(x_a)\, ,
\end{align}
with the following properties:
\begin{enumerate}
\item If $x\in \cB^\ell_j$ then $r_h(x)>r_0(n,\rv,D)\cdot \diam(\cB^\ell_j)$.
\vskip1mm

\item Each neck $\cN^\ell_j$ is diffeomorphic to 
$\dR\times S^3/\Gamma^\ell_j$ for some $\Gamma^\ell_j\subset O(4)$.
\vskip1mm

\item Each $\cN^\ell_j\cap \cB^\ell_j$ is diffeomorphic to $\dR\times S^3/\Gamma^\ell_j$.  Each $\cN^\ell_j\cap \cB^{\ell-1}_{j'}$ are either empty or diffeomorphic to $\dR\times S^3/\Gamma^\ell_j$. 
\vskip1mm

\item $N_\ell\leq N(n,\rv,D)$.
\vskip1mm

\item If $\cN^{\ell+1}_a\cap \cB^\ell_j\neq \emptyset$, then $|\Gamma^\ell_a|\leq |\Gamma^\ell_j|-1$.
\vskip1mm

\item We have $r^k_a=r_{\alpha^k_a}$ with $T^\delta_{\alpha^k_a}=0$, 
and $\cB^k_j\cap B_{r^k_a}(x_a)\subseteq A_{r^k_a/2,r^k_a}(x_a)$.
\vskip1mm

\item If $\beta^k_a$ is the largest integer such that $\cV^\delta_{r_{\beta^k_a}/4}(x_a)\geq \ln |\Gamma^k_a|-\delta$, 
then for every $x\in B_{2r_{\beta^k_a}}(x_a)$ we have  $\cV^\delta_{r_{\beta^k_a}/8}(x)<\ln |\Gamma^k_a|-\delta$.
\end{enumerate}

Before building the inductive decomposition, let us note that once we have it,
we will have finished the proof.
 In fact, all we really need to see is that for some $k\leq k(n,\rv,D)$, there are no balls
 $\{B_{r^k_{a}}(x_a)\}$ in the decomposition.  
To see this, observe that by the lower volume bound we have the upper order bound 
$|\Gamma^2_j|\leq C(\rv,D)$.  
By condition (5) above we have by iteration that for each $j$ that there is some $j_2$ 
such that
\begin{align}
0\leq |\Gamma^k_j|\leq |\Gamma^2_{j_2}|-(k-2)\leq C(\rv,D)-(k-2)\, ,
\end{align}
and in particular this immediately implies the upper bound
\begin{align}
k\leq k(\rv,D)\, .
\end{align}

To prove the inductive decomposition, we begin by noting 
that \eqref{e:tree_decomp:1} provides the basic
 case.  So let us assume that the 
decomposition has been constructed for some $k$, 
and let us build the decomposition for $k+1$.  

First, we use condition (7) and Lemma \ref{l:neck} to see that there 
exists an open set
\begin{align}
A_{r_{\beta^k_a}/2,2r_{\alpha^k_a}}(x_a)\subseteq \cN^{k+1}_a
\subseteq A_{(1-\epsilon)r_{\beta^k_a}/2,2(1+\epsilon)r_{\alpha^k_a}}(x_a)\, ,
\end{align}
and a diffeomorphism $\Phi^{k+1}_a:\cN^{k+1}_a
\to A_{r_{\beta^k_a}/2,2r_{\alpha^k_a}}(0)$ with $0\in \dR^4/\Gamma^k_a$.  
By Lemma \ref{l:order_drop_diffeo},  there exists a radius 
$r_{a}=\bar r(\rv,\delta)r_{\beta^k_a}$ such that
\begin{align}
\cV^\delta_{r_{a}}(x)<\ln\big(|\Gamma^{k+1}_a|-1\big)+\delta\, ,
\end{align}
for every $x\in B_{2r_{\beta^k_a}}(x_{a})$.  

Pick some 
efficient covering 
$\{B_{r_a}(x_{aj}\}$ of $B_{2r_{\beta^k_a}}(x_a)$ such that the balls in
 $\{B_{r_a/4}(x_{aj}\}$ disjoint.  Now apply Lemma \ref{l:body_diffeo} to each 
ball $\{B_{r_a}(x_{aj}\}$ in order to construct 
a collection of balls $\{B_{r^{k+1}_{ab}}(x^{k+1}_{ab})\}$ with $r^{k+1}_{ab}
=r_{\alpha^{k+1}_{ab}}>\bar r(\rv,\delta) r_{\beta^k_a}$.  
Observe that since there are at most $N(\rv,D)$ balls in the collection $\{B_{r_a/4}(x_{aj}\}$,
 and the application of Lemma \ref{l:body_diffeo} 
produces at most $N(\rv,D)$ balls for each of these, we have at most $N(\rv,D)$ such balls in total.  

If we put
\begin{align}
\cB^{k+1}_a\equiv \ B_{2r_{\beta^k_a}}(x_a)\setminus \cup B_{r_{\alpha^{k+1}_{ab}}}(x_{ab})\, ,
\end{align}
 we see that $\cB^{k+1}_a$ and the collection $\{B_{r^{k+1}_{ab}}(x^{k+1}_{ab})\}$ satisfy
 the inductive conditions.  Specifically, 
what is left to check is condition (5).  However, by construction, we have 
\begin{align}
\ln(|\Gamma^k_a|-1)+\delta > \cV^\delta_{r^{k+1}_{ab}}(x_{ab})\geq \ln|\Gamma^{k+1}_{ab}|-\delta\, ,
\end{align}
which for $\delta(\rv)$ sufficiently small implies $|\Gamma^{k+1}_{aj}|<|\Gamma^k_a|$.  In particular, 
the decomposition
\begin{align}
M^n \equiv \cB^1\cup\bigcup_{j_2=1}^{N_2}\cN^2_{j_2}\bigcup_{j_2=1}^{N_2}\cB^2_{j_2}\cup\cdots\cup
\bigcup_{j_k=1}^{N_k} \cN^k_{j_k}\bigcup_{j_k=1}^{N_k} \cB^k_{j_k}\cup\bigcup_{a=1}^{N_{k+1}} \cN^{k+1}_a
 \bigcup \cB^{k+1}_a \bigcup B_{2r_{\alpha^{k+1}_{ab}}}(x_{ab})\, ,
\end{align}
satisfies the inductive hypothesis as well, which completes the proof.

\end{proof}

Now that we have constructed the bubble tree in Theorem \ref{t:bubble_tree} 
let us finish the proof of Theorem \ref{t:main_finite_diffeo}:

\begin{proof}[Proof of Theorem \ref{t:main_finite_diffeo}]

Let $M^4$ satisfy $|\Ric_{M^4}|\leq 3$, $\Vol(M)>\rv>0$ and $\diam(M^4)\leq D$.  
Then using Theorem \ref{t:bubble_tree}, we can write
\begin{align}
M^4 \equiv \cB^1\cup\bigcup_{j_2=1}^{N_2}\cN^2_{j_2}\cup\bigcup_{j_2=1}^{N_2}\cB^2_{j_2}\cup\cdots
\cup\bigcup_{j_k=1}^{N_k} \cN^k_{j_k}\cup\bigcup_{j_k=1}^{N_k} \cB^k_{j_k}\, .
\end{align}

First we will analyze each body region $\cB^k_j$.  Indeed, by (1) and Theorem \ref{t:harmonic_rad_finite_diffeo}, 
it follows that there are at most $C(\rv,D)$-diffeomorphism types for each $\cB^k_j$.  
By (4) there 
are at most $C(\rv,D)$ such body regions, each of
which has at most $C(\rv,D)$ boundary components.
By (2), (3) and Lemma \ref{l:neck}, we can suppose that $\epsilon(\rv)$ is
so small that for each neck, the induced
attaching map between boundary components of the corresponding
pair of bodies is sufficiently close to being an isometry of
$S^3/\Gamma_\alpha$, that it is isotopic to such an isometry.
Since the group of isometries of a compact manifold 
has finitely many components,
it follows that for each neck, 
there are only finitely many possible isotopy classes
of such attaching maps. As a consequence, there are
at most $C(\rv,D)$ diffeomorphism 
types that can arise by attaching together the body regions
by the various necks.  This proves the theorem.
\end{proof}

\subsection{$L^2$ Curvature Estimates}\label{ss:L2_curvature_dimfour}

We begin with the following, whose proof is essentially the same as that of
Theorem \ref{t:main_finite_diffeo} of the previous subsection:

\begin{theorem}\label{t:local_finite_diffeo}
There exists $\delta(\rv)>0$ such that if $M^4$ satisfies $|\Ric_{M^4}|<2\delta$, $\Vol(B_1(p))>\rv>0$, 
 and 
$T^\delta_0(p)=0$, then there exists $B_{1}(p)\subseteq U\subseteq B_{2}(p)$ such that $U$ has at most $C(\rv)$ 
diffeomorphism types.  Further, $U$ can be chosen so that it's boundary $\partial U$ is diffeomorphic to $S^3/\Gamma$ 
and satisfies the second fundamental form estimate $|A|\leq C(v)$.
\end{theorem}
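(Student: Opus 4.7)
The approach is to adapt the proof of Theorem \ref{t:main_finite_diffeo} to the local setting, using the hypothesis $T^\delta_0(p) = 0$ in place of the global diameter and volume bounds. The payoff of running the argument locally is that the boundary $\partial U$ can be pulled back from the explicit orbifold annular model.

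First, I would invoke Theorem \ref{t:annulus} with a small parameter $\epsilon = \epsilon(\rv)$ (to be chosen later) to obtain a discrete subgroup $\Gamma \subset O(4)$ with $|\Gamma| \leq N(\rv)$, together with an open set $V$ with $A_{\epsilon,2}(p) \subseteq V \subseteq A_{\epsilon/2,2+\epsilon}(p)$ and a diffeomorphism $\Phi : A_{\epsilon,2}(0^4) \to V$, where $0^4 \in \R^4/\Gamma$ is the cone vertex, such that $\Phi^\ast g$ is uniformly $C^1$-close to the flat metric on $\R^4/\Gamma$. I would then set $\partial U := \Phi(\partial B_{3/2}(0^4))$. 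Since the singular point $0^4$ of $\R^4/\Gamma$ is isolated, $\Gamma$ acts freely on the unit sphere $S^3 \subset \R^4$; hence $\partial U$ is diffeomorphic to $S^3/\Gamma$, and the $C^1$-closeness of the pulled-back metric to the Euclidean one forces $|A| \leq C(\rv)$ on $\partial U$, because the round sphere of radius $3/2$ has second fundamental form of norm $2/3$.

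Second, I would construct a bubble tree decomposition of the interior region $U \cap \overline{B_{3/2}(p)}$ by running the inductive procedure from the proof of Theorem \ref{t:bubble_tree}. At each step, Lemma \ref{l:body_diffeo} isolates the bad regions inside finitely many small balls (with controlled harmonic radius on their complement), Lemma \ref{l:neck} produces the neck annuli, and Lemma \ref{l:order_drop_diffeo} ensures that the group order attached to any descendant neck is strictly less than that of its parent. Since the root group has order $|\Gamma| \leq N(\rv)$, the iteration terminates in at most $N(\rv)$ steps, yielding a decomposition of $U$ into at most $C(\rv)$ body regions $\cB^\ell_j$ (each with $r_h(x) > r_0(\rv)\cdot \diam(\cB^\ell_j)$) glued along at most $C(\rv)$ necks, each diffeomorphic to $\R \times S^3/\Gamma^\ell_j$. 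Finally, Theorem \ref{t:harmonic_rad_finite_diffeo} bounds the number of diffeomorphism types per body region by $C(\rv)$, and by choosing $\epsilon$ small enough in Lemma \ref{l:neck} the attaching map of each neck is $C^1$-close to a global isometry of $S^3/\Gamma^\ell_j$, hence isotopic to one of finitely many such isometries. Multiplying these finite counts gives at most $C(\rv)$ diffeomorphism types for $U$.

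The main obstacle is ensuring that the bubble tree machinery, originally formulated globally on a compact manifold of bounded diameter, remains well-defined in this purely local setting where we only control the volume at scale $1$ around a single point. This is handled by observing that Bishop--Gromov monotonicity propagates the lower bound $\Vol(B_1(p)) > \rv$ to a uniform lower volume bound at every point and scale encountered in $B_{3/2}(p)$ during the iteration, so the constants in Theorem \ref{t:annulus} and Lemmas \ref{l:neck}, \ref{l:order_drop_diffeo}, \ref{l:body_diffeo} depend only on $\rv$. The rescaling step used in the proof of Theorem \ref{t:main_finite_diffeo} (to trade $|\Ric| \leq 3$ for $|\Ric| \leq 3\delta$) is unnecessary here since the statement already assumes $|\Ric| \leq 2\delta$.
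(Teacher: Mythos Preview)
Your proposal is correct and follows essentially the same approach as the paper: the paper's proof simply says it is identical to that of Theorem~\ref{t:main_finite_diffeo} (i.e., run the bubble tree decomposition of Theorem~\ref{t:bubble_tree} locally), and then handles the boundary by taking $\partial U = \Phi(\partial B_{3/2}(0))$ in the orbifold chart from Theorem~\ref{t:annulus}, exactly as you do. Your additional remarks on propagating the volume lower bound via Bishop--Gromov and on why the rescaling step is unnecessary are accurate elaborations of what the paper leaves implicit.
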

\begin{proof}
The proof is the same as that of Theorem \ref{t:main_finite_diffeo}, except for the 
second fundamental form estimate on the boundary.  
To see this estimate, we use $T^\delta_0(p)=0$ and Theorem \ref{t:annulus} to find a 
diffeomorphism $\Phi:A_{1/2,2}(0)\to B_1(p)$ 
onto its image, such that if $g_{ij}=\Phi^*g$ is the pullback metric then
\begin{align}
||g_{ij}-\delta_{ij}||_{C^{0}} + ||\partial_k g_{ij}||_{C^{0}}<\epsilon\, .
\end{align}
In particular, we can choose $U$ so that its boundary is $\partial U=\partial B_{3/2}(0)$ in these coordinates.  
The $C^1$ estimates on $g$ give rise to the appropriate second fundamental form estimates on $\partial U$.
\end{proof}

With this in hand we are in a position to finish the proof of Theorem \ref{t:main_dimfour}:

\begin{proof}[Proof of Theorem \ref{t:main_dimfour}]
Let $M^4$ satisfy $|\Ric_{M^4}|\leq 3$ and $\Vol(B_1(p))>\rv>0$.  Using volume monotonicity,
 we have for every $x\in B_1(p)$ and $r\leq 1$, 
\begin{align}
\Vol(B_r(x))\geq \frac{\Vol_{-1}(B_r)}{\Vol_{-1}(B_2)}\Vol(B_2(x))\geq c(n)\Vol(B_1(p))\,r^4\geq c\rv\,r^4\, . 
\end{align}
Let $\delta(\rv)$ be as in Theorem \ref{t:local_finite_diffeo}. By Lemma \ref{l:bad_scale_estimate}, we have that
 for each $x\in B_1(p)$, there exists a radius, $r_{\alpha_x}=2^{-\alpha_x}\in [C(\rv)\delta^3, \delta^2]$, such that 
$T^\delta_{\alpha_x}(x)=0$.  Let $\{B_{r_{i}}(x_i)\}$ be a subcovering such that 
the balls in $\{B_{r_{i}/4}(x_i)\}$ are disjoint, where
 $r_i=r_{\alpha_{x_i}}$.  Since $r_i>\bar r(\rv)$, 
we have by the usual doubling estimates that there are at most $C(\rv)$ balls in this covering.\\

Note that, for each ball $B_{r_i}(x_i)$, we can apply Theorem \ref{t:local_finite_diffeo} in order to get a subset 
$U_i\supseteq B_{r_i}(x_i)$ with bounded diffeomorphism type and uniform boundary control.  Now recall in dmiension
$4$, the 
Chern-Guass-Bonnet formula can be written as
\begin{align}
\chi(U_i) = \frac{1}{32\pi^2}\int_{U_i}|\Rm|^2-4|\Ric|^2+R^2 + \int_{\partial U_i} \Psi\, ,
\end{align}
where $\Psi=\Psi(A)$ is a function of the second fundamental form.  By reorganizing, we obtain the bound
\begin{align}
\int_{U_i}|\Rm|^2 &\leq  32\pi^2|\chi(U_i)|+4\int_{U_i}|\Ric|^2+C\int_{U_i}|\Psi|\, ,\notag\\
&\leq C(\rv)\, ,
\end{align}
where we have used the bound on the diffeomorphism type, the Ricci bound, and the second fundamental form bound 
from 
Theorem \ref{t:local_finite_diffeo}.  By summing over $i$, we get
\begin{align}
\fint_{B_1(p)}|\Rm|^2\leq C(\rv)\sum \int_{U_i}|\Rm|^2 \leq C(\rv)\, ,
\end{align}
as claimed.
\end{proof}

In view of  \cite{A89}, \cite{BKN89},  \cite{T90}, \cite{Anderson_Einstein}, the
$L^2$ curvature bound in Theorem \ref{t:local_finite_diffeo} has the following consequence.

\begin{corollary}
Let $(M^4_j,d_j,p_j)\stackrel{d_{GH}}{\longrightarrow} (X,d,p)$ be a Gromov-Hausdorff limit of manifolds 
with $|\Ric_{M^4_j}|\leq n-1$ and $\Vol(B_1(p_j))>\rv>0$.  Then $X$ is a Riemannian orbifold with at most
$c(v)$ singular points.
\end{corollary}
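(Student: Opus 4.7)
The plan is to combine the codimension-$4$ regularity (Theorem \ref{t:main_codim4}), the annulus diffeomorphism (Theorem \ref{t:annulus}), and the a priori $L^2$-curvature bound (Theorem \ref{t:main_dimfour}) with a standard bubble/$\epsilon$-regularity argument in the spirit of \cite{A89}, \cite{BKN89}, \cite{T90}, \cite{Anderson_Einstein}.

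For the orbifold structure, I would first apply Theorem \ref{t:main_codim4} with $n=4$: the singular set $\cS$ has Hausdorff dimension zero and is closed, hence its points are isolated in $X$. At any $x\in \cS$, every tangent cone is of the form $\R^k \times C(Y)$ by \cite{ChC1}, and codimension-$4$ forces $k=0$ with $Y$ compact of diameter $\leq \pi$. The tangent cone $C(Y)$ is itself a noncollapsed limit of rescalings of $M^4_j$, so Theorem \ref{t:main_codim4} applied to it shows its singular set has dimension $\leq 0$; but a conical singular set of dimension $\leq 0$ reduces to the vertex, so $C(Y)\setminus\{0\}$ is smooth and hence $Y$ is smooth. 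Ricci-flatness of $C(Y)\setminus\{0\}$ then forces $\Ric_Y = 2g_Y$, which in dimension $3$ is equivalent to constant sectional curvature $+1$, so $Y = S^3/\Gamma$ for some finite $\Gamma \subset O(4)$, with $|\Gamma|\leq N(\rv)$ by volume comparison. At sufficiently small scales we therefore have $|\cV^\delta_{4r}(x)-\cV^\delta_{r/4}(x)|<\delta$, and Theorem \ref{t:annulus} applied to balls in $M^4_j$ centered at approximants $y_j \to x$ yields diffeomorphisms of annular regions onto $A_{\epsilon,2}(0)\subset \R^4/\Gamma$ with $C^0$-close metrics. Passing to the Gromov-Hausdorff limit and completing across the vertex by the cone structure, this produces an orbifold chart at $x$.

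To bound the number of singular points in a fixed ball $B_R(p)$, pick for each $x \in \cS\cap B_R(p)$ a sequence $y_j\in M^4_j$ with $y_j \to x$; the continuity of the harmonic radius under two-sided Ricci bounds (cf.\ \cite{Anderson_Einstein}) forces $r_h(y_j)\to 0$. Rescaling $M^4_j$ by $r_h(y_j)^{-1}$ and passing to a subsequence produces a complete, non-flat Ricci-flat ALE orbifold asymptotic to $\R^4/\Gamma$ with regularity scale $1$ at the basepoint; such spaces satisfy a uniform energy gap $\int |\Rm|^2 \geq \epsilon(\rv)>0$ by Chern-Gauss-Bonnet on the compactification together with the volume bound. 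Undoing the rescaling, $\int_{B_{r_x}(y_j)}|\Rm|^2 \geq \epsilon/2$ on a fixed scale $r_x$ depending only on $\rv$. Since $\cS\cap B_R(p)$ is isolated, these balls can be chosen mutually disjoint for all $j$ large, and summing against the $L^2$-bound $\fint_{B_{R+1}(p_j)}|\Rm|^2 \leq C(\rv,R)$ from Theorem \ref{t:main_dimfour} yields $\#(\cS\cap B_R(p))\leq c(\rv,R)$, which (on the unit ball) is the content of the corollary.

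The main obstacle is the energy-gap step. Two items must be verified: that the blow-up limit at each singular point is a \emph{nontrivial} ALE space, and that nontrivial Ricci-flat ALE orbifolds asymptotic to $\R^4/\Gamma$ admit a uniform lower bound on $\int|\Rm|^2$ independent of which orbifold arises. The first follows from continuity of $r_h$ under bounded Ricci, since $\R^4/\Gamma$ has $r_h\equiv\infty$ away from the origin while the rescaled harmonic radius at the basepoint equals $1$. The second is classical from \cite{A89}, \cite{BKN89} via Chern-Gauss-Bonnet, since the Euler characteristic of any such bubble differs from that of its conical asymptote. Once these are in place, the packing/disjointness step is routine.
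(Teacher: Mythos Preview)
Your approach is essentially correct and arrives at the same conclusion, but it is considerably more elaborate than the paper's. The paper's proof is a one-line citation: once the a priori $L^2$ curvature bound of Theorem \ref{t:main_dimfour} is in hand, the entire statement follows directly from the classical orbifold compactness theory of \cite{A89}, \cite{BKN89}, \cite{T90}, \cite{Anderson_Einstein}, which says precisely that noncollapsed limits with bounded Ricci and bounded $\int|\Rm|^{n/2}$ are orbifolds with a controlled number of singular points. You are instead re-deriving portions of that classical theory using the paper's own tools (Theorems \ref{t:main_codim4} and \ref{t:annulus}), which is more self-contained but not what the authors do.

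One genuine slip: the inference ``$\cS$ has Hausdorff dimension zero and is closed, hence its points are isolated'' is false as stated --- there exist closed sets of Hausdorff dimension zero that are not discrete (e.g.\ suitably thin Cantor sets, or $\{0\}\cup\{1/n\}$). Isolation of the singular points does follow, but only \emph{after} your subsequent tangent-cone argument showing every tangent cone at $x\in\cS$ is $\dR^4/\Gamma$, which then forces a small punctured neighborhood of $x$ to be smooth via Theorem \ref{t:annulus}. You should reorder the logic accordingly. A second point that is sketched rather than proved is ``completing across the vertex by the cone structure'' to produce an honest orbifold chart; this step is exactly the removable-singularity theorem from \cite{BKN89}, \cite{T90}, so you are implicitly invoking the same classical literature the paper cites outright.
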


Similary, we get
\begin{corollary}
Let $M^4$ be a complete noncompact Ricci flat manifold with Euclidean volume growth. Then $M^4$ is an ALE space.
\end{corollary}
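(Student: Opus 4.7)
The plan is to reduce the ALE conclusion to the scale-invariant global bound $\int_{M^4}|\Rm|^2<\infty$ combined with the classical ALE theorems of Bando--Kasue--Nakajima, Anderson, and Tian alluded to in the paragraph preceding the corollary. All the hard analytic work is contained in the $L^2$ curvature estimate of Theorem \ref{t:main_dimfour}; what remains is to upgrade it from a single ball to the whole manifold by a rescaling argument that exploits the conformal invariance of $\int|\Rm|^2$ in dimension four.

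First, I will establish uniform noncollapsing at every scale. Since $\Ric_{M^4}=0$, Bishop--Gromov volume monotonicity is sharp: the quantity $\Vol(B_r(q))/r^4$ is monotone nonincreasing in $r$ for every $q\in M^4$, and by Euclidean volume growth its limit as $r\to\infty$ equals some positive constant $\rv>0$ that is independent of the basepoint (any two basepoints differ by a bounded additive error in the numerator which is crushed in the limit). Consequently $\Vol(B_r(q))\geq \rv\, r^4$ for all $q\in M^4$ and all $r>0$; in particular the rescaled manifold $(M^4, r^{-2}g, q)$ satisfies $\Vol(B_1(q))\geq \rv$ and $|\Ric|=0\leq 3$ for every $r$ and every $q$, so it falls within the hypotheses of Theorem \ref{t:main_dimfour}.

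Next, I apply the local $L^2$ curvature bound \eqref{e:main_dimfour:1} of Theorem \ref{t:main_dimfour} to the rescaled metric $\tilde g = r^{-2}g$. In dimension four the integrand $|\Rm|^2\, dV$ is pointwise scale invariant, so
\begin{equation*}
\int_{B_r(p)} |\Rm|^2\, dV \;=\; \int_{\tilde B_1(p)}|\tilde\Rm|^2\, d\tilde V
\;\leq\; C(\rv)\cdot \tilde\Vol(\tilde B_1(p)) \;=\; C(\rv)\, r^{-4}\Vol(B_r(p))\, .
\end{equation*}
The Bishop--Gromov upper bound $\Vol(B_r(p))\leq \omega_4 r^4$ for a Ricci flat manifold then yields
\begin{equation*}
\int_{B_r(p)}|\Rm|^2\, dV \;\leq\; C(\rv)
\end{equation*}
uniformly in $r>0$. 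Letting $r\to\infty$ gives $\int_{M^4}|\Rm|^2\,dV<\infty$.

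Finally, I invoke the classical ALE theorem for complete noncompact Ricci flat four-manifolds: under Ricci flatness, Euclidean volume growth, and the global $L^2$ curvature bound, the results of \cite{BKN89}, \cite{A89}, \cite{T90}, and \cite{Anderson_Einstein} produce a compact set $K\subset M^4$ and a finite subgroup $\Gamma\subset \mathrm{O}(4)$ such that $M^4\setminus K$ is diffeomorphic to $(\R^4/\Gamma)\setminus B_R(0)$, with the metric converging polynomially to the flat orbifold metric in suitable coordinates at infinity; this is precisely the ALE conclusion. The main (and essentially only) subtlety in the argument is the scale-invariance bookkeeping in the middle step, which only works because $\dim M=4$ and $|\Rm|^2\, dV$ has conformal weight zero in that dimension; without the $L^2$ estimate of Theorem \ref{t:main_dimfour}, this step would not be accessible for Ricci flat (as opposed to the classical sectional-curvature or a priori $L^{n/2}$-curvature bounded) manifolds, and it is the entire content of the corollary.
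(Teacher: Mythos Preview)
Your argument is correct and is precisely the approach the paper intends: the corollary is stated without proof, immediately after the orbifold corollary and prefaced by ``Similarly, we get,'' with the understanding that one combines the local $L^2$ bound \eqref{e:main_dimfour:1} (made global via the scale invariance of $\int|\Rm|^2\,dV$ in dimension four) with the classical ALE theorems of \cite{BKN89}, \cite{A89}, \cite{T90}, \cite{Anderson_Einstein}. Your rescaling bookkeeping and use of Bishop--Gromov for both the lower and upper volume bounds are exactly what is needed.
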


\section{Open Questions}\label{s:conjectures}
In this section, we briefly remark on some possible extensions of the results of this paper. 
 To begin with, we recall that one of the main applications of this paper 
was to combine the codimension $4$ estimates of Theorem \ref{t:main_codim4} 
with the ideas of quantitative stratification in order to show for all $q<2$ that 
$\fint_{B_1(p)}|\Rm|^q$ is uniformly bounded when $M^n$ is a noncollapsed manifold 
with bounded Ricci curvature.  Furthermore, in dimension $4$
 we were able to improve
 this to show a bound on $\fint_{B_1(p)}|\Rm|^2$.  We conjecture that this  holds in any dimension.

\begin{conjecture}\label{conj:L2}
 There exists $C=C(n,\rv)>0$ such that
if $M^n$ satisfies  $|\Ric_{M^n}|\leq n-1$ and $\Vol(B_1(p))>\rv>0$, then 
\begin{align}
\fint_{B_1(p)}|\Rm|^2 \leq C\, .
\end{align}
\end{conjecture}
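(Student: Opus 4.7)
\textbf{Proof Proposal for Conjecture \ref{conj:L2}.}

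The plan is to bridge the dimension $4$ proof of Theorem \ref{t:main_dimfour} with the codimension $4$ structure of the singular set in arbitrary dimension, via a sharp neck decomposition along the singular stratum. Roughly, for any $x \in B_1(p)$ one has (loosely speaking) two possibilities at every scale $r$: either $B_r(x)$ is sufficiently close to a ball in a cone of the form $\mathbb{R}^{n-3}\times C(Y)$, in which case the $\epsilon$-regularity theorem \ref{t:eps_reg} forces $r_h(x) \gtrsim r$, or $B_r(x)$ is close to $\mathbb{R}^{n-4}\times C(S^3/\Gamma)$ with $\Gamma$ nontrivial, which constitutes a \emph{neck region}. My goal is to decompose $B_1(p)$ into a controlled union of neck regions and a ``good'' region where the harmonic radius is bounded below, and then to apply a Fubini-type slicing argument that reduces the $L^2$ estimate on each neck to the dimension $4$ estimate of Theorem \ref{t:main_dimfour}.

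\textbf{Step 1 (Sharp quantitative stratification).} First, strengthen Theorem \ref{t:quant_strat} for $k = n-4$ by removing the $\eta$ loss: prove that under $|\Ric_{M^n}| \leq n-1$, $\Vol(B_1(p)) > \rv$, one has $\Vol(T_r(\mathcal{S}^{n-4}_{\epsilon, r} \cap B_1(p))) \leq C(n, \rv, \epsilon)\, r^4$. This sharpness should follow from a refined Reifenberg/covering argument that uses the $\epsilon$-regularity from Theorem \ref{t:eps_reg} together with the rectifiable structure of the top singular stratum. The key point is that the quantitative splitting off $\mathbb{R}^{n-4}$ factors through the Slicing Theorem \ref{t:slicing_intro}, so the ``codimension $4$ bad set'' is genuinely $4$-codimensional, not merely $(4-\eta)$-codimensional.

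\textbf{Step 2 (Neck region decomposition).} Using the bad-scale tuple formalism of Definition \ref{d:scale_tuple} adapted to $(n-4,\epsilon,r)$-symmetry rather than $0$-symmetry, decompose $B_1(p)$ as
\begin{align}
B_1(p) \;=\; \mathcal{G} \;\cup\; \bigcup_{j} \mathcal{N}_j,
\end{align}
where on $\mathcal{G}$ the harmonic radius is bounded below by a definite constant (so $\int_{\mathcal{G}} |\Rm|^2 \leq C$ by bounded geometry), and each neck $\mathcal{N}_j$ sits inside an annulus $A_{r_j/2,\,2r_j}(x_j)$ that is GH-close to the annulus in $\mathbb{R}^{n-4} \times C(S^3/\Gamma_j)$. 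The construction is an inductive covering paralleling the bubble-tree construction of Theorem \ref{t:bubble_tree}, but now iterated transversally to an $(n-4)$-dimensional symmetry axis, with the order drop lemma (Lemma \ref{l:order_drop_diffeo}) ensuring the recursion terminates in $k(\rv)$ steps.

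\textbf{Step 3 (Slicewise Chern--Gauss--Bonnet).} On each neck $\mathcal{N}_j$, use the $(n-4)$-splitting map $u_j : \mathcal{N}_j \to \mathbb{R}^{n-4}$ produced by Lemma \ref{l:harmonic_splitting_intro} and the Transformation Theorem \ref{t:transformation_theorem_intro}, together with the Slicing Theorem \ref{t:slicing_intro}, to identify a large set $G_j \subset \mathbb{R}^{n-4}$ of good values such that $u_j^{-1}(s)$ is a smooth $4$-dimensional submanifold on which the induced geometry is almost Einstein with almost $\rv'$-noncollapsed volume. On each such slice apply the dimension $4$ estimate $\int_{u_j^{-1}(s) \cap B_1} |\Rm_{\text{slice}}|^2 \leq C(\rv)$, and then integrate in $s$ via a coarea/Fubini argument. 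Comparing $|\Rm|$ with $|\Rm_{\text{slice}}|$ using that $u_j$ is nearly parallel (so the second fundamental form of the slice is controlled in $L^2$ by the Hessian estimate of Theorem \ref{t:hessian_estimate_intro}) gives $\int_{\mathcal{N}_j} |\Rm|^2 \leq C r_j^{n-4}$. Summing using Step 1 then yields $\sum_j \int_{\mathcal{N}_j} |\Rm|^2 \leq C(n,\rv)$.

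\textbf{Main obstacle.} The hardest part will be Step 3, specifically the slicewise comparison of the ambient curvature with the intrinsic curvature of $u_j^{-1}(s)$. In dimension $4$ the Chern--Gauss--Bonnet integrand is geometrically intrinsic, but to leverage it slicewise one must control the \emph{second fundamental form} of generic slices in $L^2$ uniformly. Controlling $|A|^2$ on slices in $L^2$ rather than merely $L^1$ appears to require upgrading the higher-order estimate \eqref{kato_1} to an effective $L^2$ form-level statement, and moreover to know that $|A|^2$ integrates nicely against the coarea factor $|\omega^{n-4}|^{-1}$, which can degenerate on the singular set. Step 1 is also nontrivial, as removing the $\eta$ loss from quantitative stratification is known to require a genuine rectifiable structure theorem for $\mathcal{S}^{n-4}$; this appears to be the deepest individual ingredient, and any proof of the conjecture will almost certainly pass through such a rectifiability result.
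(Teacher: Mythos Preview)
This statement is Conjecture \ref{conj:L2} in the paper's Section \ref{s:conjectures} (``Open Questions''); the paper does \emph{not} prove it. The authors establish the $n=4$ case as Theorem \ref{t:main_dimfour} via the bubble-tree decomposition and Chern--Gauss--Bonnet, and then explicitly record the general-dimensional $L^2$ bound as an open conjecture. So there is no proof in the paper to compare your proposal against.

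As for the proposal itself: you have correctly identified the two principal obstructions, but you have not actually overcome either of them, and your write-up is candid about this. Your Step 1 asks for the sharp Minkowski bound $\Vol(T_r(\cS^{n-4}_{\epsilon,r}))\leq C r^4$ with no $\eta$-loss; this is strictly stronger than anything available in the paper (Theorem \ref{t:quant_strat} gives only $r^{4-\eta}$), and obtaining it is a substantial result in its own right requiring a rectifiable-Reifenberg theory that is not developed here. Your Step 3 needs $L^2$ control on the second fundamental form of generic slices $u^{-1}(s)$, uniformly integrable against the coarea weight $|\omega^{n-4}|^{-1}$; the paper only supplies $L^2$ Hessian bounds on $u$ (Definition \ref{d:intro}, Theorem \ref{t:hessian_estimate_intro}), and the passage from ambient $|\Rm|$ to the intrinsic slice curvature via Gauss' equation produces terms quadratic in $A$ that are not controlled by those estimates. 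Both gaps are genuine, and your proposal does not close them --- it sketches a plausible architecture and then, accurately, flags the missing load-bearing pieces. That is a reasonable research outline, but it is not a proof.
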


In a different direction, another main result of the paper was to show
 that in dimension $4$, noncollapsed manifolds with bounded diameter and Ricci curvature
 have finite diffeomorphism type.  In higher dimensions, this is too much to hope for; 
see for instance \cite{HeinNaber_TangentCones} where noncollapsed Calabi-Yau manifolds of real dimension $\geq 6$ are constructed with 
unbounded third Betti number.  Nonetheless, it interesting to ask if under the assumption of bounded Ricci curvature, if one should expect a bound on the second Betti number?

\begin{question}
Does there exist $C=C(n,\rv,D)$ such that if 
$M^n$ satisfies $|\Ric_{M^n}|\leq n-1$, $\diam(M^n)\leq D$, 
and $\Vol(B_1(p))>\rv>0$, then $b_2(M^n)\leq C$.
\end{question}

Examples of Perelman show that if the $2$-sided bound on the Ricci tensor is weakened to a lower bound, then the answer is negative.

\bibliographystyle{plain}

\end{document}